\newcommand{\runningtitle}{Running Title}
\newtheorem{thm}{Theorem}
\newtheorem{lem}[thm]{Lemma}
\newtheorem{prop}[thm]{Proposition}
\theoremstyle{definition}
\newtheorem{defn}[thm]{Definition}
\theoremstyle{remark}
\newtheorem{rmk}[thm]{Remark}
\newcommand\reallywidecheck[1]{%
\savestack{\tmpbox}{\stretchto{%
  \scaleto{%
    \scalerel*[\widthof{\ensuremath{#1}}]{\kern-.6pt\bigwedge\kern-.6pt}%
    {\rule[-\textheight/2]{1ex}{\textheight}}
  }{\textheight}%
}{0.5ex}}%
\stackon[1pt]{#1}{\scalebox{-1}{\tmpbox}}%
}
\newcommand\reallywidehat[1]{%
\savestack{\tmpbox}{\stretchto{%
  \scaleto{%
    \scalerel*[\widthof{\ensuremath{#1}}]{\kern-.6pt\bigwedge\kern-.6pt}%
    {\rule[-\textheight/2]{1ex}{\textheight}}
  }{\textheight}%
}{0.5ex}}%
\stackon[1pt]{#1}{\tmpbox}%
}
\NewDocumentEnvironment{MLME}{O{c}m}{%
  \multlined[#1][#2]
}{\endmultlined}
\newcommand{\NN}{\mathbb N}              
\newcommand{\RR}{\mathbb R}              
\newcommand{\CC}{\mathbb C}              
\renewcommand{\Re}{\operatorname*{Re}} \renewcommand{\Im}{\operatorname*{Im}}
\newcommand{\D}{\ensuremath{\,\mathrm{d}}}
\newcommand{\ri}{\ensuremath{\mathrm{i}}}
\newcommand{\re}{\ensuremath{\mathrm{e}}}
\newcommand{\la}{\ensuremath{\lambda}}
\renewcommand{\epsilon}{\varepsilon}
\renewcommand{\geq}{\geqslant}
\renewcommand{\leq}{\leqslant}
\providecommand{\BVec}[1]{\mathbf{#1}}
\providecommand{\clos}{\operatorname{clos}}
\newcommand{\abs}[1]{\left\lvert#1\right\rvert}
\newcommand{\normp}[2]{\left\lVert#1\right\rVert_{#2}}
\newcommand{\sgn}{\operatorname{sgn}}
\newcommand{\Mspacer}{\;} 
\newcommand{\M}[3]{#1_{#2\Mspacer#3}} 
\newcommand{\Msup}[4]{#1_{#2\Mspacer#3}^{#4}} 
\providecommand{\bigoh}[1]{\mathcal{O}\left(#1\right)}
\providecommand{\lindecayla}{\bigoh{\la^{-1}}}
\providecommand{\PV}{\mathrm{PV}\hspace{-0.4em}}
\providecommand{\argdot}{{}\cdot{}}
\newcommand{\AC}{\mathrm{AC}} 
\providecommand{\Lebesgue}{\mathrm{L}} 
\providecommand{\ContinuousSpace}{\mathrm{C}} 
\author{D A Smith}
\title{The linearized Korteweg-de Vries equation on the line with metric graph defects}
\renewcommand{\runningtitle}{LKdV line defect}
\date{\today}
\begin{document}
\maketitle
\thispagestyle{fancy}

\begin{abstract}
    We study the small amplitude linearization of the Korteweg de Vries equation on the line with a local defect scattering waves represented by a metric graph domain adjoined at one point.
    For a representative collection of examples, we derive explicit solution formulae expressed as contour integrals and obtain existence and unicity results for piecewise absolutely continuous data.
    In so doing, we implement the unified transform method on metric graphs comprising both bonds and leads for a third order differential operator.
\end{abstract}


\section{Introduction} \label{sec:Introduction}

If waves in a channel encounter a defect, or an obstacle partially obscuring the wave transmission, then the incoming waves are scattered by the obstacle, in a possibly complicated way.
In situations which may be represented by a spatial differential operator that is selfadjoint, or at least symmetric, the theory is highly developed.
The scattering theory of the wave equation was well understood over 50 years ago~\cite{LP1967}, but continues to yield valuable results in applications; see, for example,~\cite{ZMGI2020a}.
The scattering theory of differential operators on metric graph domains has been of particlar interest recently because of potential applications to the study of metamaterials~\cite{LTC2022a}.
Linear Schr\"odinger operators on metric graph domains have also seen significant interest, not least because their scattering theory, like that for the wave equation, admits extension from a simple metric graph to periodic assemblies of such graphs~\cite{BK2013a}.
Both as a dispersive wave equation in its own right, and approximately as the small amplitude linearization of the nonlinear Schr\"odinger equation, the linear Schr\"odinger equation can be used to model physical effects from quantum interactions to optics.
In these studies, a defect in the material, the special properties of a metamaterial, or the effects of a boundary layer are modelled by adjoining to the physical domain some kind of interface region represented by a metric graph which causes some complex scattering effects.

The Korteweg-de Vries (KdV) equation is another completely integrable dispersive wave equation, with applications to unidirectional shallow water waves and blood pressure waves in elastic channels.
Its wellposedness and solitons have been studied on domains with a defect~\cite{AP2015a,CP2017a}, star graph domains~\cite{Cav2018a,AC2018a,CCM2020a,PC2021a,MNS2024a,AC2025a}, and on more complex graphs~\cite{AM2024a}.
Similar results were obtained earlier for the Benjamin Bona Mahony equation~\cite{BC2008a}.

It may be possible to adapt the inverse scattering transform approach of~\cite{Cau2015a} from the nonlinear Schr\"odinger to KdV equation (see also~\cite{Fok2002b} for KdV on the half line), and obtain solutions for specific ``linearizable'' vertex conditions, but the nonlinear nature of a certain step in the method prevents a full solution for general vertex conditions.
This will be more difficult still for metric graph domains including one or more bonds (edges of finite length), as the characterization of linearizable boundary conditions of a nonlinear equation is complicated even on a domain composed of a single finite interval.

Nevertheless, it is possible to solve in full the small amplitude linearization of the KdV equation on a metric graph domain, via the unified transform method (UTM), or Fokas transform method.
The simplest example of this, the degenerate case of a metric graph consisting of but the single semiinfinite interval $[0,\infty)$ has been solved analytically via the unified transform method~\cite{Fok2008a,DTV2014a}.
The corresponding single finite interval $[0,1]$ problem has also been solved~\cite{FP2001a,Pel2004a}, and is particularly instructive for the spectral theory it reveals.
Indeed, it has been shown that, although for certain boundary conditions the spatial differential operator may have spectral representation similar to the familiar Sturm-Liouville theory of selfadjoint second order operators, in general this is not the case.
Rather, because the (generalized) eigenfunctions do not form a complete system, an additional ``eigenvalue at infinity'' is required to represent the initial data, resulting in a solution respresentation via contour integrals~\cite{Jac1915a,Hop1919a,Pel2005a,Pap2011a,PS2013a,FS2016a,ABS2022a}.

The unified transform method has been used to solve the linearized Korteweg-de Vries equation (lKdV) and its transportless simplification (often known as the Airy equation) on more general metric graph domains.
Specifically, all metric graph problems consisting of two half line domains were studied in~\cite{DSS2016a}, where full criteria on the interface conditions for success of the method were also derived.
Note that, because the partial differential equation is unidirectional, the orientation of the half lines categorically changes the problem, leading to four such problems, of which only one pair are equivalent via change of variables.
On a star graph consisting of three half lines all of the same orientation, the Airy equation was solved via the unified transform method~\cite{SE2020a}.
On a star graph with equally many incoming and outgoing edges with a specific set of vertex conditions, the ``$\delta$-type'' vertex conditions, the lKdV was solved in~\cite[\S{}A]{PC2021a}.
Applications of the unified transform method to interface problems on metric graphs for other equations began with the heat equation on line graphs~\cite{DPS2014a,APSS2015a,MPSS2016a}, and was extended, principally by Sheils, to heat and linear Schr\"odinger problems on line and loop graphs~\cite{DS2014a,DS2016a,She2017a,DS2020a}.
A more abstract formulation of the method for arbitrary constant coefficient linear partial differential equations on metric graphs composed of finite bonds only was presented in~\cite{ABS2022a}.
The works of Farkas and Deconinck~\cite{DF2023a,DF2024a,DF2025a} return to second order equations on line graphs, but provide a means for taking a limit, via a Riemann sum, as the number of interfaces increases.

Recently, Chatziafratis has provided rigourous wellposedness results in classical spaces based on the unified transform method and solution formulae.
Although this approach has not yet been applied to problems posed on graph domains, notable results on simpler domains include~\cite{CM2022a,CKS2023a,COT2024a,ACF2024a,ACCF2024a,ACFKM2025a}.
Rigourous wellposedness results in Sobolev and Bourgain spaces based on unified transform method formulae were obtained earlier for the nonlinear Schr\"odinger, KdV and other semilinear equations, and for their small amplitude linearizations, principally by Mantzavinos and Himonas; see~\cite{FHM2016a,FHM2017a,HMY2019c,HMY2019b,HMY2019a,HM2020a,HM2021a,HM2022a} and the survey~\cite{Man2022a}.
Asymptotics of linear problems via unified transform method solution formulae have also been fruitful~\cite{CK2024a,BCCK2024a,CGK2024a,CO2024a}.

The problem of wellposedness for lKdV with various vertex conditions was decided in much greater abstraction and generality, given countably many half lines oriented in each direction, adjoined at a single vertex to form a star graph~\cite{MNS2018a}.
See also~\cite{SSVW2015a} for much of the abstract construction on which the former relies, and treatment of a first order problem in the same domain.
However, solutions of such problems have not previously been presented except in the cases where the metric graph domain is a star graph comprising just two~\cite{DSS2016a,STV2019a} or three~\cite{SE2020a} half lines, and two of these three works are only for the Airy equation, not the full lKdV.

In this work, we study lKdV, with general transport term, on certain metric graph domains, with selected vertex conditions.
The class of problems we consider is stated explicitly below, and is designed to represent the domain of a full line with, attached at $x=0$, various types of compact metric graphs, that may describe defects in the domain.
In~\S\ref{sec:UTM}, we describe the overall method, and implement the part of the method common to all of the problems.
The four classes of examples studied are then presented in detail in~\S\ref{sec:mismatch}--\ref{sec:sink}, with full existence, unicity, and solution representation results.
The appendices~\S\ref{sec:piecewiseACfunctions} and~\S\ref{sec:technicalLemmata} provide a precise definition of the function spaces in which we work, and much of the analytic details required to obtain the existence results, respectively.
The latter is an adaptation and slight extension of the recent results of Chatziafratis and collaborators~\cite{CKS2023a,COT2024a}, which give a broadly applicable framework for proving existence results using contour integral formulae obtained from the unified transform method.

\subsection*{Problem}

Consider a metric directed graph with loops allowed (henceforth, a graph) in which each directed edge $e$ has an associated spatial domain $\Omega_e$ with $x\in\Omega_e$ increasing from the source vertex of $e$ to its target.
On each edge we specify also $a_e\in\RR$.
We study the problem
\begin{subequations} \label{eqn:generalproblem}
\begin{align}
    \label{eqn:generalproblem.PDE} \tag{\theparentequation.PDE}
    \partial_t u_e(x,t) &= [\partial_x^3+a_e\partial_x] u_e(x,t) & (x,t) &\in \Omega_e \times (0,T), \\
    \label{eqn:generalproblem.IC} \tag{\theparentequation.IC}
    u_e(x,0) &= U_e(x) & x &\in\Omega_e,
\end{align}
\end{subequations}
together with some vertex conditions to be specified later.
We see that, by scaling $x$ on each directed edge separately, the coefficient of $u_{xxx}$ may also be arbitrary, at the cost of changing the vertex conditions.
Changing the sign of the $u_{xxx}$ term on an edge has the effect of reversing the orientation of the edge and changing the sign of $a_e$.
Therefore, there is no loss of generality in assuming, per~\eqref{eqn:generalproblem.PDE}, that the coefficient of $u_{xxx}$ is always $1$.
Also without loss of generality, we specify that each $\Omega_e$ is one of $(-\infty,0],[0,\infty)$, or $[0,\eta_e]$ for some $\eta_e>0$.
We shall refer to these types of directed edges as \emph{incoming leads}, \emph{outgoing leads}, and \emph{bonds}, respectively, and partition the directed edge set into $\mathcal{L}^-$, $\mathcal{L}^+$, and $\mathcal{B}$, accordingly.
In~\S\ref{sec:mismatch}--\ref{sec:sink} of this work, all problems we study have $\abs{\mathcal{L}^-}=1=\abs{\mathcal{L}^+}$, so that we may ``zoom out'' and consider the domain of equation~\eqref{eqn:generalproblem.PDE} to be the real line, but with some kind of defect at $x=0$ described by the vertex conditions and the compact component of the metric graph.

\section{UTM for lKdV on metric graphs} \label{sec:UTM}

\subsection{Overview of UTM} \label{ssec:UTM.Overview}

In this work, we apply the unified transform method to initial boundary value problems posed on metric graphs.
Due to the notational complexity necessary to even state such problems, some explanation of the general procedure is warranted before the full argument is presented.
An excellent pedagogical introduction to the unified transform method on the simplest possible metric graphs, half lines and finite intervals, is provided by~\cite{DTV2014a}.
An implementation of the method for problems on metric graphs without leads has already been provided in~\cite[\S7]{ABS2022a} but, as our graphs always have two leads, that work is not applicable.
Moreover, both these papers omit the last part of the method described below: stage three.
We provide in the following a brief explanation of the three stages of the unified transform method, with descriptions of the necessary adaptations for the problems on the metric graphs which we study.

\subsubsection*{Stage 1}
The first stage of the method is to derive a pair of equations:
\begin{description}
    \item[Ehrenpreis form.]{
        This equation gives a formula for the solution $u(x,t)$ in terms of complex contour integrals of the Fourier transform of the initial datum, and certain temporal transforms of the boundary values, which are denoted by $\M fej$ in this work.
        In the case of the heat equation on a finite interval, temporal transforms of all four of the Dirichlet and Neumann boundary values at the left and the right of the interval would be included in the integrands of this expression, yet only two (linear combinations) of these may be explicitly specified in a wellposed problem, so this is not yet an effective representation of the solution.

        One way to derive the Ehrenpreis form, and the way we follow below, is to begin with the global relation, apply the inverse Fourier transform, and then make contour deformations to certain parts of the resulting formula, so that the inverse Fourier transform becomes an integral along another contour extending from $\infty$ to $\infty$.
        The purpose of the contour deformations may be mysterious at first glance, but is clarified in stages~2 and~3.
        An older but more complicated derivation of the Ehrenpreis form via a Riemann-Hilbert problem provides good a priori motivation for the particular contours of integration~\cite{Fok2008a}.

        In the adaptation to problems on metric graphs, one must obtain an Ehrenpreis form for each of the edges, so that $u_e(x,t)$ is given for each $e$ by complex contour integrals of these transforms of initial data and boundary values.
        As the derivation of the global relation relies only upon the PDE and initial condition, not on the boundary/vertex conditions, the adaptation required for the metric graph setting are minimal.
        However, to reduce the complexity of calculations later in the paper, we make a change of spectral variable that is not usually necessary for problems on an interval, and define temporal transforms of the boundary values in a way convenient for this change of variables.
        The change of variables for each edge $e$ is given by the function~$\nu_e$, which is defined above lemma~\ref{lem:generalNu}.
    }
    \item[Global relation.]{
        This equation relates those same temporal transforms of the boundary values to the Fourier transforms of the initial datum and the solution at time $t$.
        There are several ways to derive the global relation, but the approach followed in this paper is to take an ordinary Fourier transform of the PDE, integrate by parts to obtain a very simple temporal ODE in the Fourier transform of $u$ with the boundary values considered as inhomogeneities, then solve that ODE via an integrating factor.

        Again, this derivation is performed without reference to the boundary conditions, so it may be done in general for all problems like~\eqref{eqn:generalproblem}, and is a straightforward generalization of the result on an interval.
        As with the Ehrenpreis form, we make a change of spectral variable compared to the typical presentation of the global relation for problems on the interval.
    }
\end{description}
Because it is independent of the vertex conditions, it is possible to present stage~1 for problem~\eqref{eqn:generalproblem} in full generality.
This is done in~\S\ref{ssec:UTM.Overview}.

\subsubsection*{Stage 2}
In the second stage of the method, one completes the derivation of an effective contour integral representation of the solution of the problem.
To do so, one uses the boundary conditions (or, when working on a metric graph, vertex conditions) and the global relation to derive a system of linear equations for the temporal transforms of the boundary values, solves the system, and substitutes the solution into the Ehrenpreis form.
There are two significant complications with this process.

The first issue is that (for each edge) there is but one global relation, and the set of boundary (interface) conditions is typically short by more than one linear equation of a full rank system in the transformed boundary values.
However, the temporal transforms of the boundary values depend on a power (the cube, in our problems) of the spectral variable $\la$ rather than on $\la$ itself, so, for $\alpha$ a primitive cube root of unity, the maps $\la\mapsto\alpha\la$ and $\la\mapsto\alpha^2\la$ give extra global relations in the same temporal transforms of the boundary values.
Related to this issue is that the original global relation (for a given edge) may be valid on only one of the closed halfplanes $\clos\CC^+$ or $\clos\CC^-$, but we require formulae for the temporal transforms of the boundary values valid on some other sector of $\CC$.
The solution to this issue is the same: select the ``right'' scalings $\la\mapsto\alpha^k\la$ so that the resultant versions of the global relation all become valid in the sector of interest.

The second impediment is that the global relation, involves not only transforms of the boundary values (for which we plan to solve) and initial datum (which is known), but also the Fourier transform of the solution at time $t$ (which is certainly unknown).
Therefore, this unknown term necessarily appears in the formulae obtained by solving the linear system, and will be substituted into the Ehrenpreis form.
It is subsequently shown via asymptotic analysis and the tools of complex analysis, that this term contributes zero to the solution formula once it is inside the carefully chosen contour integrals of the Ehrenpreis form.
It was in anticipation of this part of the argument that we made contour deformations in the construction of the Ehrenpreis form.

The main added difficulty for problems on metric graphs is that the vertex conditions do not typically specify a boundary value explicitly, or even give a Robin type condition in which a linear combination of only the boundary values corresponding to a single edge is specified; rather a typical vertex condition specifies an equality between boundary values of several edges joined at the vertex.
Therefore, one must use all the global relations, from all the edges, each with maps $\la\mapsto\alpha^k\la$ applied to the spectral variable.
The resulting linear system is rather larger.

The plan for stage~2 is described again in~\S\ref{ssec:stage2} with the benefit of notation that is developed in the intervening.
The full implementation of stage~2 is presented in~\S\S\ref{ssec:mismatchUnicity},~\ref{ssec:loopUnicity}, and~\ref{ssec:sourceUnicity}.

\subsubsection*{Stage 3}
At the conclusion of stage~2, a solution representation has been determined, and its derivation required only assumption of the existence of a solution.
Therefore, in particular, unicity of the solution is established under the same assumption.
Stage three of the method justifies that existence assumption, thereby bootstrapping the whole argument.

We begin with the formula obtained in stage two, and show directly that it satisfies the PDE, initial condition, and all boundary (vertex) conditions.
This is the most analytically involved section of the argument, has not previously been implemented in detail for problems on metric graphs, and is often omitted even for boundary value problems on intervals.
However, as it is required for a full rigourous implementation of the unified transform method, stage~3 is included in this work.

The methods for completing stage~3 are less standarized than those for stages~1 and~2, but some progress has been made in recent works by Chatziafratis and collaborators.
The main analytical difficulty comes from the fact that the complex contour integrals derived in stage~2 are typically not uniformly convergent, so differentiation under the integral is not possible.
Parts of the argument are also algebraically nontrivial; identities~\eqref{eqn:generalNu.vietaSymmetries} are crucial to establishing cancellation of various quantities.

A rough explanation of the analytical tools fors tage~3 is given in~\S\ref{ssec:definitions}.
This stage is presented in greatest detail for the first problem in~\S\ref{ssec:mismatchExistence}.
Its rigourous implementation relies on the sequence of technical lemmata in the appendicial~\S\ref{sec:technicalLemmata}.
The necessary further generalizations for the other problems appear in the sections dedicated to each problem.

\subsection{UTM stage 1 for lKdV on metric graphs} \label{ssec:UTM.Overview}

For each edge $e$, let $\nu_e$ be the biholomorphic function defined outside a disc of radius $R$ having $\la^3=\nu(\la)^3-a_e\nu(\la)$ and $\nu(\la)/\la=1+\lindecayla$ as $\la\to\infty$.
The existence of this function is guaranteed in the proof below.
Its unicity follows by unicity of analytic continuation from large $\abs\la$, where $a_e\nu(\la)$ is dominated by the other terms, to smaller $\abs\la$.

\begin{lem} \label{lem:generalNu}
    As $\la\to\infty$, $\nu(\la) = \la+\lindecayla$, and the asymptotic term has a control uniform in $\arg(\la)$.
    Outside $B(0,R)$, $\nu(\RR^\pm),\nu^{-1}(\RR^\pm)\subset\RR^\pm$.
    Outside $B(0,R)$, $\nu(\CC^\pm),\nu^{-1}(\CC^\pm)\subset\CC^\pm$.
    For $\alpha=\exp(2\pi\ri/3)$, the function $\nu$ has the symmetries
    \begin{subequations} \label{eqn:generalNu.vietaSymmetries}
    \begin{align} \label{eqn:generalNu.vietaSymmetries.1}
        \sum_{j=0}^2 \alpha^j\nu(\alpha^j\la)^k \nu'(\alpha^j\la) &= \begin{cases} 0 & \mbox{if } k \in \{0,1,3\}, \\ 3\la^2 & \mbox{if } k = 2, \end{cases}
        \\
         \label{eqn:generalNu.vietaSymmetries.2}
        \sum_{j=0}^2 \alpha^j\nu(\alpha^j\la)^k\left(a-\nu(\alpha^j\la)^2\right)\nu'(\alpha^j\la) &= \begin{cases} 0 & \mbox{if } k \in \{1,2\}, \\ -3\la^2 & \mbox{if } k = 0. \end{cases}
    \end{align}
    \end{subequations}
\end{lem}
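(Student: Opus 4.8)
The plan is to build $\nu$ by inverting an explicit branch of a cube root, then read off the mapping properties by reflection and connectedness, and finally reduce the symmetry identities \eqref{eqn:generalNu.vietaSymmetries} to symmetric-function computations for a cubic whose roots are $\nu(\la),\nu(\alpha\la),\nu(\alpha^2\la)$. First I would exhibit $\nu$ as the inverse of $g(\nu):=\nu(1-a\nu^{-2})^{1/3}$, where the cube root is the principal branch, single-valued for $\abs{\nu}>R$ with $R$ large since $a\nu^{-2}$ then stays in a small disc about $0$, away from the branch point at $1$. Expanding gives $g(\nu)=\nu-\frac{a}{3\nu}+\bigoh{\nu^{-3}}$, so $g'(\nu)=1+\bigoh{\nu^{-2}}$ is nonzero and $g$ is biholomorphic from a neighbourhood of $\infty$ onto a neighbourhood of $\infty$; single-valuedness is immediate because $g(\nu)/\nu\to1$ forces $\arg g$ to return to itself as $\nu$ traverses a large circle. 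By construction $g(\nu)^3=\nu^3-a\nu$, so its inverse $\nu=g^{-1}$ satisfies $\la^3=\nu(\la)^3-a\nu(\la)$, and inverting the series yields $\nu(\la)=\la+\frac{a}{3\la}+\bigoh{\la^{-3}}=\la+\lindecayla$, with the error uniform in $\arg\la$ since the expansion of $g$ is uniform on $\abs{\nu}>R$. This proves the first assertion; uniqueness is as noted before the statement.

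Since $a\in\RR$ and the principal cube root is real on the positive reals, $g$ satisfies the reflection $\overline{g(\nu)}=g(\overline\nu)$. On $(R,\infty)$ one has $\nu^3-a\nu=\nu(\nu^2-a)>0$, so $g$ is real and positive there, and negative on $(-\infty,-R)$; the same holds for $\nu=g^{-1}$ because $g$ preserves signs of real arguments, giving $\nu(\RR^\pm),\nu^{-1}(\RR^\pm)\subset\RR^\pm$. For the half-planes, note that $g$ cannot send a non-real $\nu_0$ in the exterior to a real value: otherwise reflection would force $g(\overline{\nu_0})=g(\nu_0)$ with $\overline{\nu_0}\ne\nu_0$, contradicting injectivity. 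Hence $g$ maps the connected set $\{\abs{\nu}>R\}\cap\CC^+$ into one open half-plane; evaluating at $\nu=\ri T$ gives $g(\ri T)=\ri T(1+a/T^2)^{1/3}\in\CC^+$, so the image lies in $\CC^+$. Thus $\nu^{-1}=g$ and its inverse $\nu$ both send $\CC^\pm$ into $\CC^\pm$.

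Because $\alpha^3=1$, replacing $\la$ by $\alpha^j\la$ leaves $\la^3$ unchanged, so $r_j:=\nu(\alpha^j\la)$, $j=0,1,2$, are the three roots (distinct for $\abs{\la}>R$) of $Q(\nu):=\nu^3-a\nu-\la^3$. Differentiating $\la^3=\nu(\la)^3-a\nu(\la)$ and evaluating at $\alpha^j\la$ gives $\nu'(\alpha^j\la)=3\alpha^{2j}\la^2/(3r_j^2-a)=3\alpha^{2j}\la^2/Q'(r_j)$. Since $\alpha^j\cdot\alpha^{2j}=\alpha^{3j}=1$, the left side of \eqref{eqn:generalNu.vietaSymmetries.1} collapses to $3\la^2\sum_j r_j^k/Q'(r_j)$. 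This sum is the total finite residue of $z^k/Q(z)$, hence $0$ for $k\in\{0,1\}$ (decay $\bigoh{z^{-2}}$ at $\infty$), $1$ for $k=2$, and $0$ for $k=3$ (the $1/z$-coefficient of $z^3/Q(z)=1+az^{-2}+\cdots$ vanishes), which is exactly \eqref{eqn:generalNu.vietaSymmetries.1}. For \eqref{eqn:generalNu.vietaSymmetries.2} the same collapse gives $3\la^2\sum_j r_j^k(a-r_j^2)/Q'(r_j)$, and the identity $\frac{a-r^2}{3r^2-a}=-\frac13+\frac{2a}{3}\frac{1}{3r^2-a}$ reduces this to $3\la^2\bigl(-\frac13 p_k+\frac{2a}{3}\sum_j r_j^k/Q'(r_j)\bigr)$ with $p_k:=\sum_j r_j^k$. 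Newton's identities for $Q$ (where $e_1=0$, $e_2=-a$, $e_3=\la^3$) yield $p_0=3$, $p_1=0$, $p_2=2a$, whence the three cases evaluate to $-3\la^2,0,0$ for $k=0,1,2$, matching \eqref{eqn:generalNu.vietaSymmetries.2}.

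The genuinely delicate points are analytic rather than algebraic: establishing that $g$ is globally single-valued and injective on the exterior (so that $\nu^{-1}$ and the reflection/connectedness argument are legitimate), and pinning down the \emph{direction} of the half-plane map rather than merely that the image of $\CC^+$ avoids $\RR$. Once the root identification $r_j=\nu(\alpha^j\la)$ is in place, the identities \eqref{eqn:generalNu.vietaSymmetries} are a mechanical residue-and-power-sum computation; the only care needed there is the bookkeeping of the residue at infinity for the borderline exponents $k=2$ and $k=3$.
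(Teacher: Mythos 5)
Your proof is correct, but it reaches the lemma by a genuinely different route from the paper's. For the construction and mapping properties, the paper imports existence of the biholomorphism from \cite[proposition~1]{ABS2022a}, derives $\nu(\la)=\la+\tfrac{a}{3}\la^{-1}+\ldots$ by matching Laurent coefficients in $\la^3=\nu^3-a\nu$, and gets $\nu(\RR)\subset\RR$ from an induction showing every Laurent coefficient is real; you instead build $\nu$ explicitly as the inverse of $g(\nu)=\nu(1-a\nu^{-2})^{1/3}$ and use the reflection $\overline{g(\nu)}=g(\overline\nu)$, which is self-contained but leaves injectivity of $g$ on the exterior of a disc as the one step you flag rather than execute (the paper hides the same issue inside the citation); the half-plane argument via injectivity, connectedness, and one test point is essentially the paper's. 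For the symmetries, the paper differentiates Vieta's formulas to obtain three base identities and then reduces the remaining cases ($k=3$ of \eqref{eqn:generalNu.vietaSymmetries.1} and all of \eqref{eqn:generalNu.vietaSymmetries.2}) to them by substituting the defining cubic; you instead use $\nu'(\alpha^j\la)=3\alpha^{2j}\la^2/Q'(r_j)$ so that $\alpha^{3j}=1$ collapses every sum to $3\la^2\sum_j r_j^k(\cdots)/Q'(r_j)$, then evaluate by the residue-at-infinity identity and Newton's identities, with the partial fraction $\tfrac{a-r^2}{3r^2-a}=-\tfrac13+\tfrac{2a}{3(3r^2-a)}$ handling \eqref{eqn:generalNu.vietaSymmetries.2}. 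I checked the arithmetic (including the borderline cases $k=2,3$ and the values $p_0=3$, $p_1=0$, $p_2=2a$) and it is right. Your version is more uniform — one mechanical computation covers every case — at the cost of dividing by $Q'(r_j)$, so you should state explicitly that the roots are simple for $\abs\la>R$ (the discriminant $4a^3-27\la^6$ is nonzero there); the paper's reductions avoid that hypothesis but require a little ad hoc bookkeeping between cases.
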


\begin{proof}[Proof of lemma~\ref{lem:generalNu}]
    The existence of this biholomorphism is established in~\cite[proposition~1]{ABS2022a}.
    As $\nu$ is analytic in the exterior of $B(0,R)$, it has a Laurent series at $0$, which we express as $\nu(\la) = \la+\sum_{j=0}^\infty c_j\la^{-j}$.
    But then
    \[
        \la^3 = \nu(\la)^3 - a\nu(\la) = \la^3 + 3\la^2c_0 + \la[3c_0^2+3c_1-a] + \bigoh{1},
    \]
    so $c_0=0$ and $c_1=a/3$, which establishes the asymptotic expression $\nu(\la)=\la+\lindecayla$.
    By complete induction, all $c_j$ are real numbers, which means $\nu(\RR)\subset\RR$ and, by ensuring $R>0$ is sufficiently large, the separation of the two half lines $\nu(\{\la\in\RR:\pm\la>R\})\subset\RR^\pm$ follows from $\nu(\la)/\la\to1$.
    Bijectivity implies the corresponding results for $\nu^{-1}$.
    The fact that $\RR$ divides $\CC^+$ from $\CC^-$, together with injectivity and continuity of $\nu$, implies that $\nu(\CC^\pm)\subset\CC^\pm$, and then bijectivity implies the corresponding results for $\nu^{-1}$.

    We claim that
    \begin{equation} \label{eqn:generalNu.vietaSymmetries.proof}
        \sum_{j=0}^2 \alpha^j\nu'(\alpha^j\la) = 0,
        \qquad
        \sum_{j=0}^2 \alpha^j\nu'(\alpha^j\la)\nu(\alpha^j\la) = 0,
        \qquad
        \sum_{j=0}^2 \alpha^j\nu'(\alpha^j\la)\nu(\alpha^j\la)^2 = 3\la^2.
    \end{equation}
    The first of symmetries~\eqref{eqn:generalNu.vietaSymmetries.proof} follows from Vieta's first formula~\cite[(I.1.1.4)]{Coo1931a} applied to the polynomial $\nu(\la)^3-a\nu(\la)-\la^3$ in $\nu(\la)$, by differentiation with respect to $\la$.
    Combining Vieta's first and second formulae, one obtains $\sum_{j=0}^2 \nu(\alpha^j\la)^2 = 2a$.
    Differentiating the latter gives the second of symmetries~\eqref{eqn:generalNu.vietaSymmetries.proof}.
    The third of symmetries~\eqref{eqn:generalNu.vietaSymmetries.proof} follows similarly from Vieta's first, second, and third formulae.

    Symmetries~\eqref{eqn:generalNu.vietaSymmetries.proof} combine to give symmetry~\eqref{eqn:generalNu.vietaSymmetries.1} in all cases except $k=3$.
    But the $k=3$ case reduces to the sum of the $k=0$ and $k=1$ cases because, by definition, $\nu(\alpha^j\la)^3 = \la^3+a\nu(\alpha^j\la)$.

    Now, using $\nu(\alpha^j\la)(a-\nu(\alpha^j\la)^2)=-\la^3$, the $k=1,2$ cases of symmetry~\eqref{eqn:generalNu.vietaSymmetries.2} reduce to the $k=0,1$ cases of symmetry~\eqref{eqn:generalNu.vietaSymmetries.1}, respectively.
    The $k=0$ case of symmetry~\eqref{eqn:generalNu.vietaSymmetries.2} is a linear combination of the $k=0$ and $k=2$ cases of symmetry~\eqref{eqn:generalNu.vietaSymmetries.1}.
\end{proof}

In order to state the main proposition of this section, we define the set
\begin{equation} \label{eqn:defnD}
    D = \{\la\in\CC^-:\abs\la>R,\,\Re(\ri\la^3)<0\},
\end{equation}
and its positively oriented boundary $\partial D$, as shown in figure~\ref{fig:bdryD3}.
\begin{figure}
    \centering
    \includegraphics{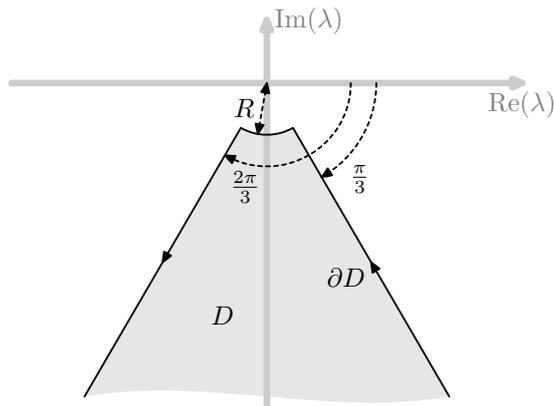}
    \caption{The contour $\partial D$.}
    \label{fig:bdryD3}
\end{figure}
Several rotations of this set appear in the proof of the proposition, but are unnecessary for its statement.
Specifically, $\alpha D$ and $\alpha^2D$ are both in $\CC^+$ together with $E=-D$, displayed in figure~\ref{fig:vertexRewritingRealMinus}, while $\alpha E$ and $\alpha^2E$ are in $\CC^-$ and shown in figure~\ref{fig:vertexRewritingRealPlus}.

\begin{prop} \label{prop:UTM}
    Suppose that, for all edges $e$, $u_e$ satisfies problem~\ref{eqn:generalproblem}.
    Suppose also that for all $x\in\Omega_e$ and all $k\in\{0,1,2\}$, $\partial_x^ku_e(x,\argdot)\in\AC_{\mathrm p}^0[0,T]$, while for all $t\in[0,T]$, $u_e(\argdot,t)\in\AC_{\mathrm p}^2(\clos\Omega_e)$; see appendicial~\S\ref{sec:piecewiseACfunctions} for a definition of these spaces.
    Let $\alpha$ be the primitive cube root of unity $\re^{2\pi\ri/3}$, $\partial D$ be the positively oriented boundary of the set defined in equation~\eqref{eqn:defnD}, and
    \begin{equation} \label{eqn:generalDefnF}
        F_e(\la;x,t) = \M fe2(\la;x,t) + \ri\nu_e(\la)\M fe1(\la;x,t) + \left(a_e-\nu_e(\la)^2\right) \M fe0(\la;x,t),
    \end{equation}
    for
    \[
        \M fej(\la;x,t) = \int_0^t \re^{\ri\la^3s} \partial_x^j u_e(x,s) \D s.
    \]
    For any integrable function $\theta$ defined on the real interval $\Omega$, we denote its Fourier transform by
    \[
        \widehat \theta(\la) = \int_\Omega \re^{-\ri\la x} \theta(x)\D\la.
    \]
    Then, for all $\la\in\clos(\CC^\mp)\setminus B(0,R)$ if $e\in\mathcal L^\pm$, and for all $\la\in\CC\setminus B(0,R)$ if $e\in\mathcal B$,
    \begin{equation} \label{eqn:generalGRe}
        0 = \re^{\ri\la^3t} \hat u_e(\nu_e(\la);t) - \widehat U_e(\nu_e(\la))
        + \chi_{\mathcal B \cup \mathcal L^+}(e) F_e(\la;0,t)
        - \chi_{\mathcal B \cup \mathcal L^-}(e) \re^{-\ri\eta_e\nu_e(\la)}F_e(\la;\eta_e,t),
    \end{equation}
    where, for notational convenience, $\eta_e=0$ for $e\in\mathcal L^-$.
    Also, for all $(x,t)\in\Omega_e\times(0,T)$,
    \begin{multline} \label{eqn:generalEFe}
        2\pi u_e(x,t) = \int_{-\infty}^\infty \re^{\ri\la x - \ri(\la^3-a_e\la)t} \widehat U_e(\la) \D\la \\
        - \chi_{\mathcal B \cup \mathcal L^+}(e) \int_{\partial D} \re^{-\ri\la^3t} \left[\alpha\re^{\ri\nu_e(\alpha\la)x}F_e(\alpha\la;0,t)\nu'_e(\alpha\la)+\alpha^2\re^{\ri\nu_e(\alpha^2\la)x}F_e(\alpha^2\la;0,t)\nu'_e(\alpha^2\la)\right] \D\la \\
        - \chi_{\mathcal B \cup \mathcal L^-}(e) \int_{\partial D} \re^{\ri\nu_e(\la)(x-\eta_e)-\ri\la^3t} F_e(\la;\eta_e,t)\nu'_e(\la) \D\la.
    \end{multline}
    Moreover, $R>0$ may be increased an arbitrary finite amount, consequently deforming $\partial D$ further from $0$, without affecting the validity of equation~\eqref{eqn:generalEFe}.
\end{prop}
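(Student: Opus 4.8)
The plan is to prove the two displayed identities in turn, deriving the global relation \eqref{eqn:generalGRe} first and then erecting the Ehrenpreis form \eqref{eqn:generalEFe} on top of it, and finally to dispatch the $R$-independence by a Cauchy argument.

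For the global relation I would take the ordinary Fourier transform of \eqref{eqn:generalproblem.PDE} in $x$, but evaluate it at the spectral point $\nu_e(\la)$, working with $\widehat{u_e}(\nu_e(\la);t)=\int_{\Omega_e}\re^{-\ri\nu_e(\la)x}u_e(x,t)\D x$. Differentiating in $t$, inserting the equation, and integrating by parts (three times against $\partial_x^3$, once against $a_e\partial_x$) produces a first order temporal ODE whose symbol is $(\ri\nu_e)^3+a_e(\ri\nu_e)=-\ri(\nu_e^3-a_e\nu_e)=-\ri\la^3$, the defining relation $\nu_e(\la)^3-a_e\nu_e(\la)=\la^3$ from Lemma~\ref{lem:generalNu} collapsing the cubic symbol to $-\ri\la^3$. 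The boundary contributions of the integration by parts assemble precisely into the spatial form $\partial_x^2+\ri\nu_e\partial_x+(a_e-\nu_e^2)$ evaluated at the endpoints $0$ and $\eta_e$, whose temporal transforms are $F_e(\la;0,t)$ and $F_e(\la;\eta_e,t)$. Solving the ODE with integrating factor $\re^{\ri\la^3 t}$, integrating over $[0,t]$, and using \eqref{eqn:generalproblem.IC} to evaluate the $t=0$ term as $\widehat{U_e}(\nu_e(\la))$ yields \eqref{eqn:generalGRe}. The characteristic functions and validity regions follow from the half-plane mapping of $\nu_e$ in Lemma~\ref{lem:generalNu}: on a lead the endpoint at infinity contributes nothing once $\re^{-\ri\nu_e(\la)x}u_e(x,t)$ is integrable there, which by $\nu_e(\CC^\pm)\subset\CC^\pm$ holds exactly on $\clos(\CC^\mp)$ for $e\in\mathcal L^\pm$, whereas a bond's compact domain imposes no restriction beyond $\abs\la>R$.

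For the Ehrenpreis form I would start from $2\pi u_e(x,t)=\int_{-\infty}^\infty\re^{\ri kx}\widehat{u_e}(k;t)\D k$, change variables $k=\nu_e(\la)$ on the large-$\abs\la$ part of the contour (so $\D k=\nu_e'(\la)\D\la$ and $\nu_e(\la)^3-a_e\nu_e(\la)=\la^3$), and substitute for $\widehat{u_e}(\nu_e(\la);t)$ from the global relation. The term carrying $\widehat{U_e}(\nu_e(\la))$ reverts under $k=\nu_e(\la)$ to $\int_{-\infty}^\infty\re^{\ri kx-\ri(k^3-a_ek)t}\widehat{U_e}(k)\D k$, the dispersive first integral of \eqref{eqn:generalEFe}, manifestly independent of $R$. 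The $x=\eta_e$ contribution is the benign one: since $\re^{\ri\nu_e(\la)(x-\eta_e)}$ decays in $\CC^-$ for $x\leq\eta_e$ and $\re^{-\ri\la^3 t}F_e(\la;\eta_e,t)=\int_0^t\re^{-\ri\la^3(t-s)}(\cdots)\D s$ decays in the two sectors of $\CC^-\setminus D$ (its cubic decay dominating the at-most-linear growth of the exponential), Cauchy's theorem with vanishing connecting arcs sweeps the real contour down onto $\partial D$, giving the third line of \eqref{eqn:generalEFe}.

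The hard part, and the main obstacle, is the $x=0$ contribution. Here $\re^{\ri\nu_e(\la)x}$ with $x\geq0$ forces the contour into $\CC^+$, where $\partial D$ is unavailable and the single global relation used above is no longer valid; moreover the naive integrand actually \emph{grows} throughout the interiors of the two components $\alpha D,\alpha^2 D$ of $\{\Re(\ri\la^3)<0\}\cap\CC^+$, so it cannot be deformed as one integral. My plan is to exploit the cube-root structure: because the temporal transforms depend on $\la$ only through $\la^3$, the maps $\la\mapsto\alpha\la,\alpha^2\la$ furnish two further global relations valid on $\alpha D$ and $\alpha^2 D$, and the symmetries \eqref{eqn:generalNu.vietaSymmetries} of $\nu_e$ are what make their contributions combine correctly. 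Using these rotated relations sector by sector, the $x=0$ integral collapses onto $\partial(\alpha D)\cup\partial(\alpha^2 D)$ — the contours on whose rays $\re^{\ri\nu_e(\la)x}$ does not grow and $F_e$ is at most polynomial — and the substitutions $\la\mapsto\alpha\la,\alpha^2\la$ recast these as integrals over $\partial D$, producing the rotation-and-Jacobian factors $\alpha\re^{\ri\nu_e(\alpha\la)x}F_e(\alpha\la;0,t)\nu_e'(\alpha\la)$ and its $\alpha^2$ analogue in the second line of \eqref{eqn:generalEFe}. I expect the genuinely delicate points to be (i) matching the deformations consistently across $\abs\la=R$, so that the small-$\abs\la$ piece of the inverse transform and the three arcs at radius $R$ cancel to leave exactly the stated contours, and (ii) confirming that the unknown term $\re^{\ri\la^3 t}\widehat{u_e}(\nu_e(\la);t)$ reintroduced by the substitution leaves no residual contribution on the chosen contours — a point whose full justification belongs to the asymptotic analysis of stage~3 but whose formal vanishing should already be visible here.

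Finally, $R$-independence is the easy coda. Increasing $R$ to $R'$ rigidly pushes $\partial D$ outward along the same two rays, so the difference between the old and new boundary integrals is the integral of the same integrand around the positively oriented boundary of the sector-annulus $\{R<\abs\la<R',\ \arg\la\in(4\pi/3,5\pi/3)\}$ (and its rotates). Since $\nu_e$ is holomorphic outside $B(0,R)$ and each $F_e(\argdot;x,t)$ is entire in $\la$ for fixed $t$, the integrand is holomorphic throughout this region, so Cauchy's theorem makes the difference vanish, while the dispersive integral over $\RR$ does not involve $R$ at all.
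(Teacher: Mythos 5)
Your derivation of the global relation is essentially the paper's (the paper integrates by parts in the variable $\la$ and only afterwards substitutes $\la\mapsto\nu_e(\la)$, whereas you work at the spectral point $\nu_e(\la)$ from the outset, but the computation and the validity regions are identical), and your treatment of the $\widehat U_e$ term, of the $x=\eta_e$ term, and of the $R$-independence coda all match the paper. The gap is in the step you yourself single out as the hard part: relocating the $x=0$ boundary term onto $\partial(\alpha D)\cup\partial(\alpha^2D)$. The mechanism you propose --- invoking the rotated global relations on $\alpha D$ and $\alpha^2D$ together with the symmetries~\eqref{eqn:generalNu.vietaSymmetries} --- is not the right tool and does not accomplish the relocation. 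At this stage nothing is being solved for: once the global relation has been used on $\RR$ (where it is valid for every edge type), $F_e(\la;0,t)$ is an explicit function of the boundary traces, analytic wherever $\nu_e$ is, so its contour may be moved wherever analyticity and decay permit, irrespective of where the global relation ``is valid''. The rotated relations are a stage-2 device for generating extra equations in the unknowns $\M fej$, and the symmetries~\eqref{eqn:generalNu.vietaSymmetries} enter only in stage~3; neither appears in this proof. Likewise your point (ii), that the substitution ``reintroduces'' $\re^{\ri\la^3t}\hat u_e(\nu_e(\la);t)$, is misplaced here: that term is eliminated exactly once, on the real line, and reappears only in stage~2.

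What is actually needed is elementary. In the middle sector $E=-D$ (arguments between $\pi/3$ and $2\pi/3$) one has $\Re(\ri\la^3)\geq0$, so integrating $\M fej(\la;0,t)$ by parts in $s$ --- using the piecewise-$\AC$ hypothesis in time --- gives $\re^{-\ri\la^3t}\M fej(\la;0,t)=\bigoh{\la^{-3}}$ uniformly in $\arg\la$ there; since the polynomial prefactors in $F_e$ grow at worst like $\nu_e(\la)^2\sim\la^2$ and $\re^{\ri\nu_e(\la)x}$ is bounded for $x\geq0$, $\la\in\clos(\CC^+)$, the whole integrand is $\lindecayla$ on $\clos(E)$. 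Cauchy's theorem and Jordan's lemma then give $\int_{\partial E}=0$, and subtracting this zero integral from the integral over the boundary of $\clos(\CC^+)\setminus B(0,R)$ leaves exactly the integral over $\partial(\alpha D)\cup\partial(\alpha^2D)$, which the reparametrizations $\la\mapsto\alpha\la$ and $\la\mapsto\alpha^2\la$ convert into the second line of~\eqref{eqn:generalEFe}. The growth of the integrand inside $\alpha D$ and $\alpha^2D$, which you cite as the obstruction, is no obstacle at all, because no deformation through those sectors is ever performed. Without this decay estimate in $E$ your argument for the second line of~\eqref{eqn:generalEFe} does not close.
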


\begin{proof}[Proof of proposition~\ref{prop:UTM}]
    Applying the Fourier transform to equation~\eqref{eqn:generalproblem.PDE} for each edge $e$ separately, and integrating by parts in space thrice, we obtain
    \begin{multline*}
        \reallywidehat{\partial_t u_e}(\la;t) + (-\ri\la^3+a_e\ri\la)\hat u_e(\la;t)
        \\
        =
        - \left[\re^{-\ri\la x}\left(
            \partial_{xx}u_e(x,t)
            + \ri\la \partial_{x}u_e(x,t)
            - \la^2 u_e(x,t)
            + a_e u_e(x,t)
        \right)\right]_{x\to\inf\Omega_e}^{x\to\sup\Omega_e}.
    \end{multline*}
    The regularity in time hypothesis is sufficient to interchange the time derivative and Fourier transform.
    Treating the right side as if it were a known inhomogeneity, we can solve this temporal ODE to obtain
    \begin{equation} \label{generalGRe.la}
        \re^{(\ri\la^3-a_e\ri\la)t} \hat u_e(\la;t)
        =
        \widehat U_e(\la)
        -
        \chi_{\mathcal B \cup \mathcal L^+}(e) F_e(\nu_e^{-1}(\la);0,t)
        +
        \chi_{\mathcal B \cup \mathcal L^-}(e) \re^{-\ri\eta_e\la}F_e(\nu_e^{-1}(\la);\eta_e,t).
    \end{equation}
    In equation~\eqref{generalGRe.la}, the composition of $\nu_e^{-1}$ with the first argument of $F_e$ should be understood in the formal sense of ``replace every $\nu_e(\la)$ with $\la$ in the definition of $F_e(\la)$ and the $\la^3$ with $\la^3-a_e\la$ in the definition of $\M fej$'', so that the resulting function is continuously defined on $\clos(\CC^\mp)$ for $e\in\mathcal L^\pm$, and $\CC$ for $e\in\mathcal B$.
    The spatial regularity of $u_e$ then implies that this function is analytic on the interior of its domain of definition.
    Equation~\eqref{eqn:generalGRe} follows by a change of variables $\la\mapsto\nu(\la)$.

    Dividing by the exponential factor and applying the inverse Fourier transform to equation~\eqref{generalGRe.la}, we arrive at
    \begin{multline*}
        2\pi u_e(x,t) = \int_{-\infty}^\infty \re^{\ri\la x - \ri(\la^3-a_e\la)t} \widehat U_e(\la) \D\la
        - \chi_{\mathcal B \cup \mathcal L^+}(e) \int_{-\infty}^\infty \re^{\ri\la x - \ri(\la^3-a_e\la)t} F_e(\nu_e^{-1}(\la);0,t) \D\la \\
        + \chi_{\mathcal B \cup \mathcal L^-}(e) \int_{-\infty}^\infty \re^{\ri\la(x-\eta_e) - \ri(\la^3-a_e\la)t} F_e(\nu_e^{-1}(\la);\eta_e,t) \D\la.
    \end{multline*}
    The first integral already matches that in equation~\eqref{eqn:generalEFe}.
    We make finite contour deformations so that the contour of the second integral lies in $\clos(\CC^+)$ and the third lies in $\clos(\CC^-)$ and the new contours lie in the domain of biholomorphicity of $\nu_e^{-1}$.
    The change of variables $\la\mapsto\nu_e(\la)$ in the second and third integrals results in
    \begin{multline} \label{eqn:generalEFe.beforeDeformation}
        2\pi u_e(x,t) = \int_{-\infty}^\infty \re^{\ri\la x - \ri(\la^3-a_e\la)t} \widehat U_e(\la) \D\la
        - \chi_{\mathcal B \cup \mathcal L^+}(e) \int_{\Gamma_-} \re^{\ri\nu_e(\la)x-\ri\la^3t} F_e(\la;0,t)\nu'_e(\la) \D\la \\
        + \chi_{\mathcal B \cup \mathcal L^-}(e) \int_{\Gamma_+} \re^{\ri\nu_e(\la)(x-\eta_e)-\ri\la^3t} F_e(\la;\eta_e,t)\nu'_e(\la) \D\la,
    \end{multline}
    where $\Gamma_\pm$ is the concatenation $(-\infty,-R)\cup\{\re^{\mp\ri(\pi-\theta)}:\theta\in[0,\pi]\}\cup(R,\infty)$; see figure~\ref{fig:Gammapm}.

    Suppose $e$ is a bond or an outgoing lead and consider $\la\to\infty$ in the closed sector with argument ranging from $\frac\pi3$ to $\frac{2\pi}3$.
    Let $0=\tau_0<\tau_1<\ldots<\tau_n=t$ be the partition associated with $\partial_x^j u_e(x,\argdot)\rvert_{[0,t]}$, per definition~\ref{defn:piecewiseACfunctions}.
    Integrating by parts, we see that
    \begin{align*}
        \abs{\re^{-\ri\la^3t}\M fej(\la;x,t)}
        &= \frac1{\abs\la^3} \abs{ \sum_{k=1}^n\left[\re^{-\ri\la^3\tau_k}\partial_x^ju_e(x,\tau_k^-) - \re^{-\ri\la^3\tau_{k-1}}\partial_x^ju_e(x,\tau_{k-1}^+)\right] - \int_0^t \re^{\ri\la^3(s-t)}\partial_t\partial_x^ju_e(x,s) \D s } \\
        &\leq \frac1{\abs\la^3} \left[
            2 n \max_{k\in\{0,1,\ldots,n\}}\abs{\partial_x^ju_e(x,\tau_k^\pm)}
            + \normp{\partial_t\partial_x^ju_e(x,\argdot)}{\Lebesgue^1}
        \right].
    \end{align*}
    Therefore, by lemma~\ref{lem:generalNu}, $\re^{-\ri\la^3t}F_e(\la;0,t)\nu'_e(\la)=\lindecayla$ here, and this decay is uniform in the argument of $\la$ within the sector.
    Hence, by Cauchy's theorem and Jordan's lemma,
    \[
        0 = \int_{\partial E} \re^{\ri\nu_e(\la)x-\ri\la^3t} F_e(\la;0,t)\nu'_e(\la) \D\la.
    \]
    Subtracting this from the second term in equation~\eqref{eqn:generalEFe.beforeDeformation}, we obtain
    \[
        - \int_{\Gamma_-} \re^{\ri\nu_e(\la)x-\ri\la^3t} F_e(\la;0,t)\nu'_e(\la) \D\la
        =
        - \int_{\partial\alpha D \cup \partial\alpha^2 D} \re^{\ri\nu_e(\la)x-\ri\la^3t} F_e(\la;0,t)\nu'_e(\la) \D\la,
    \]
    in which the contour encloses the two sectors with arguments from $0$ to $\frac\pi3$ and from $\frac{2\pi}3$ to $\pi$, except deformed away from $0$.
    Changes of variables $\la\mapsto\alpha\la$, $\la\mapsto\alpha^2\la$ for the two connected components of the contour map both to $\partial D$, whence the second integral of equation~\eqref{eqn:generalEFe}.

    By a similar argument, though applying Jordan's lemma in the two sectors $\alpha E$ and $\alpha^2 E$ shown in figure~\ref{fig:vertexRewritingRealPlus}, rather than in $E$, the contour $\Gamma_-$ may be deformed down to $\partial D$ but traversed in the opposite direction.
    The sign change to arrive at the third integral of equation~\eqref{eqn:generalEFe} arises from the switch in orientation; no change of variables is necessary in this case.
    The final claim on increasing $R$ follows from the analyticity of the integrands.
\end{proof}

We call equation~\eqref{eqn:generalGRe} the \emph{global relation} and equation~\eqref{eqn:generalEFe} the \emph{Ehrenpreis form}.
These are slightly different from the usual equations of the same name in the literature on the unified transform method.
The Ehrenpreis form has a change of variables in the integrals about $\partial D$, from $\la$ to $\nu(\la)$.
This is common in the literature on the unified transform method for interface problems (see, for example,~\cite{DS2020a,ABS2022a}), as it provides a significant algebraic simplification to the D-to-N maps that must be constructed later in the method.
Further, following~\cite{DS2020a}, we have mapped all three contour integrals about $\partial\alpha^kD$ to the single contour $\partial D$.
This is done not because $\partial D$ has any special merit over the others, but because having all on the same contour provides a further algebraic simplification of the D-to-N maps which is relevant only for problems of spatial order at least 3 in which the domain includes a lead.
The global relation differs only by a change of variables $\la\mapsto\nu_e(\la)$, in order to accommodate the corresponding change in the Ehrenpreis form; the usual form of the global relation would be equation~\eqref{generalGRe.la}.

\subsection{Sketch of UTM stage~2} \label{ssec:stage2}

From equation~\eqref{eqn:generalEFe}, it remains to implement stage~1, to determine $F_e$ for a particular set of vertex conditions so that an explicit solution formula may be provided.
In~\S\ref{sec:mismatch}--\ref{sec:sink}, we shall consider several metric graphs with associated classes of vertex conditions and, in each case, produce explicit formulae that may substituted for $F_e$, solving the problem.

To find formulae for $F_e$, we will use the global relations~\eqref{eqn:generalGRe} under appropriate choices of map $\la\mapsto\alpha^k\la$ for integer $k$.
Because $\M fej$ is invariant under this map, this allows us to obtain from the global relations sufficiently many equations in the $\M fej$ that, together with the vertex conditions, formulae for all $\M fej$ may be found.
Since the global relations include the term $\re^{\ri\la^3t} \hat u_e(\nu_e(\la);t)$, the resulting expressions for the $\M fej$ depend also upon this term, so substituting these expressions into the Ehrenpreis form~\eqref{eqn:generalEFe} does not prima facie yield a computable representation of the solution of problem~\eqref{eqn:generalproblem}.
Nevertheless, as we shall show in each case, having been so integrated, those problematic terms contribute $0$ to the solution representation, at least for $R$ chosen sufficiently large.

\subsection{Definitions of contours for UTM stage~3} \label{ssec:definitions}

There are a number of technical arguments necessary for the existence theorems in this work, which require the development of some weight of notation.
As much as possible, the arguments and associated notation are confined to the progression of lemmata in the appendicial~\S\ref{sec:technicalLemmata}.
However, in establishing that those results are applicable to each of the problems we study, it is necessary to use some of this notation in the main body of the work.
That subset of the notation is collected here.

Based on $\partial D$, we define two perturbations:
\begin{align}
    \label{eqn:defn.Gamma}
    \Gamma &= \alpha^2(-\epsilon\ri+(-\infty,-R])\cup\ri[-\epsilon,0]\cup\{R\re^{-\ri k}:k\in[\tfrac\pi3,\tfrac{2\pi}3]\}\cup-\ri[0,\epsilon]\cup\alpha(-\epsilon\ri+[R,\infty)), \\
    \gamma &= \re^{-\ri\left(\tfrac{2\pi}3-\epsilon'\right)}(-\infty,-R] \cup \{R\re^{-\ri \theta}:\theta\in[\tfrac\pi3+\epsilon',\tfrac{2\pi}3-\epsilon']\} \cup \re^{-\ri\left(\tfrac\pi3+\epsilon'\right)}[R,\infty),
\end{align}
where $\epsilon\in(0,1/\sqrt3)$ and $\epsilon'\in(0,\tfrac\pi6)$ are arbitrary and unimportant.
The closed set bounded between $\partial D$ and $\Gamma$, including the circular arc they share, is called $S$.
Similarly, $s$ is the closed set between $\partial D$ and $\gamma$, including their common circular arc.
These sets and contours are displayed in figure~\ref{fig:gammaGamma}.
\begin{figure}
    \centering
    \includegraphics{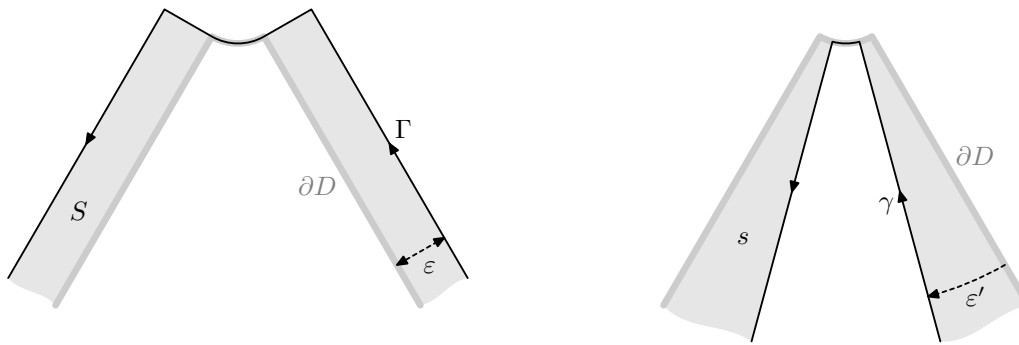}
    \caption{The contours $\Gamma$ and $\gamma$ and the closed sets $S$ and $s$.}
    \label{fig:gammaGamma}
\end{figure}

The utility of the contours $\Gamma$ and $\gamma$ is as follows.
Given an integral over $\partial D$ with integrand $\theta=\phi+\varphi+\psi$, we will deform the $\phi$ part to $\Gamma$ and the $\varphi$ part to $\gamma$ while the remainder $\psi$ is left on $\partial D$.
To make such an argument, the contour must be deformed over $S$ and $s$, and analyticity and decay properties of $\phi$ and $\varphi$ in these sets must be established.
Having done this splitting and deformation, because each of $\phi,\varphi,\psi$ will have been carefully selected to have appropriate decay on their new contour, the resulting integrals converge uniformly, so various derivatives and limits may be evaluated directly.

\section{Mismatch defect} \label{sec:mismatch}

The simplest metric graph among the class we consider is that with $\mathcal B=\emptyset$, so that the metric graph is composed of just two leads, as displayed in figure~\ref{fig:graph-mismatch}.
\begin{figure}
    \centering
    \includegraphics{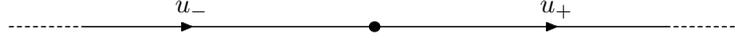}
    \caption{The metric graph domain with a mismatch defect.}
    \label{fig:graph-mismatch}
\end{figure}
We study that case here, labelling the edges with the symbols $-$ and $+$ so that $\mathcal L^-=\{-\}$ and $\mathcal L^+=\{+\}$, so $\Omega_-=(-\infty,0]$ and $\Omega_+=[0,\infty)$.
If $a_+=a_-$ and we were to specify vertex conditions enforcing $\ContinuousSpace^2$ continuity at $x=0$, we would simply have a full line problem for the lKdV equation, which may be solved via the ordinary Fourier transform method.
If instead $a_+=a_-=0$, then the full set of admissible vertex conditions that can specify well posed problems has been investigated previously in~\cite{DSS2016a}.

Guided by these results, we expect it is reasonable to ask for three vertex conditions to be imposed, and pick the simple form that the left and right limits at $x=0$ of $u$ differ by a given scalar factor, and the same holds for each of $u_x$ and $u_{xx}$, albeit with possibly different scalars.
This specifies the following problem with a \emph{mismatch} defect.
\begin{subequations} \label{eqn:mismatchproblem}
\begin{align}
    \label{eqn:mismatchproblem.PDE} \tag{\theparentequation.PDE}
    \partial_t u_\pm(x,t) &= [\partial_x^3+a_\pm\partial_x] u_{\pm}(x,t) & (x,t) &\in \Omega_\pm \times (0,T), \\
    \label{eqn:mismatchproblem.IC} \tag{\theparentequation.IC}
    u_\pm(x,0) &= U_\pm(x) & x &\in\Omega_\pm, \\
    \label{eqn:mismatchproblem.VC} \tag{\theparentequation.VC}
    \partial_x^k u_-(0,t) &= B_k^{-1} \partial_x^k u_+(0,t), & t &\in [0,T], \qquad k\in\{0,1,2\},
\end{align}
\end{subequations}
with coefficients $B_k\in\CC\setminus\{0\}$.

We do not mean to suggest that the vertex conditions for wellposed problems on this metric graph are necessarily of type~\eqref{eqn:mismatchproblem.VC}.
Indeed, it is known that this false~\cite{DSS2016a,MNS2018a}.
Rather, this class of vertex conditions has been selected because it is both restrictive enough that many formulae simplify considerably, and broad enough to demonstrate how wellposedness criteria (such as the requirement $\sum B_k\neq0$ in the below proposition) arise from the method.
We aim with these vertex conditions, as with those in~\S\ref{sec:loop}--\ref{sec:sink}, to balance clarity and generality of exposition, rather than provide a comprehensive theorem.
Vertex conditions~\eqref{eqn:mismatchproblem.VC} are not motivated by particular applications, but they may be seen as a mathematically natural generalization of perfect thermal contact conditions~\cite{CJ1959a} from the heat equation to the lKdV equation.

\begin{prop} \label{prop:mismatchSolution}
    Suppose $u$ is sufficiently smooth and satisfies problem~\eqref{eqn:mismatchproblem}, in which $\sum B_k\neq0$.
    Let
    \begin{equation} \label{eqn:mismatch.A}
        \mathcal A(\la) =
        \begin{pmatrix}
            1 & \M\nu-{2} & \left(a_--\Msup\nu-22\right) \\
            1 & \M\nu-{1} & \left(a_--\Msup\nu-12\right) \\
            B_2 & B_1\M\nu+0 & B_0\left(a_+-\Msup\nu+02\right)
        \end{pmatrix},
        \qquad
        \Delta(\la)=\det\mathcal A(\la),
    \end{equation}
    and define $\delta_k(\la)$ to be the determinant of $\mathcal A(\la)$ but with the $k$\textsuperscript{th} column replaced by
    \begin{equation} \label{eqn:mismatchYForSolution}
        Y=
        \begin{pmatrix}
            -\widehat U_-(\M\nu-2) \\ -\widehat U_-(\M\nu-1) \\ \widehat U_+(\M\nu+0)
        \end{pmatrix}.
    \end{equation}
    Then
    \begin{subequations} \label{eqn:mismatchSolution}
    \begin{align}
    \notag
        2\pi u_-(x,t)
        &= \int_{-\infty}^\infty \re^{\ri\la x - \ri(\la^3-a_-\la)t} \widehat U_-(\la) \D \la \\
    \label{eqn:mismatchSolution.-} \tag{\theparentequation.\(-\)}
        &\hspace{3em} - \int_{\partial D} \re^{\ri\nu_-(\la)x-\ri\la^3t} \left[ \frac{\delta_1(\la) + \nu_-(\la)\delta_2(\la) + (a_--\nu_-(\la)^2)\delta_3(\la)}{\Delta(\la)} \right] \nu'_-(\la) \D\la, \displaybreak[0] \\
    \notag
        2\pi u_+(x,t)
        &= \int_{-\infty}^\infty \re^{\ri\la x - \ri(\la^3-a_+\la)t} \widehat U_+(\la) \D \la \\
    \notag
        &\hspace{3em} - \int_{\partial D} \frac{\re^{-\ri\la^3t}}{\Delta(\la)} \Bigg[
            B_2\delta_1(\la) \left( \alpha \re^{\ri\nu_+(\alpha\la)x} \nu'_+(\alpha\la) + \alpha^2 \re^{\ri\nu_+(\alpha^2\la)x} \nu'_+(\alpha^2\la) \right) \\
    \notag
            &\hspace{6em} + B_1\delta_2(\la) \left( \alpha \nu_+(\alpha\la)\re^{\ri\nu_+(\alpha\la)x} \nu'_+(\alpha\la) + \alpha^2 \nu_+(\alpha^2\la) \re^{\ri\nu_+(\alpha^2\la)x} \nu'_+(\alpha^2\la) \right) \\
    \label{eqn:mismatchSolution.+} \tag{\theparentequation.\(+\)}
            &\hspace{-4em} + B_0\delta_3(\la) \left( \alpha \left(a_+-\nu_+(\alpha\la)^2\right) \re^{\ri\nu_+(\alpha\la)x} \nu'_+(\alpha\la) + \alpha^2 \left(a_+-\nu_+(\alpha^2\la)^2\right) \re^{\ri\nu_+(\alpha^2\la)x} \nu'_+(\alpha^2\la) \right)
        \Bigg] \D\la.
    \end{align}
    \end{subequations}
\end{prop}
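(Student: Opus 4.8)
The plan is to carry out stage~2 for this two-lead graph: I specialize proposition~\ref{prop:UTM}, convert the vertex conditions into linear relations among the boundary transforms, assemble and invert a $3\times3$ system for them, and then show that the terms carrying the unknown $\hat u_\pm$ contribute nothing to the contour integrals. With $\mathcal L^-=\{-\}$, $\mathcal L^+=\{+\}$ and $\mathcal B=\emptyset$, the characteristic functions collapse and the global relations of proposition~\ref{prop:UTM} become $F_-(\la;0,t)=-\widehat U_-(\nu_-(\la))+\re^{\ri\la^3t}\hat u_-(\nu_-(\la);t)$, valid on $\clos(\CC^+)\setminus B(0,R)$, and $F_+(\la;0,t)=\widehat U_+(\nu_+(\la))-\re^{\ri\la^3t}\hat u_+(\nu_+(\la);t)$, valid on $\clos(\CC^-)\setminus B(0,R)$. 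Writing $f_j=\M{f}{-}{j}(\la;0,t)$, these depend on $\la$ only through $\la^3$ and so are invariant under $\la\mapsto\alpha^j\la$; the vertex conditions~\eqref{eqn:mismatchproblem.VC} read $\M{f}{+}{k}=B_k\M{f}{-}{k}$, so $F_+$ is expressed through the same $f_j$.

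To build the system I evaluate, for $\la\in\partial D$, the incoming-lead relation at $\alpha^2\la$ and at $\alpha\la$ and the outgoing-lead relation at $\la$. These evaluations are legitimate because $\alpha D,\alpha^2D\subset\CC^+$ while $\partial D\subset\clos(\CC^-)$, so each argument lies in the appropriate half-plane; invariance of the $f_j$ keeps all three equations in the same three unknowns. Taking $(f_2,\ri f_1,f_0)^\top$ as the unknown vector, the coefficient matrix is precisely $\mathcal A(\la)$ and the inhomogeneity is $Y$ plus a vector $Z$ collecting the three $\re^{\ri\la^3t}\hat u_\pm$ terms. Cramer's rule then gives $f_2=\delta_1/\Delta$, $\ri f_1=\delta_2/\Delta$, $f_0=\delta_3/\Delta$, modulo the corresponding $Z$-contributions. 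Invertibility is exactly where the hypothesis enters: by lemma~\ref{lem:generalNu}, $\nu_e(\la)=\la+\lindecayla$, so a short computation gives the leading behaviour $\Delta(\la)=(\alpha^2-\alpha)\bigl(\textstyle\sum_k B_k\bigr)\la^3+\bigoh{\la^2}$, whence $\sum_k B_k\neq0$ forces $\Delta\neq0$ on $\partial D$ and throughout the deformation region once $R$ is taken large enough.

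Substituting the $Y$-parts of $f_2,\ri f_1,f_0$ into the Ehrenpreis form of the incoming lead collapses $F_-(\la;0,t)$ to $[\delta_1+\nu_-(\la)\delta_2+(a_--\nu_-(\la)^2)\delta_3]/\Delta$, which is~\eqref{eqn:mismatchSolution.-}; using invariance to insert the same quantities into $F_+(\alpha\la;0,t)$ and $F_+(\alpha^2\la;0,t)$ and grouping by $\delta_k$ reproduces~\eqref{eqn:mismatchSolution.+}. It remains to discard $Z$, which is the main obstacle. The key simplification is that the factor $\re^{-\ri\la^3t}$ in each Ehrenpreis integrand cancels the $\re^{\ri\la^3t}$ carried by every component of $Z$, leaving integrands assembled from $\hat u_-(\nu_-(\alpha^2\la);t)$, $\hat u_-(\nu_-(\alpha\la);t)$, $\hat u_+(\nu_+(\la);t)$, divided by $\Delta$ and multiplied by exponentials $\re^{\ri\nu(\argdot)x}$ that are bounded for $x$ in the relevant half-line.

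The contours were chosen precisely so that each of these Fourier transforms is evaluated where it is analytic and decaying: $\hat u_-(\argdot;t)$ extends analytically and boundedly to $\clos(\CC^+)$ and $\hat u_+(\argdot;t)$ to $\clos(\CC^-)$, and for $\la$ in the closed region $S$ bounded by $\partial D$ the images $\nu_-(\alpha^2\la),\nu_-(\alpha\la)$ remain in $\clos(\CC^+)$ while $\nu_+(\la)$ remains in $\clos(\CC^-)$. With $\Delta$ bounded away from $0$ there, I deform the contour off $\partial D$ across $S$ and invoke Cauchy's theorem together with Jordan's lemma to conclude that the $Z$-integrals vanish. I expect this final step to be the genuine difficulty, both because the contour integrals are not uniformly convergent and because the requisite decay must be extracted uniformly across $S$; making it rigorous is exactly the role of the technical lemmata of appendicial~\S\ref{sec:technicalLemmata}, and the freedom to enlarge $R$ granted by proposition~\ref{prop:UTM} lets the radius be chosen as large as those estimates demand.
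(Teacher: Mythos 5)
Your proposal is correct and follows essentially the same route as the paper: the same three evaluations of the global relations at $\alpha^{2}\la$, $\alpha\la$, $\la$, the same Cramer's-rule inversion of $\mathcal A$, the same leading-order computation $\Delta(\la)=-\sqrt3\ri\bigl(\sum_kB_k\bigr)\la^{3}+\bigoh{\la^{2}}$ (your $(\alpha^{2}-\alpha)$ equals $-\sqrt3\ri$), and the same Cauchy--Jordan argument to kill the $\hat u_\pm$ terms after the $\re^{\pm\ri\la^{3}t}$ cancellation. The only quibbles are cosmetic: you reuse the symbol $S$ for the region enclosed by $\partial D$, which the paper reserves for the sliver between $\partial D$ and $\Gamma$, and the paper makes the decay needed for Jordan's lemma explicit via integration by parts (yielding $\delta_1=\bigoh{\la^{2}}$, $\delta_2=\bigoh{\la}$, $\delta_3=\bigoh{1}$ from the $\hat u$ column, hence an $\lindecayla$ integrand) where you assert it and defer to the technical lemmata.
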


It follows immediately from proposition~\ref{prop:mismatchSolution} that the solution is unique.
Indeed, any solutions $(u_-,u_+)$ that exist must be given by explicit formulae~\eqref{eqn:mismatchSolution}.
The following proposition establishes existence of a solution.

\begin{prop} \label{prop:mismatchExistence}
    Assume $U_e\in\AC^4_{\mathrm p}(\Omega_e)$, per definition~\ref{defn:piecewiseACfunctions}.
    For all $x\in\Omega_e$ and $t\geq0$, let $u_e(x,t)$ be defined by equations~\eqref{eqn:loopSoln}.
    Then the functions $u_e$ satisfy problem~\eqref{eqn:mismatchproblem}.
\end{prop}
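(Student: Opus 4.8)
The plan is to verify directly that the explicit contour-integral formulae defining $u_e$ satisfy the PDE, the initial condition, and the three vertex conditions, thereby discharging the existence assumption on which proposition~\ref{prop:mismatchSolution} rested. Since the solution formulae are built from the Ehrenpreis form of proposition~\ref{prop:UTM}, with $F_e$ now expressed via the Cramer's-rule quantities $\delta_k/\Delta$, the first task is to confirm that the algebra of solving the stage~2 linear system was consistent: the matrix $\mathcal A(\la)$ encodes the three vertex conditions~\eqref{eqn:mismatchproblem.VC} together with the two scaled global relations on each lead, and the hypothesis $\sum B_k\neq0$ must be shown to guarantee $\Delta(\la)\neq0$ for $\la$ on and near $\partial D$ (using $\nu_e(\la)=\la+\lindecayla$ from lemma~\ref{lem:generalNu} to compute the leading behaviour of $\Delta$). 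The PDE and initial condition are comparatively routine: the first integral in each formula is the classical Fourier-transform solution of the whole-line problem, and the $\partial D$ integrals contribute terms of the form $\re^{\ri\nu_e(\la)x-\ri\la^3t}$, each of which is annihilated by $\partial_t-[\partial_x^3+a_e\partial_x]$ precisely because $\nu_e(\la)^3-a_e\nu_e(\la)=\la^3$ by the defining relation of $\nu_e$.

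The genuinely delicate part is justifying differentiation under the integral sign and evaluating the boundary limits, since, as the paper emphasizes in the stage~3 discussion, the $\partial D$ integrals are \emph{not} uniformly convergent as they stand. The strategy here is exactly the contour-splitting described in~\S\ref{ssec:definitions}: writing each integrand on $\partial D$ as a sum $\theta=\phi+\varphi+\psi$, I would deform the $\phi$ piece onto $\Gamma$ and the $\varphi$ piece onto $\gamma$, leaving $\psi$ on $\partial D$, after checking the requisite analyticity and decay of $\phi,\varphi$ on the closed sets $S$ and $s$. Each resulting integral is then uniformly convergent, so derivatives $\partial_t$, $\partial_x$, $\partial_x^2$, $\partial_x^3$ and the boundary limits $x\to0^\mp$ may be taken inside. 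This is where the technical lemmata of appendicial~\S\ref{sec:technicalLemmata} do the heavy lifting, and I would invoke them (in their adapted form) to certify the uniform convergence and the legitimacy of the interchanges; the assumption $U_e\in\AC^4_{\mathrm p}(\Omega_e)$ supplies the decay of $\widehat U_e(\nu_e(\la))$ needed to control the $\delta_k(\la)$.

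Verifying the initial condition requires showing the $\partial D$ integrals vanish as $t\to0^+$ while the Fourier integral reproduces $U_e(x)$; the former follows from Jordan's-lemma-type decay estimates on the deformed contours, and the latter is the standard Fourier inversion. Checking the three vertex conditions~\eqref{eqn:mismatchproblem.VC} is the combinatorial heart of the argument: after taking the limits $x\to0^\mp$ and differentiating, one must show that the boundary values assembled from the two formulae satisfy $\partial_x^k u_-(0,t)=B_k^{-1}\partial_x^k u_+(0,t)$. Here the symmetries~\eqref{eqn:generalNu.vietaSymmetries} of lemma~\ref{lem:generalNu} are indispensable: the sums over the cube-root rotations $\alpha^j$ appearing in $u_+$ collapse via these Vieta-type identities, and the cofactor structure of the $\delta_k$ relative to $\Delta$ produces exactly the factors $B_k$ and $B_k^{-1}$ that the vertex conditions demand.

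The main obstacle I anticipate is precisely this last step — demonstrating that the contour integrals, once the $x\to0^\mp$ limits are taken, reassemble into the correct linear combinations enforcing~\eqref{eqn:mismatchproblem.VC}. The difficulty is twofold: first, the boundary limits cannot be taken naively because of the non-uniform convergence, so the splitting onto $\Gamma$, $\gamma$, $\partial D$ must be arranged so that the pieces carrying the non-decaying boundary contributions are isolated and shown to cancel against one another or against the residue-type terms; second, the algebraic cancellation relies on the determinantal identities relating $\delta_k$, $\Delta$, and the Vieta symmetries holding \emph{simultaneously} across all three values $k\in\{0,1,2\}$, so an error in bookkeeping the $\nu_+(\alpha^j\la)$-dependent factors would break the match. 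Managing the interplay between the analytic (uniform-convergence, contour-deformation) and algebraic (Cramer's rule plus~\eqref{eqn:generalNu.vietaSymmetries}) aspects of this verification, rather than either in isolation, is where the real work lies.
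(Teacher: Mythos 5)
Your proposal follows essentially the same route as the paper's proof: direct verification of the PDE via the defining relation $\nu_e(\la)^3-a_e\nu_e(\la)=\la^3$ after the $\phi+\psi$ contour splitting onto $\Gamma$ and $\partial D$ (lemmata~\ref{lem:mismatchConvergenceZetaLeadingOrder}, \ref{lem:ConvergenceRealIntegrals}, \ref{lem:ConvergenceD3Integrals}), vanishing of the $\partial D$ integrals at $t=0$ plus Fourier inversion for the initial condition, and the Cramer's-rule cofactor structure combined with the Vieta symmetries~\eqref{eqn:generalNu.vietaSymmetries} for the vertex conditions, including the correct observation that for $k\geq1$ the leading-order parts must be deformed to parallel contours before differentiating. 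Two small calibration points: the $\varphi$/$\gamma$ piece of the splitting is superfluous for this homogeneous problem (it is only needed for the source and sink defects with inhomogeneous data), and the $t\to0^+$ limit of the real Fourier integral is not merely ``standard Fourier inversion'' but requires the van der Corput argument of lemma~\ref{lem:tLimitMainLemma} to handle the conditionally convergent $\la^{-1}$ leading-order term --- neither affects the correctness of the overall plan.
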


\begin{figure}
    \centering
    \includegraphics[width=0.65\textwidth]{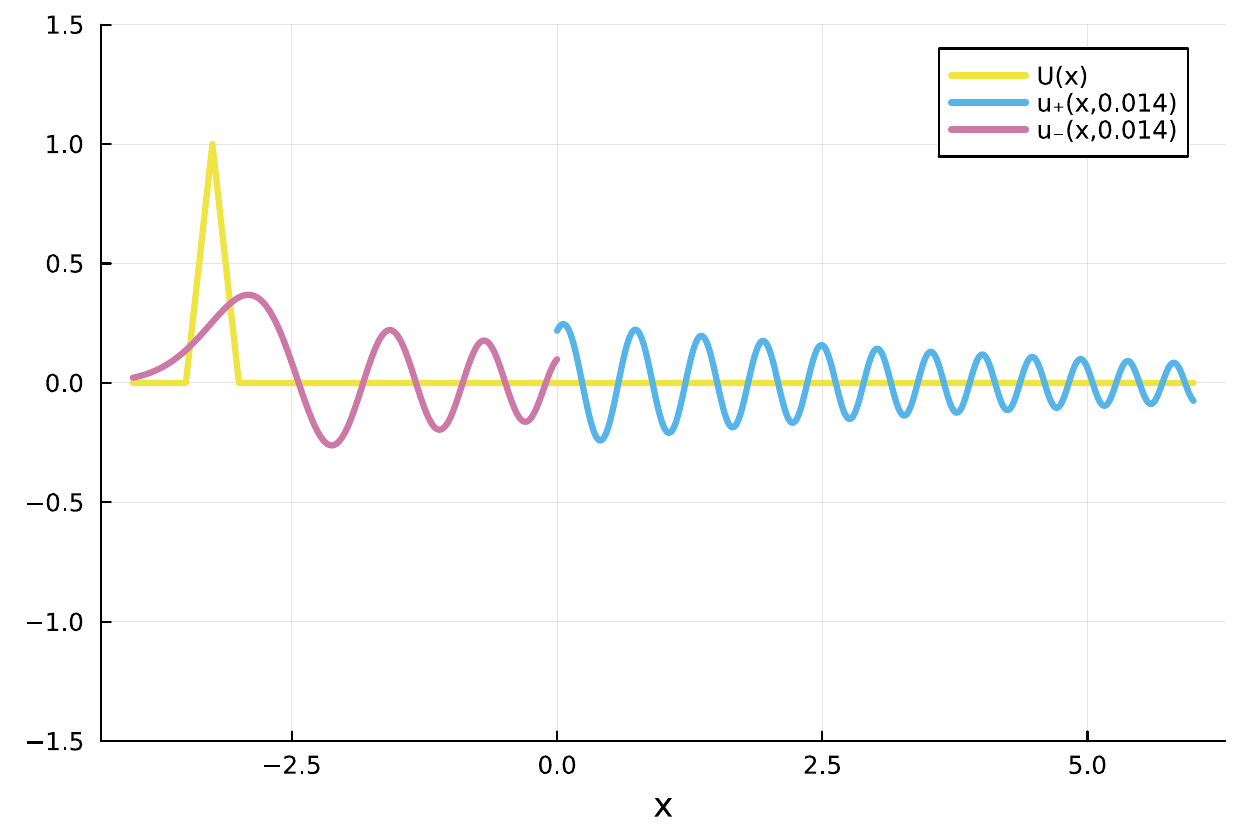}
    \caption{
        The solution of the mismatch problem at time $t=0.014$ with vertex coefficients $(B_0,B_1,B_2)=(2.2,2.0,1.1)$, with initial data shown in yellow.
    }
    \label{fig:plot-mismatch-eg}
\end{figure}

The solution of this problem for a particular initial datum and set of boundary coefficients is displayed in figure~\ref{fig:plot-mismatch-eg}.
The initial shock disperses into a wave of decreasing amplitude and slowly decreasing wavelength as $x$ increases.
The vertex coefficients cause the jump in $u$ and its spatial derivatives at the interface.

\subsection{Solution representation: proof of proposition~\protect\ref{prop:mismatchSolution}} \label{ssec:mismatchUnicity}

Imposing the boundary conditions on the global relations~\eqref{eqn:generalGRe}, we obtain
\begin{subequations} \label{eqn:mismatchGR}
\begin{align}
    \label{eqn:mismatchGR.-} \tag{\theparentequation.\(-\)}
    0 &= \re^{\ri\la^3t} \hat u_-(\M\nu-0;t) - \widehat U_-(\M\nu-0) - \left[ \M f-2 + \ri \M\nu-0 \M f-1 + \left( a_- - \M\nu-0^2 \right) \M f-0 \right] \\
    \label{eqn:mismatchGR.+} \tag{\theparentequation.\(+\)}
    0 &= \re^{\ri\la^3t} \hat u_+(\M\nu+0;t) - \widehat U_+(\M\nu+0) + \left[ B_2\M f-2 + \ri \M\nu+0 B_1 \M f-1 + \left( a_+ - \M\nu+0^2 \right) B_0 \M f-0 \right],
\end{align}
\end{subequations}
where we use the notation
\[
    \M\nu ej = \nu_e(\alpha^j\la),
    \qquad
    \M fek = \M fek(\la;0,t).
\]
The global relation~\eqref{eqn:mismatchGR.-} is valid on $\clos(\CC^+)$ and \eqref{eqn:mismatchGR.+} is valid on $\clos(\CC^-)$.
We wish to solve for $\M f-2,\M f-1,\M f-0$ on $\clos(D)$, so these global relations can be useful if we apply $\la\mapsto\alpha^j\la$ for appropriate choice of $j$.
Specifically, we use equations
\[
    \mbox{\eqref{eqn:mismatchGR.-}}\Big\rvert_{\la\mapsto\alpha^{2}\la},
    \qquad
    \mbox{\eqref{eqn:mismatchGR.-}}\Big\rvert_{\la\mapsto\alpha\la},
    \qquad
    \mbox{\eqref{eqn:mismatchGR.+}}\Big\rvert_{\la\mapsto\la}.
\]
This yields the linear system, for $\la\in\clos(D)$,
\begin{equation} \label{eqn:mismatchLinearSystem}
    \mathcal A(\la) \begin{pmatrix} \M f-2 \\ \ri \M f-1 \\ \M f-0 \end{pmatrix} = Y(\la),
\end{equation}
where
$\mathcal A(\la)$ is defined by equation~\eqref{eqn:mismatch.A}
and
\begin{equation}
    Y(\la) =
    \re^{\ri\la^3t}
    \begin{pmatrix}
        \hat u_-(\M\nu-2;t) \\ \hat u_-(\M\nu-1;t) \\ -\hat u_+(\M\nu+0;t)
    \end{pmatrix}
    -
    \begin{pmatrix}
        \widehat U_-(\M\nu-2) \\ \widehat U_-(\M\nu-1) \\ -\widehat U_+(\M\nu+0)
    \end{pmatrix}.
\end{equation}

Solving system~\eqref{eqn:mismatchLinearSystem} by Cramer's rule, we find that
\[
    \M f-2 = \frac{\delta_1(\la)}{\Delta(\la)}, \qquad
    \M f-1 = \frac{\delta_2(\la)}{\Delta(\la)}, \qquad
    \M f-0 = \frac{\delta_3(\la)}{\Delta(\la)},
\]
where $\Delta=\det\mathcal A$, and $\delta_k$ is the determinant of $\mathcal A$ but with the $k$th column replaced by $Y$.
We aim to substitute these formulae into the Ehrenpreis form~\eqref{eqn:generalEFe} to obtain a solution representation for $u_\pm$, and doing so would yield equations~\eqref{eqn:mismatchSolution}, but with $\delta_k$ as defined in this paragraph instead of as defined in the statement of proposition~\eqref{prop:mismatchSolution}.
However, to obtain an effective solution representation, we have to remove the effects of the first column vector in the definition of $Y$.
Unless this is done, the formulae for $u_\pm(x,t)$ would each depend on both $\hat u_-(\argdot;t)$ and $\hat u_+(\argdot;t)$.

Because of the asymptotic form for $\nu_\pm(\la)$,
\[
    \Delta(\la)=-\sqrt3\ri\la^3(B_0+B_1+B_2)+\bigoh{\la^2} \mbox{ as } \la\to\infty \mbox{ within } \clos(D).
\]
Similarly, using just the $\hat u_\pm(\argdot;t)$ factor of the time dependent term in $Y$ to define the $\delta_k$, it follows from integration by parts that $\delta_1(\la)=\bigoh{\la^2}$, $\delta_2(\la)=\bigoh{\la}$, and $\delta_3(\la)=\bigoh{1}$, as $\la\to\infty$ within $\clos(D)$.
Indeed, for example, because $\M\nu ej = \alpha^j\la + \lindecayla$,
\begin{align*}
    \delta_3(\la)
    &= \hat u_-(\M\nu-2;t) \left( B_1\M\nu+0-B_2\M\nu-1 \right)
    + \hat u_-(\M\nu-1;t) \left( B_2\M\nu-2-B_1\M\nu+0 \right) \\ &\hspace{20em}
    - \hat u_+(\M\nu+0;t) \left( \M\nu-1-\M\nu-2 \right) \\
    &= (B_1-B_2\alpha)\la \int_{-\infty}^0 \re^{-\ri\alpha^2\la x}u_-(x,t) \D x \\ &\hspace{14em}
    + (B_2\alpha^2-B_1)\la \int_{-\infty}^0 \re^{-\ri\alpha\la x}u_-(x,t) \D x \\ &\hspace{14em}
    - (\alpha-\alpha^2)\la \int_0^\infty \re^{-\ri\la x}u_+(x,t) \D x + \lindecayla \\
    &= (B_1-B_2\alpha)\ri\alpha \left[ u_-(0,t) - \int_{-\infty}^0 \re^{-\ri\alpha^2\la x}\partial_xu_-(x,t) \D x \right] \\ &\hspace{8.5em}
    + (B_2\alpha^2-B_1)\ri\alpha^2 \left[ u_-(0,t) - \int_{-\infty}^0 \re^{-\ri\alpha\la x}\partial_xu_-(x,t) \D x \right] \\ &\hspace{8.5em}
    - (\alpha-\alpha^2)\ri \left[ u_+(0,t) - \int_0^\infty \re^{-\ri\la x}\partial_xu_+(x,t) \D x \right] + \lindecayla
\end{align*}
and, for all $\la\in\clos(D)$, both $\Re(-i\alpha^2\la x)\leq0$ and $\Re(-i\alpha\la x)\leq0$ for $x\in(-\infty,0]$, while $\Re(-i\la x)\leq0$ for $x\in[0,\infty)$, so the integrals are bounded in $\la$ and the whole of $\delta_3(\la)=\bigoh1$.
Thus, still with only that part of $Y$ in the definitions of $\delta_k$, and assuming that $R$ is taken sufficiently large that not only are both $\nu_\pm$ biholomorphic but also $\Delta$ has no zeros, Jordan's lemma implies that
\begin{subequations} \label{eqn:mismatchWithoutTimeD3IntegralsZero}
\begin{align}
    \label{eqn:mismatchWithoutTimeD3IntegralsZero.12}
    0 &= \int_{D} \frac{\re^{\ri\nu_+(\alpha^j\la)x}}{\Delta(\la)} \left[ B_2\delta_1(\la) + B_1\delta_2(\la) + \left( a_+ - \nu_+(\alpha^j\la)^2 \right) B_0\delta_3(\la) \right] \D \la, \qquad j=1,2, \\
    \label{eqn:mismatchWithoutTimeD3IntegralsZero.3}
    0 &= \int_{D} \frac{\re^{\ri\nu_-(\la)x}}{\Delta(\la)} \left[ \delta_1(\la) + \delta_2(\la) + \left( a_- - \nu_-(\la)^2 \right) \delta_3(\la) \right] \D \la;
\end{align}
\end{subequations}
the $\hat u(\argdot;t)$ terms make no contribution to the solution representation.
Therefore, redefining $Y$ according to equation~\eqref{eqn:mismatchYForSolution} and $\delta_k$ to be the determinants of $\mathcal A$ but with the $k$\textsuperscript{th} column replaced by this new $Y$, as in the statement of proposition~\ref{prop:mismatchSolution}, we obtain the effective solution representation~\eqref{eqn:mismatchSolution}.

\subsection{Solution satisfies the problem: proof of proposition~\protect\ref{prop:mismatchExistence}} \label{ssec:mismatchExistence}

We begin with a lemma describing the asymptotic behaviour of the integrands for the integrals about $\partial D$ in solution representation~\eqref{eqn:mismatchSolution}.
The corresponding result for the integrands of the real integrals is stated as lemma~\ref{lem:FTLeadingOrder}.
The character of both these results is that, within appropriately chosen sectors, the integrands are decaying.
More than that, the integrands may be decomposed into a sum of a part with $\bigoh{\la^{-5}}$ decay and another part with only $\lindecayla$ decay, but the latter part may be analytically extended a little outside the sector, onto semistrips parallel to the boundaries of the sector, and still maintain this $\lindecayla$ behaviour.
Specifically, in the notation of lemma~\ref{lem:mismatchConvergenceZetaLeadingOrder}, equations~\eqref{eqn:mismatchSolution} can be expressed as
\begin{subequations} \label{eqn:mismatchSolutionRewrite}
\begin{align}
\label{eqn:mismatchSolutionRewrite.-} \tag{\theparentequation.\(-\)}
    2\pi u_-(x,t)
    &= \int_{-\infty}^\infty \re^{\ri\la x - \ri(\la^3-a_-\la)t} \widehat U_-(\la) \D \la
    - \int_{\partial D} \re^{\ri\nu_-(\la)x-\ri\la^3t} \frac{\M\zeta-3(\la)}{\Delta(\la)} \D\la, \\
\notag
    2\pi u_+(x,t)
    &= \int_{-\infty}^\infty \re^{\ri\la x - \ri(\la^3-a_+\la)t} \widehat U_+(\la) \D \la \\
\label{eqn:mismatchSolutionRewrite.+} \tag{\theparentequation.\(+\)}
    &\hspace{3em} - \int_{\partial D} \re^{-\ri\la^3t} \Bigg[
        \alpha \re^{\ri\nu_+(\alpha\la)x} \frac{\M\zeta+1(\la)}{\Delta(\la)}
        + \alpha^2 \re^{\ri\nu_+(\alpha^2\la)x} \frac{\M\zeta+2(\la)}{\Delta(\la)}
    \Bigg] \D\la,
\end{align}
\end{subequations}
and then lemmata~\ref{lem:FTLeadingOrder} and~\ref{lem:mismatchConvergenceZetaLeadingOrder} permit separation of each of $\M\zeta ej(\la)/\Delta(\la)$ and $\widehat U(\la)$ into components $\psi(\la)=\bigoh{\la^{-5}}$ and $\phi=\lindecayla$ and extensible.
This decomposition and analytic continuation of the less quickly decaying part will be crucial for evaluating $x\to0^\pm$ and $t\to0^+$ limits and derivatives of the various integrals in equations~\eqref{eqn:mismatchSolution}, per~\cite{CKS2023a,COT2024a} and~\S\ref{sec:technicalLemmata}.

\begin{lem} \label{lem:mismatchConvergenceZetaLeadingOrder}
    For $U_-$ and $U_+$ as in proposition~\ref{prop:mismatchExistence}, and $\delta_k$ the determinant of $\mathcal A$ with the $k$th column replaced by $Y$ from equation~\eqref{eqn:mismatchYForSolution}, let
    \begin{align*}
        \M\zeta-3(\la) &= \left[\delta_1(\la) + \nu_-(\la)\delta_2(\la) + (a_--\nu_-(\la)^2)\delta_3(\la)\right]\nu'_-(\la), \\
        \M\zeta+1(\la) &= \left[\delta_1(\la) + \nu_+(\alpha\la)\delta_2(\la) + (a_+-\nu_+(\alpha\la)^2)\delta_3(\la)\right]\nu'_+(\alpha\la), \\
        \M\zeta+2(\la) &= \left[\delta_1(\la) + \nu_+(\alpha^2\la)\delta_2(\la) + (a_+-\nu_+(\alpha^2\la)^2)\delta_3(\la)\right]\nu'_+(\alpha^2\la).
    \end{align*}
    Then we can represent
    \[
        \M\zeta ej(\la)/\Delta(\la) = \M\phi ej(\la) + \M\psi ej(\la),
    \]
    such that $\M\psi ej(\la) = \bigoh{\la^{-5}}$ as $\la\to\infty$ along $\partial D$ and $\M\phi ej$ is holomorphically extensible to closed semistrips
    \[
        S = \{\alpha^2(z-\ri y): z>R,\; 0\leq y \leq \epsilon\} \cup \{\alpha(z-\ri y): z<-R,\; 0\leq y \leq \epsilon\},
    \]
    as shown in figure~\ref{fig:gammaGamma}, and obeys
    \(
        \M\phi ej(\la)=\bigoh{\la^{-1}}
    \)
    as $\la\to\infty$ within $S$.
    More precisely, if $X_e$ is the partition for $U_e$, then
    \[
        \M\phi ej(\la) = \frac1\la\sum_{\xi\in X_e} \M ce\xi \, \re^{-\ri\la\xi} + \bigoh{\la^{-2}},
    \]
    for some constants $\M ce\xi\in\CC$.
\end{lem}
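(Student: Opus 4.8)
The plan is to reduce the claim to the large-argument asymptotics of the Fourier transforms $\widehat U_e$, since by Cramer's rule each $\M\zeta ej(\la)/\Delta(\la)$ is a fixed combination, with coefficients rational in the $\nu$'s, of the three transforms $\widehat U_-(\nu_-(\alpha^2\la))$, $\widehat U_-(\nu_-(\alpha\la))$ and $\widehat U_+(\nu_+(\la))$ that enter $Y$ through~\eqref{eqn:mismatchYForSolution}. First I would expand each transform by parts. Because $U_e\in\AC^4_{\mathrm p}(\Omega_e)$ makes $U_e^{(4)}$ piecewise absolutely continuous, hence $U_e^{(5)}\in\Lebesgue^1$, five integrations by parts are available; collecting the jumps of $U_e,\dots,U_e^{(4)}$ at the points $\xi\in X_e$ (including the finite endpoint, the terms at $\mp\infty$ vanishing in the half-plane where $\re^{-\ri\mu x}$ decays) gives
\[
    \widehat U_e(\mu) = \sum_{\xi\in X_e}\sum_{k=1}^{4}\frac{b_{e,\xi,k}}{(\ri\mu)^k}\re^{-\ri\mu\xi} + \bigoh{\mu^{-5}},
\]
valid in the appropriate closed half-plane, with a $\bigoh{\mu^{-5}}$ remainder produced by the fifth integration by parts.

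Next I would substitute $\mu=\nu_e(\alpha^j\la)$ and reassemble. By multilinearity of the determinant, the coefficient of any one transform in $\M\zeta ej$ is a $3\times3$ determinant of the same shape as $\mathcal A(\la)$ in~\eqref{eqn:mismatch.A} but with the row belonging to that edge replaced by the prefactor row $\bigl(1,\nu_e(\alpha^j\la),a_e-\nu_e(\alpha^j\la)^2\bigr)$; using $\nu_e(\alpha^j\la)=\alpha^j\la+\lindecayla$ from lemma~\ref{lem:generalNu}, each such coefficient is $\bigoh{\la^3}$ with an explicit leading term. Dividing by $\Delta(\la)=-\sqrt3\,\ri(B_0+B_1+B_2)\la^3+\bigoh{\la^2}$, which is bounded away from $0$ for $\abs\la>R$ exactly because $\sum B_k\neq0$, and multiplying by $\nu'_e(\alpha^j\la)=1+\bigoh{\la^{-2}}$, shows each resulting prefactor to be $\bigoh1$ and analytic for $\abs\la>R$. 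I would then declare $\M\psi ej$ to be the sum of (prefactor)$\times$(remainder) terms and $\M\phi ej$ the sum of (prefactor)$\times$(explicit exponential) terms.

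Verifying the two properties is then chiefly sign analysis. On $\partial D$ each $\nu_e(\alpha^j\la)$ lies in the half-plane of convergence of the corresponding remainder integral, again by the half-plane mapping in lemma~\ref{lem:generalNu}, so each remainder is $\bigoh{\la^{-5}}$ and, the prefactor being $\bigoh1$, $\M\psi ej(\la)=\bigoh{\la^{-5}}$. For $\M\phi ej$, analyticity on the semistrips $S$ is immediate: the prefactors are analytic for $\abs\la>R$ and each $\re^{-\ri\nu_e(\alpha^j\la)\xi}$ is entire composed with the biholomorphism $\nu_e$. Since $\xi\leq0$ for $e=-$ and $\xi\geq0$ for $e=+$, and $\nu_e$ carries the relevant rotated half-planes into $\CC^\pm$ as in the proof of proposition~\ref{prop:UTM}, the real part of every exponent is $\leq0$ on $\partial D$ and remains so under the small perturbation onto $S$; hence each exponential is bounded there and $\M\phi ej(\la)=\bigoh{\la^{-1}}$. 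Retaining only the $k=1$ terms and replacing $\nu_e(\alpha^j\la)$ by its leading part, the exponent error being $\lindecayla$ and so contributing only $\bigoh{\la^{-2}}$, reduces these to the stated form $\M\phi ej(\la)=\frac1\la\sum_{\xi\in X_e}\M ce\xi\,\re^{-\ri\la\xi}+\bigoh{\la^{-2}}$.

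The main obstacle I expect is this final extensibility estimate: one must check, uniformly across the whole semistrip $S$ and not merely on $\partial D$, that after the perturbation by $\epsilon$ no exponential $\re^{-\ri\nu_e(\alpha^j\la)\xi}$ crosses from decay into growth, and that continuing the $\bigoh1$ prefactors introduces no spurious poles. This forces a careful determination, on each arm of $S$ and for each of the three transforms, of exactly which exponential is merely bounded, and so feeds the $\bigoh{\la^{-1}}$ leading part, and which are exponentially small, with the estimates uniform in $\arg\la$ across the strip. The determinant bookkeeping needed to identify the coefficients $\M ce\xi$, though routine, is also somewhat heavy, because the $\delta_k$ blend the data of both edges.
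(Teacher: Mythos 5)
Your proposal is correct and follows essentially the same route as the paper: the integration-by-parts expansion of $\widehat U_e$ you derive inline is precisely lemma~\ref{lem:FTLeadingOrder} with $N=4$ (which the paper simply cites), and the rest — exploiting the multilinear/cofactor structure of the $\delta_k$, dividing by $\Delta(\la)=-\sqrt3\ri(\sum B_k)\la^3+\bigoh{\la^2}$, and checking boundedness of the exponentials on $\partial D$ and on the semistrips $S$ — matches the paper's (much terser) argument. The one point to phrase carefully is that on $S$ the real parts of the exponents need not stay $\leq0$ but only bounded (by $\epsilon\max\abs\xi$ over the finite partition set), which is exactly how the paper's lemma~\ref{lem:FTLeadingOrder} handles the strip extension.
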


\begin{proof}
    By construction,
    \[
        \delta_3(\la)
        =
        \widehat U_+(\M\nu+0)(\M\nu-1-\M\nu-2)
        + \widehat U_-(\M\nu-1)(B_1\M\nu+0-B_2\M\nu-2)
        - \widehat U_-(\M\nu-2)(B_1\M\nu+0-B_2\M\nu-1).
    \]
    By lemma~\ref{lem:FTLeadingOrder} with $N=4$, $\delta_3(\la)=\phi(\la)+\psi(\la)$, with $\psi(\la)=\bigoh{\la^{-5}}$ as $\la\to\infty$ within $\clos(D)$, and $\phi$ having holomorphic extension to the appropriate semistrips and $\bigoh{1}$ there.
    Multiplication by $(a_\pm-\nu_\pm(\alpha^j\la)^2)/\Delta(\la)$ gives the right decay of $\psi$ and $\phi$.
    Lemma~\ref{lem:generalNu} implies that multiplication by $\Msup\nu\pm j\prime$ does not affect the asymptotic result.
    The same arguments can be made for $\nu_\pm(\alpha^j\la)\delta_2(\la)\Msup\nu\pm j\prime/\Delta(\la)$ and $\delta_1(\la)\Msup\nu\pm j\prime/\Delta(\la)$.
    To obtain the final leading order asymptotic expression, we use the guarantee that $\nu(\la)=\la+\lindecayla$ from lemma~\ref{lem:generalNu} in the above construction with the explicit expression for $\phi(\la)$ given in lemma~\ref{lem:FTLeadingOrder}.
\end{proof}

Na\"\i{}vely differentiating under the integral in $x$ and $t$, the form of the exponential factor immediately guarantees that~\eqref{eqn:generalproblem.PDE} holds.
However, because some of the integrals as expressed in formulae~\eqref{eqn:mismatchSolutionRewrite} are not uniformly convergent, nor even convergent, if the integrand were multiplied by $\la^3$, we must first reexpress the integrals in forms that are amenable to differentiation.
In the following proof, we use lemmata~\ref{lem:FTLeadingOrder} and~\ref{lem:mismatchConvergenceZetaLeadingOrder} to deform the contour of integration for the slowly decaying ``$\phi$'' part of the integrand away from $\RR$ and $\partial D$, so that all of the resulting integrals are uniformly convergent.
Then we can differentiate under the integral in $x$ and $t$ to see that the PDE holds.
The contour deformation and uniform convergence results are contained in lemmata~\ref{lem:ConvergenceRealIntegrals} and~\ref{lem:ConvergenceD3Integrals}, whose criteria are that $\widehat U_\pm$ and $\M\zeta ej/\Delta$ may each be expressed in the $\phi+\psi$ form discussed above.

\begin{proof}[Proof that each $u_e$ satisfies its PDE]
    By lemma~\ref{lem:mismatchConvergenceZetaLeadingOrder} for $\M\zeta-3(\la;\BVec v)/\Delta(\la)$ and the fact that, as $\la\to\infty$ along $\partial D$, $\abs{\re^{\ri\nu_-(\la) x-\ri\la^3t}}\leq\re^{-\ri\abs\la x/2}\to0$ exponentially, the second integral on the right of equation~\eqref{eqn:mismatchSolutionRewrite.-} is absolutely convergent, uniformly in $(x,t)\in(-\infty,x_0]\times[0,\infty)$, for any $x_0<0$.
    Moreover, for any $\kappa$ continuous on $\partial D$ having at most polynomial growth as $\la\to\infty$ along $\partial D$, we may multiply the integrand by $\kappa(\la)$, and still get uniform absolute convergence.
    Hence, in the second integral on the right of equation~\eqref{eqn:mismatchSolutionRewrite.-}, we may differentiate under the integral in both $x$ and $t$ as many times as we desire without sacrificing the stated uniform convergence.
    It follows from the definition of $\nu_-$ that this integral satisfies~\eqref{eqn:generalproblem.PDE}.

    By lemma~\ref{lem:FTLeadingOrder}, we may represent $\widehat U_-$ in a form convenient for analysis with lemma~\ref{lem:ConvergenceRealIntegrals}.
    The latter then implies that we can reexpress the first integral on the right of equation~\eqref{eqn:mismatchSolutionRewrite.-} using equation~\eqref{eqn:ConvergenceRealIntegrals.integral} and that we may differentiate this representation under the integral up to $3$ times in $x$ or up to once in $t$.
    Hence the first integral on the right of equation~\eqref{eqn:mismatchSolutionRewrite.-} also satisfies~\eqref{eqn:generalproblem.PDE}.
    Similarly, the first integral on the right of equation~\eqref{eqn:mismatchSolution.+} satisfies~\eqref{eqn:generalproblem.PDE}.

    Similarly, lemma~\ref{lem:mismatchConvergenceZetaLeadingOrder} implies that the second integral on the right of equation~\eqref{eqn:mismatchSolution.+} can be analysed via lemma~\ref{lem:ConvergenceD3Integrals}, to see that this integral also obeys~\eqref{eqn:generalproblem.PDE}.
\end{proof}

Next we aim to show that the solution formulae~\eqref{eqn:mismatchSolutionRewrite} satisfy~\eqref{eqn:generalproblem.IC}.
The bulk of the argument in the proof below is to justify that the $t\to0^+$ limit is continuous for all the relevant integrals.
The contour deformation lemmata~\ref{lem:ConvergenceRealIntegrals} and~\ref{lem:ConvergenceD3Integrals} that were used above fail here, because they only give uniform convergence of the integrals for $t\in[\tau,\tau']$ for $\tau'>\tau>0$; they do not facilitate a $t\to0^+$ limit.
We appeal instead to lemma~\ref{lem:tLimitMainLemma}, to show continuity of the integrals.

\begin{proof}[Proof that each $u_e$ satisfies its initial condition]
    Evaluating at $t=0$ the expressions on the right of equations~\eqref{eqn:mismatchSolutionRewrite}, the integrals about $\partial D$ are of the form~\eqref{eqn:mismatchWithoutTimeD3IntegralsZero}, so evaluate to $0$.
    Therefore, by the Fourier inversion theorem, equation~(\ref{eqn:mismatchSolutionRewrite}.$\pm$) simplifies to $2\pi u_\pm(x,0) = 2\pi U_\pm(x)$, which is~\eqref{eqn:generalproblem.IC}.
    It remains only to check that substituting $t=0$ is equivalent to taking the $t\to0^+$ limit in each of these integrals.

    The first paragraph of the proof that each $u_e$ satisfies its PDE argues that the last integral in equation~\eqref{eqn:mismatchSolutionRewrite.-} converges uniformly in $t\in[0,\infty)$.
    It follows that it is continuous in the limit $t\to0^+$.

    We express the first integral on the right of each of equations~\eqref{eqn:mismatchSolutionRewrite} as
    \begin{equation} \label{eqn:mismatchExistence.ICproof.Real}
        \left\{ \int_{-\infty}^{-B} + \int_{-B}^{B} + \int_{B}^{\infty} \right\} \re^{\ri\la x - \ri(\la^3-a_-\la)t} \widehat U_\pm(\la) \D\la,
    \end{equation}
    for $B$ as defined in lemma~\ref{lem:tLimitMainLemma}.
    The second integral is continuous in $t$ because the integrand is uniformly bounded in $t$ and it is a finite integral.
    The first integral can be reexpressed as
    \[
        \int_{B}^{\infty} \re^{\ri\la (-x) - \ri(\la^3-a_-\la)(-t)} \widehat U_-(-\la) \D\la.
    \]
    By lemma~\ref{lem:FTLeadingOrder},
    \[
        \widehat U_\pm(\la)
        =
        \frac1{\ri\la} \sum_{\xi\in X_\pm} \re^{-\ri\la\xi} [U_\pm(\xi^+)-U_\pm(\xi^-)] + \psi(\la),
    \]
    where $\psi(\la)=\bigoh{\la^{-2}}$, and $X_\pm$ is the partition set of $U_\pm$.
    Therefore, the first integral of expression~\eqref{eqn:mismatchExistence.ICproof.Real} can be represented as
    \[
        -\ri \sum_{\xi\in X_\pm} [U_\pm(\xi^+)-U_\pm(\xi^-)] \int_B^\infty \re^{\ri\la (-x-\xi) - \ri(\la^3-a_-\la)(-t)} \frac1\la \D\la + \int_B^\infty \re^{\ri\la (-x) - \ri(\la^3-a_-\la)(-t)} \psi(-\la) \D\la.
    \]
    The final integral above is absolutely convergent uniformly in $t\in\RR$, so is continuous in $t$.
    Each of the integrals in the sum is continuous in the limit $t\to0$ by lemma~\ref{lem:tLimitMainLemma}.
    Similarly, the third integral of expression~\eqref{eqn:mismatchExistence.ICproof.Real} is continuous in the limit $t\to0$.

    For the second integral in equation~\eqref{eqn:mismatchSolutionRewrite.+}, the proof mirrors the relevant section of the proof that it satisfies the PDE: some parts are continuous in $t$ because they are absolutely uniformly convergent, and the rest because of the second statement of lemma~\ref{lem:tLimitMainLemma}.
\end{proof}

Before giving the full proof that $u_\pm$ defined by~\eqref{eqn:mismatchSolution} satisfy~\eqref{eqn:mismatchproblem.VC}, we give a proof of the zeroth order vertex condition.
The aim of this is to present the argument in the simplest case, in which the technicality of the analytic considerations may be greatly reduced, so that the algebraic structure can be emphasized.
With this aim in mind, we do not give a detailed justification of the analytic claims in the below argument.
The generalization one might guess to the higher order vertex conditions turns out to be false; the correct generalization is given in the full proof, which follows this one.

\begin{proof}[Proof that the $u_e$ satisfy the zeroth order vertex condition]
    We aim to show that
    \begin{equation} \label{eqn:mismatchVertexSimple.toProve}
        B_0 \lim_{x\to0^-} \left[ 2\pi u_-(x,t) \right] - \lim_{x\to0^+} \left[ 2\pi u_+(x,t) \right]=0,
    \end{equation}
    in which the functions $2\pi u_\pm(x,t)$ are defined according to formulae~\eqref{eqn:mismatchSolution}.
    Using lemmata~\ref{lem:ConvergenceRealIntegrals} and~\ref{lem:ConvergenceD3Integrals} to partially deform, take the limit, then deform back, it can be shown that formulae~\eqref{eqn:mismatchSolution} are continuous in the relevant limits $x\to0^\pm$, so we may evaluate the limits by simply setting $x=0$.

    We change variables $\la\mapsto\nu_\pm(\la)$ in the first integrals of equations~\eqref{eqn:mismatchSolution}, so that the new contours are $\Gamma_\pm$, the concatenation $(-\infty,-R)\cup\{\re^{\mp\ri(\pi-\theta)}:\theta\in[0,\pi]\}\cup(R,\infty)$ depicted in figure~\ref{fig:Gammapm}.
    The arc corresponding to the integrand involving $\widehat U_\pm(\nu_\pm(\la))$ lies in $\CC^\mp$.
    For the region
    \[
        E = \{ \la\in\CC: \abs\la>R,\; \tfrac\pi3<\arg(\la)\tfrac{2\pi}3 \},
    \]
    appearing in figure~\ref{fig:vertexRewritingRealMinus}, because we already set $x=0$, Jordan's lemma may be used to justify that
    \[
        \int_{\partial E} \re^{-\ri\la^3t} \widehat U_-(\nu_-(\la)) \nu'_-(\la) \D\la = 0.
    \]
    After subtracting this zero integral, we have shown that the $x\to0^-$ limit of the first integral on the right of~\eqref{eqn:mismatchSolution.-} is equal to
    \[
        \int_{\partial (\alpha D) \cup \partial (\alpha^2 D)} \re^{-\ri\la^3t} \widehat U_-(\nu_-(\la)) \nu'_-(\la) \D\la.
    \]
    We change variables $\la\mapsto\alpha\la$ and $\la\mapsto\alpha^2\la$ to map each of $\partial (\alpha D)$ and $ \partial (\alpha^2 D)$ to $\partial D$.
    Similarly,
    \[
        \int_{\partial (\alpha E) \cup (\alpha^2 E)} \re^{-\ri\la^3t} \widehat U_+(\nu_+(\la)) \nu'_+(\la) \D\la = 0,
    \]
    so the contour arising from the first integral of equations~\eqref{eqn:mismatchSolution.+} may be deformed to $\partial D$, negating the integrand to ensure the orientation is respected.
    We have shown that the left side of equation~\eqref{eqn:mismatchVertexSimple.toProve} is equal to
    \begin{multline} \label{eqn:mismatchVertexSimple.allD3}
        \int_{\partial D}\re^{-\ri\la^3t} \bigg\{
            B_0 \left[ \widehat U_-(\M\nu-1) \alpha \Msup\nu-1\prime + \widehat U_-(\M\nu-2) \alpha^2 \Msup\nu-2\prime - \frac{\delta_1+\M\nu-0\delta_2 + (a_--\Msup\nu-02)\delta_3}{\Delta}\Msup\nu-0\prime \right]
        \\
            + \bigg[ \widehat U_+(\M\nu+0)\Msup\nu+0\prime + \frac1\Delta\Big( B_2\{\alpha\Msup\nu+1\prime+\alpha^2\Msup\nu+2\prime\}\delta_1 + B_1\{\alpha\M\nu+1\Msup\nu+1\prime+\alpha^2\M\nu+2\Msup\nu+2\prime\}\delta_2
        \\
            + B_0\{\alpha(a_+-\Msup\nu+12)\Msup\nu+1\prime+\alpha^2(a_+-\Msup\nu+22)\Msup\nu+2\prime\}\delta_3 \Big) \bigg]
        \bigg\} \D\la.
    \end{multline}

    Within the second bracket of expression~\eqref{eqn:mismatchVertexSimple.allD3}, we use the identities in lemma~\ref{lem:generalNu} to evaluate the braced quantities.
    Indeed, the first braced quantity evaluates to $-\Msup\nu+0\prime$, the second to $-\M\nu+0\Msup\nu+0\prime$, and the third to $-(a_+-\Msup\nu+02)\Msup\nu+0\prime-3\la^2$.
    Therefore, the second bracket of expression~\eqref{eqn:mismatchVertexSimple.allD3} simplifies to
    \begin{equation} \label{eqn:mismatchVertexSimple.secondBracket}
        \left[ \widehat U_+(\M\nu+0) - \left( B_2 \frac{\delta_1}{\Delta} + B_1\M\nu+0 \frac{\delta_2}{\Delta} + B_0 (a_+-\Msup\nu+02) \frac{\delta_3}{\Delta} \right) \right] \Msup\nu+0\prime
        - 3B_0\la^2\frac{\delta_3(\la)}{\Delta(\la)}.
    \end{equation}
    By definition, for $\la\in\partial D$,
    \begin{equation} \label{eqn:mismatchVertexSimple.matrix}
        \begin{pmatrix}
            1 & \M\nu-{2} & \left(a_--\Msup\nu-22\right) \\
            1 & \M\nu-{1} & \left(a_--\Msup\nu-12\right) \\
            B_2 & B_1\M\nu+0 & B_0\left(a_+-\Msup\nu+02\right)
        \end{pmatrix}
        \begin{pmatrix}
            \delta_1 / \Delta \\ \delta_2 / \Delta \\ \delta_3 / \Delta
        \end{pmatrix}
        =
        \begin{pmatrix}
            -\widehat U_-(\M\nu-2) \\ -\widehat U_-(\M\nu-1) \\ \widehat U_+(\M\nu+0)
        \end{pmatrix},
    \end{equation}
    the third line of which establishes that the bracket in expression~\eqref{eqn:mismatchVertexSimple.secondBracket} evaluates to $0$.

    Multiplying the first line of equation~\eqref{eqn:mismatchVertexSimple.matrix} by $-\alpha^2\Msup\nu-2\prime$ and the second by $-\alpha\Msup\nu-1\prime$ and summing the results, we obtain
    \begin{align*}
        \widehat U_-(\M\nu-1) \alpha \Msup\nu-1\prime + \widehat U_-(\M\nu-2) \alpha^2 \Msup\nu-2\prime
        &=
        - \{\alpha \Msup\nu-1\prime + \alpha^2 \Msup\nu-2\prime\} \frac{\delta_1}\Delta
        - \{\alpha \M\nu-1\Msup\nu-1\prime + \alpha^2 \M\nu-2\Msup\nu-2\prime\} \frac{\delta_2}\Delta \\
        &\hspace{6em} - \{\alpha (a_--\Msup\nu-12)\Msup\nu-1\prime + \alpha^2 (a_--\Msup\nu-22)\Msup\nu-2\prime\} \frac{\delta_3}\Delta \\
        &= \left[ \frac{\delta_1}\Delta + \M\nu-0 \frac{\delta_2}\Delta + (a_--\Msup\nu-02) \frac{\delta_3}\Delta \right] \Msup\nu-0\prime + 3\la^2 \frac{\delta_3(\la)}{\Delta(\la)},
    \end{align*}
    where the second equality again uses the identities from lemma~\ref{lem:generalNu}.
    Substituting this in the first bracket in expression~\eqref{eqn:mismatchVertexSimple.allD3}, we find that that bracket also almost completely cancels.
    Indeed, expression~\eqref{eqn:mismatchVertexSimple.allD3} has now simplified to
    \[
        \int_{\partial D}\re^{-\ri\la^3t} \left\{
            B_0 \left[ 3\la^2 \frac{\delta_3(\la)}{\Delta(\la)} \right]
            + \left[ - 3B_0\la^2\frac{\delta_3(\la)}{\Delta(\la)} \right]
        \right\} \D\la
        =
        \int_{\partial D}\re^{-\ri\la^3t} \left\{ 0 \right\} \D\la
        =
        0.
        \qedhere
    \]
\end{proof}

In the above argument, we first provided an analytic justification that the claim was equivalent to expression~\eqref{eqn:mismatchVertexSimple.allD3} evaluating to $0$, then used an algebraic argument to prove that.
For the first order vertex condition, bearing in mind that $x\to0^\pm$ limits have been taken of functions having the simple $x$ dependence $\exp(\ri\M\nu\pm j x)$, one might hope that expression~\eqref{eqn:mismatchVertexSimple.allD3} may be simply modified by replacing the first $B_0$ with $B_1$ and multiplying terms by $\ri\M\nu\pm j$ as appropriate.
Unfortunately, such an integral would diverge, so the $x\to0^\pm$ limits of the first order boundary values necessarily have a more complicated expression.
Instead one must separate out the leading order terms in the integrands and deform them to separate parallel contours before taking the $x$ derivative, but then the rest of the analytic argument is essentially valid.
We derive instead expression~\eqref{eqn:mismatchVertexFull.allD3Gamma3}, which is similar to~\eqref{eqn:mismatchVertexSimple.allD3}, but with two contour integrals, one on a contour parallel to $\partial D$ but a little outside $D$, whose integrand is the leading order part, and one on $\partial D$ itself, whose integrand is the faster decaying remainder.
Because the integrands are each expressions similar to the integrand in~\eqref{eqn:mismatchVertexSimple.allD3}, we can still use a similar argument to the algebraic argument in the latter part of the above proof, albeit applying the algebraic argument separately to each of the two integrands.

\begin{proof}[Full proof that the $u_e$ satisfy the vertex conditions]
    We aim to show that, for $k\in\{0,1,2\}$.
    \begin{equation} \label{eqn:mismatchVertexFull.toProve}
        (-\ri)^kB_k \lim_{x\to0^-} 2\pi \partial_x^k u_-(x,t) - (-\ri)^k\lim_{x\to0^+} 2\pi \partial_x^k u_+(x,t) = 0.
    \end{equation}

    We change variables in the first integral on the right of~\eqref{eqn:mismatchSolution.-}, to yield
    \[
        \int_{\Gamma_-} \re^{\ri\nu_-(\la)x-\ri\la^3t}\widehat U_-(\nu_-(\la)) \nu'_-(\la) \D\la,
    \]
    where $\Gamma_-$ is the contour that consists of the positively oriented real line except that the part from $-R$ to $R$ is deformed along a semicircular arc in $\CC^+$; see figure~\ref{fig:Gammapm}.
    By lemma~\ref{lem:vertexRewritingLemma.Real-},
    \begin{multline*} 
        (-\ri)^kB_k \lim_{x\to0^-} \partial_x^k \left[\begin{matrix}\text{first term on right}\\\text{of equation~\eqref{eqn:mismatchSolution.-}}\end{matrix}\right]
        \\
        =
        B_k \,\PV\,\left[
            \int_{\partial D} \re^{-\ri\la^3t} \left[ \alpha \Msup\nu-1k\Msup\nu-1\prime \M\psi\RR-(\alpha\la) + \alpha^2 \Msup\nu-2k\Msup\nu-2\prime \M\psi\RR-(\alpha^2\la) \right] \D\la
            \right. \\ \left.
            + \int_{\Gamma} \re^{-\ri\la^3t} \left[ \alpha \Msup\nu-1k\Msup\nu-1\prime \M\phi\RR-(\alpha\la) + \alpha^2 \Msup\nu-2k\Msup\nu-2\prime \M\phi\RR-(\alpha^2\la) \right] \D\la
        \right],
    \end{multline*}
    where
    \[
        \widehat U_-(\nu_-(\la)) = \left[\M\phi\RR-(\la) + \M\psi\RR-(\la)\right]\nu'_-(\la),
    \]
    and $\M\phi\RR-(\la)$ is the leading order term $\phi(\nu_-(\la))$ from lemma~\ref{lem:FTLeadingOrder}, with $N=4$.
    Note that the pair $(\phi,\psi)$ from lemma~\ref{lem:vertexRewritingLemma.Real-} is not exactly $(\M\phi\RR-,\M\psi\RR-)$ but rather $(\M\phi\RR-\Msup\nu-0\prime,\M\psi\RR-\Msup\nu-0\prime)$; this is inconsequential for their asymptotic and analytic properties, because lemma~\ref{lem:generalNu} gurantees that $\Msup\nu-0\prime$ is analytic outside $B(0,R)$ and is $1+\lindecayla$ as $\la\to\infty$ in any direction.
    The mismatch between notations affords some efficiency of presentation later in this proof.

    We rewrite the first integral of~\eqref{eqn:mismatchSolution.+}, as
    \[
        \int_{\Gamma_+} \re^{\ri\nu_+(\la)x-\ri\la^3t}\widehat U_+(\nu_+(\la)) \nu'_+(\la) \D\la,
    \]
    where $\Gamma_+$ is the contour that consists of the positively oriented real line except that the part from $-R$ to $R$ is deformed along a semicircular arc in $\CC^-$, also shown in figure~\ref{fig:Gammapm}.
    Now by lemma~\ref{lem:vertexRewritingLemma.Real+},
    \begin{multline*} 
        -(-\ri)^k\lim_{x\to0^+} \partial_x^k \left[\begin{matrix}\text{first term on right}\\\text{of equation~\eqref{eqn:mismatchSolution.+}}\end{matrix}\right]
        \\
        =
        \PV\,\left[
            \int_{\partial D} \re^{-\ri\la^3t} \Msup\nu+0k\Msup\nu+0\prime \M\psi\RR+(\la) \D\la
            + \int_{\Gamma} \re^{-\ri\la^3t} \Msup\nu+0k\Msup\nu+0\prime \M\phi\RR+(\la) \D\la
        \right],
    \end{multline*}
    where
    \[
        \widehat U_+(\nu_+(\la)) = \left[\M\phi\RR+(\la) + \M\psi\RR+(\la)\right]\nu'_+(\la),
    \]
    and $\M\phi\RR+(\la)$ is the leading order term $\phi(\nu_+(\la))$ from lemma~\ref{lem:FTLeadingOrder}, with $N=4$.

    Using lemma~\ref{lem:vertexRewritingLemma.D3},
    \begin{multline*} 
        (-\ri)^kB_k \lim_{x\to0^-} \partial_x^k \left[\begin{matrix}\text{second term on right}\\\text{of equation~\eqref{eqn:mismatchSolution.-}}\end{matrix}\right]
        - (-\ri)^k\lim_{x\to0^+} \partial_x^k \left[\begin{matrix}\text{second term on right}\\\text{of equation~\eqref{eqn:mismatchSolution.+}}\end{matrix}\right]
        \\
        =
        \PV\,\left[
            \int_{\partial D} \re^{-\ri\la^3t} \left[ \alpha \Msup\nu+1k\Msup\nu+1\prime \M\psi1+(\la) + \alpha^2 \Msup\nu+2k\Msup\nu+2\prime \M\psi2+(\la) - B_k \Msup\nu-0k\Msup\nu-0\prime \M\psi3-(\la) \right] \D\la
            \right. \\ \left.
            + \int_{\Gamma} \re^{-\ri\la^3t} \left[ \alpha \Msup\nu+1k\Msup\nu+1\prime \M\phi1+(\la) + \alpha^2 \Msup\nu+2k\Msup\nu+2\prime \M\phi2+(\la) - B_k \Msup\nu-0k\Msup\nu-0\prime \M\phi3-(\la) \right] \D\la
        \right],
    \end{multline*}
    where $\Gamma$ is defined by equation~\eqref{eqn:defn.Gamma}.
    \begin{subequations} \label{eqn:mismatchVertexFull.defnphipsijpm}
    \begin{align}
        \label{eqn:mismatchVertexFull.defnphipsijpm.firstbit}
        \M\omega3-(\la) &= \Omega_1 + \M\nu-0\Omega_2 + (a_--\Msup\nu-02)\Omega_3,
        \\
        \label{eqn:mismatchVertexFull.defnphipsij+}
        \M\omega j+(\la) &= B_2\Omega_1 + B_1\M\nu+j\Omega_2 + B_0(a_+-\Msup\nu+j2)\Omega_3
        & j &\in \{1,2\},
    \end{align}
    for
    \begin{equation}
        \label{eqn:mismatchVertexFull.matrixPhi}
        \mathcal A
        \begin{pmatrix}
            \Omega_1 \\ \Omega_2 \\ \Omega_3
        \end{pmatrix}
        =
        \begin{pmatrix}
            -\M\omega\RR-(\alpha^2\la) \\ -\M\omega\RR-(\alpha\la) \\ \M\omega\RR+(\la)
        \end{pmatrix},
    \end{equation}
    \end{subequations}
    in which the symbols $\omega,\Omega$ represent $\phi,\Phi$ or $\psi,\Psi$, respectively.
    Indeed, by lemma~\ref{lem:FTLeadingOrder}, $\M\phi\RR\pm=\lindecayla$ as $\la\to\infty$ within
    \[
        \{\la-\ri\epsilon:\la\in\clos(\CC^+)\} \mbox{ and } \{\la+\ri\epsilon:\la\in\clos(\CC^-)\}
    \]
    respectively, which justifies the use of lemmata~\ref{lem:vertexRewritingLemma.Real-} and~\ref{lem:vertexRewritingLemma.Real+} above.
    But these decay results combined with construction~\eqref{eqn:mismatchVertexFull.defnphipsijpm} also imply that $\M\phi j\pm$ and $\M\psi j\pm$ obey the bounds and analyticity criteria for the required applications of lemma~\ref{lem:vertexRewritingLemma.D3}.
    We have shown that the left side of equation~\eqref{eqn:mismatchVertexFull.toProve} evaluates to
    \begin{multline} \label{eqn:mismatchVertexFull.allD3Gamma3}
        \PV\,\left\{ \int_{\partial D} + \int_{\Gamma} \right\} \re^{-\ri\la^3t}
        \bigg[
            B_k \alpha \Msup\nu-1k\Msup\nu-1\prime \M\omega\RR-(\alpha\la)
            + B_k \alpha^2 \Msup\nu-2k\Msup\nu-2\prime \M\omega\RR-(\alpha^2\la)
            + \Msup\nu+0k\Msup\nu+0\prime \M\omega\RR+(\la)
            \\
            + \alpha \Msup\nu+1k\Msup\nu+1\prime \M\omega1+(\la)
            + \alpha^2 \Msup\nu+2k\Msup\nu+2\prime \M\omega2+(\la)
            - B_k \Msup\nu-0k\Msup\nu-0\prime \M\omega3-(\la)
        \bigg]
        \D\la,
    \end{multline}
    in which the symbol $\omega$ refers to $\psi$ for the integral on $\partial D$ and $\phi$ for the integral on $\Gamma$.

    Using construction~\eqref{eqn:mismatchVertexFull.defnphipsijpm}, after some simplification, we find that the bracket in expression~\eqref{eqn:mismatchVertexFull.allD3Gamma3} evaluates to
    \begin{equation*}
        \begin{pmatrix}
            \displaystyle B_2 \sum_{j=0}^2 \alpha^j\Msup\nu+jk\Msup\nu+j\prime
            - B_k \sum_{j=0}^2 \alpha^j\Msup\nu-jk\Msup\nu-j\prime \\
            \displaystyle B_1 \sum_{j=0}^2 \alpha^j\Msup\nu+j{k+1}\Msup\nu+j\prime
            - B_k \sum_{j=0}^2 \alpha^j\Msup\nu-j{k+1}\Msup\nu-j\prime \\
            \displaystyle B_0 \sum_{j=0}^2 \alpha^j\Msup\nu+jk\left(a_+-\Msup\nu+j2\right)\Msup\nu+j\prime
            - B_k \sum_{j=0}^2 \alpha^j\Msup\nu-jk\left(a_--\Msup\nu-j2\right)\Msup\nu-j\prime
        \end{pmatrix}
        \cdot
        \begin{pmatrix}
            \Omega_1 \\ \Omega_2 \\ \Omega_3
        \end{pmatrix}.
    \end{equation*}
    The identities for $\nu_\pm$ given in lemma~\ref{lem:generalNu} imply that every entry in the first vector of the above dot product evalates to $0$.
    Therefore, both integrands in expression~\eqref{eqn:mismatchVertexFull.allD3Gamma3} are zero, and equation~\eqref{eqn:mismatchVertexFull.toProve} holds.
\end{proof}

\section{Loop defect} \label{sec:loop}
Here we study a metric graph in which there is a single bond attached as a loop of length $\eta>0$ to the vertex joining the leads, as displayed in figure~\ref{fig:graph-loop}.
\begin{figure}
    \centering
    \includegraphics{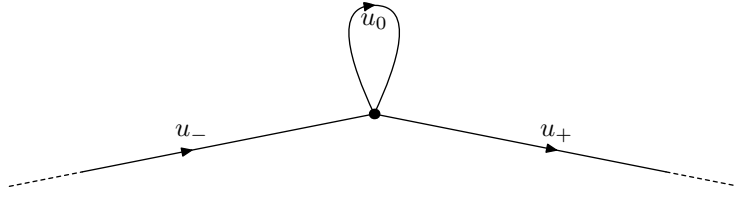}
    \caption{The metric graph domain with a loop defect.}
    \label{fig:graph-loop}
\end{figure}
Expecting that the new bond should require three extra vertex conditions compared with the simpler problem studied in~\S\ref{sec:mismatch}, we impose a total of six vertex conditions.
We select for their algebraic simplicity the vertex conditions which (mis-)match both amplitudes and fluxes:
\begin{equation} \label{eqn:loopVC}
    \partial_x^k u_-(0,t) = b_k^{-1} \partial_x^k u_0(0,t) = \beta_k^{-1} \partial_x^k u_0(\eta,t) = B_k^{-1} \partial_x^k u_+(0,t), \qquad t \in [0,T], \qquad k\in\{0,1\},
\end{equation}
with coefficients $b_k,\beta_k,B_k\in\CC\setminus\{0\}$.

\begin{prop} \label{prop:loopSolution}
    Consider problem~\eqref{eqn:generalproblem} on the metric graph described above, with vertex conditions~\eqref{eqn:loopVC} for coefficients satisfying, for both $j\in\{1,2\}$, $\abs{\beta_1-\beta_0}>\abs{\alpha^j b_1-b_0}$.
    Define, for $e\in\{+,-,0\}$ and $j\in\{0,1,2\}$, $\M\nu ej = \nu_e(\alpha^j\la)$, $E_j = \exp\left(-\ri\eta\M\nu0j\right)$, and
    \begin{equation}  \label{eqn:loopA}
        \mathcal A(\la) =
        \begin{pmatrix}
            1 & 0 & 0 & 0 &  \M\nu-2 &  (a_--\Msup\nu-22) \\
            1 & 0 & 0 & 0 &  \M\nu-1 &  (a_--\Msup\nu-12) \\
            0 & 1 & 0 & 0 & -\M\nu+0B_1 & -(a_+-\Msup\nu+02)B_0 \\
            0 & 0 & 1 & E_2 & \M\nu02\left(E_2\beta_1-b_1\right) & (a_0-\Msup\nu022)\left(E_2\beta_0-b_0\right) \\
            0 & 0 & 1 & E_1 & \M\nu01\left(E_1\beta_1-b_1\right) & (a_0-\Msup\nu012)\left(E_1\beta_0-b_0\right) \\
            0 & 0 & 1 & E_0 & \M\nu00\left(E_0\beta_1-b_1\right) & (a_0-\Msup\nu002)\left(E_0\beta_0-b_0\right)
        \end{pmatrix}.
    \end{equation}
    Let $\Delta=\det\mathcal A$, and $\delta_k$ be the determinant of $\mathcal A$ but with the $k$\textsuperscript{th} column replaced by
    \begin{equation} \label{eqn:loopSolnY}
        -\left(
            \widehat U_-(\M\nu-2), \;
            \widehat U_-(\M\nu-1), \;
            \widehat U_+(\M\nu-0), \;
            \widehat U_0(\M\nu02), \;
            \widehat U_0(\M\nu01), \;
            \widehat U_0(\M\nu00)
        \right).
    \end{equation}
    Suppose $u$ is sufficiently smooth and satisfies this problem.
    Then, for $R$ sufficiently large,
    \begin{subequations} \label{eqn:loopSoln}
    \begin{align}
    \notag
        2\pi u_-(x,t)
        &= \int_{-\infty}^\infty \re^{\ri\la x - \ri(\la^3-a_-\la)t} \widehat U_-(\la) \D \la \\
    \label{eqn:loopSoln.-} \tag{\theparentequation.\(-\)}
        &\hspace{1.2em} - \int_{\partial D} \re^{\ri\nu_-(\la)x-\ri\la^3t} \left[ \frac{\delta_1(\la) + \nu_-(\la)\delta_5(\la) + (a_--\nu_-(\la)^2)\delta_6(\la)}{\Delta(\la)} \right] \nu'_-(\la) \D\la, \displaybreak[0] \\
    \notag
        2\pi u_+(x,t)
        &= \int_{-\infty}^\infty \re^{\ri\la x - \ri(\la^3-a_+\la)t} \widehat U_+(\la) \D \la \\
    \notag
        &\hspace{3em} - \int_{\partial D} \frac{\re^{-\ri\la^3t}}{\Delta(\la)} \Bigg[
            -\delta_2(\la) \left( \alpha \re^{\ri\nu_+(\alpha\la)x} \nu'_+(\alpha\la) + \alpha^2 \re^{\ri\nu_+(\alpha^2\la)x} \nu'_+(\alpha^2\la) \right) \\
    \notag
            &\hspace{6em} + B_1\delta_5(\la) \left( \alpha \nu_+(\alpha\la)\re^{\ri\nu_+(\alpha\la)x} \nu'_+(\alpha\la) + \alpha^2 \nu_+(\alpha^2\la) \re^{\ri\nu_+(\alpha^2\la)x} \nu'_+(\alpha^2\la) \right) \\
    \label{eqn:loopSoln.+} \tag{\theparentequation.\(+\)}
            &\hspace{-4em} + B_0\delta_6(\la) \left( \alpha \left(a_+-\nu_+(\alpha\la)^2\right) \re^{\ri\nu_+(\alpha\la)x} \nu'_+(\alpha\la) + \alpha^2 \left(a_+-\nu_+(\alpha^2\la)^2\right) \re^{\ri\nu_+(\alpha^2\la)x} \nu'_+(\alpha^2\la) \right)
        \Bigg] \D\la, \displaybreak[0] \\
    \notag
        2\pi u_0(x,t)
        &= \int_{-\infty}^\infty \re^{\ri\la x - \ri(\la^3-a_0\la)t} \widehat U_0(\la) \D \la \\
    \notag
        &\hspace{3em} - \int_{\partial D} \frac{\re^{-\ri\la^3t}}{\Delta(\la)} \Bigg[
            -\delta_3(\la) \left( \alpha \re^{\ri\nu_0(\alpha\la)x} \nu'_0(\alpha\la) + \alpha^2 \re^{\ri\nu_0(\alpha^2\la)x} \nu'_0(\alpha^2\la) \right) \\
    \notag
            &\hspace{6em} + b_1\delta_5(\la) \left( \alpha \nu_0(\alpha\la)\re^{\ri\nu_0(\alpha\la)x} \nu'_0(\alpha\la) + \alpha^2 \nu_0(\alpha^2\la) \re^{\ri\nu_0(\alpha^2\la)x} \nu'_0(\alpha^2\la) \right) \\
    \notag
            &\hspace{-2em} + b_0\delta_6(\la) \left( \alpha \left(a_0-\nu_0(\alpha\la)^2\right) \re^{\ri\nu_0(\alpha\la)x} \nu'_0(\alpha\la) + \alpha^2 \left(a_0-\nu_0(\alpha^2\la)^2\right) \re^{\ri\nu_0(\alpha^2\la)x} \nu'_0(\alpha^2\la) \right)
        \Bigg] \D\la \\
    \label{eqn:loopSoln.0} \tag{\theparentequation.\(0\)}
        &\hspace{1.2em} - \int_{\partial D} \re^{\ri\nu_0(\la)(x-\eta)-\ri\la^3t} \left[ \frac{\delta_4(\la) + \nu_0(\la)\beta_1\delta_5(\la) + (a_0-\nu_0(\la)^2)\beta_0\delta_6(\la)}{\Delta(\la)} \right] \nu'_0(\la) \D\la.
    \end{align}
    \end{subequations}
\end{prop}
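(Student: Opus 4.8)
The plan is to mirror the proof of proposition~\ref{prop:mismatchSolution}, carrying the extra bookkeeping forced by the bond. First I would impose the vertex conditions~\eqref{eqn:loopVC} on the three edge global relations~\eqref{eqn:generalGRe}. Since~\eqref{eqn:loopVC} constrains only the $k\in\{0,1\}$ traces, I can eliminate every zeroth- and first-order boundary transform in favour of the two reference transforms $\M f-0$ and $\M f-1$ of the incoming lead (the latter entering as $\ri\M f-1$), whereas the four second-order transforms $\M f-2$, $\M f+2$, $\M f02$, and $\M f02(\la;\eta,t)$ survive as independent unknowns — six in all. To produce six equations I would use that $(\alpha^j\la)^3=\la^3$, so each $\M fek$ is invariant under $\la\mapsto\alpha^j\la$: applying $\la\mapsto\alpha^2\la$ and $\la\mapsto\alpha\la$ to the incoming-lead relation, $\la\mapsto\la$ to the outgoing-lead relation, and — the bond relation being valid on all of $\CC\setminus B(0,R)$ — all three scalings $\la\mapsto\alpha^j\la$ ($j=2,1,0$) to the bond relation. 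With the same sectorwise bookkeeping as in the mismatch case, these assemble into a $6\times6$ system on $\clos(D)$ whose coefficient matrix, after writing $E_j=\exp(-\ri\eta\M\nu0j)$, is precisely $\mathcal A$ of~\eqref{eqn:loopA}; the two endpoint contributions $F_0(\argdot;0,t)$ and $E_jF_0(\argdot;\eta,t)$ of the bond relation are exactly what populate the $E_j$-columns.

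Second, I would solve by Cramer's rule, obtaining $\M f-2=\delta_1/\Delta$, $\M f+2=-\delta_2/\Delta$, $\M f02=-\delta_3/\Delta$, $\M f02(\la;\eta,t)=\delta_4/\Delta$, $\ri\M f-1=\delta_5/\Delta$, $\M f-0=\delta_6/\Delta$ (the minus signs on $\delta_2,\delta_3$ reflecting the corresponding columns of $\mathcal A$), and substitute these into the Ehrenpreis forms~\eqref{eqn:generalEFe} edge by edge. For the two leads this is a direct transcription of the mismatch calculation. The genuinely new ingredient is the bond, whose Ehrenpreis form carries \emph{two} $\partial D$ integrals: the $x=0$ term built from $F_0(\alpha^j\la;0,t)$ over the $\alpha,\alpha^2$ scalings, and the $x=\eta$ term built from $F_0(\la;\eta,t)$ on $\partial D$ itself. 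Inserting the Cramer ratios and regrouping by $\delta_k$ reproduces the two bond integrals of~\eqref{eqn:loopSoln.0}; together with the lead formulae this yields~\eqref{eqn:loopSoln}, but with the $\delta_k$ still built from the full data vector and hence still depending on the unknown traces $\hat u_e(\argdot;t)$.

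Third, as in the mismatch argument, I would show that the $\hat u_e(\argdot;t)$ parts contribute nothing to the contour integrals, so the $\delta_k$ may be redefined through the $\widehat U_e$ alone, as in~\eqref{eqn:loopSolnY}. The integration-by-parts bounds on $\hat u_e(\argdot;t)$ carry over unchanged, and each offending integral is closed, by Cauchy's theorem and Jordan's lemma, into the sector where its exponential factor $\re^{\ri\nu_e(\alpha^j\la)x}$ — or, for the bond's second integral, $\re^{\ri\nu_0(\la)(x-\eta)}$ with $x-\eta\le0$ — decays. For these closures to succeed the Cramer ratios $\delta_k/\Delta$ must remain bounded, and suitably decaying, along $\partial D$ and throughout the closure sectors; this is the one genuinely new difficulty.

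The main obstacle is thus the control of $\Delta$, and of $\delta_k/\Delta$, in the presence of the bond exponentials $E_j$. By lemma~\ref{lem:generalNu}, on the two components of $D$ — the sectors $\arg\la\in(-\tfrac\pi3,0)$ and $\arg\la\in(-\pi,-\tfrac{2\pi}3)$ — exactly one of $E_1,E_2$ is exponentially large while the other two $E_j$ stay bounded, so both $\Delta$ and each $\delta_k$ are dominated by the terms carrying that single large exponential; the ratios $\delta_k/\Delta$ stay bounded precisely when that exponential does not cancel out of $\Delta$. I expect that factoring it from $\Delta$ leaves a leading coefficient for which the hypothesis $\abs{\beta_1-\beta_0}>\abs{\alpha^jb_1-b_0}$, in the instance $j\in\{1,2\}$ matching the dominant sector, is exactly the triangle-inequality statement forbidding that cancellation. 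Establishing this non-vanishing, and the consequent orders of $\delta_k/\Delta$ needed to justify the closures above, is where the real work lies; it is the sole essential departure from the mismatch computation, the remaining order estimates and Jordan's-lemma closures transcribing directly from that case.
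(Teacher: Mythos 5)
Your overall route is the paper's: impose~\eqref{eqn:loopVC} on the three global relations, apply the scalings $\la\mapsto\alpha^2\la,\alpha\la$ to the incoming-lead relation, the identity to the outgoing-lead relation, and all three scalings to the bond relation to get the $6\times6$ system with matrix $\mathcal A$; solve by Cramer's rule (your sign conventions for $\delta_2,\delta_3$ match lemma~\ref{lem:loopLinearSystemSolution}); substitute into the Ehrenpreis form; and remove the $\hat u_e(\argdot;t)$ dependence by Jordan's lemma, which is exactly what lemma~\ref{lem:loopDeformContour} does. You also correctly identify the crux as the control of $\Delta$ and of $\delta_k/\Delta$ on $\clos(D)$, and correctly anticipate that $\abs{\beta_1-\beta_0}>\abs{\alpha^jb_1-b_0}$ is a non-cancellation condition on the leading coefficient.

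The gap is in the geometric picture on which your dominance argument rests. $D=\{\la\in\CC^-:\abs\la>R,\ \Re(\ri\la^3)<0\}$ is the \emph{single} sector $-\tfrac{2\pi}3<\arg\la<-\tfrac\pi3$ (minus a disc); the two sectors you name, $\arg\la\in(-\tfrac\pi3,0)$ and $\arg\la\in(-\pi,-\tfrac{2\pi}3)$, are $\alpha^2E$ and $\alpha E$, which lie \emph{outside} $D$ and are where the Jordan closures for the $\hat u_\pm$ terms of the mismatch problem take place, not where $\Delta$ must be controlled. On the interior of the actual $D$ it is not true that one $E_j$ is large and the rest bounded: there $\abs{E_1},\abs{E_2}\to\infty$ and $\abs{E_0}\to0$, so all three exponentials $\re^{\ri\la\eta},\re^{-\ri\alpha\la\eta},\re^{-\ri\alpha^2\la\eta}$ appearing in $\Delta_{\mathrm{dominant}}$ grow, with $\re^{\ri\la\eta}$ strictly dominant only in the open sector. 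The competition that the hypothesis must forbid occurs on the two \emph{boundary rays} of $D$, where $\abs{\re^{\ri\la\eta}}$ ties with $\abs{\re^{-\ri\alpha\la\eta}}$ (at $\arg\la=-\tfrac\pi3$) or with $\abs{\re^{-\ri\alpha^2\la\eta}}$ (at $\arg\la=-\tfrac{2\pi}3$); the zeros of $\Delta$ accumulate along lines parallel to these rays at a distance fixed by $\log\left(\abs{\beta_1-\beta_0}/\abs{\alpha^jb_1-b_0}\right)$, using $\abs{\alpha b_1-\alpha^2b_0}=\abs{\alpha^2b_1-b_0}$ and $\abs{\alpha^2b_1-\alpha b_0}=\abs{\alpha b_1-b_0}$ to connect the coefficients of $\Delta_{\mathrm{dominant}}$ to the stated hypothesis. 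This is the content of lemma~\ref{lem:loopDeltaDominant}, proved by a Langer-type analysis of the zero locus, which in turn feeds the estimate $\M\zeta ej(\la;\BVec V)/\Delta(\la)=\lindecayla$ of lemma~\ref{lem:loopDeformContour} via the cancellation of the common factor $\re^{\ri\la\eta}$ between $\delta_k$ and $\Delta$. Your conclusion is right, but the dominance argument as you sketch it would not go through on the actual $D$; it needs this two-exponential boundary-ray analysis rather than a one-large-exponential-per-component argument.
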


\begin{prop} \label{prop:loopExistence}
    Assume $U_e\in\AC^4_{\mathrm p}(\Omega_e)$, per definition~\ref{defn:piecewiseACfunctions}.
    For all $x\in\Omega_e$ and $t\geq0$, let $u_e(x,t)$ be defined by equations~\eqref{eqn:loopSoln}.
    Then the functions $u_-,u_+,u_0$ satisfy problem~\eqref{eqn:generalproblem},~\eqref{eqn:loopVC}.
\end{prop}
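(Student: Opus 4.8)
The plan is to follow the three-part structure of~\S\ref{ssec:mismatchExistence}, verifying in turn that the functions defined by~\eqref{eqn:loopSoln} satisfy the PDE~\eqref{eqn:generalproblem.PDE}, the initial condition~\eqref{eqn:generalproblem.IC}, and the vertex conditions~\eqref{eqn:loopVC}. The one structural novelty is the bond $u_0$, whose formula~\eqref{eqn:loopSoln.0} carries \emph{two} families of $\partial D$ integrals: a source-type family with exponential $\re^{\ri\nu_0(\alpha^j\la)x}$, which decays along $\partial D$ because $x\geq0$, and a target-type term with exponential $\re^{\ri\nu_0(\la)(x-\eta)}$, which decays because $x\leq\eta$. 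For $x$ in the open bond both families decay, so that part of the argument parallels the leads exactly; the endpoints are deferred to the vertex-condition step.

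First I would prove the analogue of lemma~\ref{lem:mismatchConvergenceZetaLeadingOrder}: after multiplication by the relevant $\nu'_e$ and power of $\nu_e$, each ratio $\delta_k(\la)/\Delta(\la)$ appearing in~\eqref{eqn:loopSoln} decomposes as $\phi+\psi$, with $\psi=\bigoh{\la^{-5}}$ on $\partial D$ and $\phi=\bigoh{\la^{-1}}$ holomorphically extensible to the semistrips of~\S\ref{ssec:definitions}. The genuinely new computation is the leading-order asymptotics of the $6\times6$ determinants $\Delta$ and $\delta_k$. Because the factors $E_j=\re^{-\ri\eta\nu_0(\alpha^j\la)}$ have competing exponential behaviour along $\partial D$ (for $\la\in D$ the rotated arguments $\alpha\la,\alpha^2\la$ lie in $\CC^+$, so $E_1,E_2$ grow while $E_0$ decays), one must isolate the dominant exponential in each determinant and check that it cancels from every ratio. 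The hypothesis $\abs{\beta_1-\beta_0}>\abs{\alpha^j b_1-b_0}$ is precisely what keeps $\Delta$ bounded away from zero after this normalization, so that each $\delta_k/\Delta$ decays algebraically. Once the decomposition is in hand, lemmata~\ref{lem:ConvergenceRealIntegrals} and~\ref{lem:ConvergenceD3Integrals} license differentiation under the integral, and the exponential factors force~\eqref{eqn:generalproblem.PDE} edge by edge.

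For the initial condition I would set $t=0$ and argue, as in the mismatch case, that every $\partial D$ integral in~\eqref{eqn:loopSoln} vanishes. As in~\eqref{eqn:mismatchWithoutTimeD3IntegralsZero}, each integrand extends analytically into $D$ (the Fourier transforms appearing in $\delta_k$ are evaluated at arguments landing in their half-plane of analyticity, and $\widehat U_0$ is entire) and decays there, since $\re^{\ri\nu_e(\alpha^j\la)x}$ decays for $x\geq0$ and $\re^{\ri\nu_0(\la)(x-\eta)}$ for $x\leq\eta$; Jordan's lemma then closes each contour into $D$ and returns zero. What survives is the Fourier-inversion term reconstructing each $U_e$, and continuity of the $t\to0^+$ limit follows from lemma~\ref{lem:tLimitMainLemma} together with the leading-order expansion of lemma~\ref{lem:FTLeadingOrder}, as before.

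The vertex conditions~\eqref{eqn:loopVC} are the main obstacle. Each chain of equalities must be checked for $k\in\{0,1\}$, which forces evaluation of \emph{four} boundary limits: $x\to0^-$ of $u_-$, $x\to0^+$ of $u_+$ and of $u_0$, and $x\to\eta^-$ of $u_0$. The two bond limits are new: at $x\to0^+$ the target-type integrand acquires the factor $\re^{-\ri\eta\nu_0(\la)}$, while at $x\to\eta^-$ the source-type integrands acquire factors $\re^{\ri\eta\nu_0(\alpha^j\la)}$, and for the derivative condition ($k=1$) the slowly decaying $\phi$ parts of these integrands must be deformed off $\partial D$ before differentiating in $x$. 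This is where the full three-way split $\theta=\phi+\varphi+\psi$ anticipated in~\S\ref{ssec:definitions} is exercised: the source-type slow part is deformed to $\Gamma$ and the target-type slow part to $\gamma$, while the fast remainder $\psi$ is left on $\partial D$. Once each limit is written as a $\PV$ integral over $\partial D$, $\Gamma$, and $\gamma$, the argument reduces, just as in the mismatch proof, to showing that the combined integrand vanishes identically. Assembling the boundary data through the Cramer solution encoded in $\mathcal A$, each component of that integrand collapses to a sum $\sum_{j=0}^2\alpha^j\nu_e(\alpha^j\la)^m(\,\cdots)\nu'_e(\alpha^j\la)$, and the Vieta symmetries~\eqref{eqn:generalNu.vietaSymmetries} of lemma~\ref{lem:generalNu} annihilate every such sum. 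The bookkeeping is heavier than in~\S\ref{sec:mismatch} because the $6\times6$ structure couples the two ends of the bond through the $E_j$, but I expect no identity beyond~\eqref{eqn:generalNu.vietaSymmetries} to be needed.
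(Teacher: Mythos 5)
Your overall architecture is the paper's: a decomposition lemma splitting each $\M\zeta ej/\Delta$ into $\psi=\bigoh{\la^{-5}}$ on $\partial D$ plus $\phi=\bigoh{\la^{-1}}$ extensible to the semistrips, with the hypothesis $\abs{\beta_1-\beta_0}>\abs{\alpha^jb_1-b_0}$ controlling the dominant exponential of the $6\times6$ determinant $\Delta$ (lemmata~\ref{lem:loopDeltaDominant},~\ref{lem:loopDeformContour},~\ref{lem:loopConvergenceZetaLeadingOrder}); differentiation under the integral after deformation for the PDE; Jordan's lemma plus Fourier inversion plus lemma~\ref{lem:tLimitMainLemma} for the initial condition; and the Vieta symmetries~\eqref{eqn:generalNu.vietaSymmetries} annihilating the assembled integrand for the vertex conditions. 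Your organizing observation that at each bond endpoint exactly one of the two families of $\partial D$ integrals retains exponential decay, while the other needs the $\phi+\psi$ split, is also the right one, as is your expectation that no identity beyond~\eqref{eqn:generalNu.vietaSymmetries} is required.

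The step that would fail is your assignment of the ``target-type slow part'' to $\gamma$. The contour $\gamma$ lies inside the sector $D$, where $\Re(-\ri\la^3)>0$, so for every $t>0$ the factor $\re^{-\ri\la^3t}$ grows like $\exp(c\abs{\la}^3t)$ along the rays of $\gamma$; a slow part that is merely $\bigoh{\la^{-1}}$ — and in the loop problem every slow part is of this type, there being no boundary forcing — then yields a divergent integral. The factor $E_0=\re^{-\ri\eta\nu_0(\la)}$ does decay there, but only like $\re^{-c\abs\la}$, which cannot compensate cubic-exponent growth; and the spatial decay of $\re^{\ri\nu_0(\la)(x-\eta)}$ into $D$ that drives the $t=0$ Jordan's-lemma argument is irrelevant to where the slow part may be parked once $t>0$. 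The correct move, per lemma~\ref{lem:vertexRewritingLemma.D3}, is to deform \emph{every} slow part outward to $\Gamma$, where $\re^{-\ri\la^3t}$ decays like $\re^{-2\epsilon z^2t}$; the inner contour $\gamma$ and the three-way split $\phi+\varphi+\psi$ are genuinely needed only in the source and sink problems, where the inhomogeneities $H_k$ contribute components carrying a factor $\re^{\ri\la^3\tau}$ with $\tau\geq t$ that decays inside $D$. With $\Gamma$ substituted for $\gamma$ throughout your vertex-condition step, the remainder of your argument — including the $\re^{-\ri\eta\la}$ cancellation of $\widehat{U}_0$'s growth in $\CC^+$ at the $x\to\eta^-$ limit and the final collapse of every coefficient via~\eqref{eqn:generalNu.vietaSymmetries} — goes through as in the paper.
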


\begin{figure}
    \centering
    \includegraphics[width=0.65\textwidth]{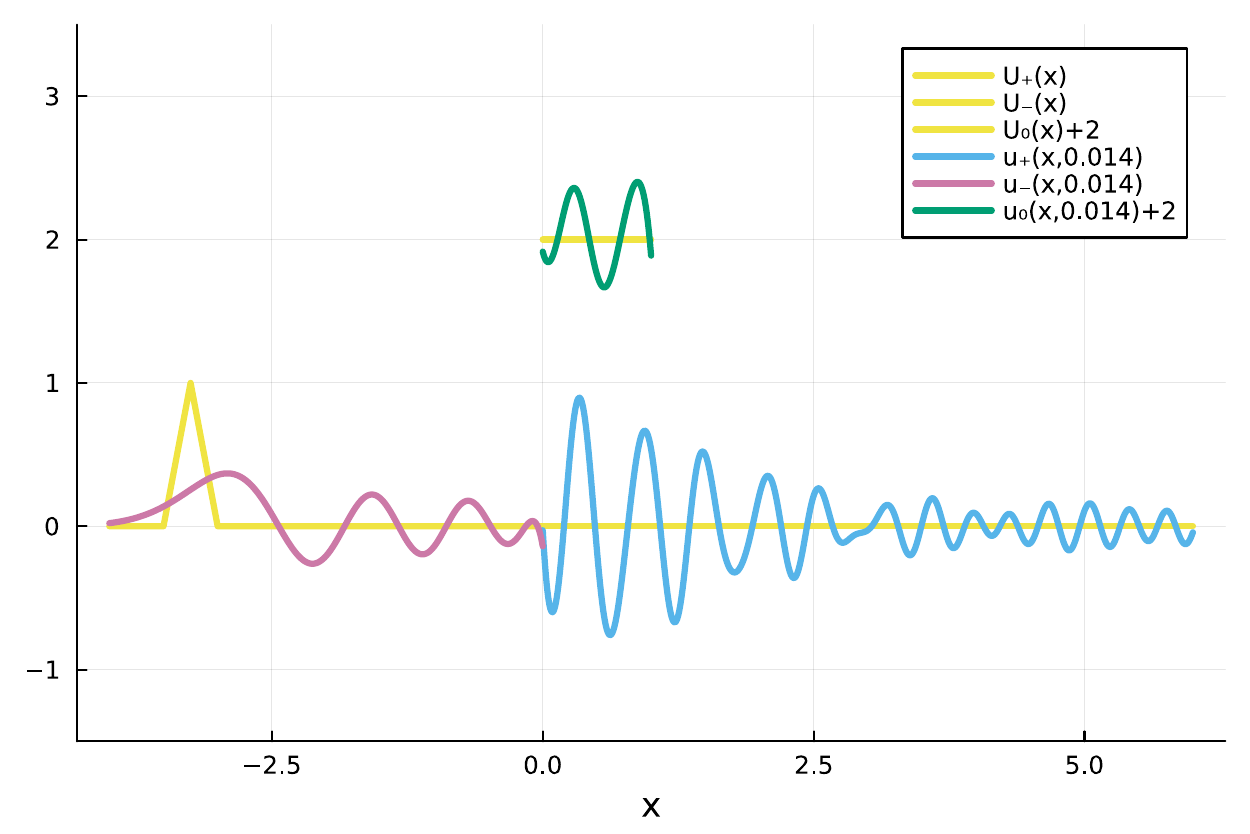}
    \caption{
        The solution of the mismatch problem at time $t=0.014$ with vertex coefficients $(b_0,b_1,\beta_0,\beta_1,B_0,B_1)=(0.6,0.7,0.8,2.1,0.2,3.0)$, with initial data shown in yellow.
        The loop edge of length $\eta=1$ is displayed separately from the leads so that the solution and initial datum are clear on the loop, at the cost of inaccurately representing the graph; c.f.\ figure~\ref{fig:graph-loop}.
    }
    \label{fig:plot-loop-eg}
\end{figure}

For certain parameters and data, the solution of this problem is shown in figure~\ref{fig:plot-loop-eg}.
This plot is not perfectly comparable with that in figure~\ref{fig:plot-mismatch-eg} because the vertex conditions have a different structure, but the systems have in common that a shock wave spreads from initial support on only the incoming lead into the outgoing lead.
A qualitative difference is that the outgoing wave has a more complicated amplitude modulation in figure~\ref{fig:plot-loop-eg}, than that in figure~\ref{fig:plot-mismatch-eg}.
An informal explanation for this phenomenon follows.
In the simpler mismatch graph, the wave may only proceed from the incoming lead to the outgoing lead directly, whereas in the loop problem the wave can be considered to propogate directly across the interface between the leads, but also via one or more traversals of the loop.
Therefore, around a given point $x=x_0>0$ on the outgoing lead, there is a superposition of multiple copies of the wave, having travelled a distance of $x_0+N$ for $N\in\{0,1,2,\ldots\}$ since first encountering the defect, and these waves must interfere in some way.
Returning to figure~\ref{fig:plot-mismatch-eg}, we see that, as $x>0$ increases, the amplitude of the wave decreases but, crucially, the wavelength also gradually decreases.
Therefore, in the superposition of multiple dispersive waves found close to $x_0$, it is not surprising that interference occurs, and this interference may be constructive or destructive at different $x_0$.

\subsection{Solution representation: proof of proposition~\protect\ref{prop:loopSolution}} \label{ssec:loopUnicity}

Applying vertex conditions~\eqref{eqn:loopVC} to the global relations~\eqref{eqn:generalGRe}, we obtain
\begin{subequations} \label{eqn:loopGR}
\begin{align}
\label{eqn:loopGR.-} \tag{\theparentequation.\(-\)}
    \la &\in \clos(\CC^+), &
    0 &= \re^{\ri\la^3t} \hat u_-(\M\nu-0;t) - \widehat U_-(\M\nu-0) - \left[ \M f-2 + \ri \M\nu-0 \M f-1 + \left( a_- - \M\nu-0^2 \right) \M f-0 \right] \\
\label{eqn:loopGR.+} \tag{\theparentequation.\(+\)}
    \la &\in \clos(\CC^-), &
    0 &= \re^{\ri\la^3t} \hat u_+(\M\nu+0;t) - \widehat U_+(\M\nu+0) + \left[ \M f+2 + \ri \M\nu+0 B_1 \M f-1 + \left( a_+ - \M\nu+0^2 \right) B_0 \M f-0 \right], \\
\notag
    \la &\in \CC, &
    0 &= \re^{\ri\la^3t} \hat u_0(\M\nu00;t) - \widehat U_0(\M\nu00) + \left[ g_2 + \ri \M\nu00 b_1 \M f-1 + \left( a_0 - \M\nu00^2 \right) b_0 \M f-0 \right] \\
\label{eqn:loopGR.0} \tag{\theparentequation.\(0\)}
    &&&\hspace{10em} - E_0\left[ G_2 + \ri \M\nu00 \beta_1 \M f-1 + \left( a_0 - \M\nu00^2 \right) \beta_0 \M f-0 \right],
\end{align}
\end{subequations}
in which
\[
    \M f\pm j = \M f\pm j(\la;0,t), \qquad g_2 = \M f02(\la;0,t), \qquad G_2 = \M f02(\la;\eta,t).
\]
For $\la\in\clos(D)$, we use equations
\begin{align*}
    &\mbox{\eqref{eqn:loopGR.-}}\Big\rvert_{\la\mapsto\alpha^{2}\la},
    &
    &\mbox{\eqref{eqn:loopGR.-}}\Big\rvert_{\la\mapsto\alpha\la},
    &
    &\mbox{\eqref{eqn:loopGR.+}}\Big\rvert_{\la\mapsto\la},
    \\
    &\mbox{\eqref{eqn:loopGR.0}}\Big\rvert_{\la\mapsto\alpha^{2}\la},
    &
    &\mbox{\eqref{eqn:loopGR.0}}\Big\rvert_{\la\mapsto\alpha\la},
    &
    &\mbox{\eqref{eqn:loopGR.0}}\Big\rvert_{\la\mapsto\la}.
\end{align*}
This yields the linear system, for $\la\in\clos(D)$,
\begin{equation} \label{eqn:loopLinearSystem}
    \mathcal A(\la)
    \left( \M f-2 \quad -\M f+2 \quad -g_2 \quad G_2 \quad \ri \M f-1 \quad \M f-0 \right)^\top
    =
    Y(\la),
\end{equation}
where $\mathcal A(\la)$ is defined in equation~\eqref{eqn:loopA}
and
\begin{equation} \label{eqn:loopY}
    Y(\la) =
    \re^{\ri\la^3t}
    \begin{pmatrix}
        \hat u_-(\M\nu-2;t) \\ \hat u_-(\M\nu-1;t) \\ \hat u_+(\M\nu+0;t) \\ \hat u_0(\M\nu02;t) \\ \hat u_0(\M\nu01;t) \\ \hat u_0(\M\nu00;t)
    \end{pmatrix}
    -
    \begin{pmatrix}
        \widehat U_-(\M\nu-2) \\ \widehat U_-(\M\nu-1) \\ \widehat U_+(\M\nu+0) \\ \widehat U_0(\M\nu02) \\ \widehat U_0(\M\nu01) \\ \widehat U_0(\M\nu00)
    \end{pmatrix}.
\end{equation}
After employing Cramer's rule, it is a matter of straightforward row and column operations to establish the following lemma on the solution of linear system~\eqref{eqn:loopLinearSystem}.

\begin{lem} \label{lem:loopLinearSystemSolution}
    Linear system~\eqref{eqn:loopLinearSystem} has solution
    \begin{align*}
        \M f-2 &= \delta_1(\la)/\Delta(\la), &
        -\M f+2 &= \delta_2(\la)/\Delta(\la), &
        -g_2 &= \delta_3(\la)/\Delta(\la), \\
        G_2 &= \delta_4(\la)/\Delta(\la), &
        \ri \M f-1 &= \delta_5(\la)/\Delta(\la), &
        \M f-0 &= \delta_6(\la)/\Delta(\la),
    \end{align*}
    where $\Delta=\det\mathcal{A}$ and $\delta_k$ is the determinant of $\mathcal A$ but with the $k$\textsuperscript{th} column replaced by $Y$ (not necessarily given by equation~\eqref{eqn:loopY}).
    Moreover,
    \begin{equation} \label{eqn:loopDelta}
        \Delta(\la) = \det
        \begin{pmatrix}
            1 & E_2 & \left(\Msup\nu-22-\Msup\nu-12\right)\M\nu02\left(E_2\beta_1-b_1\right) - \left(\M\nu-1-\M\nu-2\right)\left(a_0-\Msup\nu022\right)\left(E_2\beta_0-b_0\right) \\
            1 & E_1 & \left(\Msup\nu-22-\Msup\nu-12\right)\M\nu01\left(E_1\beta_1-b_1\right) - \left(\M\nu-1-\M\nu-2\right)\left(a_0-\Msup\nu012\right)\left(E_1\beta_0-b_0\right) \\
            1 & E_0 & \left(\Msup\nu-22-\Msup\nu-12\right)\M\nu00\left(E_0\beta_1-b_1\right) - \left(\M\nu-1-\M\nu-2\right)\left(a_0-\Msup\nu002\right)\left(E_0\beta_0-b_0\right) \\
        \end{pmatrix}
    \end{equation}
    and
    \begin{align*}
        \delta_1(\la) &=
        \det\begin{pmatrix}
            1 & E_2 & \begin{MLME}{30em}\left[ Y_1(a_--\Msup\nu-12) - Y_2(a_--\Msup\nu-22) \right] \M\nu01(E_2\beta_1-b_1) \\[-12pt] - \left[ Y_1\M\nu-1 - Y_2\M\nu-2 \right](a_0-\Msup\nu022)(E_2\beta_0-b_0)\end{MLME} \\
            1 & E_1 & \begin{MLME}{30em}\left[ Y_1(a_--\Msup\nu-12) - Y_2(a_--\Msup\nu-22) \right] \M\nu01(E_1\beta_1-b_1) \\[-12pt] - \left[ Y_1\M\nu-1 - Y_2\M\nu-2 \right](a_0-\Msup\nu012)(E_1\beta_0-b_0)\end{MLME} \\
            1 & E_0 & \begin{MLME}{30em}\left[ Y_1(a_--\Msup\nu-12) - Y_2(a_--\Msup\nu-22) \right] \M\nu00(E_0\beta_1-b_1) \\[-12pt] - \left[ Y_1\M\nu-1 - Y_2\M\nu-2 \right](a_0-\Msup\nu002)(E_0\beta_0-b_0)\end{MLME}
        \end{pmatrix} \\{}
        &\hspace{5em} - \det\begin{pmatrix} Y_4 & 1 & E_2 \\ Y_5 & 1 & E_1 \\ Y_6 & 1 & E_0 \end{pmatrix} \det\begin{pmatrix} \M\nu-2 & (a_--\Msup\nu-22) \\ \M\nu-1 & (a_--\Msup\nu-12) \end{pmatrix}, \displaybreak[0] \\
        \delta_2(\la) &=
        (Y_2-Y_1)\det\begin{pmatrix}
            1 & E_2 & (a_+-\Msup\nu+02)B_0\M\nu02(E_2\beta_1-b_1) - \M\nu+0B_1(a_0-\Msup\nu022)(E_2\beta_0-b_0) \\
            1 & E_1 & (a_+-\Msup\nu+02)B_0\M\nu01(E_1\beta_1-b_1) - \M\nu+0B_1(a_0-\Msup\nu012)(E_1\beta_0-b_0) \\
            1 & E_0 & (a_+-\Msup\nu+02)B_0\M\nu00(E_0\beta_1-b_1) - \M\nu+0B_1(a_0-\Msup\nu002)(E_0\beta_0-b_0)
        \end{pmatrix} \\
        &\hspace{5em} + Y_3\Delta(\la) + \det\begin{pmatrix} Y_4 & 1 & E_2 \\ Y_5 & 1 & E_1 \\ Y_6 & 1 & E_0 \end{pmatrix} \det\begin{pmatrix} 1 & \M\nu-2 & (a_--\Msup\nu-22) \\ 1 & \M\nu-1 & (a_--\Msup\nu-12) \\ 0 & B_1\M\nu+0 & -B_0(a_+-\Msup\nu+02) \end{pmatrix}, \displaybreak[0] \\
        \delta_3(\la) &=
        (Y_1-Y_2)\det\begin{pmatrix}
            E_2 & \M\nu02(E_2\beta_1-b_1) & (a_0-\Msup\nu022)(E_2\beta_0-b_0) \\
            E_1 & \M\nu01(E_1\beta_1-b_1) & (a_0-\Msup\nu012)(E_1\beta_0-b_0) \\
            E_0 & \M\nu00(E_0\beta_1-b_1) & (a_0-\Msup\nu002)(E_0\beta_0-b_0)
        \end{pmatrix} \\
        &\hspace{5em} + \det\begin{pmatrix}
            Y_4 & E_2 & (\Msup\nu-22-\Msup\nu-12)\M\nu02(E_2\beta_1-b_1) - (\M\nu-1-\M\nu-2)(E_2\beta_0-b_0) \\
            Y_5 & E_1 & (\Msup\nu-22-\Msup\nu-12)\M\nu01(E_1\beta_1-b_1) - (\M\nu-1-\M\nu-2)(E_1\beta_0-b_0) \\
            Y_6 & E_0 & (\Msup\nu-22-\Msup\nu-12)\M\nu00(E_0\beta_1-b_1) - (\M\nu-1-\M\nu-2)(E_0\beta_0-b_0)
        \end{pmatrix}, \displaybreak[0] \\
        \delta_4(\la) &=
        (Y_2-Y_1)\det\begin{pmatrix}
            1 & \M\nu02(E_2\beta_1-b_1) & (a_0-\Msup\nu022)(E_2\beta_0-b_0) \\
            1 & \M\nu01(E_1\beta_1-b_1) & (a_0-\Msup\nu012)(E_1\beta_0-b_0) \\
            1 & \M\nu00(E_0\beta_1-b_1) & (a_0-\Msup\nu002)(E_0\beta_0-b_0)
        \end{pmatrix} \\
        &\hspace{5em} - \det\begin{pmatrix}
            Y_4 & 1 & (\Msup\nu-22-\Msup\nu-12)\M\nu02(E_2\beta_1-b_1) - (\M\nu-1-\M\nu-2)(E_2\beta_0-b_0) \\
            Y_5 & 1 & (\Msup\nu-22-\Msup\nu-12)\M\nu01(E_1\beta_1-b_1) - (\M\nu-1-\M\nu-2)(E_1\beta_0-b_0) \\
            Y_6 & 1 & (\Msup\nu-22-\Msup\nu-12)\M\nu00(E_0\beta_1-b_1) - (\M\nu-1-\M\nu-2)(E_0\beta_0-b_0)
        \end{pmatrix}, \displaybreak[0] \\
        \delta_5(\la) &=
        \begin{pmatrix}
            1 & E_2 & (\Msup\nu-22-\Msup\nu-12)Y_4 - (Y_2-Y_1)(a_0-\Msup\nu022)(E_2\beta_0-b_0) \\
            1 & E_1 & (\Msup\nu-22-\Msup\nu-12)Y_5 - (Y_2-Y_1)(a_0-\Msup\nu012)(E_1\beta_0-b_0) \\
            1 & E_0 & (\Msup\nu-22-\Msup\nu-12)Y_6 - (Y_2-Y_1)(a_0-\Msup\nu002)(E_0\beta_0-b_0)
        \end{pmatrix}, \displaybreak[0] \\
        \delta_6(\la) &=
        \begin{pmatrix}
            1 & E_2 & (Y_1-Y_2)\M\nu02(E_2\beta_0-b_0) + (\M\nu-2-\M\nu-1)Y_4 \\
            1 & E_1 & (Y_1-Y_2)\M\nu01(E_1\beta_0-b_0) + (\M\nu-2-\M\nu-1)Y_5 \\
            1 & E_0 & (Y_1-Y_2)\M\nu00(E_0\beta_0-b_0) + (\M\nu-2-\M\nu-1)Y_6
        \end{pmatrix}.
    \end{align*}
\end{lem}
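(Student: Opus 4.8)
The plan is to separate the statement into two independent assertions: that Cramer's rule solves the system, and that the six resulting $6\times6$ determinants collapse to the compact expressions displayed. The first is immediate. Writing the system \eqref{eqn:loopLinearSystem} as $\mathcal A(\la)\,\mathbf f = Y(\la)$ with $\mathbf f = (\M f-2,\,-\M f+2,\,-g_2,\,G_2,\,\ri\M f-1,\,\M f-0)^\top$, wherever $\Delta=\det\mathcal A\neq0$ Cramer's rule identifies the $k$\textsuperscript{th} entry of $\mathbf f$ with $\delta_k/\Delta$, which is exactly the claimed correspondence. The genuine nonvanishing of $\Delta$ for large $\abs\la$ --- where the hypothesis $\abs{\beta_1-\beta_0}>\abs{\alpha^jb_1-b_0}$ enters --- is needed only to render these quotients meaningful and belongs to the asymptotic analysis of $\Delta$; for the determinantal identities themselves the column $Y$ may be arbitrary, as the statement already allows. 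Everything remaining is the evaluation of determinants, driven by the sparsity of the first four columns of $\mathcal A$ in \eqref{eqn:loopA}: column $1$ is supported on rows $1,2$, column $2$ on row $3$ alone, and columns $3,4$ on rows $4,5,6$.

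For $\Delta$, and likewise for $\delta_5$ and $\delta_6$ (where the replaced column is one of the two full columns, so all four sparse columns survive), I would run a short chain of single expansions. First expand along column $2$, whose only nonzero entry is the $1$ in row $3$; this deletes row $3$ and column $2$ with sign $(-1)^{3+2}$, and in passing explains why $Y_3$ never appears in $\delta_5,\delta_6$. In the resulting $5\times5$ determinant column $1$ retains only the two entries from rows $1,2$, so one row operation subtracting one of these rows from the other leaves a single nonzero entry there; expanding along that column produces a $4\times4$ determinant whose first row is supported on its last two entries, namely the differences $\M\nu-1-\M\nu-2$ and $\Msup\nu-22-\Msup\nu-12$ for $\Delta$ (and the analogous pairings of $Y$-differences with $\nu$- or $a-\nu^2$-differences for $\delta_5,\delta_6$). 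Expanding along that first row gives two $3\times3$ minors over rows $4,5,6$, and recombining them by multilinearity in the final column yields precisely \eqref{eqn:loopDelta} and the single-determinant forms of $\delta_5,\delta_6$.

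For $\delta_1,\ldots,\delta_4$ the replaced column is one of the sparse ones, so the row-operation trick on column $1$ is no longer available after the column-$2$ expansion, and the surviving sparse columns are the two loop columns carrying $1$ and $E_j$ on rows $4,5,6$. Here I would perform a genuine Laplace expansion along the column triple consisting of the $Y$-column together with these two loop columns: selecting rows $4,5,6$ for the triple produces a $3\times3$ minor in $Y_4,Y_5,Y_6,E_j$ times the complementary $2\times2$ lead minor in rows $1,2$, which is the product term $\det\begin{pmatrix} Y_4 & 1 & E_2 \\ Y_5 & 1 & E_1 \\ Y_6 & 1 & E_0 \end{pmatrix}\det\begin{pmatrix} \M\nu-2 & (a_--\Msup\nu-22) \\ \M\nu-1 & (a_--\Msup\nu-12) \end{pmatrix}$ visible in $\delta_1$; the remaining row selections, which promote one of rows $1,2$ into the triple, reassemble by multilinearity in the last column into the leading $3\times3$ determinant carrying $Y_1,Y_2$. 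The cases $\delta_2,\delta_3,\delta_4$ run along the identical template, differing only in which of columns $5,6$ (hence which of the coefficient pairs $\beta_1,b_1$ or $\beta_0,b_0$) remains in the surviving block.

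I expect the only real difficulty to be bookkeeping rather than mathematics: each single-column expansion contributes one cofactor sign, each Laplace step contributes a sum of row-index signs, and the combined minors must be matched against the displayed third columns coefficient by coefficient. I would therefore fix one ordering of rows and columns at the outset and carry the signs symbolically, using the fully explicit reductions of $\Delta$ and of $\delta_5$ (the simplest $Y$-case) to calibrate all sign and coefficient conventions before propagating the same template to the remaining $\delta_k$.
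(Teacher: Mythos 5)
Your proposal is correct and follows essentially the same route as the paper, which dispatches this lemma in one line as ``Cramer's rule plus straightforward row and column operations''; your cofactor expansions along the sparse columns, the row subtraction in rows $1,2$, the Laplace expansion over the column triple for $\delta_1,\ldots,\delta_4$, and the final recombination by multilinearity in the last column are precisely the operations that phrase elides. Your plan is simply a fully explicit version of the paper's argument, with the signs checking out (e.g.\ the two $(-1)$ cofactor signs cancel in the reduction of $\Delta$ to \eqref{eqn:loopDelta}).
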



We aim to substitute the formulae afforded by lemma~\ref{lem:loopLinearSystemSolution} into the Ehrenpreis form~\eqref{eqn:generalEFe} via equation~\eqref{eqn:generalDefnF}.
However, doing so does not immediately provide a useful solution representation, because our formulae for $F_e$ would all depend on $\hat u_-(\argdot;t),\hat u_+(\argdot;t),\hat u_0(\argdot;t)$ via formula~\eqref{eqn:loopY} for $Y$.
We shall argue, via Jordan's lemma and Cauchy's theorem, that this dependence may be removed.
This requires an asymptotic analysis of $\Delta,\delta_k$, on which we now embark.

By lemma~\ref{lem:generalNu} and equation~\eqref{eqn:loopDelta}, as $\la\to\infty$ along any ray of constant argument,
\begin{equation} \label{eqn:loopDeltaAsymptotic}
    \Delta(\la) = -3 \la^3 \left( \sum_{j=0}^2 \re^{\ri\alpha^j\la\eta} \left[\alpha^j\beta_1-\alpha^{2j}\beta_0\right] + \sum_{j=0}^2 \re^{-\ri\alpha^j\la\eta} \left[\alpha^jb_1-\alpha^{2j}b_0\right] \right)\left(1+\bigoh{\la^{-1}}\right).
\end{equation}
In particular, as $\la\to\infty$ from within $\clos(D)$, the dominant term is the $j=0$ term in the first sum plus, if $\la$ is approaching $\infty$ asymptotically parallel to the boundary, a term from the second sum.
We improve on this observation in the following lemma, whose proof appears at the end of this section.
The result of this lemma is essentially that the leading order terms in expression~\eqref{eqn:loopDeltaAsymptotic} do not cancel on $\clos(D)$ and that the zeros of $\Delta$ are exterior to $D$.

\begin{lem} \label{lem:loopDeltaDominant}
    Let
    \begin{equation} \label{eqn:loopDeltaDominant}
        \Delta_{\mathrm{dominant}}(\la)
        =
        (\beta_1-\beta_0)\re^{\ri\la\eta}
        + \re^{-\ri\alpha\la\eta}(\alpha b_1-\alpha^2b_0)
        + \re^{-\ri\alpha^2\la\eta}(\alpha^2b_1-\alpha b_0).
    \end{equation}
    For $\sigma>0$, define $S(\sigma)$ to be the set $S$ given by $\epsilon=\sigma$, and
    \[
        D_\sigma
        =
        D\cup S(\sigma),
    \]
    so that $\clos(D)\subsetneq D_\sigma$.
    As $\la\to\infty$ within $D_\sigma$,
    \[
        -\Delta(\la)/3\la^3 = \Delta_{\mathrm{dominant}}(\la) + \bigoh{\la^{-1}\left[\abs{\re^{\ri\la\eta}}+\abs{\re^{-\ri\alpha\la\eta}}+\abs{\re^{-\ri\alpha^2\la\eta}}\right]}.
    \]
    Moreover, provided the criteria on $\beta_j,b_j$ specified in proposition~\ref{prop:loopSolution} hold and $\sigma'>0$ is sufficiently small, there exists a function $\mu$ with $\sup_{\la\in\clos(D)}\abs{\mu(\la)}<1$ such that as $\la\to\infty$ within $D_{\sigma'}$,
    \[
        \Delta(\la) = -\ri\sqrt3\la^3(\beta_1-\beta_0)\left(1+\mu(\la)\right)\re^{\ri\la\eta} + \bigoh{\la^{2}\abs{\re^{\ri\la\eta}}}.
    \]
    Finally, the asymptotic locus of the zeros of $\Delta$ in $D_\sigma$ is
    \begin{alignat}{6}
        \label{eqn:loopDeltaZeros+}
        \la &\sim-\alpha  (x+\ri y)/\eta & &\mbox{ where } & y &= s+q,\; & x &= r+p+2n\pi, & &\mbox{ for } & n &\in\NN, \\
        \label{eqn:loopDeltaZeros-}
        \la &\sim \alpha^2(x+\ri y)/\eta & &\mbox{ where } & -y &= s'+q,\; & x &= -(r'+p)+2n\pi, & &\mbox{ for } & n &\in\NN,
    \end{alignat}
    in which
    \[
        -\re^{-\ri p + q} = \beta_1-\beta_0,
        \qquad
        \re^{\ri r-s} = \alpha b_1-\alpha^2b_0,
        \qquad
        \re^{\ri r'-s'} = \alpha^2 b_1-\alpha b_0.
    \]
\end{lem}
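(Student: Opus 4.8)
The plan is to treat $\Delta$ as a sum of six exponential terms with slowly varying (algebraic) coefficients, identify the three that dominate on $\clos(D)$, and control everything else. I would start from the reduced determinant~\eqref{eqn:loopDelta} and, using the uniform expansion $\nu_e(\alpha^j\la)=\alpha^j\la+\bigoh{\la^{-1}}$ from lemma~\ref{lem:generalNu} (uniform in $\arg\la$), expand each entry to obtain the six-term leading form already recorded in~\eqref{eqn:loopDeltaAsymptotic}, where each term carries a multiplicative $(1+\bigoh{\la^{-1}})$ factor uniform on $D_\sigma$. Writing $\la=r\re^{\ri\theta}$, the modulus of the $j$th term of each sum is governed by $\Re(\pm\ri\alpha^j\la\eta)=\mp r\eta\sin(\theta+\tfrac{2\pi j}3)$; a sign check on the sector $\arg\la\in[-\tfrac{2\pi}3,-\tfrac\pi3]$ shows the three growing exponentials are exactly $\re^{\ri\la\eta}$, $\re^{-\ri\alpha\la\eta}$, $\re^{-\ri\alpha^2\la\eta}$, with coefficients $\beta_1-\beta_0$, $\alpha b_1-\alpha^2 b_0$, $\alpha^2 b_1-\alpha b_0$ respectively. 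These are collected into $\Delta_{\mathrm{dominant}}$.

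For the first asymptotic, I would show that the three recessive exponentials $\re^{-\ri\la\eta},\re^{\ri\alpha\la\eta},\re^{\ri\alpha^2\la\eta}$ are each dominated in modulus by one of the growing ones throughout $\clos(D)$, with an exponential margin $\re^{-cr}$ ($c>0$); hence both these terms and the $\bigoh{\la^{-1}}$ corrections to the dominant terms are $\bigoh{\la^{-1}[\,\abs{\re^{\ri\la\eta}}+\abs{\re^{-\ri\alpha\la\eta}}+\abs{\re^{-\ri\alpha^2\la\eta}}\,]}$. The only care needed is uniformity: the expansion of lemma~\ref{lem:generalNu} is uniform in argument, and the sign inequalities on the exponents persist on the slightly enlarged $D_\sigma$ provided $\sigma$ is small, which is exactly why the semistrips were introduced. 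This gives the first displayed estimate.

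For the factored form and non-vanishing I would divide through by $(\beta_1-\beta_0)\re^{\ri\la\eta}$, so that $\mu(\la)$ is the ratio of the other two dominant terms to this one. Setting $\rho_1=\abs{\re^{-\ri\alpha\la\eta}}/\abs{\re^{\ri\la\eta}}$ and $\rho_2=\abs{\re^{-\ri\alpha^2\la\eta}}/\abs{\re^{\ri\la\eta}}$, the identities $\log\rho_1=r\eta\sin(\theta+\tfrac\pi3)$ and $\log\rho_2=r\eta\sin(\theta-\tfrac\pi3)$ show $\rho_1,\rho_2\leq1$ on $\clos(D)$, with $\rho_1=1$ only on the arm $\arg\la=-\tfrac\pi3$ and $\rho_2=1$ only on the arm $\arg\la=-\tfrac{2\pi}3$, and crucially never both near $1$. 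Rewriting the hypothesis $\abs{\beta_1-\beta_0}>\abs{\alpha^j b_1-b_0}$ ($j=1,2$) as $\abs{\beta_1-\beta_0}>\abs{\alpha b_1-\alpha^2 b_0}$ and $\abs{\beta_1-\beta_0}>\abs{\alpha^2 b_1-\alpha b_0}$, the bound $\abs\mu\leq(\abs{\alpha b_1-\alpha^2 b_0}\rho_1+\abs{\alpha^2 b_1-\alpha b_0}\rho_2)/\abs{\beta_1-\beta_0}$ then gives $\sup_{\clos(D)}\abs\mu<1$ once $R$ is taken large enough to suppress the subdominant ratio on each arm; continuity extends the strict bound to $D_{\sigma'}$ for small $\sigma'$. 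Since $\abs{1+\mu}\geq1-\abs\mu>0$ and the error $\bigoh{\la^2\abs{\re^{\ri\la\eta}}}$ is of lower order than the $\la^3$ leading term, $\Delta$ is nonzero on $\clos(D)$ for large $\abs\la$, placing all its zeros outside $D$.

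Finally, the zero locus is found by balancing dominant pairs. In the strip beyond the arm $\arg\la=-\tfrac\pi3$ the term $\re^{-\ri\alpha^2\la\eta}$ is recessive, so to leading order $\Delta=0$ reduces to $(\beta_1-\beta_0)\re^{\ri\la\eta}+(\alpha b_1-\alpha^2 b_0)\re^{-\ri\alpha\la\eta}=0$; using $1+\alpha=-\alpha^2$, $\alpha^3=1$, the substitution $\la\sim-\alpha(x+\ri y)/\eta$, and the definitions $-\re^{-\ri p+q}=\beta_1-\beta_0$, $\re^{\ri r-s}=\alpha b_1-\alpha^2 b_0$, this collapses to $\re^{\ri x-y}=\re^{\ri(r+p)-(s+q)}$, i.e.~\eqref{eqn:loopDeltaZeros+}; the companion balance at the arm $\arg\la=-\tfrac{2\pi}3$ (now with $\re^{-\ri\alpha\la\eta}$ recessive and $\la\sim\alpha^2(x+\ri y)/\eta$) yields~\eqref{eqn:loopDeltaZeros-} analogously. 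A Rouch\'e argument, comparing $\Delta$ with its two-term dominant part on small circles about each approximate root (justified by the first estimate), upgrades these to the true asymptotic loci. I expect the main obstacle to be the uniform bound $\sup_{\clos(D)}\abs\mu<1$: it is precisely here that the hypothesis on $\beta_j,b_j$ and the geometric fact that $\rho_1,\rho_2$ attain $1$ only on opposite arms must be combined, and where the largeness of $R$ enters.
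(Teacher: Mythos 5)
Your proposal is correct and follows essentially the same route as the paper's proof: expand $\Delta$ via $\nu_e(\alpha^j\la)=\alpha^j\la+\bigoh{\la^{-1}}$ to isolate the three growing exponentials, bound the ratio of the two $b$-terms to the $(\beta_1-\beta_0)\re^{\ri\la\eta}$ term using the hypothesis together with the fact that the competing exponentials peak only on opposite arms of $\partial D$ (your $\rho_1,\rho_2\leq1$ computation is exactly the paper's bound $\re^{-(s+q-\sigma)/\eta}<1$ in different notation), and locate the zeros by the two-term balance near each arm under the substitutions $\la\sim-\alpha(x+\ri y)/\eta$ and $\la\sim\alpha^2(x+\ri y)/\eta$. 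The only substantive difference is that you add a Rouch\'e step to upgrade the approximate roots to the true asymptotic locus, which the paper (following Langer) leaves implicit.
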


\begin{lem} \label{lem:loopDeformContour}
    Suppose $V_-\in\AC_{\mathrm p}^0(-\infty,0]$, $V_+\in\AC_{\mathrm p}^0[0,\infty)$ and $V_0\in\AC_{\mathrm p}^0[0,\eta]$.
    Denote by $\BVec{V}$ the list $(V_-,V_+,V_0)$.
    For $\la\in\clos(D)$, let $\widehat{\BVec{V}}(\la)$ be the vector of Fourier transforms given by
    \begin{multline*}
        \widehat{\BVec{V}}(\la)
        =
        \left(
            \int_{-\infty}^0 \re^{-\ri\nu_-(\alpha^2\la) x} V_-(x) \D x, \quad
            \int_{-\infty}^0 \re^{-\ri\nu_-(\alpha\la) x} V_-(x) \D x, \quad
            \int_0^\infty \re^{-\ri\nu_+(\la) x} V_+(x) \D x, \right. \\ \left.
            \int_0^\eta \re^{-\ri\nu_0(\alpha^2\la) x} V_0(x) \D x, \quad
            \int_0^\eta \re^{-\ri\nu_0(\alpha\la) x} V_0(x) \D x, \quad
            \int_0^\eta \re^{-\ri\nu_0(\la) x} V_0(x) \D x
        \right).
    \end{multline*}
    Define
    \begin{align*}
        \M\zeta-3(\la;\BVec V) &= \left[ \delta_1(\la) + \nu_-(\la)\delta_5(\la) + (a_--\nu_-(\la)^2)\delta_6(\la) \right] \nu'_-(\la), \\
        \M\zeta+1(\la;\BVec V) &= \left[ -\delta_2(\la) + B_1\nu_+(\alpha\la)\delta_5(\la) + B_0(a_+-\nu_+(\alpha\la)^2)\delta_6(\la) \right] \nu'_+(\alpha\la), \\
        \M\zeta+2(\la;\BVec V) &= \left[ -\delta_2(\la) + B_1\nu_+(\alpha^2\la)\delta_5(\la) + B_0(a_+-\nu_+(\alpha^2\la)^2)\delta_6(\la) \right] \nu'_+(\alpha^2\la), \\
        \M\zeta03(\la;\BVec V) &= \left[ \delta_4(\la) + \beta_1\nu_0(\la)\delta_5(\la) + B_0(a_0-\nu_0(\la)^2)\delta_6(\la) \right] \nu'_0(\la), \\
        \M\zeta01(\la;\BVec V) &= \left[ -\delta_3(\la) + b_1\nu_0(\alpha\la)\delta_5(\la) + b_0(a_0-\nu_0(\alpha\la)^2)\delta_6(\la) \right] \nu'_0(\alpha\la), \\
        \M\zeta02(\la;\BVec V) &= \left[ -\delta_3(\la) + b_1\nu_0(\alpha^2\la)\delta_5(\la) + b_0(a_0-\nu_0(\alpha^2\la)^2)\delta_6(\la) \right] \nu'_0(\alpha^2\la),
    \end{align*}
    for $\delta_k$ the determinant of matrix $\mathcal A$ given by equation~\eqref{eqn:loopA}, except with the $k$\textsuperscript{th} column replaced by $\widehat{\BVec{V}}(\la)$.
    Then, as $\la\to\infty$ within $\clos(D)$, $\M\zeta ej(\la;\BVec V)/\Delta(\la)=\lindecayla$ and they are also all continuous functions on $\clos(D)$ and analytic in $D$.
    Moreover,
    \begin{subequations} \label{eqn:loopDeformContour.Integral0}
    \begin{align}
        \label{eqn:loopDeformContour.Integral0.--}
        &\forall\;x<0, & 0 &= \int_{\partial D} \re^{\ri\nu_-(\la)x} \frac{\M\zeta-3(\la;\BVec V)}{\Delta(\la)} \D \la, \\
        \label{eqn:loopDeformContour.Integral0.++}
        &\forall\;x>0, & 0 &= \int_{\partial D} \frac{\alpha\re^{\ri\nu_+(\alpha\la)x} \M\zeta+1(\la;\BVec V) + \alpha^2\re^{\ri\nu_+(\alpha^2\la)x} \M\zeta+2(\la;\BVec V)}{\Delta(\la)} \D \la, \\
        \label{eqn:loopDeformContour.Integral0.0-}
        &\forall\;x<\eta, & 0 &= \int_{\partial D} \re^{\ri\nu_0(\la)(x-\eta)} \frac{\M\zeta03(\la;\BVec V)}{\Delta(\la)} \D \la, \\
        \label{eqn:loopDeformContour.Integral0.0+}
        &\forall\;x>0, & 0 &= \int_{\partial D} \frac{\alpha\re^{\ri\nu_0(\alpha\la)x} \M\zeta01(\la;\BVec V) + \alpha^2\re^{\ri\nu_0(\alpha^2\la)x} \M\zeta02(\la;\BVec V)}{\Delta(\la)} \D \la.
    \end{align}
    \end{subequations}
\end{lem}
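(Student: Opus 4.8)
The plan is to follow the template of the corresponding step for the mismatch problem (equations~\eqref{eqn:mismatchWithoutTimeD3IntegralsZero}), dividing the work into three parts: analyticity together with zero-freeness of $\Delta$, a uniform decay estimate, and the application of Cauchy's theorem with Jordan's lemma. First I would settle the regularity claims. Each component of $\widehat{\BVec V}(\la)$ is a Fourier-type integral against $\re^{-\ri\nu_e(\alpha^j\la)x}$; by lemma~\ref{lem:generalNu}, for $\la\in\clos(D)\subset\clos(\CC^-)$ the spectral arguments lie in the appropriate closed half planes (namely $\nu_-(\la),\nu_0(\la)\in\clos(\CC^-)$, while $\nu_+(\alpha^j\la),\nu_0(\alpha^j\la)\in\clos(\CC^+)$ for $j\in\{1,2\}$, since $\alpha D,\alpha^2D\subset\CC^+$), so each integrand decays on its interval of integration and the integral converges, is bounded, depends continuously on $\la\in\clos(D)$, and is holomorphic on $D$ (the integrals over the bond $[0,\eta]$ being entire in $\la$). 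Because $\Delta$, the $\delta_k$, the $E_j$, and the $\nu_e(\alpha^j\la)$ are analytic off $B(0,R)$, each $\M\zeta ej(\la;\BVec V)$ is then continuous on $\clos(D)$ and analytic on $D$; and by lemma~\ref{lem:loopDeltaDominant} we may fix $R$ so large that $\Delta$ has no zeros on $\clos(D)$, whence each $\M\zeta ej(\la;\BVec V)/\Delta(\la)$ is analytic on $D$ and continuous on $\clos(D)$.

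Next I would prove the decay estimate $\M\zeta ej(\la;\BVec V)/\Delta(\la)=\lindecayla$. The denominator is controlled from below by the second statement of lemma~\ref{lem:loopDeltaDominant}: on $D_{\sigma'}$ one has $\abs{\Delta(\la)}\geq c\,\abs{\la}^3\abs{\re^{\ri\la\eta}}$ for large $\abs\la$, since $\sup_{\clos(D)}\abs\mu<1$ and the dominant exponential on $\clos(D)$ is $\re^{\ri\la\eta}$ (cf.\ expression~\eqref{eqn:loopDeltaAsymptotic}). For the numerators I would use the explicit expansions of lemma~\ref{lem:loopLinearSystemSolution} with $Y$ replaced by $\widehat{\BVec V}$, together with integration by parts: since each $V_e\in\AC_{\mathrm p}^0$, its Fourier transform is $\bigoh{\la^{-1}}$, the leading term being supplied by the endpoint and jump data, uniformly for $\la$ in the relevant half plane. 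Tracking the $E_j$ factors through the determinants shows that every $\delta_k$ carries the same dominant exponential $\re^{\ri\la\eta}$ as $\Delta$ but with one fewer power of $\la$; combined with lemma~\ref{lem:generalNu} (which gives $\Msup\nu ej\prime=1+\lindecayla$ and $\nu_e(\alpha^j\la)=\alpha^j\la+\lindecayla$, so the bracketed combinations contribute at most the expected polynomial order), this yields $\M\zeta ej(\la;\BVec V)=\bigoh{\la^2\abs{\re^{\ri\la\eta}}}$ on $\clos(D)$, and hence the claimed $\lindecayla$ ratio.

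Finally, with decay and analyticity in hand, each of the four identities~\eqref{eqn:loopDeformContour.Integral0} would follow by closing the contour $\partial D$ into the sector $D$. For~\eqref{eqn:loopDeformContour.Integral0.--} and~\eqref{eqn:loopDeformContour.Integral0.0-}, the factors $\re^{\ri\nu_-(\la)x}$ and $\re^{\ri\nu_0(\la)(x-\eta)}$ decay in the interior of $D$ precisely because $\nu_-(\la),\nu_0(\la)\in\CC^-$ there while $x<0$, respectively $x<\eta$; for~\eqref{eqn:loopDeformContour.Integral0.++} and~\eqref{eqn:loopDeformContour.Integral0.0+}, the factors $\re^{\ri\nu_+(\alpha^j\la)x}$ and $\re^{\ri\nu_0(\alpha^j\la)x}$ decay for $x>0$ because $\alpha^j\la\in\CC^+$ for $\la\in D$ and $j\in\{1,2\}$. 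Thus, integrating over $\partial(D\cap B(0,\rho))$ — which by Cauchy's theorem contributes zero, as $\Delta$ is zero-free on $\clos(D)$ — and letting $\rho\to\infty$, the circular-arc contribution vanishes by Jordan's lemma (the $\lindecayla$ algebraic factor together with the interior exponential decay suffices, the marginal directions $\arg\la\in\{-\tfrac\pi3,-\tfrac{2\pi}3\}$ being absorbed by the usual Jordan sine estimate). The strictness of the inequalities $x<0$, $x>0$, $x<\eta$ is exactly what makes these arc contributions decay.

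The main obstacle I anticipate is the decay estimate of the second paragraph: after substituting $\widehat{\BVec V}$ for $Y$, one must verify both that no cancellation lowers the order of $\Delta$ below $\la^3\re^{\ri\la\eta}$ on $\clos(D)$ and that each numerator $\M\zeta ej(\la;\BVec V)$ is genuinely one order of $\la$ smaller while carrying the same dominant exponential. This is the loop-graph analogue of the explicit $\delta_3$ computation performed for the mismatch problem, but the bookkeeping is heavier because of the $6\times6$ determinant and the three loop exponentials $E_0,E_1,E_2$; the algebraic identities of lemma~\ref{lem:generalNu} and the dominance analysis of lemma~\ref{lem:loopDeltaDominant} are the tools I would rely on to keep it tractable.
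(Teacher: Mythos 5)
Your proposal is correct and follows essentially the same route as the paper: half-plane membership of the $\nu_e(\alpha^j\la)$ for regularity, the lower bound on $\Delta$ from lemma~\ref{lem:loopDeltaDominant} together with tracking of the dominant exponential $\re^{\ri\la\eta}$ through the $\delta_k$ of lemma~\ref{lem:loopLinearSystemSolution} for the $\lindecayla$ ratio, and Cauchy plus Jordan to close $\partial D$ into $D$. One small slip: the bond transforms $\widehat{\BVec V}_4,\widehat{\BVec V}_5$ are \emph{not} bounded on $\clos(D)$ (they grow like $\abs{\re^{-\ri\alpha^2\la\eta}}$ and $\abs{\re^{-\ri\alpha\la\eta}}$ since $\nu_0(\alpha^j\la)\in\clos(\CC^+)$ there), but this does not affect your argument because your second paragraph correctly accounts for exactly this growth when balancing the $\delta_k$ against $\Delta$.
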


By construction, setting $\BVec v=(u_-(\argdot,t),u_+(\argdot,t),u_0(\argdot,t))$ and $\BVec w=(-U_-,-U_+,-U_0)$, we obtain from equations~(\ref{eqn:loopLinearSystem}--\ref{eqn:loopY}) that, for all $\la\in\clos(D)$,
\begin{align*}
    F_-(        \la;0,t) &= \re^{\ri\la^3t}\frac{\M\zeta-3(\la;\BVec v)}{\Delta(\la)} + \frac{\M\zeta-3(\la;\BVec w)}{\Delta(\la)}, \\
    F_+(\alpha^2\la;0,t) &= \re^{\ri\la^3t}\frac{\M\zeta+2(\la;\BVec v)}{\Delta(\la)} + \frac{\M\zeta+2(\la;\BVec w)}{\Delta(\la)}, \\
    F_+(\alpha  \la;0,t) &= \re^{\ri\la^3t}\frac{\M\zeta+1(\la;\BVec v)}{\Delta(\la)} + \frac{\M\zeta+1(\la;\BVec w)}{\Delta(\la)}, \\
    F_0(\alpha^j\la;0,t) &= \re^{\ri\la^3t}\frac{\M\zeta0j(\la;\BVec v)}{\Delta(\la)} + \frac{\M\zeta0j(\la;\BVec w)}{\Delta(\la)}, \qquad j\in\{1,2,3\}.
\end{align*}
We substitute these formulae into the Ehrenpreis form~\eqref{eqn:generalEFe} to obtain
\begin{multline} \label{eqn:loopAlmostSolution}
    2\pi u_-(x,t)
    = \int_{-\infty}^\infty \re^{\ri\la x - \ri(\la^3-a_-\la)t} \widehat U_-(\la) \D \la \\
    - \int_{\partial D} \re^{\ri\nu_-(\la)x} \left[ \frac{\M\zeta-3(\la;\BVec v)}{\Delta(\la)} \right] \D\la
    - \int_{\partial D} \re^{\ri\nu_-(\la)x-\ri\la^3t} \left[ \frac{\M\zeta-3(\la;\BVec w)}{\Delta(\la)} \right] \D\la,
\end{multline}
and similarly for the other $u_e$.
Note that the $\re^{-\ri\la^3t}$ factor in each integrand of the Ehrenpreis form has canceled with the $\re^{\ri\la^3t}$ multiplying the $\BVec v$ dependent parts of the above formulae.
By lemma~\ref{lem:loopDeformContour}, the $\BVec v$ dependent integral in equation~\eqref{eqn:loopAlmostSolution} evaluates to $0$, and such terms also vanish from the other $u_e$.
In this way, equation~\eqref{eqn:loopAlmostSolution} reduces to equation~\eqref{eqn:loopSoln.-}, and similarly for the other $u_e$.
This completes the proof of proposition~\ref{prop:loopSolution}.

\begin{rmk}
    It may be possible to weaken the strict inequality $\abs{\beta_1-\beta_0}>\abs{\alpha^j b_1-b_0}$ to permit equality, resulting in zeros of $\Delta$ lying asymptotically on $\partial D$, and still solve this problem, but this would necessitate contour deformation along a series of finite semicircular indentations away from such zeros.
    Similar arguments have been made for finite interval problems in, for example,~\cite{Fok2008a}.
    However, such an approach requires an analysis of the locus of the zeros of $\Delta$ more precise than that presented here.
\end{rmk}

\begin{proof}[Proof of lemma~\ref{lem:loopDeltaDominant}]
    The first claim follows from equation~\eqref{eqn:loopDelta} and the asymptotic form of $\nu_e(\la)=\la+\lindecayla$ established in lemma~\ref{lem:generalNu}.

    Suppose $\la\to\infty$ within $D_\sigma$.
    By the criteria of the proposition, $\beta_1-\beta_0\neq0$, so the $\re^{\ri\la\eta}$ term appears in $\Delta_{\mathrm{dominant}}$ with nonzero coefficient and, by the first claim of the lemma, this term dominates all terms in $\Delta$ that are not explicitly given in $-3\la^3\Delta_{\mathrm{dominant}}$.
    If, for any $\epsilon'>0$, $\arg(\la)$ is bounded within $[\frac{-\pi}3+\epsilon',\frac{-\pi}6-\epsilon']$, then the $\re^{\ri\la\eta}$ term dominates all other terms, and the result is plainly true with $\mu(\la)=0$.
    It remains to consider $\la\to\infty$ within sectors of arbitarily narrow aperture $\epsilon'>0$ given by $\frac{-\pi}3\leq\arg(\la)\leq\frac{-\pi}3+\epsilon'$ and $\frac{-\pi}6-\epsilon'\leq\arg(\la)\leq\frac{-\pi}6$, together with the two semistrips that make up $D_\sigma\setminus D$.
    To do so, we define
    \begin{align*}
        \Msup D\sigma{\epsilon'}+ &:= D_{\frac\sigma\eta} \cap \{\la\in\CC:\tfrac{-\pi}6-\epsilon'\leq\arg(\la)\}, \\
        \Msup D\sigma{\epsilon'}- &:= D_{\frac\sigma\eta} \cap \{\la\in\CC:\arg(\la)\leq\tfrac{-\pi}3+\epsilon'\},
    \end{align*}
    and follow the approach of Langer~\cite{Lan1931a}, which will also yield the asymptotic locus of the zeros.

    Consider first $\la\to\infty$ within $\Msup D\sigma{\epsilon'}+$.
    We parametrize such $\la$ as $\la=-\alpha(x+\ri y)/\eta$, for $x\gg0$ and $-x\arctan(\epsilon') \leq y \leq \sigma$.
    Then the $\re^{-\ri\alpha^2\la\eta}$ term in $\Delta_{\mathrm{dominant}}$ is dominated by the other two.
    Evaluating just those terms of $\Delta_{\mathrm{dominant}}$, we get
    \[
        (\beta_1-\beta_0)\re^{\ri\la\eta} + \re^{-\ri\alpha\la\eta}(\alpha b_1-\alpha^2b_0)
        =
        \re^{\frac{\sqrt3}2(x+\ri y)} \left[ -\re^{-\ri p + q} \re^{\ri\frac x2 - \frac y2} + \re^{\ri r-s} \re^{-(\ri\frac x2 - \frac y2)}\right],
    \]
    for $p,q,r,s$ as defined in the statement of the lemma.
    Hence the leading order terms of $\Delta_{\mathrm{dominant}}$ cancel if and only if $\re^{\ri x - y} = \re^{\ri(r+p)-(s+q)}$.
    Therefore, the asymptotic locus of the zeros of $\Delta$ within $\Msup D\sigma{\epsilon'}+$ is expression~\eqref{eqn:loopDeltaZeros+}.
    By definition, $s+q>0$ is equivalent to $\abs{\beta_1-\beta_0}>\abs{\alpha^2b_1-b_0}$, one of the criteria of proposition~\ref{prop:loopSolution}, and this guarantees that there are at most finitely many zeros of $\Delta$ in $\Msup D\sigma{\epsilon'}+$ within a fixed small distance from $\clos(D)$.
    More precisely, by restricting $\sigma<s+q$, we guarantee the second claim of the lemma within $\Msup D\sigma{\epsilon'}+$, with $\sup_{\la\in \Msup D\sigma{\epsilon'}+}\abs{\phi(\la)}\leq\re^{-(s+q-\sigma)/\eta}<1$.

    If instead $\la\to\infty$ within $\Msup D\sigma{\epsilon'}-$, we can express $\la=\alpha^2(x+\ri y)/\eta$, for $x\gg0$ and $-\sigma\leq y \leq x\arctan(\epsilon)$.
    In this case, the dominant terms in $\Delta_{\mathrm{dominant}}$ are
    \[
        (\beta_1-\beta_0)\re^{\ri\la\eta} + \re^{-\ri\alpha^2\la\eta}(\alpha^2b_1-\alpha b_0)
        =
        \re^{\frac{\sqrt3}2(x+\ri y)} \left[ -\re^{-\ri p + q} \re^{-(\ri\frac x2 - \frac y2)} + \re^{\ri r'-s'} \re^{\ri\frac x2 - \frac y2)}\right],
    \]
    for $p,q,r',s'$ as defined in the statement of the lemma.
    The locus~\eqref{eqn:loopDeltaZeros-} and bound on $\abs{\phi(\la)}$ follow as before.
\end{proof}

\begin{proof}[Proof of lemma~\ref{lem:loopDeformContour}]
    Lemma~\ref{lem:generalNu} guarantees that, if $\la\in\clos(D)$, then $\nu_e(\alpha\la),\nu_e(\alpha^2\la)\in\clos(\CC^+)$ and $\nu_e(\la)\in\clos(\CC^-)$.
    Hence, by lemma~\ref{lem:FTLeadingOrder}, as $\la\to\infty$ within $\clos(D)$,
    \begin{gather*}
        \abs{\widehat{\BVec V}_1(\la)},\abs{\widehat{\BVec V}_2(\la)},\abs{\widehat{\BVec V}_3(\la)},\abs{\widehat{\BVec V}_6(\la)} = \lindecayla,
        \\
        \abs{\widehat{\BVec V}_4(\la)} = \bigoh{\la^{-1}\re^{-\ri\alpha^2\la\eta}},
        \qquad
        \abs{\widehat{\BVec V}_5(\la)} = \bigoh{\la^{-1}\re^{-\ri\alpha\la\eta}},
    \end{gather*}
    all uniformly in $\arg(\la)$.
    From the above estimates and lemmata~\ref{lem:generalNu} and~\ref{lem:loopLinearSystemSolution}, we see that
    \[
        \delta_6(\la)
        =
        \det
        \begin{pmatrix}
            1 & E_2 & (\widehat{\BVec V}_2-\widehat{\BVec V}_1)\M\nu02(E_2\beta_1-b_1) - (\M\nu-1-\M\nu-2)\widehat{\BVec V}_4 \\
            1 & E_1 & (\widehat{\BVec V}_2-\widehat{\BVec V}_1)\M\nu01(E_1\beta_1-b_1) - (\M\nu-1-\M\nu-2)\widehat{\BVec V}_5 \\
            1 & E_0 & (\widehat{\BVec V}_2-\widehat{\BVec V}_1)\M\nu00(E_0\beta_1-b_1) - (\M\nu-1-\M\nu-2)\widehat{\BVec V}_6
        \end{pmatrix}
        =
        \bigoh{\re^{\ri\la\eta}}.
    \]
    Hence, noting that $\nu'_e(\alpha^j\la)=\bigoh{1}$, the contributions of $\delta_6(\la)$ to $\M\zeta ej(\la;\BVec V)$, multiplied as they are by $(a_e-\nu_e(\alpha^j\la)^2)$, are all $\bigoh{\la^{2}\re^{\ri\la\eta}}$.
    In exactly the same way, it may be established that $\M\zeta ej(\la;\BVec V)=\bigoh{\la^{2}\re^{\ri\la\eta}}$ for all $e,j$ for which they are defined, and all these asymptotic bounds are uniform in $\arg(\la)$ as $\la\to\infty$ within $\clos(D)$.
    Hence, by lemma~\ref{lem:loopDeltaDominant}, the first claim of the present lemma is justified.

    By lemma~\ref{lem:generalNu}, $\nu_0$ is analytic on an open set containing $\clos(D)$ and
    \[
        \re^{\ri\nu_0(\la)(x-\eta)} = \re^{\ri\la(x-\eta)}\left[1+\lindecayla\right],
    \]
    so the integrand in equation~\eqref{eqn:loopDeformContour.Integral0.0-} can be expressed as
    \(
        \re^{\ri(-\la)(\eta-x)}y(\la)
    \),
    with $y$ analytic on $D$ and continuous on $\clos(D)$ having the property that $\max_{\theta\in[\frac{-\pi}3,\frac{-\pi}6]}y(r\re^{\ri\theta})\to0$ as $r\to\infty$.
    By Jordan's lemma, using $(\eta-x)>0$, the integral along the circular arc from $r\re^{-\ri\frac\pi3}$ to $r\re^{-\ri\frac\pi6}$ of $\re^{\ri(-\la)(\eta-x)}y(\la)$ has limit zero for large $r$.
    By Cauchy's theorem, the integral about $\partial(D\cap B(0,r))$ of the same integrand is zero.
    Equation~\eqref{eqn:loopDeformContour.Integral0.0-} follows.
    The other equations~\eqref{eqn:loopDeformContour.Integral0} may be justified similarly.
\end{proof}

\subsection{Solution satisfies the problem: proof of proposition~\protect\ref{prop:loopExistence}} \label{ssec:loop.existence}

\begin{lem} \label{lem:loopConvergenceZetaLeadingOrder}
    Suppose $U_-,U_+,U_0$, are as in the hypothesis of proposition~\ref{prop:loopExistence} and denote by $\BVec{v}$ the list $(U_-,U_+,U_0)$.
    For $\la\in\partial D$, let $\hat{\BVec{v}}(\la)$ and $\M\zeta ej$ be defined as in the statement of lemma~\ref{lem:loopDeformContour}, but in terms of this $\BVec{v}$.
    Then
    \[
        \M\zeta ej(\la;\BVec v)/\Delta(\la) = \M\phi ej(\la) + \M\psi ej(\la),
    \]
    with $\M\psi ej(\la) = \bigoh{\la^{-5}}$ as $\la\to\infty$ along $\partial D$ and $\M\phi ej$ is holomorphically extensible to the closed semistrips $S$ and obeys
    \[
        \abs{\M\phi ej(\la)}=\bigoh{\la^{-1}}
    \]
    as $\la\to\infty$ within $S$.
\end{lem}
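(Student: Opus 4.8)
The plan is to follow the proof of lemma~\ref{lem:mismatchConvergenceZetaLeadingOrder} almost verbatim, the two genuinely new ingredients being the six-dimensional determinantal formulae of lemma~\ref{lem:loopLinearSystemSolution} and the bond exponentials, whose control comes from lemma~\ref{lem:loopDeltaDominant}. By its definition in lemma~\ref{lem:loopDeformContour}, each $\M\zeta ej(\la;\BVec v)$ is a fixed $\CC$-linear combination, with coefficients assembled from powers of the $\M\nu ej$, the factors $E_j$, and the $\Msup\nu ej\prime$, of the six entries of $\hat{\BVec v}(\la)$, namely $\widehat U_-(\M\nu-2)$, $\widehat U_-(\M\nu-1)$, $\widehat U_+(\M\nu+0)$, $\widehat U_0(\M\nu02)$, $\widehat U_0(\M\nu01)$, $\widehat U_0(\M\nu00)$. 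First I would apply lemma~\ref{lem:FTLeadingOrder} with $N=4$ to each of these transforms, writing $\hat{\BVec v}_i = \Phi_i + \Psi_i$ with $\Psi_i = \bigoh{\la^{-5}}$ (carrying, for the two bond entries indexed $4,5$, the surplus weights $\re^{-\ri\alpha^2\la\eta}$ and $\re^{-\ri\alpha\la\eta}$ recorded in the proof of lemma~\ref{lem:loopDeformContour}) and $\Phi_i$ the explicit leading boundary-term expansion, holomorphically extensible to the semistrips and $\bigoh{\la^{-1}}$ there; for the bond datum $U_0$ the transform is entire, so only its growth, not its extensibility, needs tracking. Propagating this splitting through the multilinear formulae for $\delta_1,\dots,\delta_6$ and through the definitions of the $\M\zeta ej$, then collecting the $\Phi$-terms and the $\Psi$-terms separately, produces $\M\zeta ej = \M\zeta ej^\phi + \M\zeta ej^\psi$, and I would set $\M\phi ej = \M\zeta ej^\phi/\Delta$ and $\M\psi ej = \M\zeta ej^\psi/\Delta$.

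For the decay rates I would recycle the bookkeeping of the proof of lemma~\ref{lem:loopDeformContour}, where it was shown that $\M\zeta ej(\la;\BVec V) = \bigoh{\la^2\re^{\ri\la\eta}}$ uniformly in $\arg(\la)$ on $\clos(D)$, the controlling exponential $\re^{\ri\la\eta}$ entering through $\delta_5,\delta_6$ and matching that of $\Delta$. Lemma~\ref{lem:loopDeltaDominant} gives $\Delta(\la) = -\ri\sqrt3\la^3(\beta_1-\beta_0)(1+\mu(\la))\re^{\ri\la\eta} + \bigoh{\la^2\abs{\re^{\ri\la\eta}}}$ with $\sup_{\clos(D)}\abs\mu<1$, whence $1/\Delta = \bigoh{\la^{-3}\abs{\re^{\ri\la\eta}}^{-1}}$ and, decisively, $1/\Delta$ is analytic and nonvanishing throughout $D_{\sigma'}$ for $\sigma'$ small. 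Rerunning the estimate of lemma~\ref{lem:loopDeformContour} with each $\hat{\BVec v}_i$ replaced by its fast part $\Psi_i$ drops the power of $\la$ by four (and, for $i\in\{4,5\}$, cancels the surplus exponential against the matching factor in the coefficient), giving $\M\zeta ej^\psi = \bigoh{\la^{-2}\re^{\ri\la\eta}}$ and hence $\M\psi ej = \bigoh{\la^{-5}}$ along $\partial D$; the $\Phi$-part keeps the full $\bigoh{\la^2\re^{\ri\la\eta}}$ numerator size, so $\M\phi ej = \bigoh{\la^{-1}}$.

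The holomorphic extension of each $\M\phi ej$ to the semistrips $S$ rests on four analytic facts: the $\Phi_i$ for the lead data extend to the semistrips by lemma~\ref{lem:FTLeadingOrder}, the bond transforms are entire, the factors $\M\nu ej$ and $\Msup\nu ej\prime$ are analytic outside $B(0,R)$ by lemma~\ref{lem:generalNu}, and $1/\Delta$ is analytic and nonvanishing on $D_{\sigma'}\supset S$ once the width $\epsilon$ of $S$ in~\eqref{eqn:defn.Gamma} is taken at most $\sigma'$, again by lemma~\ref{lem:loopDeltaDominant}. The hard part will be the cancellation of the bond exponentials: the entries $\hat{\BVec v}_4,\hat{\BVec v}_5$ and the $x=\eta$ boundary term of each $\Phi_i$ carry factors $\re^{-\ri\alpha^j\la\eta}$ while $\Delta$ carries $\re^{\ri\la\eta}$, and in every surviving term of the multilinear expansion these must combine into a net exponential that remains $\bigoh1$ as $\la\to\infty$ within $D_{\sigma'}$ rather than growing. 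This is exactly the dominance phenomenon quantified in lemma~\ref{lem:loopDeltaDominant}, whose sector parametrizations $\la=-\alpha(x+\ri y)/\eta$ and $\la=\alpha^2(x+\ri y)/\eta$ identify which exponential dominates on each piece of $D_{\sigma'}$; applying it term by term to the coefficient functions multiplying $\Phi_4$ and $\Phi_5$ confirms the required boundedness, and therefore both the stated decay and the extensibility of $\M\phi ej$.
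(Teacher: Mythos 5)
Your proposal is correct and follows essentially the same route as the paper: the paper's proof is a one-sentence remark that the argument is identical to the first paragraph of the proof of lemma~\ref{lem:loopDeformContour}, upgraded by applying lemma~\ref{lem:FTLeadingOrder} with $N=4$, which is precisely the decomposition and bookkeeping you carry out explicitly (including the role of lemma~\ref{lem:loopDeltaDominant} in keeping $1/\Delta$ analytic and of the right size on the semistrips).
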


\begin{proof}[Proof of lemma~\ref{lem:loopConvergenceZetaLeadingOrder}]
    This proof is identical to the first paragraph of the proof of lemma~\ref{lem:loopDeformContour}, except that the improved regularity of $U_-,U_+,U_0$ allows application of lemma~\ref{lem:FTLeadingOrder} with $N=4$ instead of $N=1$.
\end{proof}

\begin{proof}[Proof that each $u_e$ satisfies its PDE]
    The arguments for the integrals in equations~(\ref{eqn:loopSoln}.$\pm$) match exactly the corresponding arguments in the proof for the mismatch problem, except that now lemma~\ref{lem:loopConvergenceZetaLeadingOrder} plays the role of lemma~\ref{lem:mismatchConvergenceZetaLeadingOrder}.
    Similarly to the argument for the final integral of equation~\eqref{eqn:loopSoln.-}, via its absolute convergence uniformly in $(x,t)\in[0,x_0]\times[0,\infty)$ for any $x_0<\eta$, the final integral on the right of equation~\eqref{eqn:loopSoln.0} satisfies~\eqref{eqn:generalproblem.PDE}.
    The proof for the first integral on the right of equation~\eqref{eqn:loopSoln.0} mirrors that for the corresponding integrals in equation~(\ref{eqn:loopSoln}.$\pm$).
    The analysis of the second integral on the right of equation~\eqref{eqn:loopSoln.0} is identical to the analysis of the second integral on the right of~\eqref{eqn:loopSoln.+}.
\end{proof}

\begin{proof}[Proof that each $u_e$ satisfies its initial condition]
    As in the mismatch problem, evaluation at $t=0$ reduces each solution formula to~\eqref{eqn:generalproblem.IC}, via lemma~\ref{lem:loopDeformContour} and the Fourier inversion theorem; we need only check continuity of the integrals at $t=0$.

    The last integral in equation~\eqref{eqn:loopSoln.0} converges uniformly in $t\in[0,\infty)$, just like the last integral in equation~\eqref{eqn:loopSoln.-}, from which continuity follows.
    The argument for the first integral of solution formula~\eqref{eqn:loopSoln.0} is exactly the same as for the first integrals of the other formulae.
    The second integral in equation~\eqref{eqn:loopSoln.0} is analysed similarly to the second integral in equation~\eqref{eqn:loopSoln.+}.
\end{proof}

\begin{proof}[Proof that the $u_e$ satisfy the vertex conditions]
    We aim to show that, for $k\in\{0,1\}$,
    \begin{subequations} \label{eqn:loopVertex.toProve}
    \begin{align}
        \label{eqn:loopVertex.toProve.b}
        (-\ri)^k b_k \lim_{x\to0^-} 2\pi \partial_x^k u_-(x,t) - (-\ri)^k \lim_{x\to0^+} 2\pi \partial_x^k u_0(x,t) &= 0, \\
        \label{eqn:loopVertex.toProve.beta}
        (-\ri)^k \beta_k \lim_{x\to0^-} 2\pi \partial_x^k u_-(x,t) - (-\ri)^k \lim_{x\to\eta^-} 2\pi \partial_x^k u_0(x,t) &= 0, \\
        \label{eqn:loopVertex.toProve.B}
        (-\ri)^k B_k \lim_{x\to0^-} 2\pi \partial_x^k u_-(x,t) - (-\ri)^k \lim_{x\to0^+} 2\pi \partial_x^k u_+(x,t) &= 0.
    \end{align}
    \end{subequations}

    Of these, equation~\eqref{eqn:loopVertex.toProve.B} is most similar to the argument presented in~\S\ref{sec:mismatch}, because we need not analyse solution representation~\eqref{eqn:loopSoln.0}.
    Indeed, one may treat the terms in each of equations~(\ref{eqn:loopSoln}$.\pm$) in exactly the same way as we analysed the corresponding terms in equations~(\ref{eqn:mismatchSolution}.$\pm$), determining that the left side of equation~\eqref{eqn:loopVertex.toProve.B} reduces to exactly expression~\eqref{eqn:mismatchVertexFull.allD3Gamma3} but now with%
    \begin{subequations} \label{eqn:loopVertexFull.defnphipsijpm}
    \begin{align}
        \label{eqn:loopVertexFull.defnphipsijpm.firstbit}
        \M\omega3-(\la) &= \Omega_1 + \M\nu-0\Omega_5 + (a_--\Msup\nu-02)\Omega_6,
        \\
        \label{eqn:loopVertexFull.defnphipsij+}
        \M\omega j+(\la) &= -\Omega_2 + B_1\M\nu+j\Omega_5 + B_0(a_+-\Msup\nu+j2)\Omega_6
        & j &\in \{1,2\}, \\
        \label{eqn:loopVertexFull.defnphipsij0}
        \M\omega j0(\la) &= -\Omega_3 + b_1\M\nu0j\Omega_5 + b_0(a_0-\Msup\nu0j2)\Omega_6
        & j &\in \{1,2\}, \\
        \M\omega30(\la) &= \Omega_4 + \beta_1\M\nu00\Omega_5 + \beta_0(a_0-\Msup\nu002)\Omega_6,
    \end{align}
    for
    \begin{equation}
        \label{eqn:loopVertexFull.matrixPhi}
        \mathcal A
        \begin{pmatrix}
            \Omega_1 \\ \Omega_2 \\ \Omega_3 \\ \Omega_4 \\ \Omega_5 \\ \Omega_6
        \end{pmatrix}
        =
        \begin{pmatrix}
            -\M\omega\RR-(\alpha^2\la) \\ -\M\omega\RR-(\alpha\la) \\ -\M\omega\RR+(\la) \\ -\M\omega\RR0(\alpha^2\la) \\ -\M\omega\RR0(\alpha\la) \\ -\M\omega\RR0(\la)
        \end{pmatrix},
    \end{equation}
    \end{subequations}
    in which the symbols $\omega,\Omega$ represent $\phi,\Phi$ or $\psi,\Psi$, respectively and $\mathcal A$ is now given by equation~\eqref{eqn:loopA}.
    Here $\M\omega\RR\pm$ are defined following the construction in the earlier proof, and $\M\omega\RR0$ are the functions with
    \[
        \widehat U_0(\nu_0(\la)) = \left[\M\phi\RR0(\la)+\M\psi\RR0(\la)\right]\nu'_0(\la),
    \]
    with the appropriate decay properties in $\clos(\CC^-)$, guaranteed to exist by lemma~\ref{lem:FTLeadingOrder} with $N=4$.
    The first result of lemma~\ref{lem:loopDeformContour} gives the corresponding decay properties of $\M\omega j\pm$, enabling applications of lemmata~\ref{lem:vertexRewritingLemma.Real-} and~\ref{lem:vertexRewritingLemma.Real+}.
    (The definitions of $\M\omega j0$, $\Omega_3$, $\Omega_4$, and $\M\omega\RR0(\alpha^j\la)$ are not required in this calculation, but shall be needed for the later proofs of statements~\eqref{eqn:loopVertex.toProve.b} and~\eqref{eqn:loopVertex.toProve.beta}, so we give the definitions here for convenience of reference.)

    By equations~\eqref{eqn:loopVertexFull.defnphipsijpm}, the bracket in expression~\eqref{eqn:mismatchVertexFull.allD3Gamma3} reduces to
    \begin{equation*}
        \begin{pmatrix}
            \displaystyle - B_k \sum_{j=0}^2 \alpha^j\Msup\nu-jk\Msup\nu-j\prime \\
            \displaystyle - \sum_{j=0}^2 \alpha^j\Msup\nu+jk\Msup\nu+j\prime \\
            \displaystyle B_1 \sum_{j=0}^2 \alpha^j\Msup\nu+j{k+1}\Msup\nu+j\prime
            - B_k \sum_{j=0}^2 \alpha^j\Msup\nu-j{k+1}\Msup\nu-j\prime \\
            \displaystyle B_0 \sum_{j=0}^2 \alpha^j\Msup\nu+jk\left(a_+-\Msup\nu+j2\right)\Msup\nu+j\prime
            - B_k \sum_{j=0}^2 \alpha^j\Msup\nu-jk\left(a_--\Msup\nu-j2\right)\Msup\nu-j\prime
        \end{pmatrix}
        \cdot
        \begin{pmatrix}
            \Omega_1 \\ \Omega_2 \\ \Omega_5 \\ \Omega_6
        \end{pmatrix}.
    \end{equation*}
    The symmetries of $\nu_e$ presented in lemma~\ref{lem:generalNu} imply that every entry in the first vector is $0$.

    Proofs of equations~\eqref{eqn:loopVertex.toProve.b} and~\eqref{eqn:loopVertex.toProve.beta} are very similar in structure.
    In particular, the analysis of the limit of $\partial_x^ku_-(x,t)$ is identical; only the coefficient $B_k$ changes to $b_k$ or $\beta_k$.

    When analysing the small $x$ limit of $\partial_x^ku_0(x,t)$, one uses lemma~\ref{lem:FTLeadingOrder} to treat the first integral of solution representation~\eqref{eqn:loopSoln.0} via lemma~\ref{lem:vertexRewritingLemma.Real+} exactly as we did above for the first integral of~\eqref{eqn:loopSoln.+}.
    Still for the small $x$ limit, the second and third terms of~\eqref{eqn:loopSoln.0} can be treated the same way as the second term of~\eqref{eqn:loopSoln.+} and the second term of~\eqref{eqn:loopSoln.-}, respectively.
    This means that the left side of equation~\eqref{eqn:loopVertex.toProve.b} reduces to
    \begin{multline} \label{eqn:loopVertexFull.allD3Gamma3.b}
        \PV\,\left\{ \int_{\partial D} + \int_{\gamma} \right\} \re^{-\ri\la^3t}
        \bigg[
            b_k\sum_{j=1}^2 \alpha^j \Msup\nu-jk\Msup\nu-j\prime \M\omega\RR-(\alpha^j\la)
            - b_k \Msup\nu-0k\Msup\nu-0\prime \M\omega3-(\la)
            \\
            + \Msup\nu00k\Msup\nu00\prime \M\omega\RR0(\la)
            + \sum_{j=1}^2 \alpha^j \Msup\nu0jk\Msup\nu0j\prime \M\omega j0(\la)
            + \Msup\nu00k\Msup\nu00\prime E_0 \M\omega30(\la)
        \bigg]
        \D\la.
    \end{multline}
    Using again construction~\eqref{eqn:loopVertexFull.defnphipsijpm} and the symmetries of lemma~\ref{lem:generalNu}, the coefficients of $\Omega_\ell$ all vanish inside the bracket of expression~\eqref{eqn:loopVertexFull.allD3Gamma3.b}.
    The only algebraic difference from the simplifications hereunto studied is that the coefficient of $\Omega_4$ in this case evaluates to $(1-1)E_0\Msup\nu00k\Msup\nu00\prime$, a pair of terms that mutually annihilate but which include an exponential factor, and similar pairs of terms appear in the coefficients of $\Omega_5$ and $\Omega_6$.

    To analyse the $x\to\eta^-$ limit of $\partial_x^ku_0(x,t)$, we must apply lemma~\ref{lem:vertexRewritingLemma.Real-} instead of lemma~\ref{lem:vertexRewritingLemma.Real+} to treat the first integral of solution representation~\eqref{eqn:loopSoln.0}; the result is similar to that for the first integral of~\eqref{eqn:loopSoln.-}.
    The $x\to\eta^-$ limit is equivalent to $(x-\eta)\to0^-$, matching the direction of the limit addressed in lemma~\ref{lem:vertexRewritingLemma.Real-}, and different from the limit analysed in lemma~\ref{lem:vertexRewritingLemma.Real+} but, more importantly, the $\re^{-\ri\eta\nu_0(\la)}$ factor that consequently appears in the integrand can be used to cancel the blowup of $\widehat U_0(\nu_0(\la))$ in $\CC^+$, per asymptotic equations~\eqref{eqn:FTLeadingOrder.etaU}.
    The left side of equation~\eqref{eqn:loopVertex.toProve.beta} then reduces to
    \begin{multline} \label{eqn:loopVertexFull.allD3Gamma3.beta}
        \PV\,\left\{ \int_{\partial D} + \int_{\gamma} \right\} \re^{-\ri\la^3t}
        \bigg[
            \beta_k\sum_{j=1}^2 \alpha^j \Msup\nu-jk\Msup\nu-j\prime \M\omega\RR-(\alpha^j\la)
            - \beta_k \Msup\nu-0k\Msup\nu-0\prime \M\omega3-(\la)
            \\
            - \sum_{j=1}^2 \alpha^j E_j^{-1} \Msup\nu0jk\Msup\nu0j\prime \M\omega\RR0(\alpha^j\la)
            + \sum_{j=1}^2 \alpha^j E_j^{-1} \Msup\nu0jk\Msup\nu0j\prime \M\omega j0(\la)
            + \Msup\nu00k\Msup\nu00\prime \M\omega30(\la)
        \bigg]
        \D\la.
    \end{multline}
    As above, the bracket evaluates to $0$ using construction~\eqref{eqn:loopVertexFull.defnphipsijpm} and lemma~\ref{lem:generalNu}.
\end{proof}

\section{Source defect} \label{sec:source}
Here we study a metric graph with a single bond which terminates at the same vertex to which both leads are attached, but whose source is another vertex at which certain boundary conditions are presecribed; see figure~\ref{fig:graph-source}.
\begin{figure}
    \centering
    \includegraphics{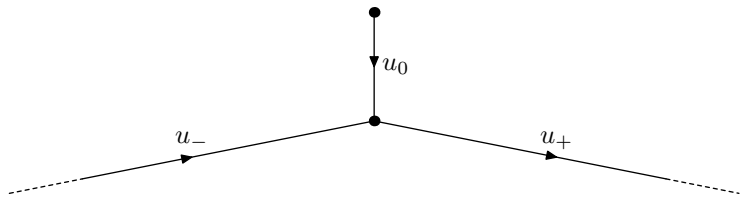}
    \caption{The metric graph domain with a source defect.}
    \label{fig:graph-source}
\end{figure}
Guided by the results of~\cite{FP2001a} for finite interval problems, we expect that the problem would be solvable if, in contrast with the mismatch problem of~\S\ref{sec:mismatch}, one further vertex condition were supplied at the target of bond $0$, and two extra boundary conditions were prescribed at its source.
We select the vertex and boundary conditions
\begin{subequations} \label{eqn:sourceVCandBC}
\begin{align}
    \label{eqn:sourceVC}
    \partial_x^k u_-(0,t) &= \beta_k^{-1} \partial_x^k u_0(\eta,t) = B_k^{-1} \partial_x^k u_+(0,t), & t &\in [0,T], & k &\in \{0,1\}, \\
    \label{eqn:sourceBC}
    \partial_x^k u_0(0,t) &= H_k(t), & t &\in [0,T], & k &\in \{0,1\},
\end{align}
\end{subequations}
with coefficients $b_k,\beta_k,B_k\in\CC\setminus\{0\}$ and known functions $H_k$ of regularity to be specified later.

We see this problem representing a ``source'' type defect because the orientation of the bond suggests that information, specified by the inhomogeneities $H_k$ and $U_0$, is flowing into the system from the defect.

\begin{prop} \label{prop:sourceSolution}
    Consider problem~\eqref{eqn:generalproblem} on the metric graph described above, with vertex and boundary conditions~\eqref{eqn:sourceVCandBC} for coefficients satisfying $\beta_0\neq\sqrt3\ri\beta_1$.
    Define, for $e\in\{+,-,0\}$ and $j\in\{0,1,2\}$, $\M\nu ej = \nu_e(\alpha^j\la)$, $E_j = \exp\left(-\ri\eta\M\nu0j\right)$, and
    \begin{equation}  \label{eqn:sourceA}
        \mathcal A(\la) =
        \begin{pmatrix}
            1 & 0 & 0 & 0 &  \M\nu-2    &  (a_--\Msup\nu-22) \\
            1 & 0 & 0 & 0 &  \M\nu-1    &  (a_--\Msup\nu-12) \\
            0 & 1 & 0 & 0 & -\M\nu+0B_1 & -(a_+-\Msup\nu+02)B_0 \\
            0 & 0 & 1 & E_2 & \M\nu02 E_2 \beta_1 & (a_0-\Msup\nu022)E_2\beta_0 \\
            0 & 0 & 1 & E_1 & \M\nu01 E_1 \beta_1 & (a_0-\Msup\nu012)E_1\beta_0 \\
            0 & 0 & 1 & E_0 & \M\nu00 E_0 \beta_1 & (a_0-\Msup\nu002)E_0\beta_0
        \end{pmatrix}.
    \end{equation}
    Define also $h_k(\la;t) = \int_0^t \re^{\ri\la^3s}H_k(s)\D s$.
    Let $\Delta=\det\mathcal A$, and $\delta_k$ be the determinant of $\mathcal A$ but with the $k$\textsuperscript{th} column replaced by
    \begin{multline} \label{eqn:sourceSolnY}
        \left(
            0,\;0,\;0,\;
            \ri \M\nu02 h_1 + (a_0-\Msup\nu022) h_0, \;
            \ri \M\nu01 h_1 + (a_0-\Msup\nu012) h_0, \;
            \ri \M\nu00 h_1 + (a_0-\Msup\nu002) h_0
        \right)
        \\
        -
        \left(
            \widehat U_-(\M\nu-2), \;
            \widehat U_-(\M\nu-1), \;
            \widehat U_+(\M\nu-0), \;
            \widehat U_0(\M\nu02), \;
            \widehat U_0(\M\nu01), \;
            \widehat U_0(\M\nu00)
        \right).
    \end{multline}
    Suppose $u$ is sufficiently smooth and satisfies this problem.
    Then, for $R$ sufficiently large,
    \begin{subequations} \label{eqn:sourceSoln}
    \begin{align}
    \notag
        2\pi u_-(x,t)
        &= \int_{-\infty}^\infty \re^{\ri\la x - \ri(\la^3-a_-\la)t} \widehat U_-(\la) \D \la \\
    \label{eqn:sourceSoln.-} \tag{\theparentequation.\(-\)}
        &\hspace{1.2em} - \int_{\partial D} \re^{\ri\nu_-(\la)x-\ri\la^3t} \left[ \frac{\delta_1(\la) + \nu_-(\la)\delta_5(\la) + (a_--\nu_-(\la)^2)\delta_6(\la)}{\Delta(\la)} \right] \nu'_-(\la) \D\la, \displaybreak[0] \\
    \notag
        2\pi u_+(x,t)
        &= \int_{-\infty}^\infty \re^{\ri\la x - \ri(\la^3-a_+\la)t} \widehat U_+(\la) \D \la \\
    \notag
        &\hspace{3em} - \int_{\partial D} \frac{\re^{-\ri\la^3t}}{\Delta(\la)} \Bigg[
            -\delta_2(\la) \left( \alpha \re^{\ri\nu_+(\alpha\la)x} \nu'_+(\alpha\la) + \alpha^2 \re^{\ri\nu_+(\alpha^2\la)x} \nu'_+(\alpha^2\la) \right) \\
    \notag
            &\hspace{6em} + B_1\delta_5(\la) \left( \alpha \nu_+(\alpha\la)\re^{\ri\nu_+(\alpha\la)x} \nu'_+(\alpha\la) + \alpha^2 \nu_+(\alpha^2\la) \re^{\ri\nu_+(\alpha^2\la)x} \nu'_+(\alpha^2\la) \right) \\
    \label{eqn:sourceSoln.+} \tag{\theparentequation.\(+\)}
            &\hspace{-4em} + B_0\delta_6(\la) \left( \alpha \left(a_+-\nu_+(\alpha\la)^2\right) \re^{\ri\nu_+(\alpha\la)x} \nu'_+(\alpha\la) + \alpha^2 \left(a_+-\nu_+(\alpha^2\la)^2\right) \re^{\ri\nu_+(\alpha^2\la)x} \nu'_+(\alpha^2\la) \right)
        \Bigg] \D\la, \displaybreak[0] \\
    \notag
        2\pi u_0(x,t)
        &= \int_{-\infty}^\infty \re^{\ri\la x - \ri(\la^3-a_0\la)t} \widehat U_0(\la) \D \la \\
    \notag
        &\hspace{3em} - \int_{\partial D} \re^{-\ri\la^3t} \Bigg[
            \frac{-\delta_3(\la)}{\Delta(\la)} \left( \alpha \re^{\ri\nu_0(\alpha\la)x} \nu'_0(\alpha\la) + \alpha^2 \re^{\ri\nu_0(\alpha^2\la)x} \nu'_0(\alpha^2\la) \right) \\
    \notag
            &\hspace{6em} + \ri h_1(\la;t) \left( \alpha \nu_0(\alpha\la)\re^{\ri\nu_0(\alpha\la)x} \nu'_0(\alpha\la) + \alpha^2 \nu_0(\alpha^2\la) \re^{\ri\nu_0(\alpha^2\la)x} \nu'_0(\alpha^2\la) \right) \\
    \notag
            &\hspace{-1.5em} + h_0(\la;t) \left( \alpha \left(a_0-\nu_0(\alpha\la)^2\right) \re^{\ri\nu_0(\alpha\la)x} \nu'_0(\alpha\la) + \alpha^2 \left(a_0-\nu_0(\alpha^2\la)^2\right) \re^{\ri\nu_0(\alpha^2\la)x} \nu'_0(\alpha^2\la) \right)
        \Bigg] \D\la \\
    \label{eqn:sourceSoln.0} \tag{\theparentequation.\(0\)}
        &\hspace{-1em} - \int_{\partial D} \re^{\ri\nu_0(\la)(x-\eta)-\ri\la^3t} \left[ \frac{\delta_4(\la) + \nu_0(\la)\beta_1\delta_5(\la) + (a_0-\nu_0(\la)^2)\beta_0\delta_6(\la)}{\Delta(\la)} \right] \nu'_0(\la) \D\la.
    \end{align}
    \end{subequations}
    Moreover, in expressions~\eqref{eqn:sourceSolnY} and~\eqref{eqn:sourceSoln.0}, $h_k(\argdot;t)$ may be replaced with $h_k(\argdot,T)$.
\end{prop}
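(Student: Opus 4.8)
The plan is to run the stage-two argument of~\S\ref{ssec:loopUnicity} almost verbatim, viewing the present graph as the loop graph of~\S\ref{sec:loop} with the bond's source detached from the central vertex. First I would impose the vertex and boundary conditions~\eqref{eqn:sourceVCandBC} on the edgewise global relations~\eqref{eqn:generalGRe}. The junction conditions~\eqref{eqn:sourceVC} identify the zeroth- and first-order boundary transforms of the outgoing lead and of the bond's target end with those of the incoming lead, exactly as in the loop problem, supplying the $B_k$ and $\beta_k$ entries of~\eqref{eqn:sourceA}; the essential change is that the source conditions~\eqref{eqn:sourceBC} fix $\M f0k(\la;0,t)=h_k(\la;t)$ for $k\in\{0,1\}$, so the loop's $b_k$ coupling vanishes from the coefficient matrix and the $h_k$ instead reappear as \emph{known} inhomogeneities, producing the $h_k$ entries of~\eqref{eqn:sourceSolnY}. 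With the six unknowns $\M f-2,\M f+2,g_2,G_2,\M f-1,\M f-0$ and the scalings $\la\mapsto\alpha^j\la$ (two of the incoming-lead relation, one of the outgoing, three of the bond), this yields for $\la\in\clos(D)$ a $6\times6$ linear system with matrix~\eqref{eqn:sourceA}, which Cramer's rule solves to express each transform as $\delta_k/\Delta$.

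Substituting these formulae into the Ehrenpreis form~\eqref{eqn:generalEFe} via~\eqref{eqn:generalDefnF} gives a provisional representation that still carries the unknown traces $\hat u_e(\nu_e(\argdot);t)$ through the data vector. The content of the proposition is that these trace terms integrate to zero over $\partial D$. I would establish this exactly as in lemma~\ref{lem:loopDeformContour}: define the analogous quantities $\M\zeta ej(\la;\BVec V)$ from the solved determinants, show that $\M\zeta ej(\la;\BVec V)/\Delta(\la)=\lindecayla$ and is analytic in $D$ and continuous on $\clos(D)$, and then deform $\partial D$ and apply Cauchy's theorem and Jordan's lemma to annihilate the trace-built contributions, leaving precisely~\eqref{eqn:sourceSoln}.

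The main obstacle is the asymptotic analysis of $\Delta$ underlying that step, the analogue of lemma~\ref{lem:loopDeltaDominant}. Setting $b_k=0$ in~\eqref{eqn:loopDeltaDominant} collapses the loop's three dominant exponentials to the single $\beta$-term $(\beta_1-\beta_0)\re^{\ri\la\eta}$, so the balance governing the zeros of $\Delta$ can no longer be read off from~\eqref{eqn:loopDeltaAsymptotic} and must be recomputed directly from~\eqref{eqn:sourceA}, now retaining the hitherto negligible exponentials $\re^{\ri\alpha\la\eta}$ and $\re^{\ri\alpha^2\la\eta}$. It is here that the hypothesis $\beta_0\neq\sqrt3\ri\beta_1$ does its work, in the role played for the loop by the inequalities $\abs{\beta_1-\beta_0}>\abs{\alpha^jb_1-b_0}$: it is the non-degeneracy condition under which the asymptotic locus of the zeros of $\Delta$ stays clear of $\clos(D)$, so that enlarging $R$ sweeps all zeros out of the integration region. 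I would carry out this balance by the method of Langer~\cite{Lan1931a} on a slightly enlarged region $D_\sigma$, as in the loop proof; the convergence and analyticity bookkeeping for the $\M\zeta ej/\Delta$ then transfers essentially unchanged, the bond's source integral merely carrying the extra but harmless $h_k$ terms.

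Finally, that $h_k(\argdot;t)$ may be replaced by $h_k(\argdot;T)$ in~\eqref{eqn:sourceSolnY} and~\eqref{eqn:sourceSoln.0} follows by the same device used to discard the trace terms. The difference satisfies $\re^{-\ri\la^3t}[h_k(\la;T)-h_k(\la;t)]=\int_t^T\re^{\ri\la^3(s-t)}H_k(s)\D s$, which for $\la\in\clos(D)$ is analytic and uniformly bounded, with genuine exponential decay in the interior of $D$. Paired with the spatially decaying exponentials $\re^{\ri\nu_0(\alpha^j\la)x}$ of the bond integral (for $0<x<\eta$), its integral over $\partial D$ vanishes by Cauchy's theorem and Jordan's lemma, so neither~\eqref{eqn:sourceSoln.0} nor the determinants of~\eqref{eqn:sourceSolnY} are altered by the replacement.
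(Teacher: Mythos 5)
Your proposal is correct and follows essentially the same route as the paper: the same six-by-six linear system obtained from the scaled global relations with the $H_k$ entering as known inhomogeneities, Cramer's rule, substitution into the Ehrenpreis form, removal of the $\hat u_e(\argdot;t)$ trace terms by Cauchy's theorem and Jordan's lemma after an asymptotic analysis of $\Delta$, and the same decay-plus-Jordan device for the $h_k(\argdot;t)\mapsto h_k(\argdot;T)$ replacement. The only cosmetic difference is that the paper does not need a full Langer-type analysis for this problem: from the recomputed asymptotic form~\eqref{eqn:sourceDeltaAsymptotic} it simply observes that, under $\beta_0\neq\sqrt3\ri\beta_1$, the dominant coefficient is nonzero and the zeros of $\Delta$ lie asymptotically on the rays of argument $\tfrac\pi2+\tfrac{2j\pi}3$, all of which are exterior to $D$.
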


\begin{prop} \label{prop:sourceExistence}
    Assume $U_e\in\AC^4_{\mathrm p}(\Omega_e)$ and $H_0,H_1\in\AC^2_{\mathrm p}[0,\infty)$, per definition~\ref{defn:piecewiseACfunctions}.
    For all $x\in\Omega_e$ and $t\geq0$, let $u_e(x,t)$ be defined by equations~\eqref{eqn:sourceSoln}.
    Then the functions $u_-,u_+,u_0$ satisfy problem~\eqref{eqn:generalproblem},~\eqref{eqn:sourceVCandBC}.
\end{prop}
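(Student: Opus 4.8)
The proof adapts the template of the mismatch and loop existence proofs in~\S\ref{ssec:mismatchExistence} and~\S\ref{ssec:loop.existence}: we verify in turn that the functions defined by~\eqref{eqn:sourceSoln} satisfy the PDE~\eqref{eqn:generalproblem.PDE}, the initial condition~\eqref{eqn:generalproblem.IC}, the vertex conditions~\eqref{eqn:sourceVC}, and the boundary conditions~\eqref{eqn:sourceBC}. The one structurally new feature is the prescribed data, entering through $h_k(\la;t)=\int_0^t\re^{\ri\la^3s}H_k(s)\D s$, so two preliminary observations are needed. First, $h_k(\argdot;t)$ may be replaced by $h_k(\argdot;T)$ wherever it appears in~\eqref{eqn:sourceSolnY} and~\eqref{eqn:sourceSoln.0}: the difference $h_k(\la;t)-h_k(\la;T)=-\int_t^T\re^{\ri\la^3s}H_k(s)\D s$, carrying the universal factor $\re^{-\ri\la^3t}$, is analytic in $\la$ and, since $\Re(\ri\la^3(s-t))\leq0$ for $\la\in\clos(D)$ and $s\geq t$, its $\partial D$ integral vanishes by Jordan's lemma and Cauchy's theorem, exactly as the $\hat u_e(\argdot;t)$ terms were discarded in~\S\ref{ssec:loopUnicity}. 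Second, the hypothesis $H_k\in\AC^2_{\mathrm p}[0,\infty)$ yields, by integration by parts, $h_k(\la;T)=\bigoh{\la^{-3}}$ along $\partial D$ with an explicit $\bigoh{\la^{-1}}$ leading term of the form furnished by lemma~\ref{lem:FTLeadingOrder}; this is precisely what is required to extend lemma~\ref{lem:loopConvergenceZetaLeadingOrder} to the present coefficients and so place every integrand of~\eqref{eqn:sourceSoln} in the $\phi+\psi$ form demanded by the convergence lemmata~\ref{lem:ConvergenceRealIntegrals},~\ref{lem:ConvergenceD3Integrals} and the limit lemma~\ref{lem:tLimitMainLemma}. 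Throughout we assume, via an analysis of $\Delta$ paralleling lemma~\ref{lem:loopDeltaDominant} and valid under the standing hypothesis $\beta_0\neq\sqrt3\ri\beta_1$, that $R$ is large enough that $\Delta$ is zero-free on $\clos(D)$.

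With this decomposition in hand, the arguments that each $u_e$ satisfies~\eqref{eqn:generalproblem.PDE} and~\eqref{eqn:generalproblem.IC} follow the loop proofs integral by integral; the $h_k(\argdot;T)$ terms carry no $t$ dependence beyond the common $\re^{-\ri\la^3t}$ and so are deformed by the same contour manipulations. The two vertex conditions~\eqref{eqn:sourceVC} reduce to cancellations already performed: the matching $\partial_x^ku_-(0,t)=B_k^{-1}\partial_x^ku_+(0,t)$ is verified verbatim as~\eqref{eqn:loopVertex.toProve.B}, while $\partial_x^ku_-(0,t)=\beta_k^{-1}\partial_x^ku_0(\eta,t)$ follows as~\eqref{eqn:loopVertex.toProve.beta}, the $x\to\eta^-$ limit of the bond being treated through lemma~\ref{lem:vertexRewritingLemma.Real-} with the $\re^{-\ri\eta\nu_0(\la)}$ factor absorbing the growth of $\widehat U_0(\nu_0(\la))$ in $\clos(\CC^+)$; in each case one assembles all contributions onto $\partial D$ together with the appropriate perturbed contour ($\Gamma$ or $\gamma$), invokes the symmetries of lemma~\ref{lem:generalNu}, and finds that the coefficient of each $\Omega_\ell$ vanishes.

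The genuinely new and hardest part is the boundary conditions~\eqref{eqn:sourceBC}, namely $\partial_x^ku_0(0,t)=H_k(t)$ for $k\in\{0,1\}$. Taking $\partial_x^k$ and $x\to0^+$ in~\eqref{eqn:sourceSoln.0} and performing the usual change of variables and contour deformations that bring the leading Fourier integral onto $\partial D$, the $h_k$ contribution is the truncated sum $\sum_{j=1}^2\ri^k\alpha^j\Msup\nu0jk\Msup\nu0j\prime F_0(\alpha^j\la;0,t)$, in which, recalling~\eqref{eqn:generalDefnF}, $F_0(\argdot;0,t)$ carries the prescribed data $h_1,h_0$ in its first and zeroth slots and the solved quantity $-\delta_3/\Delta$ in its second. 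Completing this to a full sum over $j\in\{0,1,2\}$ and applying identities~\eqref{eqn:generalNu.vietaSymmetries}, the completed sum collapses, for both $k\in\{0,1\}$, to exactly $-3\la^2h_k(\la;T)$, while the subtracted $j=0$ summand $\ri^k\Msup\nu00k\Msup\nu00\prime F_0(\la;0,t)$, together with the $\widehat U_0$ integral and the $x=\eta$ integral of~\eqref{eqn:sourceSoln.0}, cancels by the very matrix identity~\eqref{eqn:sourceSolnY} and lemma~\ref{lem:generalNu} mechanism that underlies the vertex-condition computations. The verification of~\eqref{eqn:sourceBC} is therefore reduced to the single reconstruction identity
\[
    \frac1{2\pi}\int_{\partial D}3\la^2\re^{-\ri\la^3t}h_k(\la;T)\D\la = H_k(t), \qquad t\in(0,T).
\]

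Establishing this identity is the main obstacle, as it is the one place where an integral is made to \emph{reproduce} data rather than to vanish. Under the substitution $\rho=\la^3$, the two rays $\arg\la=-\tfrac{2\pi}3,-\tfrac\pi3$ bounding $D$ map to $\arg\rho=0,-\pi$, so $\partial D$ maps onto the real $\rho$-axis outside $\abs\rho\geq R^3$, while the finite circular arc of $\partial D$ may be deformed onto the missing segment $\abs\rho\leq R^3$ by Cauchy's theorem, using the entireness of $h_k(\argdot;T)$. The left side thereby becomes the temporal Fourier inversion $\tfrac1{2\pi}\int_\RR\re^{-\ri\rho t}\bigl(\int_0^T\re^{\ri\rho s}H_k(s)\D s\bigr)\D\rho$, which equals $H_k(t)$ at every point of continuity in $(0,T)$. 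The remaining care concerns the conditional convergence of this oscillatory integral and the endpoint limits $t\to0^+$ and $t\to T^-$: both are handled, exactly as the analogous reconstruction of $U_e$ in the initial-condition proofs, by splitting off the $\bigoh{\la^{-1}}$ leading term of $3\la^2h_k(\la;T)$ and invoking lemma~\ref{lem:tLimitMainLemma}, the piecewise regularity $H_k\in\AC^2_{\mathrm p}$ guaranteeing the requisite behaviour at the jumps of $H_k$ and at the endpoints $0$ and $T$.
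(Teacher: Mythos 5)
Your overall architecture matches the paper's: the PDE, the initial condition, and the two homogeneous vertex conditions are handled by transporting the loop-problem template, and the boundary condition $\partial_x^ku_0(0,t)=H_k(t)$ is reduced, exactly as in the paper's proof, to the reconstruction identity $\frac1{2\pi}\int_{\partial D}3\la^2\re^{-\ri\la^3t}h_k(\la;T)\D\la=H_k(t)$, established by the cubic change of variables, a small deformation near the origin, and temporal Fourier inversion. That final step, which you correctly identify as the genuinely new ingredient, is right and is essentially the paper's own argument.

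The gap is in your treatment of the $h_k$-dependent terms everywhere \emph{else}. You assert that $h_k(\la;T)=\bigoh{\la^{-3}}$ on $\partial D$ with a leading term of the form furnished by lemma~\ref{lem:FTLeadingOrder}, that this places every integrand of~\eqref{eqn:sourceSoln} in the two-part $\phi+\psi$ form required by lemmata~\ref{lem:ConvergenceD3Integrals} and~\ref{lem:vertexRewritingLemma.D3}, and that the $h_k(\argdot;T)$ terms are ``deformed by the same contour manipulations'' because they ``carry no $t$ dependence beyond the common $\re^{-\ri\la^3t}$''. This is where the argument fails. Integrating $h_k(\la;T)$ by parts produces boundary terms $(\ri\la^3)^{-m}\re^{\ri\la^3\sigma}\left[H_k^{(m-1)}(\sigma^+)-H_k^{(m-1)}(\sigma^-)\right]$ at every partition point $\sigma$ of $H_k$, and after multiplication by the universal factor $\re^{-\ri\la^3t}$ these carry $\re^{\ri\la^3(\sigma-t)}$. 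For $\sigma>t$ this factor grows superexponentially in the outward semistrips $S$, where $\Re(\ri\la^3)>0$, so the $\bigoh{\la^{-1}}$ leading part of the integrand cannot be continued to $S$ with decay; deforming it to $\Gamma$ --- which is unavoidable once the integrand is multiplied by $\la^3$ for the PDE, or by $\Msup\nu0jk$ for the vertex values --- yields a divergent integral. This is precisely why the paper introduces the three-part decomposition $\re^{\ri\la^3\tau'}\M\phi ej+\re^{\ri\la^3\tau}\M\varphi ej+\M\psi ej$ of lemma~\ref{lem:sourceConvergenceZetaLeadingOrder}, with $\tau'<t\leq\tau$ consecutive partition points of $H_0,H_1$, and sends the $\varphi$ part \emph{inward} to $\gamma$, where $\Re(\ri\la^3)<0$, via the strengthened lemmata~\ref{lem:vertexRewritingLemma.D3Stronger} and~\ref{lem:ConvergenceD3IntegralsStronger}. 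Your parenthetical ``($\Gamma$ or $\gamma$)'' gestures at this, but you never state the splitting of the temporal transform at $t$, which is the actual mechanism, and consequently you also miss the resulting restriction that the vertex and boundary conditions are only verified for $t$ outside the partition sets of $H_0,H_1$ --- the ``almost everywhere'' in the paper's statement of that part of the proof.
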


\subsection{Solution representation: proof of proposition~\protect\ref{prop:sourceSolution}} \label{ssec:sourceUnicity}

With vertex and boundary conditions~\eqref{eqn:sourceVCandBC}, the global relations~\eqref{eqn:generalGRe} simplify to
\begin{subequations} \label{eqn:sourceGR}
\begin{align}
\label{eqn:sourceGR.-} \tag{\theparentequation.\(-\)}
    \la &\in \clos(\CC^+), &
    0 &= \re^{\ri\la^3t} \hat u_-(\M\nu-0;t) - \widehat U_-(\M\nu-0) - \left[ \M f-2 + \ri \M\nu-0 \M f-1 + \left( a_- - \M\nu-0^2 \right) \M f-0 \right] \\
\label{eqn:sourceGR.+} \tag{\theparentequation.\(+\)}
    \la &\in \clos(\CC^-), &
    0 &= \re^{\ri\la^3t} \hat u_+(\M\nu+0;t) - \widehat U_+(\M\nu+0) + \left[ \M f+2 + \ri \M\nu+0 B_1 \M f-1 + \left( a_+ - \M\nu+0^2 \right) B_0 \M f-0 \right], \\
\notag
    \la &\in \CC, &
    0 &= \re^{\ri\la^3t} \hat u_0(\M\nu00;t) - \widehat U_0(\M\nu00) + \left[ g_2 + \ri \M\nu00 h_1 + \left( a_0 - \M\nu00^2 \right) h_0 \right] \\
\label{eqn:sourceGR.0} \tag{\theparentequation.\(0\)}
    &&&\hspace{10em} - E_0\left[ G_2 + \ri \M\nu00 \beta_1 \M f-1 + \left( a_0 - \M\nu00^2 \right) \beta_0 \M f-0 \right],
\end{align}
\end{subequations}
in which, for $k\in\{0,1\}$,
\[
    \M f \pm j = \M f \pm j(\la;0,t), \qquad g_2 = \M f02(\la;0,t), \qquad G_2 = \M f02(\la;\eta,t), \qquad h_k = h_k(\la;t).
\]
For $\la\in\clos(D)$, we use equations
\begin{align*}
    &\mbox{\eqref{eqn:sourceGR.-}}\Big\rvert_{\la\mapsto\alpha^{2}\la},
    &
    &\mbox{\eqref{eqn:sourceGR.-}}\Big\rvert_{\la\mapsto\alpha\la},
    &
    &\mbox{\eqref{eqn:sourceGR.+}}\Big\rvert_{\la\mapsto\la},
    \\
    &\mbox{\eqref{eqn:sourceGR.0}}\Big\rvert_{\la\mapsto\alpha^{2}\la},
    &
    &\mbox{\eqref{eqn:sourceGR.0}}\Big\rvert_{\la\mapsto\alpha\la},
    &
    &\mbox{\eqref{eqn:sourceGR.0}}\Big\rvert_{\la\mapsto\la}.
\end{align*}
This yields the linear system, for $\la\in\clos(D)$,
\begin{equation} \label{eqn:sourceLinearSystem}
    \mathcal A(\la)
    \left( \M f-2 \quad -\M f+2 \quad -g_2 \quad G_2 \quad \ri \M f-1 \quad \M f-0 \right)^\top
    =
    Y(\la),
\end{equation}
where $\mathcal A(\la)$ is defined in equation~\eqref{eqn:sourceA}
and
\begin{equation} \label{eqn:sourceY}
    Y(\la) =
    \re^{\ri\la^3t}
    \begin{pmatrix}
        \hat u_-(\M\nu-2;t) \\ \hat u_-(\M\nu-1;t) \\ \hat u_+(\M\nu+0;t) \\ \hat u_0(\M\nu02;t) \\ \hat u_0(\M\nu01;t) \\ \hat u_0(\M\nu00;t)
    \end{pmatrix}
    -
    \begin{pmatrix}
        \widehat U_-(\M\nu-2) \\ \widehat U_-(\M\nu-1) \\ \widehat U_+(\M\nu+0) \\ \widehat U_0(\M\nu02) \\ \widehat U_0(\M\nu01) \\ \widehat U_0(\M\nu00)
    \end{pmatrix}
    +
    \begin{pmatrix}
        0 \\ 0 \\ 0 \\
        \ri\M\nu02h_1 + (a_0-\Msup\nu022)h_0 \\
        \ri\M\nu01h_1 + (a_0-\Msup\nu012)h_0 \\
        \ri\M\nu00h_1 + (a_0-\Msup\nu002)h_0
    \end{pmatrix}.
\end{equation}
Cramer's rule and row and column operations justify the following lemma.

\begin{lem} \label{lem:sourceLinearSystemSolution}
    Linear system~\eqref{eqn:sourceLinearSystem} has solution
    \begin{align*}
        \M f-2 &= \delta_1(\la)/\Delta(\la), &
        -\M f+2 &= \delta_2(\la)/\Delta(\la), &
        -g_2 &= \delta_3(\la)/\Delta(\la), \\
        G_2 &= \delta_4(\la)/\Delta(\la), &
        \ri \M f-1 &= \delta_5(\la)/\Delta(\la), &
        \M f-0 &= \delta_6(\la)/\Delta(\la),
    \end{align*}
    where $\Delta=\det\mathcal{A}$ and $\delta_k$ is the determinant of $\mathcal A$ but with the $k$\textsuperscript{th} column replaced by $Y$ (not necessarily given by equation~\eqref{eqn:sourceY}).
    Moreover,
    \begin{equation} \label{eqn:sourceDelta}
        \Delta(\la) = (\M\nu-1-\M\nu-2) \det
        \begin{pmatrix}
            1 & E_2^{-1} & \beta_0(a_0-\Msup\nu022) + \beta_1(\M\nu-2+\M\nu-1)\M\nu02 \\
            1 & E_1^{-1} & \beta_0(a_0-\Msup\nu012) + \beta_1(\M\nu-2+\M\nu-1)\M\nu01 \\
            1 & E_0^{-1} & \beta_0(a_0-\Msup\nu002) + \beta_1(\M\nu-2+\M\nu-1)\M\nu00 \\
        \end{pmatrix}
    \end{equation}
    and
    \begin{align*}
        \delta_1(\la) &=
        -\det\begin{pmatrix}
            1 & E_2 & Y_4 \\
            1 & E_1 & Y_5 \\
            1 & E_0 & Y_6
        \end{pmatrix} \det\begin{pmatrix} \M\nu-2 & (a_--\Msup\nu-22) \\ \M\nu-1 & (a_--\Msup\nu-12) \end{pmatrix} \\
        &\hspace{5em} + \beta_1\det\begin{pmatrix} E_2^{-1} & 1 & \M\nu02 \\ E_1^{-1} & 1 & \M\nu01 \\ E_0^{-1} & 1 & \M\nu00 \end{pmatrix} \det\begin{pmatrix} Y_1 & (a_--\Msup\nu-22) \\ Y_2 & (a_--\Msup\nu-12) \end{pmatrix}, \\
        &\hspace{5em} + \beta_0\det\begin{pmatrix} E_2^{-1} & 1 & (a_0-\Msup\nu022) \\ E_1^{-1} & 1 & (a_0-\Msup\nu012) \\ E_0^{-1} & 1 & (a_0-\Msup\nu002) \end{pmatrix} \det\begin{pmatrix} \M\nu-2 & Y_1 \\ \M\nu-1 & Y_2 \end{pmatrix}, \displaybreak[0] \\
        \delta_2(\la) &=
        (Y_1-Y_2)\det\begin{pmatrix}
            E_2^{-1} & 1 & \M\nu+0B_1\beta_0(a_0-\Msup\nu022) - (a_+-\Msup\nu+02)B_0\beta_1\M\nu02 \\
            E_1^{-1} & 1 & \M\nu+0B_1\beta_0(a_0-\Msup\nu012) - (a_+-\Msup\nu+02)B_0\beta_1\M\nu01 \\
            E_0^{-1} & 1 & \M\nu+0B_1\beta_0(a_0-\Msup\nu002) - (a_+-\Msup\nu+02)B_0\beta_1\M\nu00
        \end{pmatrix} \\
        &\hspace{5em} + Y_3\Delta(\la) - \det\begin{pmatrix} Y_4 & 1 & E_2 \\ Y_5 & 1 & E_1 \\ Y_6 & 1 & E_0 \end{pmatrix} \det\begin{pmatrix} 1 & \M\nu-2 & (a_--\Msup\nu-22) \\ 1 & \M\nu-1 & (a_--\Msup\nu-12) \\ 0 & B_1\M\nu+0 & B_0(a_+-\Msup\nu+02) \end{pmatrix}, \displaybreak[0] \\
        \delta_3(\la) &=
        (Y_1-Y_2)\beta_0\beta_1\det\begin{pmatrix}
            1 & \M\nu02 & (a_0-\Msup\nu022) \\
            1 & \M\nu01 & (a_0-\Msup\nu012) \\
            1 & \M\nu00 & (a_0-\Msup\nu002)
        \end{pmatrix} \\
        &\hspace{5em} + (\M\nu-2-\M\nu-1) \det\begin{pmatrix}
            Y_4E_2^{-1} & 1 & \beta_0(a_0-\Msup\nu022) + \beta_1(\M\nu-2+\M\nu-1)\M\nu02 \\
            Y_5E_1^{-1} & 1 & \beta_0(a_0-\Msup\nu012) + \beta_1(\M\nu-2+\M\nu-1)\M\nu01 \\
            Y_6E_0^{-1} & 1 & \beta_0(a_0-\Msup\nu002) + \beta_1(\M\nu-2+\M\nu-1)\M\nu00
        \end{pmatrix}, \displaybreak[0] \\
        \delta_4(\la) &=
        (Y_2-Y_1)\beta_0\beta_1\det\begin{pmatrix}
            E_2^{-1} & \M\nu02 & (a_0-\Msup\nu022) \\
            E_1^{-1} & \M\nu01 & (a_0-\Msup\nu012) \\
            E_0^{-1} & \M\nu00 & (a_0-\Msup\nu002)
        \end{pmatrix} \\
        &\hspace{5em} + (\M\nu-2-\M\nu-1) \det\begin{pmatrix}
            1 & Y_4 & \beta_1(\M\nu-2+\M\nu-1)\M\nu02E_2 + \beta_0(a_0-\Msup\nu022)E_2 \\
            1 & Y_5 & \beta_1(\M\nu-2+\M\nu-1)\M\nu01E_1 + \beta_0(a_0-\Msup\nu012)E_1 \\
            1 & Y_6 & \beta_1(\M\nu-2+\M\nu-1)\M\nu00E_0 + \beta_0(a_0-\Msup\nu002)E_0
        \end{pmatrix}, \displaybreak[0] \\
        \delta_5(\la) &=
        \begin{pmatrix}
            1 & E_2 & (\Msup\nu-22-\Msup\nu-12)Y_4 + (Y_2-Y_1)(a_0-\Msup\nu022)\beta_0E_2 \\
            1 & E_1 & (\Msup\nu-22-\Msup\nu-12)Y_5 + (Y_2-Y_1)(a_0-\Msup\nu012)\beta_0E_1 \\
            1 & E_0 & (\Msup\nu-22-\Msup\nu-12)Y_6 + (Y_2-Y_1)(a_0-\Msup\nu002)\beta_0E_0
        \end{pmatrix}, \displaybreak[0] \\
        \delta_6(\la) &=
        \begin{pmatrix}
            1 & E_2 & (Y_2-Y_1)\M\nu02E_2\beta_1 + (\M\nu-2-\M\nu-1)Y_4 \\
            1 & E_1 & (Y_2-Y_1)\M\nu01E_1\beta_1 + (\M\nu-2-\M\nu-1)Y_5 \\
            1 & E_0 & (Y_2-Y_1)\M\nu00E_0\beta_1 + (\M\nu-2-\M\nu-1)Y_6
        \end{pmatrix}.
    \end{align*}
\end{lem}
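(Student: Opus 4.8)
The first assertion is Cramer's rule. The proposition posits a sufficiently smooth solution, and $\Delta(\la)\neq0$ for $\la\in\clos(D)$ once $R$ is large, by a dominant-balance argument analogous to lemma~\ref{lem:loopDeltaDominant} in which the hypothesis $\beta_0\neq\sqrt3\ri\beta_1$ prevents cancellation of the leading term of $\Delta$. Granting this, the entries of the solution vector of system~\eqref{eqn:sourceLinearSystem} are $\delta_k(\la)/\Delta(\la)$ with $\delta_k$ the determinant of $\mathcal A$ whose $k$th column is replaced by $Y$, which is exactly the claimed identification of $\M f-2,-\M f+2,\dots,\M f-0$. Everything else is the explicit reduction of the six-by-six determinants $\Delta,\delta_1,\dots,\delta_6$ to the displayed combinations of two- and three-by-three determinants.

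Every reduction is driven by the sparsity of the first three columns of $\mathcal A$ in~\eqref{eqn:sourceA}: column~$1$ is supported on rows $\{1,2\}$, column~$2$ on row $\{3\}$, and column~$3$ on rows $\{4,5,6\}$. For $\Delta$ I would expand along column~$2$ (one nonzero entry, deleting row~$3$), then perform $R_2\mapsto R_2-R_1$ and expand along column~$1$ to delete row~$1$; what survives is a four-by-four determinant over rows $\{4,5,6\}$ together with the modified row, the latter carrying only two nonzero entries, namely $\M\nu-1-\M\nu-2$ and $\Msup\nu-22-\Msup\nu-12$. Expanding along that row produces two three-by-three minors over rows $\{4,5,6\}$, from which $\beta_0$ and $\beta_1$ factor out of the last columns; since those minors share their first two columns, column-linearity recombines them into one determinant, and the prefactor $\M\nu-1-\M\nu-2$ is extracted using $\Msup\nu-22-\Msup\nu-12=(\M\nu-2-\M\nu-1)(\M\nu-2+\M\nu-1)$.

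Two normalisations then produce the exact form~\eqref{eqn:sourceDelta}. Scaling row~$j$ of the combined three-by-three by $E_j^{-1}$ turns each $E_j$ in its $E$-column into $E_j^{-1}$ at the cost of the factor $E_0E_1E_2$; but $E_0E_1E_2=\exp(-\ri\eta(\M\nu00+\M\nu01+\M\nu02))=1$, because $\M\nu00,\M\nu01,\M\nu02$ are the three roots of $w^3-a_0w-\la^3$ and hence sum to zero, this being the Vieta relation underlying lemma~\ref{lem:generalNu}. A single column swap then fixes the column order and absorbs the residual sign. The identical template, that is, expand along the structured columns, factor out the $\beta$'s and the difference $\M\nu-1-\M\nu-2$, and normalise the $E_j$ via $E_0E_1E_2=1$, delivers each $\delta_k$; the only change from case to case is that replacing column~$k$ by $Y$ destroys whichever block the column carried, so the surviving complementary minors differ. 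This is why $\delta_5,\delta_6$ collapse to single three-by-three determinants (their replaced columns lie among the dense columns, leaving columns $1,2,3$ intact), while $\delta_1,\dots,\delta_4$ keep a sum of products of minors coming from the two or three surviving blocks.

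The method is uniform and conceptually routine; the real difficulty is bookkeeping. Each $\delta_k$ must be handled separately because a different column replacement breaks a different part of the block structure, and signs have to be propagated carefully through the successive cofactor expansions, the operation $R_2-R_1$, the row scalings by $E_j^{-1}$, and the column swaps. The principal risk of error is in pinpointing, for each $\delta_k$, exactly where to invoke the difference-of-squares factorisation and the identity $E_0E_1E_2=1$ so that the result coincides with the precise expressions displayed rather than merely an equivalent determinant. Once the reduction of $\Delta$ is completed in full detail, the six cases for $\delta_k$ follow by the same steps with only the surviving minors changing.
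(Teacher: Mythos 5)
Your proposal matches the paper's approach exactly: the paper's entire proof is the one sentence ``Cramer's rule and row and column operations justify the following lemma,'' and your expansion strategy is a correct filling-in of those operations — I checked the reduction of $\Delta$ and the cofactor expansions, the sign bookkeeping through the column swap, the difference-of-squares factorisation extracting $\M\nu-1-\M\nu-2$, and the identity $E_0E_1E_2=1$ (from Vieta applied to $w^3-a_0w-\la^3$) all come out as you describe, yielding precisely~\eqref{eqn:sourceDelta}. The remaining $\delta_k$ follow the same template as you indicate, so the proposal is correct and essentially identical in method to the paper's (unwritten) argument.
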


We aim to substitute the formulae afforded by lemma~\ref{lem:sourceLinearSystemSolution} into the Ehrenpreis form~\eqref{eqn:generalEFe} via equation~\eqref{eqn:generalDefnF}.
However, doing so does not immediately provide a useful solution representation, because our formulae for $F_e$ would all depend on $\hat u_-(\argdot;t),\hat u_+(\argdot;t),\hat u_0(\argdot;t)$ via formula~\eqref{eqn:sourceY} for $Y$.
We shall argue, via Jordan's lemma and Cauchy's theorem, that this dependence may be removed, but that requires an asymptotic analysis of $\Delta,\delta_k$.

By lemma~\ref{lem:generalNu} and equation~\eqref{eqn:sourceDelta}, as $\la\to\infty$,
\begin{align}
    \notag
    \Delta(\la)
    &= (\M\nu-2-\M\nu-1) \sum_{j=0}^2 E_j^{-1}(\M\nu0{j+1}-\M\nu0{j+2}) \left[\beta_1(\M\nu-2-\M\nu-1) - \beta_0(\M\nu0{j+1}+\M\nu0{j+2})\right]
    \\ \label{eqn:sourceDeltaAsymptotic}
    &= 3\la^3 \sum_{j=0}^2 \re^{\ri\alpha^j\la\eta}\alpha^j \left[ -\sqrt3\ri\beta_1 + \alpha^j\beta_0 \right]\left(1+\lindecayla\right).
\end{align}
In particular, as $\la\to\infty$ from within $\clos(D)$, the dominant term does not vanish, provided $\beta_0\neq\sqrt3\ri\beta_1$.
The zeros of $\Delta$ occur asymptotically on rays of argument $\frac\pi2+\frac{2j\pi}3$ for $j\in\{0,1,2\}$, which are all exterior to $D$.

\begin{lem} \label{lem:sourceDeformContour}
    Suppose $V_-\in\AC_{\mathrm p}^0(-\infty,0]$, $V_+\in\AC_{\mathrm p}^0[0,\infty)$ and $V_0\in\AC_{\mathrm p}^0[0,\eta]$.
    Denote by $\BVec{V}$ the list $(V_-,V_+,V_0)$.
    Suppose $H_0,H_1\in\AC_{\mathrm p}^0[0,T]$.
    Denote by $\BVec{H}$ the list $(H_0,H_1)$.
    For $\la\in\clos(D)$, let $\widehat{\BVec{V}}(\la)$ and $\widehat{\BVec{H}}(\la)$ be the vectors of transforms given by
    \begin{multline*}
        \widehat{\BVec{V}}(\la)
        =
        \left(
            \int_{-\infty}^0 \re^{-\ri\nu_-(\alpha^2\la) x} V_-(x) \D x, \quad
            \int_{-\infty}^0 \re^{-\ri\nu_-(\alpha\la) x} V_-(x) \D x, \quad
            \int_0^\infty \re^{-\ri\nu_+(\la) x} V_+(x) \D x, \right. \\ \left.
            \int_0^\eta \re^{-\ri\nu_0(\alpha^2\la) x} V_0(x) \D x, \quad
            \int_0^\eta \re^{-\ri\nu_0(\alpha\la) x} V_0(x) \D x, \quad
            \int_0^\eta \re^{-\ri\nu_0(\la) x} V_0(x) \D x
        \right).
    \end{multline*}
    and
    \begin{multline*}
        \widehat{\BVec{H}}(\la;t) = \left(0,0,0,
        \int_0^t\re^{\ri\la^3s}\left[\ri\M\nu02H_1(s) + (a_0-\Msup\nu022)H_0(s)\right]\D s,
        \right. \\ \left.
        \int_0^t\re^{\ri\la^3s}\left[\ri\M\nu01H_1(s) + (a_0-\Msup\nu012)H_0(s)\right]\D s,
        \int_0^t\re^{\ri\la^3s}\left[\ri\M\nu00H_1(s) + (a_0-\Msup\nu002)H_0(s)\right]\D s
        \right).
    \end{multline*}
    Define
    \begin{align*}
        \M\zeta-3(\la;\BVec V, \BVec H, t) &= \left[ \delta_1(\la) + \nu_-(\la)\delta_5(\la) + (a_--\nu_-(\la)^2)\delta_6(\la) \right] \nu'_-(\la), \\
        \M\zeta+1(\la;\BVec V, \BVec H, t) &= \left[ -\delta_2(\la) + B_1\nu_+(\alpha\la)\delta_5(\la) + B_0(a_+-\nu_+(\alpha\la)^2)\delta_6(\la) \right] \nu'_+(\alpha\la), \\
        \M\zeta+2(\la;\BVec V, \BVec H, t) &= \left[ -\delta_2(\la) + B_1\nu_+(\alpha^2\la)\delta_5(\la) + B_0(a_+-\nu_+(\alpha^2\la)^2)\delta_6(\la) \right] \nu'_+(\alpha^2\la), \\
        \M\zeta03(\la;\BVec V, \BVec H, t) &= \left[ \delta_4(\la) + \beta_1\nu_0(\la)\delta_5(\la) + B_0(a_0-\nu_0(\la)^2)\delta_6(\la) \right] \nu'_0(\la), \\
        \M\zeta01(\la;\BVec V, \BVec H, t) &= \left[-\delta_3(\la)  \left( - \ri\M\nu01 h_1(\la;t) - (a_0-\Msup\nu012) h_0(\la;t) \right) \Delta(\la) \right] \nu'_0(\alpha\la), \\
        \M\zeta02(\la;\BVec V, \BVec H, t) &= \left[-\delta_3(\la)  \left( - \ri\M\nu02 h_1(\la;t) - (a_0-\Msup\nu022) h_0(\la;t) \right) \Delta(\la) \right] \nu'_0(\alpha^2\la),
    \end{align*}
    for $\delta_k$ the determinant of matrix $\mathcal A$ given by equation~\eqref{eqn:sourceA}, except with the $k$\textsuperscript{th} column replaced by $\widehat{\BVec{H}}(\la;t)-\widehat{\BVec{V}}(\la)$.
    Then, as $\la\to\infty$ within $\clos(D)$, $\M\zeta ej(\la;\BVec V, 0, 0)/\Delta(\la)=\lindecayla$ and they are also all continuous functions on $\clos(D)$ and analytic in $D$.
    The functions $\M\zeta ej(\la;0, \BVec H, t)/\Delta(\la)$ are also all continuous functions on $\clos(D)$ and analytic in $D$.
    Moreover,
    \begin{subequations} \label{eqn:sourceDeformContour.IntegralV0}
    \begin{align}
        \label{eqn:sourceDeformContour.IntegralV0.--}
        &\forall\;x<0, & 0 &= \int_{\partial D} \re^{\ri\nu_-(\la)x} \frac{\M\zeta-3(\la;\BVec V,0,0)}{\Delta(\la)} \D \la, \\
        \label{eqn:sourceDeformContour.IntegralV0.++}
        &\forall\;x>0, & 0 &= \int_{\partial D} \frac{\alpha\re^{\ri\nu_+(\alpha\la)x} \M\zeta+1(\la;\BVec V,0,0) + \alpha^2\re^{\ri\nu_+(\alpha^2\la)x} \M\zeta+2(\la;\BVec V,0,0)}{\Delta(\la)} \D \la, \\
        \label{eqn:sourceDeformContour.IntegralV0.0-}
        &\forall\;x<\eta, & 0 &= \int_{\partial D} \re^{\ri\nu_0(\la)(x-\eta)} \frac{\M\zeta03(\la;\BVec V,0,0)}{\Delta(\la)} \D \la, \\
        \label{eqn:sourceDeformContour.IntegralV0.0+}
        &\forall\;x>0, & 0 &= \int_{\partial D} \frac{\alpha\re^{\ri\nu_0(\alpha\la)x} \M\zeta01(\la;\BVec V,0,0) + \alpha^2\re^{\ri\nu_0(\alpha^2\la)x} \M\zeta02(\la;\BVec V,0,0)}{\Delta(\la)} \D \la,
    \end{align}
    \end{subequations}
    and, provided $T\geq t$, and $j\in\{1,2\}$,
    \begin{subequations} \label{eqn:sourceDeformContour.IntegralH0}
    \begin{align}
        \label{eqn:sourceDeformContour.IntegralH0.--}
        &\forall\;x<0, & 0 &= \int_{\partial D} \re^{\ri\nu_-(\la)x-\ri\la^3t} \frac{\M\zeta-3(\la;0,\BVec H, T)-\M\zeta-3(\la;0,\BVec H, t)}{\Delta(\la)} \D \la, \\
        \label{eqn:sourceDeformContour.IntegralH0.++}
        &\forall\;x>0, & 0 &= \int_{\partial D} \alpha^j\re^{\ri\nu_+(\alpha^j\la)x-\ri\la^3t} \frac{\M\zeta+j(\la;0,\BVec H, T)-\M\zeta+j(\la;0,\BVec H, t)}{\Delta(\la)} \D \la, \\
        \label{eqn:sourceDeformContour.IntegralH0.0-}
        &\forall\;x<\eta, & 0 &= \int_{\partial D} \re^{\ri\nu_0(\la)(x-\eta)-\ri\la^3t} \frac{\M\zeta03(\la;0,\BVec H, T)-\M\zeta03(\la;0,\BVec H, t)}{\Delta(\la)} \D \la, \\
        \label{eqn:sourceDeformContour.IntegralH0.0+}
        &\forall\;x>0, & 0 &= \int_{\partial D} \alpha^j\re^{\ri\nu_+(\alpha^j\la)x-\ri\la^3t}\frac{\M\zeta0j(\la;0,\BVec H, T)-\M\zeta0j(\la;0,\BVec H, t)}{\Delta(\la)} \D \la.
    \end{align}
    \end{subequations}
\end{lem}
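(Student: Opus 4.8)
The plan is to mirror the proof of lemma~\ref{lem:loopDeformContour}, handling the $\BVec V$-dependent quantities exactly as in the loop case and then supplying the genuinely new argument needed for the $\BVec H$-dependent identities~\eqref{eqn:sourceDeformContour.IntegralH0}. For the decay and regularity of $\M\zeta ej(\la;\BVec V,0,0)/\Delta(\la)$, I would first use lemma~\ref{lem:generalNu} to place $\nu_e(\alpha\la),\nu_e(\alpha^2\la)\in\clos(\CC^+)$ and $\nu_e(\la)\in\clos(\CC^-)$ for $\la\in\clos(D)$, then apply lemma~\ref{lem:FTLeadingOrder} to obtain $\widehat{\BVec V}_1,\widehat{\BVec V}_2,\widehat{\BVec V}_3,\widehat{\BVec V}_6=\lindecayla$ while the two bond transforms carry growing factors $\widehat{\BVec V}_4=\bigoh{\la^{-1}\re^{-\ri\alpha^2\la\eta}}$ and $\widehat{\BVec V}_5=\bigoh{\la^{-1}\re^{-\ri\alpha\la\eta}}$, all uniformly in $\arg\la$. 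Substituting these into the determinant formulae of lemma~\ref{lem:sourceLinearSystemSolution} and tracking leading orders (the same bookkeeping as in the loop case) gives $\M\zeta ej(\la;\BVec V,0,0)=\bigoh{\la^2\abs{\re^{\ri\la\eta}}}$, one power of $\la$ below the dominant $j=0$ term of $\Delta$ recorded in equation~\eqref{eqn:sourceDeltaAsymptotic}. Since $\beta_0\neq\sqrt3\ri\beta_1$ makes that term nonzero, and its asymptotic zeros lie on the rays $\arg\la=\tfrac\pi2+\tfrac{2j\pi}3$ exterior to $D$, dividing by $\Delta$ yields $\lindecayla$ on $\clos(D)$; continuity on $\clos(D)$ and analyticity in $D$ then follow from analyticity of the $\nu_e$ outside $B(0,R)$, of the Fourier transforms in their sectors, and non-vanishing of $\Delta$ on $\clos(D)$ for $R$ large.

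The regularity claim for $\M\zeta ej(\la;0,\BVec H,t)/\Delta(\la)$ is easier and requires no decay estimate: each $h_k(\la;t)=\int_0^t\re^{\ri\la^3 s}H_k(s)\D s$ is entire in $\la$, so the numerators are analytic wherever the $\nu_e$ are, and the same non-vanishing of $\Delta$ on $\clos(D)$ gives continuity on $\clos(D)$ and analyticity in $D$.

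For the vanishing integrals~\eqref{eqn:sourceDeformContour.IntegralV0} I would reproduce, case by case, the second paragraph of the loop proof: in each integrand the spatial exponential $\re^{\ri\nu_e(\alpha^j\la)(\cdot)}$ has nonpositive real part in the sector swept out when closing $\partial D$ (using $x<0$, $x>0$, or $x<\eta$ as appropriate), so the integrand is a product of an exponential factor with negative real part and a factor $y(\la)$ that is analytic in $D$, continuous and vanishing at infinity on $\clos(D)$; Jordan's lemma then annihilates the closing arc and Cauchy's theorem forces the integral to $0$.

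The genuinely new part, and the step I expect to be the main obstacle, is~\eqref{eqn:sourceDeformContour.IntegralH0}. The essential observation is that the difference forming each integrand, $h_k(\la;T)-h_k(\la;t)=\int_t^T\re^{\ri\la^3 s}H_k(s)\D s$, when combined with the $\re^{-\ri\la^3 t}$ already present, produces $\int_t^T\re^{\ri\la^3(s-t)}H_k(s)\D s$ with $s-t\geq0$ because $T\geq t$; since $\Re(\ri\la^3)\leq0$ throughout $\clos(D)$, this temporal factor is bounded by $1$ and decays off the boundary of $D$. This is precisely the decay that the individual terms $h_k(\la;t)$ lack and that legitimizes closing the contour. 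I would therefore verify that $[\M\zeta ej(\la;0,\BVec H,T)-\M\zeta ej(\la;0,\BVec H,t)]/\Delta(\la)$ is analytic in $D$ and continuous on $\clos(D)$, and that, multiplied by the spatial exponential $\re^{\ri\nu_e(\alpha^j\la)(\cdot)}$, the full integrand decays uniformly on the closing arcs, whence Jordan's lemma and Cauchy's theorem give $0$. The delicate point is the uniformity of this decay: on the interior of $D$ the factor $\re^{\ri\la^3(s-t)}$ supplies super-polynomial decay, but on the two boundary rays $\arg\la\in\{-\tfrac\pi3,-\tfrac{2\pi}3\}$ it is merely $O(1)$, so there one must lean on the strict sign of $x$ (respectively $x-\eta$) to obtain exponential spatial decay overcoming the polynomial growth of the $\nu_e$ and $\delta_k$ factors; bounding the $s$-integral uniformly and invoking $T\geq t$ ensures no growing temporal contribution survives.
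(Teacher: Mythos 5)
Your treatment of the $\BVec V$-dependent claims and of equations~\eqref{eqn:sourceDeformContour.IntegralV0} matches the paper's proof, which simply ports the argument of lemma~\ref{lem:loopDeformContour} with the determinant formulae of lemma~\ref{lem:sourceLinearSystemSolution} and the asymptotics~\eqref{eqn:sourceDeltaAsymptotic} of $\Delta$. The regularity of $\M\zeta ej(\la;0,\BVec H,t)/\Delta(\la)$ is also handled as you say.

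The gap is in your treatment of equations~\eqref{eqn:sourceDeformContour.IntegralH0}. You correctly observe that $\re^{-\ri\la^3t}\left[h_k(\la;T)-h_k(\la;t)\right]=\int_t^T\re^{\ri\la^3(s-t)}H_k(s)\D s$ is bounded on $\clos(D)$ because $s-t\geq0$ and $\Re(\ri\la^3)\leq0$ there, but boundedness is not enough: without further work the modified column entries are $\bigoh{\la^2}$, the determinants inherit this growth, and the ratios $\re^{-\ri\la^3t}\left[\M\zeta ej(\la;0,\BVec H,T)-\M\zeta ej(\la;0,\BVec H,t)\right]/\Delta(\la)$ are only $\bigoh{\la^{2}}$ on $\clos(D)$. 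Your proposed rescue --- leaning on the spatial exponential on the two boundary rays --- works for~\eqref{eqn:sourceDeformContour.IntegralH0.--} and~\eqref{eqn:sourceDeformContour.IntegralH0.0-}, where $\Im(\nu_e(\la))\leq-\tfrac{\sqrt3}2\abs\la(1+\lindecayla)$ throughout $\clos(D)$, but it fails for~\eqref{eqn:sourceDeformContour.IntegralH0.++} and~\eqref{eqn:sourceDeformContour.IntegralH0.0+}: there the kernel is $\re^{\ri\nu_e(\alpha^j\la)x}$ with $x>0$, and on the ray $\arg\la=-\tfrac{2\pi}3$ (for $j=1$), respectively $\arg\la=-\tfrac\pi3$ (for $j=2$), the point $\alpha^j\la$ is asymptotically real, so by lemma~\ref{lem:generalNu} $\nu_e(\alpha^j\la)$ is real and the spatial factor has modulus $1$. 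On that ray the temporal factor also has modulus $1$, so the integrand grows like $\la^2$, the integral does not converge, and Jordan's lemma is inapplicable because the non-exponential part of the integrand does not tend to zero near the endpoints of the closing arcs.

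The missing step, which is the substance of the paper's proof, is to regard $\re^{-\ri\la^3t}\left[\widehat{\BVec H}(\la;T)-\widehat{\BVec H}(\la;t)\right]$ as built from the shifted transforms $\int_0^{T-t}\re^{\ri\la^3s}H_k(s+t)\D s$ and to integrate by parts, i.e.\ apply lemma~\ref{lem:FTLeadingOrder} under the substitution $\la\mapsto\la^3$ (using $H_k\in\AC^0_{\mathrm p}$). This upgrades each temporal integral from $\bigoh1$ to $\bigoh{\la^{-3}}$, so each modified column entry is $\lindecayla$, whence $\delta_1,\ldots,\delta_4=\bigoh{\la^2\re^{\ri\eta\la}}$, $\delta_5=\bigoh{\la\re^{\ri\eta\la}}$, $\delta_6=\bigoh{\re^{\ri\eta\la}}$, and the full ratio is $\lindecayla$ uniformly on $\clos(D)$. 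Only then do Jordan's lemma and Cauchy's theorem deliver all four identities~\eqref{eqn:sourceDeformContour.IntegralH0}, including the $j\in\{1,2\}$ cases where the spatial kernel is merely bounded on one boundary ray.
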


Equations~\eqref{eqn:sourceLinearSystem} and~\eqref{eqn:sourceY} imply that, if
\begin{equation*}
    \BVec v = (u_-(\argdot,t),u_+(\argdot,t),u_0(\argdot,t)),
    \qquad
    \BVec w = (U_-,U_+,U_0),
    \qquad
    \BVec h = (h_0,h_1),
\end{equation*}
then
\begin{align*}
    F_-(        \la;0,t) &= \re^{\ri\la^3t}\frac{\M\zeta-3(\la;\BVec v,0,0)}{\Delta(\la)} - \frac{\M\zeta-3(\la;\BVec w,0,0)}{\Delta(\la)} + \frac{\M\zeta-3(\la;0, \BVec h,t)}{\Delta(\la)}, \\
    F_+(\alpha^j\la;0,t) &= \re^{\ri\la^3t}\frac{\M\zeta+j(\la;\BVec v,0,0)}{\Delta(\la)} - \frac{\M\zeta+j(\la;\BVec w,0,0)}{\Delta(\la)} + \frac{\M\zeta+j(\la;0, \BVec h,t)}{\Delta(\la)}, & j &\in\{1,2\}, \\
    F_0(        \la;0,t) &= \re^{\ri\la^3t}\frac{\M\zeta03(\la;\BVec v,0,0)}{\Delta(\la)} - \frac{\M\zeta03(\la;\BVec w,0,0)}{\Delta(\la)} + \frac{\M\zeta03(\la;0, \BVec h,t)}{\Delta(\la)}, \\
    F_0(\alpha^j\la;0,t) &= \re^{\ri\la^3t}\frac{\M\zeta0j(\la;\BVec v,0,0)}{\Delta(\la)} - \frac{\M\zeta0j(\la;\BVec w,0,0)}{\Delta(\la)} + \frac{\M\zeta0j(\la;0, \BVec h,t)}{\Delta(\la)}, & j &\in\{1,2\}.
\end{align*}
Substituting these formulae into the Ehrenpreis form~\eqref{eqn:generalEFe}, we obtain solution representations~\eqref{eqn:sourceSoln}, except with the additional terms corresponding to $\BVec v$.
However, by equations~\eqref{eqn:sourceDeformContour.IntegralV0} of lemma~\ref{lem:sourceDeformContour}, these terms evaluate to $0$.
By equations~\eqref{eqn:sourceDeformContour.IntegralH0} of lemma~\ref{lem:sourceDeformContour}, each of the $\M\zeta ej(\la;0, \BVec h,t)$ may be replaced by $\M\zeta ej(\la;0, \BVec h,T)$ without affecting the result.
This is sufficient to justify the ``$h_k(\argdot;t)$ may be replaced with $h_k(\argdot,T)$'' claim in all except the terms of the second integral of equation~\eqref{eqn:sourceSoln.0} whose dependence on $h_k(\la;t)$ is explicitly stated.
The $t\mapsto T$ invariance for those terms follows from the fact that,
\[
    \re^{-\ri\la^3t}\left[- \ri\M\nu0j \big\{h_1(\la;T)-h_1(\la;t)\big\} - (a_0-\Msup\nu0j2) \big\{h_0(\la;T)-h_0(\la;t)\big\}\right] = \lindecayla
\]
as $\la\to\infty$ in $\clos(D)$ and another application of Jordan's lemma.
This completes the proof of proposition~\ref{prop:sourceSolution}.

\begin{proof}[Proof of lemma~\ref{lem:sourceDeformContour}]
    The proof of the asymptotic formula $\M\zeta ej(\la;\BVec V,0,0)/\Delta(\la)=\lindecayla$ follows exactly the proof of lemma~\ref{lem:loopDeformContour}, except that the formulae for $\delta_k$ in lemma~\ref{lem:sourceLinearSystemSolution} replace those from lemma~\ref{lem:loopLinearSystemSolution}, and the asymptotic analysis of $\Delta$ and the locus of its zeros is as presented in this section.
    The argument for equations~\eqref{eqn:sourceDeformContour.IntegralV0} is then also like that in the proof of lemma~\ref{lem:loopDeformContour}.

    We can consider $\re^{-\ri\la^3t}[\M\zeta ej(\la;0,\BVec H, T)-\M\zeta ej(\la;0,\BVec H, t)]$ as being given by the formula for $\M\zeta ej$ where the $\delta_k$ are the determinants of $\mathcal A$ with the $k$\textsuperscript{th} column replaced by
    \begin{multline*}
        \left(0,0,0,
        \int_0^{T-t}\re^{\ri\la^3s}\left[\ri\M\nu02H_1(s-t) + (a_0-\Msup\nu022)H_0(s-t)\right]\D s,
        \right. \\ \left.
        \int_0^{T-t}\re^{\ri\la^3s}\left[\ri\M\nu01H_1(s-t) + (a_0-\Msup\nu012)H_0(s-t)\right]\D s,
        \right. \\ \left.
        \int_0^{T-t}\re^{\ri\la^3s}\left[\ri\M\nu00H_1(s-t) + (a_0-\Msup\nu002)H_0(s-t)\right]\D s
        \right).
    \end{multline*}
    By lemma~\ref{lem:FTLeadingOrder} under $\la\mapsto\la^3$, each entry in the above vector is $\lindecayla$ as $\la\to\infty$ in $\clos D$.
    In this case,
    \[
        \delta_3(\la) = (\M\nu-2-\M\nu-1) \det
        \begin{pmatrix}
            \lindecayla & E_2 & \beta_0(a_0-\Msup\nu022) + \beta_1(\M\nu-2+\M\nu-1)\M\nu02 E_2 \\
            \lindecayla & E_1 & \beta_0(a_0-\Msup\nu012) + \beta_1(\M\nu-2+\M\nu-1)\M\nu01 E_1 \\
            \lindecayla & E_0 & \beta_0(a_0-\Msup\nu002) + \beta_1(\M\nu-2+\M\nu-1)\M\nu00 E_0
        \end{pmatrix}
        =
        \bigoh{\la^2\re^{\ri\eta\la}},
    \]
    and $\delta_1(\la),\delta_2(\la),\delta_4(\la)=\bigoh{\la^2\re^{\ri\eta\la}}$ also.
    Similarly, $\delta_5=\bigoh{\la\re^{\ri\eta\la}}$ and $\delta_6=\bigoh{\re^{\ri\eta\la}}$, so
    \[
        \re^{-\ri\la^3t}\frac{\M\zeta ej(\la;0,\BVec H, T)-\M\zeta ej(\la;0,\BVec H, t)}{\Delta(\la)} = \lindecayla.
    \]
    Equations~\eqref{eqn:sourceDeformContour.IntegralH0} follow by Jordan's lemma.
\end{proof}

\subsection{Solution satisfies the problem: proof of proposition~\protect\ref{prop:sourceExistence}} \label{ssec:source.existence}

The following lemma allows us to write the data dependent parts of the integrands of the various integrals over $\partial D$ as a sum of three terms.
One part, labelled $\phi$, will have its integral deformed from $\partial D$ outwards to $\gamma$.
The integrals of another part, labelled $\varphi$, will be deformed inwards to $\gamma$.
The final part, which has the symbol $\psi$, has faster decay on $\partial D$, so we leave its integrals on $\partial D$ itself.
The actual contour deformations are justified in the general lemmata~\ref{lem:vertexRewritingLemma.D3Stronger} and~\ref{lem:ConvergenceD3IntegralsStronger}; the purpose of lemma~\ref{lem:sourceConvergenceZetaLeadingOrder} is to justify their applicability to this particular problem.
Broadly, the argument is similar in structure to that in~\S\ref{ssec:loop.existence}, the difference being that in this case we have the extra term $\varphi$ which arises from the inhomogeneous vertex conditions.

\begin{lem} \label{lem:sourceConvergenceZetaLeadingOrder}
    Suppose $U_-,U_+,U_0,H_0,H_1$, are as in the hypothesis of proposition~\ref{prop:sourceExistence} and denote by $\BVec{v}$ and $\BVec{h}$ the lists $(U_-,U_+,U_0)$ and $(H_0,H_1)$.
    For $\la\in\partial D$, let
    $\M\zeta ej$ be defined as in the statement of lemma~\ref{lem:sourceDeformContour}, but in terms of these $\BVec{v},\BVec{h}$.
    Let $\tau'<\tau$ be consecutive points in the union of the partition sets of $H_0,H_1$ such that $\tau'<t\leq\tau$; see definition~\ref{defn:piecewiseACfunctions}.
    Then,
    \[
        \M\zeta ej(\la;\BVec v,\BVec h, T)/\Delta(\la) = \re^{\ri\la^3\tau'}\M\phi ej(\la) + \re^{\ri\la^3\tau}\M\varphi ej(\la) + \M\psi ej(\la),
    \]
    with $\M\psi ej(\la) = \bigoh{\la^{-5}}$ as $\la\to\infty$ along $\partial D$ and $\M\phi ej$, respectively $\M\varphi ej$, is continuously extensible to $S$, respectively $s$, and analytically to its interior, obeying
    \[
        \abs{\M\phi ej(\la)}=\bigoh{\la^{-1}},
        \qquad\mbox{respectively}\qquad
        \abs{\M\varphi ej(\la)}=\bigoh{\la^{-1}},
    \]
    as $\la\to\infty$ within $S$, respectively $s$.
\end{lem}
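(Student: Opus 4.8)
The plan is to split each $\M\zeta ej(\la;\BVec v,\BVec h,T)/\Delta(\la)$, using linearity of the determinants $\delta_k$ in their replaced column, into an initial-data contribution (obtained by setting $\BVec h=0$) and a boundary-data contribution (obtained by setting $\BVec v=0$), and to treat the two separately. The initial-data contribution is handled verbatim as in lemma~\ref{lem:loopConvergenceZetaLeadingOrder}: applying lemma~\ref{lem:FTLeadingOrder} with $N=4$ to each spatial transform $\widehat U_e(\M\nu ej)$, and combining with the asymptotics of $\nu_e$ from lemma~\ref{lem:generalNu} and the dominant-term estimate~\eqref{eqn:sourceDeltaAsymptotic} for $\Delta$, yields a part of order $\bigoh{\la^{-1}}$ that extends holomorphically to $S$ and a remainder of order $\bigoh{\la^{-5}}$ on $\partial D$. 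The former is assigned to $\M\phi ej$—after multiplying by the factor $\re^{-\ri\la^3\tau'}$, which is bounded and analytic on $S$ since $\tau'\geq0$ and $\Re(\ri\la^3)\geq0$ on the semistrips constituting $S$—and the latter to $\M\psi ej$; the initial data produces no $\M\varphi ej$ term.

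The boundary-data contribution is where the genuinely new work lies, and is the source of the $\M\varphi ej$ term. Here I would expand every temporal transform $h_k(\la;T)=\int_0^T\re^{\ri\la^3 s}H_k(s)\,\D s$—which appears both directly in the integrand of~\eqref{eqn:sourceSoln.0} and, through the combinations $\ri\M\nu0j h_1+(a_0-\Msup\nu0j2)h_0$, in the $\widehat{\BVec H}$-column of the $\delta_k$—by lemma~\ref{lem:FTLeadingOrder} under the substitution $\la\mapsto\la^3$ with $N=2$, giving
\[
    h_k(\la;T)=\frac1{\ri\la^3}\sum_{\xi}\re^{\ri\la^3\xi}\left[H_k(\xi^+)-H_k(\xi^-)\right]+\bigoh{\la^{-6}},
\]
where $\xi$ ranges over the partition points of $H_k$ in $[0,T]$ together with the endpoints, the extensible exponential sum continues to the $H_k'$-jump terms at order $\la^{-6}$, and the true integral remainder is then $\bigoh{\la^{-9}}$. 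Because $\tau'<t\leq\tau$ are consecutive partition points, every such $\xi$ satisfies either $\xi\leq\tau'$ or $\xi\geq\tau$; I split the exponential sum accordingly, extract the common factor $\re^{\ri\la^3\tau'}$ from the first group and $\re^{\ri\la^3\tau}$ from the second, and assign the resulting holomorphic quantities to $\M\phi ej$ and $\M\varphi ej$, while the integral remainder joins $\M\psi ej$.

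The decay and extensibility then follow from the sign of $\Re(\ri\la^3)$ on the two regions. On $S$ the residual exponentials $\re^{\ri\la^3(\xi-\tau')}$ have $\xi-\tau'\leq0$ and $\Re(\ri\la^3)\geq0$, so they are bounded and holomorphic; multiplying by the algebraic factors $(a_0-\Msup\nu0j2)\Msup\nu0j\prime=\bigoh{\la^2}$ and dividing by $\Delta$—which by~\eqref{eqn:sourceDeltaAsymptotic} is nonvanishing on $S$ for $R$ large, its zeros lying asymptotically on the exterior rays $\arg\la=\tfrac\pi2+\tfrac{2j\pi}3$ at fixed angular distance from $\partial D$—yields $\M\phi ej=\bigoh{\la^{-1}}$ on $S$. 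Symmetrically, on $s\subset D$ the residual exponentials $\re^{\ri\la^3(\xi-\tau)}$ have $\xi-\tau\geq0$ and $\Re(\ri\la^3)\leq0$, while zero-freeness of $\Delta$ is automatic since $s\subset D$, giving $\M\varphi ej=\bigoh{\la^{-1}}$ on $s$. The $\bigoh{\la^{-9}}$ temporal remainder, multiplied by $\bigoh{\la^2}$ and divided by $\Delta$, contributes $\bigoh{\la^{-7}}\subset\bigoh{\la^{-5}}$ to $\M\psi ej$ on $\partial D$.

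The main obstacle is organisational rather than analytic: one must verify that the very same factors $\re^{\ri\la^3\tau'}$ and $\re^{\ri\la^3\tau}$ can be extracted consistently from all six $\M\zeta ej$, reconciling the two channels by which $h_k$ enters. For the $\widehat{\BVec H}$-column channel I would trace the above $h_k$ expansion through the explicit determinants of lemma~\ref{lem:sourceLinearSystemSolution}, checking—exactly as in the first paragraph of the proof of lemma~\ref{lem:sourceDeformContour}—that each relevant $\delta_k$ carries at most a factor $\bigoh{\re^{\ri\la\eta}}$ matching that of $\Delta$, so that the $\re^{\ri\la\eta}$ growth cancels in $\delta_k/\Delta$ and the stated orders survive, and that the factors $E_j^{\pm1}=\re^{\mp\ri\eta\M\nu0j}$ do not disturb the $\tau'/\tau$ grouping of the exponential sum. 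Once this bookkeeping is confirmed, summing the initial- and boundary-data parts delivers the claimed decomposition.
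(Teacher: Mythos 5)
Your proposal follows the paper's own proof essentially step for step: the same linearity split into the $\BVec v$-part (handled as in the loop case via lemma~\ref{lem:FTLeadingOrder} with $N=4$, with the harmless factor $\re^{-\ri\la^3\tau'}$ absorbed on $S$) and the $\BVec h$-part (expanded by integrating $h_k$ by parts thrice, splitting the jump sum at the consecutive partition points $\tau'$ and $\tau$, extracting $\re^{\ri\la^3\tau'}$ and $\re^{\ri\la^3\tau}$, and bounding the residual exponentials by the sign of $\Re(\ri\la^3)$ on $S$ and $s$), with the $\bigoh{\la^{-9}}$ remainder absorbed into $\M\psi ej$ and the $\delta_k/\Delta$ bookkeeping delegated to the computations already done for lemma~\ref{lem:sourceDeformContour}. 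This matches the paper's argument, so no further comment is needed.
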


\begin{proof}[Proof of lemma~\ref{lem:sourceConvergenceZetaLeadingOrder}]
    By definition, $\M\zeta ej(\la;\BVec v, \BVec h, T) = \M\zeta ej(\la;\BVec v, 0, 0) + \M\zeta ej(\la;0, \BVec h, T)$, so we may give the proof separately for each part.
    For the $\BVec v$ dependent part, the proof essentially follows the part of the proof of lemma~\ref{lem:sourceDeformContour} not concerned with equations~\eqref{eqn:sourceDeformContour.IntegralV0}--\eqref{eqn:sourceDeformContour.IntegralH0}, except that the improved regularity of the initial data allows an enhancement to the application of lemma~\ref{lem:FTLeadingOrder}: now $N=4$.
    This argument can be made for $\tau'=0$, but it also holds for the $\tau'$ specified.
    Henceforth, we may assume $\BVec v=0$.

    Let $T'$ be the union of the partition sets of $H_0$ and $H_1$.
    For notational convenience, we define $H_\ell(0^-),H_\ell(T^+):=0$.
    Integration by parts thrice establishes that
    \begin{multline} \label{eqn:sourceConvergenceZetaLeadingOrder.proof1}
        \int_0^T \re^{\ri\la^3s} H_\ell(s) \D s
        =
        \sum_{\sigma\in T'} \sum_{k=1}^3 (\ri\la)^{-3k}
        \re^{\ri\la^3\sigma}
        \left[
            H^{(k-1)}_\ell(\sigma^+) - H^{(k-1)}_\ell(\sigma^-)
        \right]
        \\
        - \ri \la^{-9}
        \sum_{\substack{\tau_1,\tau_2\in T'\\\text{consecutive}}} \int_{\tau_1}^{\tau_2} \re^{\ri\la^3s} H'''_\ell (s) \D s.
    \end{multline}
    For $\la$ in either ray of $\partial D$, the integrals on the right all converge absolutely and their sum is dominated by
    \[
        \int_0^T \abs{H'''_\ell(s)}\D s,
    \]
    in which $H'''_\ell$ represents the piecewise (not distributional) derivative.
    Therefore, the whole last term is $\bigoh{\la^{-9}}$ as $\la\to\infty$ along $\partial D$.
    In the construction of $\M\zeta ej(\la;0, \BVec h, T)/\Delta(\la)$ from the $\ell\in\{0,1\}$ cases of the integral on the left of equation~\eqref{eqn:sourceConvergenceZetaLeadingOrder.proof1}, it is multiplied by a factor which is $\bigoh{\la^2}$, hence overall the final term on the right of equation~\eqref{eqn:sourceConvergenceZetaLeadingOrder.proof1} contributes a $\bigoh{\la^{-7}}$ term, which can be absorbed into $\M\psi ej$.

    We partition $T'$ into $T_<,T_>$ so that
    \[
        T_\lessgtr = \{\sigma\in T': \sigma \lessgtr(\tau'+\tau)/2\},
        \qquad\mbox{so}\qquad
        \max T_<=\tau'<t,
        \qquad\mbox{and}\qquad
        \min T_>=\tau\geq t.
    \]
    The first double sum on the right of equation~\eqref{eqn:sourceConvergenceZetaLeadingOrder.proof1} can be expressed as
    \begin{multline*}
        \re^{\ri\la^3\tau'}\sum_{\sigma\in T_<} \sum_{k=1}^3 (\ri\la)^{-3k}
        \re^{-\ri\la^3(\tau'-\sigma)}
        \left[
            H^{(k-1)}_\ell(\sigma^+) - H^{(k-1)}_\ell(\sigma^-)
        \right]
        \\
        +
        \re^{\ri\la^3\tau}\sum_{\sigma\in T_>} \sum_{k=1}^3 (\ri\la)^{-3k}
        \re^{\ri\la^3(\sigma-\tau)}
        \left[
            H^{(k-1)}_\ell(\sigma^+) - H^{(k-1)}_\ell(\sigma^-)
        \right].
    \end{multline*}
    Ignoring the $\re^{\ri\la^3\tau}$ factor, the latter double sum is $\bigoh{\la^{-3}}$ in $s$, so it contributes to the ratio $\M\zeta ej(\la;0, \BVec h, T)/\Delta(\la)$ a term which is $\lindecayla$, and may be absorbed into $\M\varphi ej$.
    Similarly, the former double sum contributes to $\M\zeta ej(\la;0, \BVec h, T)/\Delta(\la)$ a term which can be accommodated in $\M\phi ej$.
\end{proof}

\begin{proof}[Proof that each $u_e$ satisfies its PDE]
    Suppose $(x,t)$ is in the interior of the domain of $u_e$.
    Then there exist $\xi>0$ and $\tau',\tau\geq0$ such that $x\in(-\infty,-\xi)$ or $(\xi,\infty)$ and $t\in(\tau',\tau]$ with $\tau',\tau$ being consecutive points in the union of the partition sets of $H_0,H_1$.
    Therefore, by lemmata~\ref{lem:ConvergenceD3IntegralsStronger} and~\ref{lem:sourceConvergenceZetaLeadingOrder}, we can rewrite each integral on the right of equations~\eqref{eqn:sourceSoln} in such a way that we may differentiate under the integral thrice in $x$ or once in $t$.
    The result~\eqref{eqn:generalproblem.PDE} follows from the definition of $\nu$.
\end{proof}

\begin{proof}[Proof that each $u_e$ satisfies its initial condition]
    The argument from~\S\ref{ssec:loop.existence} requires no modification for this problem.
\end{proof}

\begin{proof}[Proof that the $u_e$ satisfy the vertex conditions almost everywhere]
    In this argument, so that we may successfully apply lemma~\ref{lem:vertexRewritingLemma.D3Stronger}, we shall have to assume not only that $t>0$, but also that $t$ is not a point in the union of the partition sets of $H_0$ and $H_1$.
    We aim to show, for $k\in\{0,1\}$,
    \begin{subequations} \label{eqn:sourceVertex.toProve}
    \begin{align}
        \label{eqn:sourceVertex.toProve.beta}
        (-\ri)^k \beta_k \lim_{x\to0^-} 2\pi \partial_x^k u_-(x,t) - (-\ri)^k \lim_{x\to\eta^-} 2\pi \partial_x^k u_0(x,t) &= 0, \\
        \label{eqn:sourceVertex.toProve.B}
        (-\ri)^k B_k \lim_{x\to0^-} 2\pi \partial_x^k u_-(x,t) - (-\ri)^k \lim_{x\to0^+} 2\pi \partial_x^k u_+(x,t) &= 0, \\
        \label{eqn:sourceVertex.toProve.H}
        \lim_{x\to0^+} \partial_x^k u_0(x,t) &= H_k(t).
    \end{align}
    \end{subequations}
    Most of the argument is very similar to the corresponding proof in~\S\ref{ssec:loop.existence}.
    Throughout, we use notation
    \[
        (\M\omega je(\la),\M\Omega je(\la)) =
        \begin{cases}
            (\M\psi je(\la),\M\Psi je(\la)) & \la \in \partial D, \\
            (\re^{\ri\la^3\tau'} \M\phi    je(\la),\re^{\ri\la^3\tau'} \M\Phi    je(\la)) & \la \in \Gamma, \\
            (\re^{\ri\la^3\tau}  \M\varphi je(\la),\re^{\ri\la^3\tau}  \M{\widehat\Phi} je(\la)) & \la \in \gamma.
        \end{cases}
    \]
    These functions are defined by
    \begin{align*}
        \M\omega 3-(\la) &= \Omega_1 + \M\nu-0\Omega_5 + (a_--\Msup\nu-02)\Omega_6,
        \\
        \M\omega j+(\la) &= -\Omega_2 + B_1\M\nu+j\Omega_5 + B_0(a_+-\Msup\nu+j2)\Omega_6
        & j &\in \{1,2\}, \\
        \M\omega j0(\la) &= -\Omega_3 + \M\nu0j\ri h_1 + (a_0-\Msup\nu0j2)h_0
        & j &\in \{1,2\}, \\
        \M\omega 30(\la) &= \Omega_4 + \beta_1\M\nu00\Omega_5 + \beta_0(a_0-\Msup\nu002)\Omega_6,
    \end{align*}
    with
    \begin{equation*}
        \mathcal A
        \begin{pmatrix}
            \Omega_1 \\ \Omega_2 \\ \Omega_3 \\ \Omega_4 \\ \Omega_5 \\ \Omega_6
        \end{pmatrix}
        =
        \begin{pmatrix}
            -\M\omega\RR-(\alpha^2\la) \\
            -\M\omega\RR-(\alpha\la) \\
            -\M\omega\RR+(\la) \\
            -\M\omega\RR0(\alpha^2\la)
            + \omega_{\mathrm d}(\alpha^2\la) \\
            -\M\omega\RR0(\alpha\la)
            + \omega_{\mathrm d}(\alpha  \la) \\
            -\M\omega\RR0(\la)
            + \omega_{\mathrm d}(        \la)
        \end{pmatrix},
    \end{equation*}
    and, for $\la\in\partial D$,
    \begin{align*}
        \widehat U_e(\nu_e(\la)) &= \left[ \M\psi\RR e(\la) + \re^{\ri\la^3\tau'}\M\phi\RR e(\la) + \re^{\ri\la^3\tau}\M\varphi\RR e(\la) \right] \nu'_e(\la), \\
        h_k(\la) &= \left[ \M\psi{\mathrm d}k(\la) + \re^{\ri\la^3\tau'}\M\phi{\mathrm d}k(\la) + \re^{\ri\la^3\tau}\M\varphi{\mathrm d}k(\la) \right] \nu'_0(\la),
    \end{align*}
    in which the functions $\M\psi\RR e,\M\phi\RR e, \M\varphi\RR e$ have the desired decay properties, as justified by lemma~\ref{lem:sourceConvergenceZetaLeadingOrder}.

    By lemmata~\ref{lem:vertexRewritingLemma.Real-},~\ref{lem:vertexRewritingLemma.Real+}, and~\ref{lem:vertexRewritingLemma.D3Stronger}
    the left side of equation~\eqref{eqn:sourceVertex.toProve.beta} reduces to
    \begin{multline*}
        \PV\,\left\{ \int_{\partial D} + \int_{\Gamma} + \int_{\gamma} \right\} \re^{-\ri\la^3t} 
        \bigg[
            \beta_k\bigg(
                - \M\omega3-(\la)\Msup\nu-0k\Msup\nu-0\prime
                + \sum_{j=1}^2\alpha^j\M\omega\RR-(\alpha^j\la)\Msup\nu-jk\Msup\nu-j\prime
            \bigg) \\
            + \M\omega30(\la)\Msup\nu00k\Msup\nu00\prime
            + \sum_{j=1}^2\alpha^j E_j^{-1} \left(
                \M\omega j0(\la)
                + \M\nu0j\ri\M\omega{\mathrm d}1(\alpha^j\la)
                + (a_0-\Msup\nu0j2)\M\omega{\mathrm d}0(\alpha^j\la)
                - \M\omega\RR0(\alpha^j\la)
            \right) \Msup\nu0jk\Msup\nu0j\prime
        \bigg]
        \D\la
    \end{multline*}
    By the above construction, the bracket evaluates to
    \[
        \begin{pmatrix}
            \Omega_1 \\ \Omega_2 \\ \Omega_3 \\ \Omega_4 \\ \Omega_5 \\ \Omega_6 \\ \M\omega{\mathrm d}0 \\ \ri \M\omega{\mathrm d}1
        \end{pmatrix}
        \cdot
        \begin{pmatrix}
            -\beta_k\sum_{j=0}^2 \alpha^j \Msup\nu-jk\Msup\nu-j\prime \\
            0 \\
            (1-1)\sum_{j=1}^2 \alpha^j E_j^{-1} \Msup\nu0jk\Msup\nu0j\prime \\
            \sum_{j=0}^2 \alpha^j \Msup\nu0jk\Msup\nu0j\prime \\
            -\beta_k\sum_{j=0}^2 \alpha^j \Msup\nu-j{k+1}\Msup\nu-j\prime + \beta_1\sum_{j=0}^2 \alpha^j \Msup\nu0j{k+1}\Msup\nu0j\prime \\
            -\beta_k\sum_{j=0}^2 \alpha^j(a_0-\Msup\nu-j2)\Msup\nu-jk\Msup\nu-j\prime + \beta_0\sum_{j=0}^2 \alpha^j(a_0-\Msup\nu0j2)\Msup\nu0jk\Msup\nu0j\prime \\
            (1-1)\sum_{j=1}^2 \alpha^j E_j^{-1} (a_0-\Msup\nu0j2) \Msup\nu0jk\Msup\nu0j\prime \\
            (1-1)\sum_{j=1}^2 \alpha^j E_j^{-1} \Msup\nu0j{k+1}\Msup\nu0j\prime
        \end{pmatrix}.
    \]
    By lemma~\ref{lem:generalNu}, the coefficients each evaluate to $0$, justifying equation~\eqref{eqn:sourceVertex.toProve.beta}.
    A similar calculation yields equation~\eqref{eqn:sourceVertex.toProve.B}.
    Another similar calculation for the left side of equation~\eqref{eqn:sourceVertex.toProve.H} may be carried out, but in this case the coefficients are not all zero.
    Rather, the left side of equation~\eqref{eqn:sourceVertex.toProve.H} simplifies to
    \begin{equation*}
        \frac1{2\pi} \PV\,\left\{ \int_{\partial D} + \int_{\Gamma} + \int_{\gamma} \right\} \re^{-\ri\la^3t}
        \bigg[
            3\la^2\M\omega{\mathrm d}k
        \bigg]
        \D\la
        =
        \frac1{2\pi} \PV \int_{\partial D} \re^{-\ri\la^3t}
        \bigg[
            3\la^2h_k(\la)
        \bigg]
        \D\la;
    \end{equation*}
    the latter equality follows by Jordan's lemma and Cauchy's theorem.
    Making the change of variables $\mu=-\la^3$ and a small contour deformation close to $0$, this simplifies to
    \[
        \frac1{2\pi} \PV \int_{-\infty}^\infty \re^{\ri\mu t}
            \int_0^T \re^{-\ri\mu s} H_k(s) \D s
        \D\mu.
    \]
    The Fourier inversion theorem completes the proof.
\end{proof}

\section{Sink defect} \label{sec:sink}

Suppose that, attached to the vertex where the incoming and outgoing leads meet, there is a single bond but, in contrast to the source problem of~\S\ref{sec:source}, here the bond is attached at its $x=0$ end, and detached at its $x=\eta$ end.
On such a metric graph, as displayed in figure~\ref{fig:graph-sink},
\begin{figure}
    \centering
    \includegraphics{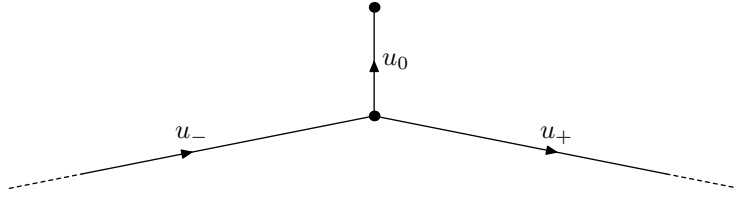}
    \caption{The metric graph domain with a sink defect.}
    \label{fig:graph-sink}
\end{figure}
we study problem~\eqref{eqn:generalproblem} together with vertex conditions
\begin{subequations} \label{eqn:sinkVC}
\begin{align}
    \label{eqn:sinkVC1}
    u_-(0,t) &= b_0^{-1} u_0(0,t) = B_0^{-1} u_+(0,t), & t &\in [0,T], \\
    \label{eqn:sinkVC2}
    \partial_x u_-(0,t) &= b_1^{-1} \partial_x u_0(0,t) = B_1^{-1} \partial_x u_+(0,t), & t &\in [0,T], \\
    \label{eqn:sinkVC3}
    \partial_{xx} u_-(0,t) &= b_2 \partial_{xx} u_0(0,t) + B_2 \partial_{xx} u_+(0,t), & t &\in [0,T], \\
    \label{eqn:sinkBC}
    u_0(\eta,t) &= H(t), & t &\in [0,T],
\end{align}
\end{subequations}
with coefficients $b_k,B_k\in\CC\setminus\{0\}$ and known function $H$ of regularity to be specified later.

We see this problem representing a ``sink'' type defect because information may flow out of the system through the bond.

\begin{prop} \label{prop:sinkSolution}
    Consider problem~\eqref{eqn:generalproblem} on the metric graph described above, with vertex conditions~\eqref{eqn:sinkVC} for coefficients satisfying
    \(
        b_2(b_0+b_1) - B_2(B_0+B_1) \neq 1.
    \)
    Define, for $e\in\{+,-,0\}$ and $j\in\{0,1,2\}$, $\M\nu ej = \nu_e(\alpha^j\la)$, $E_j = \exp\left(-\ri\eta\M\nu0j\right)$, and
    \begin{equation}  \label{eqn:sinkA}
        \mathcal A(\la) =
        \begin{pmatrix}
            B_2 & b_2 & \M\nu-2     &  (a_--\Msup\nu-22)    &  0    &  0 \\
            B_2 & b_2 & \M\nu-1     &  (a_--\Msup\nu-12)    &  0    &  0 \\
            -1  & 0   & -B_1\M\nu+0 & -(a_+-\Msup\nu+02)B_0 & 0 & 0 \\
            0   & -1  & -b_1\M\nu02 &  (a_0-\Msup\nu022)b_0 & E_2 & \M\nu02E_2 \\
            0   & -1  & -b_1\M\nu01 &  (a_0-\Msup\nu012)b_0 & E_1 & \M\nu01E_1 \\
            0   & -1  & -b_1\M\nu00 &  (a_0-\Msup\nu002)b_0 & E_0 & \M\nu00E_0
        \end{pmatrix}.
    \end{equation}
    Define also $h(\la;t) = \int_0^t \re^{\ri\la^3s}H(s)\D s$.
    Let $\Delta=\det\mathcal A$, and $\delta_k$ be the determinant of $\mathcal A$ but with the $k$\textsuperscript{th} column replaced by
    \begin{multline} \label{eqn:sinkSolnY}
        -
        \left(
            0,\;0,\;0,\;
            (a_0-\Msup\nu022) E_2 h(\la), \;
            (a_0-\Msup\nu012) E_1 h(\la), \;
            (a_0-\Msup\nu002) E_0 h(\la)
        \right)
        \\
        -
        \left(
            \widehat U_-(\M\nu-2), \;
            \widehat U_-(\M\nu-1), \;
            \widehat U_+(\M\nu-0), \;
            \widehat U_0(\M\nu02), \;
            \widehat U_0(\M\nu01), \;
            \widehat U_0(\M\nu00)
        \right).
    \end{multline}
    Suppose $u$ is sufficiently smooth and satisfies this problem.
    Then, for $R$ sufficiently large,
    \begin{subequations} \label{eqn:sinkSoln}
    \begin{align}
    \notag
        2\pi u_-(x,t)
        &= \int_{-\infty}^\infty \re^{\ri\la x - \ri(\la^3-a_-\la)t} \widehat U_-(\la) \D \la \\
    \label{eqn:sinkSoln.-} \tag{\theparentequation.\(-\)}
        &\hspace{1.2em} - \int_{\partial D} \re^{\ri\nu_-(\la)x-\ri\la^3t} \left[ \frac{B_2\delta_1(\la) + b_2\delta_2(\la) + \nu_-(\la)\delta_3(\la) + (a_--\nu_-(\la)^2)\delta_4(\la)}{\Delta(\la)} \right] \nu'_-(\la) \D\la, \displaybreak[0] \\
    \notag
        2\pi u_+(x,t)
        &= \int_{-\infty}^\infty \re^{\ri\la x - \ri(\la^3-a_+\la)t} \widehat U_+(\la) \D \la \\
    \notag
        &\hspace{3em} - \int_{\partial D} \frac{\re^{-\ri\la^3t}}{\Delta(\la)} \Bigg[
            \delta_1(\la) \left( \alpha \re^{\ri\nu_+(\alpha\la)x} \nu'_+(\alpha\la) + \alpha^2 \re^{\ri\nu_+(\alpha^2\la)x} \nu'_+(\alpha^2\la) \right) \\
    \notag
            &\hspace{6em} + B_1\delta_3(\la) \left( \alpha \nu_+(\alpha\la)\re^{\ri\nu_+(\alpha\la)x} \nu'_+(\alpha\la) + \alpha^2 \nu_+(\alpha^2\la) \re^{\ri\nu_+(\alpha^2\la)x} \nu'_+(\alpha^2\la) \right) \\
    \label{eqn:sinkSoln.+} \tag{\theparentequation.\(+\)}
            &\hspace{-4em} + B_0\delta_4(\la) \left( \alpha \left(a_+-\nu_+(\alpha\la)^2\right) \re^{\ri\nu_+(\alpha\la)x} \nu'_+(\alpha\la) + \alpha^2 \left(a_+-\nu_+(\alpha^2\la)^2\right) \re^{\ri\nu_+(\alpha^2\la)x} \nu'_+(\alpha^2\la) \right)
        \Bigg] \D\la, \displaybreak[0] \\
    \notag
        2\pi u_0(x,t)
        &= \int_{-\infty}^\infty \re^{\ri\la x - \ri(\la^3-a_0\la)t} \widehat U_0(\la) \D \la \\
    \notag
        &\hspace{3em} - \int_{\partial D} \frac{\re^{-\ri\la^3t}}{\Delta(\la)} \Bigg[
            \delta_2(\la) \left( \alpha \re^{\ri\nu_0(\alpha\la)x} \nu'_0(\alpha\la) + \alpha^2 \re^{\ri\nu_0(\alpha^2\la)x} \nu'_0(\alpha^2\la) \right) \\
    \notag
            &\hspace{6em} + b_1\delta_3(\la) \left( \alpha \nu_0(\alpha\la)\re^{\ri\nu_0(\alpha\la)x} \nu'_0(\alpha\la) + \alpha^2 \nu_0(\alpha^2\la) \re^{\ri\nu_0(\alpha^2\la)x} \nu'_0(\alpha^2\la) \right) \\
    \notag
            &\hspace{-1.5em} + b_0\delta_4(\la) \left( \alpha \left(a_0-\nu_0(\alpha\la)^2\right) \re^{\ri\nu_0(\alpha\la)x} \nu'_0(\alpha\la) + \alpha^2 \left(a_0-\nu_0(\alpha^2\la)^2\right) \re^{\ri\nu_0(\alpha^2\la)x} \nu'_0(\alpha^2\la) \right)
        \Bigg] \D\la \\
    \label{eqn:sinkSoln.0} \tag{\theparentequation.\(0\)}
        &\hspace{-1em} - \int_{\partial D} \re^{\ri\nu_0(\la)(x-\eta)-\ri\la^3t} \left[ \frac{\delta_5(\la) + \nu_0(\la)\delta_6(\la) + (a_0-\nu_0(\la)^2)h(\la)}{\Delta(\la)} \right] \nu'_0(\la) \D\la.
    \end{align}
    \end{subequations}
    Moreover, in expressions~\eqref{eqn:sinkSolnY} and~\eqref{eqn:sinkSoln.0}, $h(\argdot;t)$ may be replaced with $h(\argdot,T)$.
\end{prop}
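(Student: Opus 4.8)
The plan is to follow the template used at the close of \S\ref{ssec:sourceUnicity} for the source problem. The function $h$ enters solution representation~\eqref{eqn:sinkSoln} in two distinct ways: \emph{implicitly}, through the fourth, fifth and sixth components of the vector in~\eqref{eqn:sinkSolnY}, which by multilinearity of the determinant propagate into every $\delta_k$ and hence into all of the contour integrals over $\partial D$; and \emph{explicitly}, through the lone term $(a_0-\nu_0(\la)^2)h(\la)$ in the last integral of~\eqref{eqn:sinkSoln.0}. I would treat these two sources of dependence separately, in each case showing that replacing $h(\argdot;t)$ by $h(\argdot;T)$ changes the corresponding integral only by a quantity whose integrand is analytic in $D$ and decaying along $\partial D$, so that Jordan's lemma and Cauchy's theorem force the change to vanish.

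The single estimate underpinning both arguments concerns the difference
\[
    \rho(\la) := h(\la;T) - h(\la;t) = \int_t^T \re^{\ri\la^3 s} H(s) \D s.
\]
Keeping the $\re^{-\ri\la^3 t}$ factor of the Ehrenpreis form attached, I claim
\[
    \re^{-\ri\la^3 t}\rho(\la) = \int_t^T \re^{\ri\la^3(s-t)} H(s) \D s = \bigoh{\la^{-3}}
\]
uniformly as $\la\to\infty$ within $\clos(D)$. Indeed, definition~\eqref{eqn:defnD} gives $\Re(\ri\la^3)\leq0$ on $\clos(D)$, so $\abs{\re^{\ri\la^3(s-t)}}\leq1$ for every $s\geq t$; integrating by parts in $s$ then trades the oscillation $\re^{\ri\la^3 s}$ for a factor $\la^{-3}$, the boundary term at $s=T$ remaining bounded by the same sign condition and the one at $s=t$ contributing $-H(t)/\ri\la^3$.

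For the implicit dependence I would argue as in the second half of the proof of lemma~\ref{lem:sourceDeformContour}. Writing $\delta_k^{(t)},\delta_k^{(T)}$ for the determinants built from the vector~\eqref{eqn:sinkSolnY} with $h(\argdot;t)$, respectively $h(\argdot;T)$, multilinearity shows that $\delta_k^{(T)}-\delta_k^{(t)}$ is the determinant of $\mathcal A$ with its $k$th column replaced by the vector supported in rows $4,5,6$ with entries $-(a_0-\Msup\nu0j2)E_j\rho(\la)$. Cofactor expansion along that column, the estimate above, and the asymptotics of $\Delta$ for this graph (the analogue of~\eqref{eqn:sourceDeltaAsymptotic}, valid because $b_2(b_0+b_1)-B_2(B_0+B_1)\neq1$ keeps $\abs{\Delta}\gtrsim\la^3$ on $\clos(D)$) together yield $\re^{-\ri\la^3 t}[\delta_k^{(T)}-\delta_k^{(t)}]/\Delta(\la)=\lindecayla$ on $\clos(D)$, analytic in $D$; the $E_j$ factors in the replaced column are matched by the exponential growth of $\Delta$ on $\clos(D)$, exactly as in lemma~\ref{lem:sourceDeformContour}. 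Since each integral of~\eqref{eqn:sinkSoln} carries an exponential factor ($\re^{\ri\nu_-(\la)x}$, $\re^{\ri\nu_+(\alpha^j\la)x}$, $\re^{\ri\nu_0(\alpha^j\la)x}$, or $\re^{\ri\nu_0(\la)(x-\eta)}$) that decays inside $D$ on the appropriate range of $x$, Jordan's lemma and Cauchy's theorem then annihilate the $\delta_k$-mediated difference in every integral.

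Finally, the explicit occurrence of $h$ in the last integral of~\eqref{eqn:sinkSoln.0} contributes the additional difference
\[
    -\int_{\partial D} \re^{\ri\nu_0(\la)(x-\eta)-\ri\la^3 t}\,\frac{(a_0-\nu_0(\la)^2)\,\rho(\la)}{\Delta(\la)}\,\nu'_0(\la)\D\la.
\]
Here $\re^{-\ri\la^3 t}\rho(\la)=\bigoh{\la^{-3}}$, the factor $(a_0-\nu_0(\la)^2)\nu'_0(\la)=\bigoh{\la^2}$, and $1/\Delta=\bigoh{\la^{-3}}$ on $\clos(D)$, while $\re^{\ri\nu_0(\la)(x-\eta)}$ is bounded by $1$ because $x\leq\eta$ and $\nu_0(\clos(D))\subset\clos(\CC^-)$ by lemma~\ref{lem:generalNu}; the integrand is thus $\bigoh{\la^{-4}}$ and analytic in $D$, so Jordan's lemma disposes of it as well. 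I expect the main obstacle to be purely the bookkeeping of the competing exponential factors $E_j$, $\re^{\ri\nu_0(\la)(x-\eta)}$ and the dominant exponential inside $\Delta$: one must verify that they combine to leave a genuinely decaying integrand uniformly over $\clos(D)$, which is precisely where the lower bound $\abs{\Delta}\gtrsim\la^3$ afforded by the hypothesis $b_2(b_0+b_1)-B_2(B_0+B_1)\neq1$ is indispensable.
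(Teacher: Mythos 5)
Your argument addresses only the final sentence of the proposition --- the claim that $h(\argdot;t)$ may be replaced by $h(\argdot;T)$ --- and for that clause it is essentially the paper's own reasoning for the source problem (the end of \S\ref{ssec:sourceUnicity} together with the second half of the proof of lemma~\ref{lem:sourceDeformContour}): set $\rho=h(\argdot;T)-h(\argdot;t)$, show $\re^{-\ri\la^3t}\rho(\la)=\bigoh{\la^{-3}}$ on $\clos(D)$ by integration by parts, and remove the resulting difference integrals with Cauchy's theorem and Jordan's lemma. That part is sound.

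The gap is that the main assertion of the proposition --- the solution representation~\eqref{eqn:sinkSoln} itself, and with it the unicity statement --- is nowhere derived. The proof the paper has in mind (it states that it mirrors \S\ref{ssec:sourceUnicity}) must: impose the vertex conditions~\eqref{eqn:sinkVC} on the global relations~\eqref{eqn:generalGRe}; apply the rotations $\la\mapsto\alpha^j\la$ to obtain six equations valid on $\clos(D)$; assemble and solve by Cramer's rule the linear system $\mathcal A(\la)X=Y(\la)$ in the six unknown transformed boundary values (which for the sink graph differ from the source case: here $\partial_x^ku_0(\eta,t)$ for $k\in\{0,1\}$ are among the unknowns while $u_0(\eta,t)=H(t)$ is data, which is exactly why $h$ enters the columns of~\eqref{eqn:sinkSolnY} with the factors $E_j$ and why the last integral of~\eqref{eqn:sinkSoln.0} carries the explicit $(a_0-\nu_0(\la)^2)h$ term); substitute into the Ehrenpreis form~\eqref{eqn:generalEFe}; and, crucially, prove the analogue of equations~\eqref{eqn:sourceDeformContour.IntegralV0}, namely that the $\hat u_e(\argdot;t)$-dependent part of $Y$ contributes zero to every contour integral. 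This last step is where the asymptotics of $\Delta$ and of the ratios $\M\zeta ej/\Delta$ must actually be established and where the hypothesis $b_2(b_0+b_1)-B_2(B_0+B_1)\neq1$ earns its keep; your bound $\abs{\Delta}\gtrsim\abs{\la}^3$ is asserted rather than proved, and is not quite the right statement in any case --- for a graph containing a bond, $\Delta$ grows like $\la^3$ times an exponential on $\clos(D)$ (compare~\eqref{eqn:sourceDeltaAsymptotic}), and one must verify that the decaying factors $E_j$ in the numerators are dominated by that same exponential, uniformly on $\clos(D)$. Without this portion of the argument the formulae~\eqref{eqn:sinkSoln} are not established, so there is nothing yet in which to perform the $t\mapsto T$ replacement.
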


\begin{prop} \label{prop:sinkExistence}
    Assume $U_e\in\AC^4_{\mathrm p}(\Omega_e)$ and $H\in\AC^2_{\mathrm p}[0,\infty)$, per definition~\ref{defn:piecewiseACfunctions}.
    For all $x\in\Omega_e$ and $t\geq0$, let $u_e(x,t)$ be defined by equations~\eqref{eqn:sinkSoln}.
    Then the functions $u_-,u_+,u_0$ satisfy problem~\eqref{eqn:generalproblem},~\eqref{eqn:sinkVC}.
\end{prop}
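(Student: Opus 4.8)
The plan is to follow the proof of proposition~\ref{prop:sourceExistence} in~\S\ref{ssec:source.existence} closely, since the sink defect is the counterpart of the source defect in which the single inhomogeneous datum $H$ is imposed at the detached end $x=\eta$ of bond~$0$ rather than at its source. As there, I would first record the analogues of lemmata~\ref{lem:sourceDeformContour} and~\ref{lem:sourceConvergenceZetaLeadingOrder}: applying Cramer's rule to the matrix~\eqref{eqn:sinkA} to obtain the relevant combinations $\M\zeta ej$, I would show that $\M\zeta ej(\la)/\Delta(\la)=\lindecayla$ as $\la\to\infty$ within $\clos(D)$. This rests on an asymptotic analysis of $\Delta$ whose dominant term, by lemma~\ref{lem:generalNu}, is controlled by the hypothesis $b_2(b_0+b_1)-B_2(B_0+B_1)\neq1$; this non-degeneracy guarantees the dominant term does not vanish on $\clos(D)$ and that the zeros of $\Delta$ lie asymptotically exterior to $D$. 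Combined with the transform estimates of lemma~\ref{lem:FTLeadingOrder}, this yields the splitting of each ratio into the $\re^{\ri\la^3\tau'}\M\phi ej+\re^{\ri\la^3\tau}\M\varphi ej+\M\psi ej$ form, with $\M\psi ej=\bigoh{\la^{-5}}$ on $\partial D$ and $\M\phi ej,\M\varphi ej$ holomorphically extensible to $S$ and $s$ respectively with $\lindecayla$ control, furnishing precisely the hypotheses needed by the contour-deformation machinery of the appendix.

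With these estimates established, the proofs that each $u_e$ satisfies its PDE and initial condition are verbatim adaptations of the source case. For the PDE I would invoke lemmata~\ref{lem:ConvergenceD3IntegralsStronger} and~\ref{lem:ConvergenceRealIntegrals} to re-express each integrand on a deformed contour on which the integral converges uniformly, thereby licensing differentiation under the integral thrice in $x$ or once in $t$; the exponential factor and the defining relation for $\nu_e$ then deliver~\eqref{eqn:generalproblem.PDE}. For the initial condition, setting $t=0$ annihilates the $\partial D$ integrals through the sink analogues of equations~\eqref{eqn:sourceDeformContour.IntegralV0} (the $h$-dependent terms vanishing since $h(\la;0)=0$), leaving $2\pi u_e(x,0)=2\pi U_e(x)$ by Fourier inversion; continuity of the $t\to0^+$ limit follows from lemma~\ref{lem:tLimitMainLemma} as before.

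The substantive work is the verification of the vertex and boundary conditions~\eqref{eqn:sinkVC}. Introducing auxiliary functions $\M\omega je$ and unknowns $\Omega_\ell$ defined through the matrix~\eqref{eqn:sinkA} exactly as in~\S\ref{ssec:source.existence}, I would reduce the left side of each condition to a dot product of $(\Omega_1,\dots,\Omega_6)$, augmented by the data transform of $H$, against a coefficient vector assembled from sums of the shape $\sum_{j=0}^2\alpha^j\Msup\nu ejk(\cdots)\Msup\nu ej\prime$, which vanish entrywise by the symmetries~\eqref{eqn:generalNu.vietaSymmetries}. The homogeneous matching conditions~\eqref{eqn:sinkVC1} and~\eqref{eqn:sinkVC2} then go through as in the loop and source problems. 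The flux condition~\eqref{eqn:sinkVC3} is second order, so, as in the full mismatch argument, its coefficient vector invokes the $3\la^2$ entries of~\eqref{eqn:generalNu.vietaSymmetries}; the new feature is that its sum structure (rather than an equality chain) redistributes the weights $b_2,B_2$ across the three edges, and I must check that these $3\la^2$ contributions still cancel under precisely these weights.

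The genuinely new ingredient, which I expect to be the main obstacle, is the boundary condition~\eqref{eqn:sinkBC}, imposed at the detached end $x=\eta$. Its verification requires analysing the $x\to\eta^-$ limit of $u_0$, and in particular of the final integral of~\eqref{eqn:sinkSoln.0}: the factor $\re^{\ri\nu_0(\la)(x-\eta)}$ must be used to tame the $\CC^+$ growth of $\widehat U_0(\nu_0(\la))$ and of $h(\la)$, exactly as in the $x\to\eta^-$ analysis behind~\eqref{eqn:loopVertex.toProve.beta}. By analogy with the proof of~\eqref{eqn:sourceVertex.toProve.H}, the matching terms cancel by lemma~\ref{lem:generalNu} while a single surviving term $\frac1{2\pi}\PV\int_{\partial D}\re^{-\ri\la^3t}3\la^2h(\la)\D\la$ remains, which after the substitution $\mu=-\la^3$, a small indentation near $0$, and Fourier inversion equals $H(t)$. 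As with the source problem, this identification is valid only for $t$ outside the partition set of $H$, so~\eqref{eqn:sinkBC} is recovered almost everywhere, matching the \emph{almost everywhere} character of the corresponding source result.
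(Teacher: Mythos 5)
Your proposal is correct and takes essentially the same approach as the paper: the paper in fact omits this proof entirely, stating only that it is very similar to that of proposition~\ref{prop:sourceExistence}, and your outline is a faithful elaboration of precisely that adaptation. You correctly identify the two points requiring genuine care --- the redistribution of the weights $b_2,B_2$ in the summed flux condition~\eqref{eqn:sinkVC3} and the $x\to\eta^-$ limit for~\eqref{eqn:sinkBC}, handled as in the $\beta$-condition of the loop problem combined with the Fourier-inversion step from the source problem.
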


The proofs of propositions~\ref{prop:sinkSolution} and~\ref{prop:sinkExistence} are very similar to the proofs of propositions~\ref{prop:sourceSolution} and~\ref{prop:sourceExistence}, so they are omitted.

\section{Conclusion} \label{sec:Conclusion}

We have shown how to implement the unified transform method for problem~\eqref{eqn:generalproblem} on metric graph domains composed of incoming leads, outgoing leads, and bonds.
In particular, we solved the problem and proved existence and unicity for four particular classes of problems, representing all metric graph topologies in which $0$ or $1$ bonds are attached at a single defect point.
Although, in contrast with~\cite{DSS2016a,MNS2018a}, the vertex conditions we considered are not absolutely general linear vertex conditions for any given metric graph topology, they were kept sufficiently broad to illustrate how wellposedness criteria, such as $\sum B_k\neq0$ in proposition~\ref{prop:mismatchSolution}, on the vertex conditions may arise.
Specifically, we showed how these wellposedness criteria manifest in the unified transform method as restrictions on the validity of the unicity and existence arguments via asymptotic analysis of the determinant $\Delta$.
This $\Delta$ is analogous to the object referred to as the characteristic determinant in finite interval problems~\cite{Bir1908b,Loc2000a,Loc2008a}, which is known to be closely related to wellposedness~\cite{Smi2012a,PS2013a,ABS2022a}.
For example, it is known that finite interval problems without the transport term are wellposed if and only if a certain minor of $\Delta$ is nonzero~\cite{Smi2012a} and this is one of the two minors Locker uses to characterize the corresponding spatial differential operators as ``regular'', ``simply irregular'', or ``degenerate irregular''~\cite{Loc2008a}.
It is an open question exactly how this correspondence extends to problems on the half line and generalizations, such as those studied in this paper, to metric graph domains including leads.

One limitation of the present formulation of the unified transform method on metric graphs came to light in the preparation of this paper: the solution of the linear system that represents the D to N map (that is, the formulation of results such as lemmata~\ref{lem:loopLinearSystemSolution} and~\ref{lem:sourceLinearSystemSolution}) becomes somewhat arduous as the number of edges in the metric graph increases.
Indeed, the dimension of the matrix $\mathcal A$ is expected to behave like $\abs{\mathcal L^+} + 2\abs{\mathcal L^-} + 3\abs{\mathcal B}$ for a more general metric graph.
Of course, if the partial differential equation were of still higher spatial order, then this difficulty would be exacerbated.
Therefore, one may hope that it were possible to decompose the problem on a complicated metric graph into a number of smaller problems, each posed on relatively simple metric graphs, solve the subsiduary problems separately, then recombine into a solution of the original problem.
This is precisely the approach that is followed for the linear Schr\"odinger equation, where the scattering at each vertex is derived separately, by considering the graph locally at each vertex as a star graph, and then the scattering on the full graph is reconstructed from those simpler scattering results~\cite{BK2013a}.
It would be interesting to attempt a similar approach for the linearized Korteweg de Vries equation.
However, because the associated differential operators are so far from self adjoint, the usual scattering data alone are insufficient to find a solution in this setting.
Therefore, it is expected that such a decomposition and reconstruction approach will have to be formulated in coordinate space, rather than purely in the spectral domain.

\section*{Acknowledgement}
The author is grateful to Andreas Chatziafratis, Robert Marangell, Michael Meylan, Dmitry Pelinovsky, and Ravi Pethiyagoda for valuable conversations.

\appendix

\section{Piecewise absolutely continuous functions} \label{sec:piecewiseACfunctions}

Many of the arguments in this paper rely on (repeated) integration by parts in the formula for the Fourier transform of a given function.
For a finite real interval $\Omega$, it is convenient to work with spaces of absolutely continuous functions $\AC^N(\Omega)$, because the Fourier transforms of such functions, and their first $N+1$ derivatives, are all defined, allowing integration by parts up to $N+1$ times.
Because functions in $\AC^N(\Omega)$ may have nonzero boundary values, when integrating by parts, it is necessary to account for the existence of boundary terms.
Therefore, it is no great complication (save some occasional notational awkwardness) to allow that the functions be only ``piecewise absolutely continuous'', rather than truly absolutely continuous on all of $\Omega$.
Definition~\ref{defn:piecewiseACfunctions} makes this concept precise, by parititioning $\Omega$ using the finite set of discontinuous points $X$.
However, because we also want to work with semiinfinite real intervals $\Omega$, and the classical definition of the Fourier transform requires absolute integrability of the function, we have selected a rather strong sense of piecewise absolute continuity.
Specifically, the function and the first $N+1$ of its derivatives are absolutely integrable not only locally on each finite connected subinterval of $\Omega\setminus X$, but also on the semiinfinite subinterval.
The following lemma~\ref{lem:FTLeadingOrder} summarises the asymptotic behaviour of Fourier transforms of such functions.

\begin{defn} \label{defn:piecewiseACfunctions}
    Suppose $\Omega$ is a finite or semiinfinite real interval and $X=\{\xi_0,\xi_1,\ldots,\xi_n\}$ is a finite set with
    \[
        \inf(\Omega) = \xi_0 < \xi_1 < \ldots < \xi_n = \sup(\Omega).
    \]
    If $U\in\Lebesgue^1(\Omega)$ has the property that, for all $j\in\{1,2,\ldots,n\}$ and all $k\in\{0,1,\ldots,N\}$, the restriction of the derivative $U^{(k)}\rvert_{(\xi_{j-1},\xi_j)}$ admits absolutely continuous extension to $\clos(\xi_{j-1},\xi_j)$ whose derivative is in $\Lebesgue^1(\xi_{j-1},\xi_j)$, then we say that $U$ is \emph{piecewise absolutely continuous} of order $N$; $U\in\AC^N_{\mathrm p}(\Omega)$.
    Where necessary, we specify the \emph{partition set} $X$ with $U\in\AC^N_{\mathrm p}(\Omega;X)$.
\end{defn}

\begin{lem} \label{lem:FTLeadingOrder}
    For $\Omega$ a real interval with partition set $X=\{\xi_0,\xi_1,\ldots,\xi_n\}$, suppose $U\in\AC^N_{\mathrm p}(\Omega;X)$.
    Then, for all $\la\in\CC\setminus\{0\}$ for which $\widehat U(\la)$ converges,
    \[
        \widehat U(\la)
        = \phi(\la)
        + \sum_{j=1}^n \frac1{(\ri\la)^{N+1}}\int_{\xi_{j-1}}^{\xi_j} \re^{-\ri\la x} U^{(N+1)}(x) \D x,
    \]
    with
    \[
        \phi(\la) = \sum_{j=1}^n \sum_{k=1}^{N+1} \frac1{(\ri\la)^k} \left[ \re^{-\ri\la\xi_{j-1}} U^{(k-1)}(\xi_{j-1}^+) - \re^{-\ri\la\xi_j} U^{(k-1)}(\xi_j^-) \right].
    \]
    Moreover, if $\Omega=(-\infty,0]$, then
    \begin{equation} \label{eqn:FTLeadingOrder.-}
        \widehat U(\la) = \phi(\la) + \bigoh{\la^{-(N+1)}} \mbox{ and } \phi(\la)=\lindecayla,
    \end{equation}
    as $\la\to\infty$ in $\clos(\CC^+)$, both uniformly in $\arg(\la)$, and, for any $\epsilon>0$, $\phi$ may be analytically extended to the closed horizontal strips $\mathfrak s = \{\pm z-\ri y: z>1,\;y\in[0,\epsilon]\}$, as shown in figuer~\ref{fig:Gammapm}, and $\phi(\la)=\lindecayla$, uniformly in $\Im(\la)$, as $\la\to\infty$ in this set.
    If instead $\Omega=[0,\infty)$, then
    \begin{equation} \label{eqn:FTLeadingOrder.+}
        \widehat U(\la) = \phi(\la) + \bigoh{\la^{-(N+1)}} \mbox{ and } \phi(\la)=\lindecayla,
    \end{equation}
    as $\la\to\infty$ in $\clos(\CC^-)$, both uniformly in $\arg(\la)$.
    However, if $\Omega=[0,\eta]$, then $\widehat U$ and $\phi$ obey the same asymptotic bounds on $\clos(\CC^-)$ as if $\Omega$ were $[0,\infty)$, and also
    \begin{equation} \label{eqn:FTLeadingOrder.etaU}
        \re^{-\ri\eta\la}\widehat U(\la) = \re^{-\ri\eta\la}\phi(\la) + \bigoh{\abs{\la}^{-(N+1)}} \mbox{ and }
        \re^{-\ri\eta\la}\phi(\la) = \lindecayla,
    \end{equation}
    as $\la\to\infty$ in $\clos(\CC^+)$.
\end{lem}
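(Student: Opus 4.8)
The plan is to derive the stated expansion by integrating by parts $N+1$ times on each of the finitely many subintervals $(\xi_{j-1},\xi_j)$ of the partition, and then to read off all the decay from the single observation that the exponential kernel has modulus at most $1$ in the relevant half-plane. First I would split $\widehat U(\la)=\sum_{j=1}^n\int_{\xi_{j-1}}^{\xi_j}\re^{-\ri\la x}U(x)\D x$ and integrate each piece by parts using $\frac{\D}{\D x}\re^{-\ri\la x}=-\ri\la\re^{-\ri\la x}$. Definition~\ref{defn:piecewiseACfunctions} guarantees that each $U^{(k)}\rvert_{(\xi_{j-1},\xi_j)}$ extends absolutely continuously to the closed subinterval, so every boundary value $U^{(k-1)}(\xi_{j-1}^+),U^{(k-1)}(\xi_j^-)$ is defined and each integration by parts is legitimate; after $N+1$ steps the boundary terms assemble into $\phi$ and the leftover is exactly the claimed remainder integral of $U^{(N+1)}$. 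When $\Omega$ is semiinfinite I would separately show that the lone boundary term at the infinite endpoint vanishes: since $U,\dots,U^{(N+1)}\in\Lebesgue^1(\Omega)$, each $U^{(k)}$ possesses a limit at $\mp\infty$ which integrability forces to be $0$, and this both annihilates that boundary term and secures convergence of the semiinfinite integral being integrated by parts. This establishes the exact identity at every $\la$ for which $\widehat U$ converges.

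For the asymptotics I would work in the half-plane in which every partition point contributes a bounded exponential. For $\Omega=(-\infty,0]$ all $\xi_j\le0$, so on $\clos(\CC^+)$ one has $\abs{\re^{-\ri\la\xi_j}}=\re^{\Im(\la)\xi_j}\le1$ and likewise $\abs{\re^{-\ri\la x}}\le1$ for $x\le0$; hence the remainder is dominated by $\abs\la^{-(N+1)}\normp{U^{(N+1)}}{\Lebesgue^1}$, giving $\bigoh{\la^{-(N+1)}}$, while $\phi$, a finite sum of terms $(\ri\la)^{-k}\re^{-\ri\la\xi}$ with $k\ge1$, is $\lindecayla$. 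In both cases the bound is uniform in $\arg(\la)$ precisely because the exponential estimate is. Isolating the $k=1$ terms and telescoping the double sum across interior partition points then yields the stated leading-order expression. The case $\Omega=[0,\infty)$ is identical with $\clos(\CC^-)$ and $\xi_j\ge0$.

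The holomorphic extension to the strips $\mathfrak s$ is a statement about $\phi$ alone, which is entire on $\CC\setminus\{0\}$; on $\mathfrak s$ we have $\Im(\la)\in[-\epsilon,0]$, so $\abs{\re^{-\ri\la\xi_j}}\le\re^{\epsilon\abs{\xi_j}}$ is bounded by a constant depending only on the finitely many partition points, and the $\bigoh{\la^{-1}}$ bound persists uniformly in $\Im(\la)$. For $\Omega=[0,\eta]$ the estimate on $\clos(\CC^-)$ is exactly the $[0,\infty)$ argument; on $\clos(\CC^+)$ the boundary contributions at the right endpoint $\xi_n=\eta$ no longer decay, and the exponential prefactor in~\eqref{eqn:FTLeadingOrder.etaU} is what re-anchors the expansion at $x=\eta$. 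I would justify this cleanly by the reflection $x\mapsto\eta-x$, which carries $U$ on $[0,\eta]$ to a function whose transform is evaluated at $-\la$, reducing the twisted claim to the already-established left-anchored estimate in the opposite half-plane.

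The main obstacle is analytic rather than algebraic: every bound must hold uniformly up to and including the bounding rays of each half-plane, where $\re^{-\ri\la x}$ has modulus exactly $1$ and supplies no decay. Consequently the vanishing of the infinite-endpoint boundary terms cannot be read off from exponential smallness but must be extracted from $\Lebesgue^1$-integrability, and on the strip one must be careful to assert the extension only for the convergent finite sum $\phi$, since the remainder integral over the semiinfinite piece need not converge there. Handling this boundary-ray transition, and the semiinfinite endpoint within it, is the delicate part; the rest is routine integration by parts and elementary exponential bookkeeping.
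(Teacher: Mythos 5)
Your proposal is correct and follows essentially the same route as the paper's proof: integrate by parts $N+1$ times on each subinterval of the partition, then read off the half-plane asymptotics from $\sup\abs{\re^{-\ri\la x}}=1$ and the strip bounds from the fact that the finitely many remaining exponentials are bounded by a constant such as $\re^{-\epsilon\xi_1}$. Your explicit verification that the boundary terms at the infinite endpoint vanish (via $\Lebesgue^1$-integrability of $U^{(k)}$ and $U^{(k+1)}$) and your reflection $x\mapsto\eta-x$ for the $\Omega=[0,\eta]$ case supply details the paper dispatches with ``similarly''; note only that the reflection yields control of $\re^{+\ri\eta\la}\widehat U(\la)$ on $\clos(\CC^+)$, which is the factor actually used elsewhere in the paper (as $E_0^{-1}$), so the sign appearing in~\eqref{eqn:FTLeadingOrder.etaU} is evidently a typo rather than a defect of your argument.
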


\begin{proof}[Proof of lemma~\ref{lem:FTLeadingOrder}]
    The representation follows from integration by parts $N+1$ times, in each of the $n$ connected components of $\Omega\setminus X$.

    If $\Omega=(-\infty,0]$ and $\la\in\clos(\CC^+)$, then, by the triangle inequality,
    \[
        \abs{\sum_{j=1}^n \frac1{(\ri\la)^{N+1}}\int_{\xi_{j-1}}^{\xi_j} \re^{-\ri\la x} U^{(N+1)}(x) \D x}
        \leq
        \abs\la^{-(N+1)} \normp{U^{(N+1)}}{\Lebesgue^1(\Omega)}
        \sup_{\substack{\la\in\clos(\CC^+),\\x\in(-\infty,0]}} \abs{\re^{-\ri\la x}},
    \]
    the norm is finite by hypothesis, and the supremum is $1$.
    The proofs of the other asymptotic bounds on half planes follow this one.

    If $\Omega=(-\infty,0]$, then
    \begin{align*}
        \abs{\phi(\la)}
        &\leq
        \abs\la^{-1}
        \sum_{j,k} \left(\abs{U^{(k-1)}(\xi_{j-1}^+)}+\abs{U^{(k-1)}(\xi_{j}^-)}\right)
        \sup_{\substack{\la\in S,\\\xi\in X}} \abs{\re^{-\ri\la x}}
        \\
        &=
        \abs\la^{-1}
        \sum_{j,k} \left(\abs{U^{(k-1)}(\xi_{j-1}^+)}+\abs{U^{(k-1)}(\xi_{j}^-)}\right)
        \re^{-\epsilon \xi_1},
    \end{align*}
    but $\xi_1$ is a finite negative number and the boundary values are all finite by hypothesis.
    The other bounds of $\phi$ on the strip may be obtained similarly.
\end{proof}

\section{Technical lemmata} \label{sec:technicalLemmata}

In this section, we give a series of lemmata that are useful in the analysis of integrals such as those appearing in solution formulae~\eqref{eqn:mismatchSolution},~\eqref{eqn:loopSoln},~\eqref{eqn:sourceSoln}, and~\eqref{eqn:sinkSoln}.
In showing that these formulae satisify the initial interface problems we study in this paper, it is necessary to take various $x$ and $t$ limits and derivatives of such integrals.
This section is organized as follows.
\begin{description}
    \item[\S\ref{ssec:techncialLemmata.BV}]{
        houses lemmata that can be used to evaluate the boundary values, or ``vertex values'', of each integral.
        \begin{description}
            \item[Lemma~\ref{lem:vertexRewritingLemma.Real-}]{lets one evaluate target vertex values of real integrals like the first of equation~\eqref{eqn:mismatchSolution.-} or, rather, the $\la\mapsto\nu(\la)$ change of variables of such integrals.}
            \item[Lemma~\ref{lem:vertexRewritingLemma.Real+}]{is like lemma~\ref{lem:vertexRewritingLemma.Real-}, except for source vertex values instead, such as the first integral of equation~\eqref{eqn:mismatchSolution.+} after a $\la\mapsto\nu(\la)$ change of variables.}
            \item[Lemma~\ref{lem:vertexRewritingLemma.D3}]{is the tool for evaluating either a source (with equation~\eqref{eqn:vertexRewritingLemma.D3.Solutionj}) or target (using equation~\eqref{eqn:vertexRewritingLemma.D3.Solution0}) vertex value of a $\partial D$ integral such as the second integral of equation~(\ref{eqn:mismatchSolution}$\pm$).}
            \item[Lemma~\ref{lem:vertexRewritingLemma.D3Stronger}]{is a generalization of lemma~\ref{lem:vertexRewritingLemma.D3} appropriate for integrals arising from problems with inhomogeneous boundary conditions.}
        \end{description}
        Because the vertex values can be the limiting values of the spatial derivatives of $u_e$, these all rely on lemmata in the following section.
    }
    \item[\S\ref{ssec:techncialLemmata.ContourDeformation}]{
        features lemmata that enable taking $x$ or $t$ derivatives of such integrals.
        They find direct application in arguments that~\eqref{eqn:generalproblem.PDE} holds.
        \begin{description}
            \item[Lemma~\ref{lem:ConvergenceRealIntegrals}]{
                is essentially a Jordan's lemma type argument for deforming the leading order part of the integrand of a real integral (such as the first integral of equation~(\ref{eqn:mismatchSolution}$\pm$)) away from the real line to a parallel contour.
                The result is a sum of two contour integrals, each of which converges uniformly.
                Moreover, they are shown to converge uniformly even after the integrands are modified by $x$ or $t$ differentiation of the integrands.
            }
            \item[Lemma~\ref{lem:ConvergenceRealIntegralsNu}]{is the same as lemma~\ref{lem:ConvergenceRealIntegrals}, but afer a $\la\mapsto\nu(\la)$ change of variables in the integrals of interest.}
            \item[Lemma~\ref{lem:ConvergenceD3Integrals}]{plays the role of lemma~\ref{lem:ConvergenceRealIntegrals}, except for starting integrals over $\partial D$ instead of real starting integrals.}
            \item[Lemma~\ref{lem:ConvergenceD3IntegralsStronger}]{is a generalization of lemma~\ref{lem:ConvergenceD3Integrals} to allow for the kinds of $\partial D$ integrals arising in problems with inhomogeneous boundary conditions.}
        \end{description}
    }
    \item[\S\ref{ssec:techncialLemmata.tLimit}]{
        contains lemma~\ref{lem:tLimitMainLemma}, which is used to take small $t$ limits of these integrals.
    }
\end{description}

All of these results are essentially contained in~\cite{CKS2023a,COT2024a} by Chatziafratis and collaborators.
However, it is not easy to adapt those results to the present setting, so we give new proofs, following the original proofs wherever convenient.
The inclusion of a transport term in our linearization of the KdV equation requires some additional analysis compared with~\cite{CKS2023a}, but the main change is, because we study our equation on three different domains $(-\infty,0]$, $[0,\infty)$, and $[0,\eta]$, certain kinds of integrals that were absent from the single interval case do appear in our solution formulae.
The proof for the short time limit lemma is a fairly straightforward reworking of a similar analysis in~\cite{COT2024a}, adapting from second order to third.

\subsection{To evaluate the vertex values of integrals} \label{ssec:techncialLemmata.BV}

The following two lemmata are used to express integrals along $\Gamma_\pm$ as integrals about $\partial D$ and $\Gamma$, while simultaneously taking $x$ derivatives and limits.
The contours $\Gamma_\pm$, shown on the left of figures~\ref{fig:Gammapm}, are the real line but perturbed away from zero along semicircular arcs into $\CC^\mp$ to ensure analyticity of $\nu$, while $\partial D$ and $\Gamma$ are the contours displayed in figure~\ref{fig:bdryD3}.
\begin{figure}
    \centering
    \includegraphics{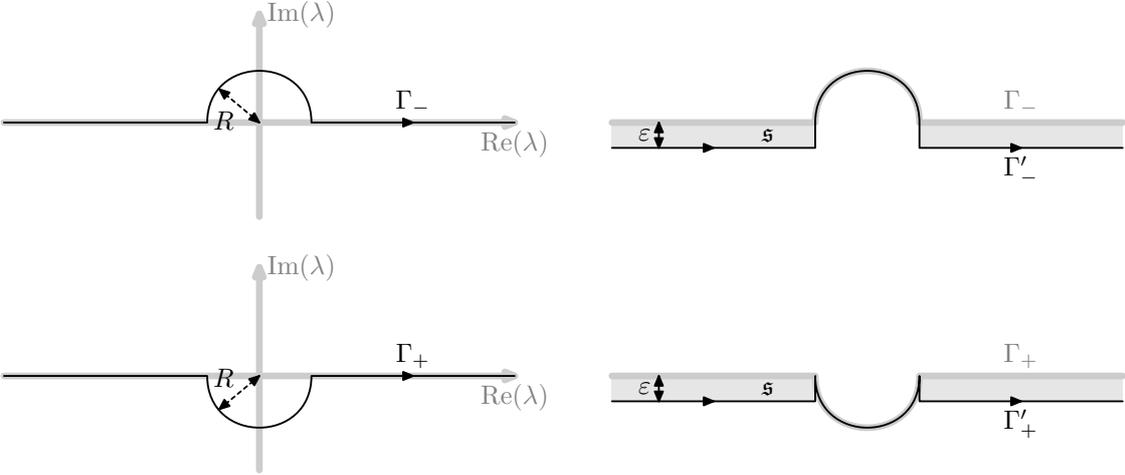}
    \caption{The contours $\Gamma_\pm$, the related contours $\Gamma'_\pm$, and the closed sets $\mathfrak s$ between them, shaded.}
    \label{fig:Gammapm}
\end{figure}%
To state and prove these results, we also require the closed set $\mathfrak s$ and shifted contour $\Gamma'_\pm$, all of which appear on the right in figure~\ref{fig:Gammapm}.
Also used in the proofs are sets $E=\{\la\in\CC:-\la\in D\}$ and $E'$, defined as $E$ but missing two semistrips parallel to its edges; $E'=E\setminus(\alpha S\cup\alpha^2S)$.
The latter two sets and their boundaries are displayed in figure~\ref{fig:vertexRewritingRealMinus}.
\begin{figure}
    \centering
    \includegraphics{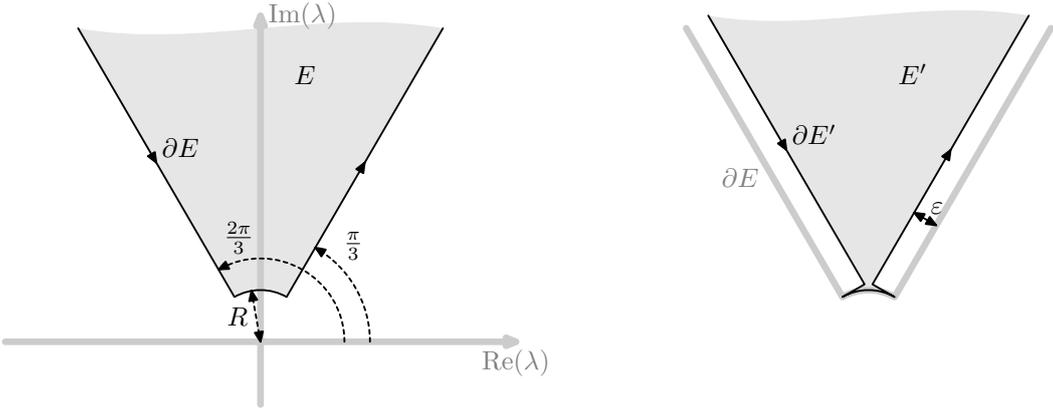}
    \caption{The sets $E,E'$, and their positively oriented boundaries $\partial E,\partial E'$.}
    \label{fig:vertexRewritingRealMinus}
\end{figure}
The $x\to0$ limit in each of the first two lemmata is taken in a different direction but, more importantly, there are different criteria on the asymptotic behaviour of the integrands.
In the present work, it is only necessary to take the limits in the directions specified, but it is really the asymptotic behaviour of the integrands that determines which of them can be used.

\begin{lem} \label{lem:vertexRewritingLemma.Real-}
    Suppose that $\theta\in\Lebesgue^1\cap\Lebesgue^\infty(\Gamma_-)$ may be extended to $\clos\{\la\in\CC^+:\abs\la> R\}$ and represented there as the sum of two holomorphic functions
    \[
        \theta(\la) = \phi(\la) + \psi(\la),
    \]
    for $\psi(\la)=\bigoh{\la^{-4}}$, uniformly in $\arg(\la)$, as $\la\to\pm\infty$ within $\Gamma_-\cup\clos (E)$, and $\phi$ extensible holomorphically still further to a neighbourhood of
    \(
        \mathfrak s
    \)
    with $\phi(\la)\to0$, uniformly in $\arg(\la)$, as $\la\to\infty$ within $\clos (E')\cup\mathfrak s$.
    Then, for $k\in\{0,1,2\}$, and for all $t>0$,
    \begin{multline} \label{eqn:vertexRewritingLemma.Real-.Solution}
        (-\ri)^k \lim_{x\to0^-} \partial_x^k \left[ \int_{\Gamma_-} \re^{\ri\nu(\la)x - \ri\la^3t} \theta(\la) \D\la \right]
        \\
        =
        \PV\,\left[
            \int_{\partial D} \re^{-\ri\la^3t} \left[ \alpha \psi(\alpha\la) \nu(\alpha\la)^k + \alpha^2 \psi(\alpha^2\la) \nu(\alpha^2\la)^k \right] \D\la
            \right. \\ \left.
            + \int_{\Gamma} \re^{-\ri\la^3t} \left[ \alpha \phi(\alpha\la) \nu(\alpha\la)^k + \alpha^2 \phi(\alpha^2\la) \nu(\alpha^2\la)^k \right] \D\la
        \right].
    \end{multline}
\end{lem}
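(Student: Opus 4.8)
The plan is to split $\theta=\phi+\psi$ and treat the two summands by different contour manipulations, exploiting that $\psi$ already decays fast enough to survive the $k\le 2$ spatial differentiations whereas $\phi$ does not and must first be moved onto a contour where the factor $\re^{-\ri\la^3t}$ supplies exponential decay. Since $\theta,\psi\in\Lebesgue^1(\Gamma_-)$, so is $\phi$, and the integral on the left of~\eqref{eqn:vertexRewritingLemma.Real-.Solution} separates accordingly.

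\textbf{The $\psi$ part.} Because $\psi(\la)=\bigoh{\la^{-4}}$ and $\nu(\la)=\la+\lindecayla$ by lemma~\ref{lem:generalNu}, the integrand $\re^{\ri\nu(\la)x}\nu(\la)^k\psi(\la)\re^{-\ri\la^3t}$ and its first $k\le2$ derivatives in $x$ are dominated on $\Gamma_-$ by an $x$-independent $\bigoh{\la^{-2}}$ majorant for $x$ in a neighbourhood of $0$, since $\re^{\ri\nu(\la)x}$ is uniformly bounded on $\Gamma_-$ for such $x$. Dominated convergence then permits differentiating under the integral and setting $x=0$, giving $\int_{\Gamma_-}\nu(\la)^k\psi(\la)\re^{-\ri\la^3t}\D\la$. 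I then deform $\Gamma_-$ up onto $\partial(\alpha D)\cup\partial(\alpha^2D)$: analyticity of $\psi$ on $\clos(E)$ together with the hypothesised $\bigoh{\la^{-4}}$ bound there, and the superexponential decay of $\re^{-\ri\la^3t}$ throughout the sector $E$ where $\Re(\ri\la^3)\ge cr^3$, make $\int_{\partial E}\nu(\la)^k\psi(\la)\re^{-\ri\la^3t}\D\la=0$ by Cauchy's theorem and Jordan's lemma; subtracting this zero and reorganising is exactly the manoeuvre used in the proof of proposition~\ref{prop:UTM}. The substitutions $\la\mapsto\alpha\la$ and $\la\mapsto\alpha^2\la$, under which $\la^3$ is invariant, carry the two arms onto $\partial D$ and produce the $\partial D$ integral of~\eqref{eqn:vertexRewritingLemma.Real-.Solution}.

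\textbf{The $\phi$ part.} Here $\nu(\la)^k\phi(\la)$ need not be integrable, so I must deform \emph{before} differentiating. Keeping $x<0$ fixed, I deform $\Gamma_-$ onto $\alpha\Gamma\cup\alpha^2\Gamma$, using analyticity of $\phi$ on the exterior of the disc in $\clos(\CC^+)$ together with its extension to a neighbourhood of $\mathfrak s$, the latter being needed because $\Gamma_-$ dips below $\RR$ near the origin, which is also the source of the principal value. The arcs at infinity vanish because $\alpha\Gamma\cup\alpha^2\Gamma$ lie in $\{\Re(\ri\la^3)>0\}$, where $\re^{-\ri\la^3t}=\bigoh{\re^{-cr^3t}}$ dominates both the at-most-$\re^{Cr\abs x}$ growth of $\re^{\ri\nu(\la)x}$ for $x\le0$ and the polynomial $\nu(\la)^k$, while $\phi=\lindecayla$ on $\clos(E')\cup\mathfrak s$ supplies the remaining smallness on the finite parts. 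On the new contour this same exponential decay yields absolute and uniform convergence for $x$ near $0$, even after inserting $\nu(\la)^k$, so I may differentiate under the integral and pass to $x\to0^-$, obtaining $\int_{\alpha\Gamma\cup\alpha^2\Gamma}\nu(\la)^k\phi(\la)\re^{-\ri\la^3t}\D\la$. The rotations $\la\mapsto\alpha^j\la$ collapse this to the $\Gamma$ integral of~\eqref{eqn:vertexRewritingLemma.Real-.Solution}, and summing the two parts gives the claim.

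\textbf{Main obstacle.} The delicate step is the $\phi$ deformation: one must carry the nonintegrable factor $\re^{\ri\nu(\la)x}$ through the entire argument, verifying that the competition between its growth in $\clos(\CC^+)$ and the $\re^{-\ri\la^3t}$ decay is won by the latter \emph{uniformly} in $x\in(-\delta,0]$, and tracking precisely which regions of analyticity, namely $E'$, the strips $\mathfrak s$, and the set in $\clos(\CC^+)$ swept between $\Gamma_-$ and $\alpha^j\Gamma$, are invoked at each stage so that Cauchy's theorem genuinely applies. The fast-decaying $\psi$ contribution, by contrast, is routine.
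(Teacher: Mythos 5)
Your argument is correct and matches the paper's proof in all essentials: the same decomposition $\theta=\phi+\psi$, Jordan's lemma applied in $E$ for the $\psi$ part and in $E'$ (together with the analytic continuation across $\mathfrak s$) for the $\phi$ part, and the final rotations $\la\mapsto\alpha^j\la$ carrying the two arms onto $\partial D$ and $\Gamma$. The only difference is one of bookkeeping: the paper first invokes lemma~\ref{lem:ConvergenceRealIntegralsNu} to move the $\phi$ part onto $\Gamma'_-$, justifies differentiation and the limit $x\to0^-$ there, and only then deforms across $E'$, whereas you deform all the way to $\alpha\Gamma\cup\alpha^2\Gamma$ before differentiating; both orders are valid (and note that only $\phi(\la)\to0$, not $\phi(\la)=\lindecayla$, is hypothesised, which is all your closing-arc estimate actually requires).
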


\begin{proof}[Proof of lemma~\ref{lem:vertexRewritingLemma.Real-}]
    We denote the limit of interest
    \[
        I_k(t) = (-\ri)^k \lim_{x\to0^-} \partial_x^k \left[ \int_{\Gamma_-} \re^{\ri\nu(\la)x - \ri\la^3t} \theta(\la) \D\la \right].
    \]
    By lemma~\ref{lem:ConvergenceRealIntegralsNu} with $N=4$, because $t>0$,
    \begin{equation} \label{eqn:vertexRewritingLemma.Real-.Step1}
        I_k(t)
        =
        \ri^k\lim_{C\to\infty}
        \left\{
            \int_{\Gamma_-\rvert_{\abs{\Re(\la)}<C}} \psi(\la)
            + \int_{\Gamma'_-\rvert_{\abs{\Re(\la)}<C}} \phi(\la)
        \right\}
        \nu(\la)^k\re^{-\ri\la^3t} \D\la.
    \end{equation}

    As $\la\to\infty$ in $\clos(E)$, $\psi(\la)\nu(\la)^2=\lindecayla$, uniformly in the argument of $\la$.
    Hence, by Jordan's lemma, $\int_{\partial E} \psi(\la)\nu(\la)^k\re^{-\ri\la^3t}\D\la=0$.
    Similarly,
    \(
        \int_{\partial E'} \phi(\la)\nu(\la)^k \re^{-\ri\la^3t} \D\la = 0.
    \)
    Adding these two zero valued integrals to the right of equation~\eqref{eqn:vertexRewritingLemma.Real-.Step1}, we arrive at
    \begin{equation}
        I_k(t)
        =
        \PV\,\left[
            \int_{\alpha\partial D \cup \alpha^2\partial D} \re^{-\ri\la^3t} \psi(\la) \nu_-(\la)^k \D\la
            + \int_{\alpha\Gamma \cup \alpha\Gamma} \re^{-\ri\la^3t} \phi(\la) \nu(\la)^k \D\la
        \right],
    \end{equation}
    After changing variables to map the contours $\alpha^j\partial D$ and $\alpha^j\Gamma$ to $\partial D$ and $\Gamma$, this reduces to equation~\eqref{eqn:vertexRewritingLemma.Real-.Solution}
\end{proof}

\begin{lem} \label{lem:vertexRewritingLemma.Real+}
    Suppose that $\theta\in\Lebesgue^1\cap\Lebesgue^\infty(\Gamma_+)$ may be holomorphically extended to $\{\la\in\CC^-:\abs\la\geq R\}$ and represented there by
    \[
        \theta(\la) = \phi(\la) + \psi(\la),
    \]
    for $\psi(\la)=\bigoh{\la^{-4}}$ and $\phi(\la)\to0$, uniformly in $\arg(\la)$, as $\la\to\pm\infty$ within $\clos(\alpha E) \cup \clos(\alpha^2E)$.
    Then, for $k\in\{0,1,2\}$, and for all $t>0$,
    \begin{multline} \label{eqn:vertexRewritingLemma.Real+.Solution}
        (-\ri)^k \lim_{x\to0^+} \partial_x^k \left[ \int_{\Gamma_+} \re^{\ri\nu(\la)x - \ri\la^3t} \theta(\la) \D\la \right]
        \\
        =
        \PV\,\left[
            \int_{\partial D} \re^{-\ri\la^3t} \psi(\la) \nu(\la)^k \D\la
            + \int_{\Gamma} \re^{-\ri\la^3t} \phi(\la) \nu(\la)^k \D\la
        \right].
    \end{multline}
\end{lem}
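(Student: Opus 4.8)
The plan is to follow the proof of lemma~\ref{lem:vertexRewritingLemma.Real-} almost line for line, with the half-plane $\clos(\CC^+)$ and the single sector $E$ replaced throughout by $\clos(\CC^-)$ and the \emph{pair} of sectors $\alpha E,\alpha^2 E$. Writing
\[
    I_k(t) = (-\ri)^k \lim_{x\to0^+} \partial_x^k \left[ \int_{\Gamma_+} \re^{\ri\nu(\la)x - \ri\la^3t} \theta(\la) \D\la \right],
\]
the first step is to invoke lemma~\ref{lem:ConvergenceRealIntegralsNu} with $N=4$, in the form appropriate to the contour $\Gamma_+$. Because $t>0$ and $\psi=\bigoh{\la^{-4}}$, that lemma justifies pulling the $x\to0^+$ limit and the $k$ spatial derivatives inside the integral, producing
\[
    I_k(t) = \PV \left\{ \int_{\Gamma_+} \psi(\la) + \int_{\Gamma'_+} \phi(\la) \right\} \nu(\la)^k \re^{-\ri\la^3t} \D\la,
\]
where $\Gamma'_+$ is the downward shift of $\Gamma_+$ into $\clos(\CC^-)$ shown in figure~\ref{fig:Gammapm}, onto which the more slowly decaying part $\phi$ has already been deformed.

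Next I would record the decay of the remaining exponential in $\clos(\CC^-)$. A direct calculation gives $\abs{\re^{-\ri\la^3t}} = \re^{tr^3\sin(3\arg(\la))}$, so this factor decays in precisely the two sectors $\alpha E$ (arguments in $(-\pi,-\tfrac{2\pi}3)$) and $\alpha^2 E$ (arguments in $(-\tfrac\pi3,0)$). By hypothesis $\theta$, and hence both $\psi$ and $\phi$, is holomorphic and vanishing on $\clos(\alpha E)\cup\clos(\alpha^2 E)$; combined with $\psi\nu^k=\bigoh{\la^{-2}}$ this gives, by Jordan's lemma, $\int_{\partial(\alpha^j E)} \psi(\la)\nu(\la)^k\re^{-\ri\la^3t}\D\la = 0$ for $j\in\{1,2\}$. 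The part $\phi$ decays only to $0$, so $\phi\nu^k$ need not be integrable on the sector boundaries; it must instead be treated on the trimmed sectors via $E'=E\setminus(\alpha S\cup\alpha^2 S)$, giving $\int_{\partial(\alpha^j E')} \phi(\la)\nu(\la)^k\re^{-\ri\la^3t}\D\la = 0$ where the exponential provides genuine decay off the boundary rays.

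The final step is the contour bookkeeping. Since $\{\la\in\CC^-:\abs\la>R\}\setminus(\alpha E\cup\alpha^2 E) = D$, adding the two vanishing integrals over $\partial(\alpha E)$ and $\partial(\alpha^2 E)$ to the $\psi$ term collapses the real-axis contour $\Gamma_+$ directly onto $\partial D$: the rays bounding $\alpha E$ and $\alpha^2 E$ cancel the matching rays of $\Gamma_+$, leaving exactly $\int_{\partial D}\psi(\la)\nu(\la)^k\re^{-\ri\la^3t}\D\la$. The identical argument with $E'$ sends the $\phi$ integral to $\int_{\Gamma}\phi(\la)\nu(\la)^k\re^{-\ri\la^3t}\D\la$, which together yield equation~\eqref{eqn:vertexRewritingLemma.Real+.Solution}. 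The crucial simplification over the $\Gamma_-$ case is that the target contours $\partial D$ and $\Gamma$ already lie in $\CC^-$, so no rescaling $\la\mapsto\alpha^j\la$ is required and each appears with coefficient $1$ rather than $\alpha,\alpha^2$. The main obstacle is exactly this geometric step: verifying that the three sectors $\alpha E$, $D$, $\alpha^2 E$ tile $\CC^-$ with orientations so that the oriented boundaries cancel correctly, and confirming that the uniform decay hypotheses on $\clos(\alpha E)\cup\clos(\alpha^2 E)$ and on the semistrips make each truncated Cauchy and Jordan estimate valid as the truncation parameter tends to infinity.
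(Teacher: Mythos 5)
Your proposal follows the paper's own proof of this lemma essentially step for step: first invoke lemma~\ref{lem:ConvergenceRealIntegralsNu} with $N=4$ to push the $k$ derivatives and the $x\to0^+$ limit inside the (split) integrals over $\Gamma_+$ and $\Gamma'_+$, then use Cauchy's theorem and Jordan's lemma in the sectors $\alpha E,\alpha^2E$ (for $\psi$) and their trimmed versions $\alpha E',\alpha^2E'$ (for $\phi$) to collapse the contours onto $\partial D$ and $\Gamma$. Your identification of the decay sectors of $\re^{-\ri\la^3t}$ in $\CC^-$ is correct, as is the tiling $\clos(\CC^-)\setminus B(0,R)=\clos(\alpha E)\cup\clos(D)\cup\clos(\alpha^2E)$.

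The one step you single out as ``the main obstacle'' --- the orientation bookkeeping --- is, however, asserted rather than checked, and the assertion comes out with the wrong sign. The positively oriented boundaries $\partial(\alpha E)$ and $\partial(\alpha^2E)$ traverse their real-axis rays from right to left, so adding the two vanishing sector integrals to $\int_{\Gamma_+}$ does cancel the real-axis portions, but what remains is $\partial D$ traversed in the \emph{reverse} orientation: once the sector integrals vanish one has $\int_{\Gamma_+}=-\int_{\partial D}$, not $+\int_{\partial D}$. You can see the paper record exactly this elsewhere: in the proof of proposition~\ref{prop:UTM} the deformation of the $\Gamma_+$ integral to $\partial D$ is described as ``traversed in the opposite direction'' with an explicit ``sign change,'' and the zeroth-order vertex-condition proof speaks of ``negating the integrand to ensure the orientation is respected.'' Consistently with this, when lemma~\ref{lem:vertexRewritingLemma.Real+} is actually invoked in the full vertex-condition proof, it is applied to $-(-\ri)^k\lim_{x\to0^+}\partial_x^k[\cdots]$ and yields $+\PV[\cdots]$, i.e.\ the version with the minus sign. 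So while your argument reproduces the printed statement, a careful orientation check gives the opposite sign on the right-hand side; you should either carry that minus sign through explicitly or flag that the identity as written presumes $\partial D$ with reversed orientation. This is precisely the point your proof needed to verify rather than assume.
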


\begin{proof}[Proof of lemma~\ref{lem:vertexRewritingLemma.Real+}]
    We denote the limit of interest
    \[
        I_k(t) = (-\ri)^k \lim_{x\to0^+} \partial_x^k \left[ \int_{\Gamma_+} \re^{\ri\nu(\la)x - \ri\la^3t} \theta(\la) \D\la \right].
    \]
    By lemma~\ref{lem:ConvergenceRealIntegralsNu} with $N=4$, because $t>0$,
    \begin{equation}
        I_k(t)
        =
        \ri^k\lim_{C\to\infty}
        \left\{
            \int_{\Gamma_+\rvert_{\abs{\Re(\la)}<C}} \psi(\la)
            + \int_{\Gamma'_+\rvert_{\abs{\Re(\la)}<C}} \phi(\la)
        \right\}
        \nu(\la)^k\re^{-\ri\la^3t} \D\la.
    \end{equation}

    To complete the proof, we argue that it is possible to deform these contours of integration over the sets shown in figure~\ref{fig:vertexRewritingRealPlus} down to the contours $\partial D$ and $\Gamma$, arriving at representation~\eqref{eqn:vertexRewritingLemma.Real+.Solution}.
    \begin{figure}
        \centering
        \includegraphics{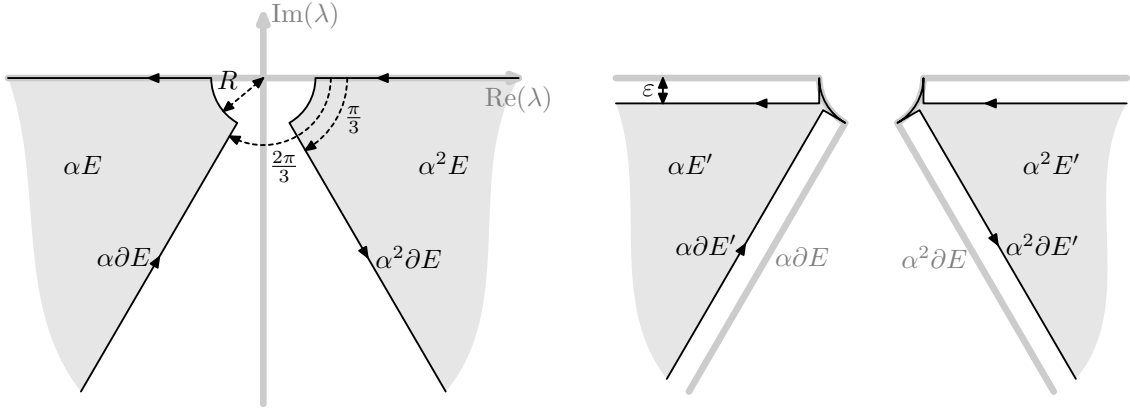}
        \caption{The sets $\alpha E,\alpha^2E,\alpha E',\alpha^2E'$, and their positively oriented boundaries.}
        \label{fig:vertexRewritingRealPlus}
    \end{figure}%
    As $\la\to\infty$ in $\clos(\alpha E\cup\alpha^2E)$, $\psi(\la)\nu(\la)^2=\lindecayla$, uniformly in the argument of $\la$.
    Hence, by Jordan's lemma, the integral about the boundary of this region of $\psi(\la)\nu(\la)^k\re^{-\ri\la^3t}$ is zero.
    Similarly,
    \[
        \int_{\alpha\partial E'\cup\alpha^2\partial E'} \phi(\la)\nu(\la)^k \re^{-\ri\la^3t} \D\la = 0.
        \qedhere
    \]
\end{proof}

\begin{lem} \label{lem:vertexRewritingLemma.D3}
    Suppose that $\theta\in\Lebesgue^1\cap\Lebesgue^\infty(\partial D)$ may be represented as the sum of functions
    \[
        \theta(\la) = \phi(\la) + \psi(\la),
    \]
    for $\psi\in\Lebesgue^1\cap\Lebesgue^\infty(\partial D)$ having $\psi(\la)=\bigoh{\la^{-4}}$ as $\la\to\pm\infty$ along $\partial D$, and $\phi$ extensible continuously to $S$ such that it is holomorphic inside $S$ with
    \begin{equation} \label{eqn:vertexRewritingLemma.D3.phiLimit}
        \lim_{C\to\infty}\max_{\substack{\la\in S:\\\abs\la=C}} \abs{\phi(\la)}=0.
    \end{equation}
    Then, for $k\in\{0,1,2\}$, and for all $t>0$, if $j\in\{1,2\}$,
    \begin{multline} \label{eqn:vertexRewritingLemma.D3.Solutionj}
        (-\ri)^k \lim_{x\to0^+} \partial_x^k \left[ \int_{\partial D} \re^{\ri\nu(\alpha^j\la)x - \ri\la^3t} \theta(\la) \D\la \right]
        \\
        =
        \left[
            \int_{\partial D} \re^{-\ri\la^3t} \psi(\la) \nu(\alpha^j\la)^k \D\la
            + \int_{\Gamma} \re^{-\ri\la^3t} \phi(\la) \nu(\alpha^j\la)^k \D\la
        \right],
    \end{multline}
    and
    \begin{multline} \label{eqn:vertexRewritingLemma.D3.Solution0}
        (-\ri)^k \lim_{x\to0^-} \partial_x^k \left[ \int_{\partial D} \re^{\ri\nu(\la)x - \ri\la^3t} \theta(\la) \D\la \right]
        \\
        =
        \left[
            \int_{\partial D} \re^{-\ri\la^3t} \psi(\la) \nu(\la)^k \D\la
            + \int_{\Gamma} \re^{-\ri\la^3t} \phi(\la) \nu(\la)^k \D\la
        \right].
    \end{multline}
\end{lem}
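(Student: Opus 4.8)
The plan is to follow the template of the proofs of lemmata~\ref{lem:vertexRewritingLemma.Real-} and~\ref{lem:vertexRewritingLemma.Real+}, handling the two summands of $\theta=\phi+\psi$ by different mechanisms. The guiding algebraic observation is that the prefactor $(-\ri)^k$ exactly cancels the $\ri^k$ produced when $\partial_x^k$ acts on $\re^{\ri\nu(\alpha^j\la)x}$ (respectively $\re^{\ri\nu(\la)x}$), so that after setting $x=0$ one is left with the clean factor $\nu(\alpha^j\la)^k$ appearing on the right of~\eqref{eqn:vertexRewritingLemma.D3.Solutionj} and~\eqref{eqn:vertexRewritingLemma.D3.Solution0}. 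The entire content of the lemma is thus the justification for moving the operator $\lim_{x\to0^\pm}\partial_x^k$ inside the integral, and I would accomplish this by exhibiting, for each summand, a representation that converges absolutely and uniformly in $x$ on a one sided neighbourhood of $0$ (the crucial $x$-independent decay coming, for $t>0$, from the factor $\re^{-\ri\la^3t}$).

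For the $\psi$ summand the argument is immediate. Since $\psi(\la)=\bigoh{\la^{-4}}$ along $\partial D$ while $\nu(\alpha^j\la)^k=\bigoh{\la^k}$ with $k\le2$ by lemma~\ref{lem:generalNu}, the differentiated integrand $(\ri\nu(\alpha^j\la))^k\re^{\ri\nu(\alpha^j\la)x-\ri\la^3t}\psi(\la)$ is $\bigoh{\la^{-2}}$, hence in $\Lebesgue^1(\partial D)$. Moreover, for $\la\in\partial D\subset\clos(\CC^-)$ lemma~\ref{lem:generalNu} gives $\alpha^j\la\in\clos(\CC^+)$ for $j\in\{1,2\}$, so $\nu(\alpha^j\la)\in\clos(\CC^+)$ and $\abs{\re^{\ri\nu(\alpha^j\la)x}}\le1$ for $x\ge0$ (and symmetrically $\abs{\re^{\ri\nu(\la)x}}\le1$ for $x\le0$ in the $j=0$ case). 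The dominated convergence theorem then permits differentiation under the integral and evaluation at $x=0$, leaving exactly the $\psi$ integral over $\partial D$ in the conclusion.

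The $\phi$ summand cannot be treated on $\partial D$ directly, because although $\phi=\theta-\psi\in\Lebesgue^1(\partial D)$, we have no control on $\phi(\la)\nu(\alpha^j\la)^k$ for $k\ge1$ there; the remedy is to deform the contour out to $\Gamma$ across $S$. Holding $x$ fixed of the appropriate sign and truncating at radius $C$, the integrand is holomorphic in the interior of $S$ (by hypothesis for $\phi$, and by lemma~\ref{lem:generalNu} for $\nu$, which is holomorphic off $B(0,R)$ and so on $S$), so Cauchy's theorem gives $\int_{\partial D,\,\abs\la<C}=\int_{\Gamma,\,\abs\la<C}+\int_{\mathrm{arcs}}$. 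Because $\Gamma$ lies strictly outside $D$, where $\Re(\ri\la^3)>0$ by~\eqref{eqn:defnD}, the factor $\re^{-\ri\la^3t}$ decays like $\re^{-ct\abs\la^3}$ there for $t>0$; combined with $\phi\to0$ from~\eqref{eqn:vertexRewritingLemma.D3.phiLimit}, a Jordan's lemma estimate kills the connecting arcs as $C\to\infty$. On $\Gamma$ this same cubic decay dominates the at most order-$\abs\la$ exponential growth of $\re^{\ri\nu(\alpha^j\la)x}$, uniformly for $x$ in a bounded one sided neighbourhood of $0$, so once more I may differentiate under the integral and let $x\to0^\pm$, producing the $\phi$ integral over $\Gamma$. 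Packaging the deformation together with this uniform convergence is precisely the role of lemma~\ref{lem:ConvergenceD3Integrals}, which I would invoke here (with $N=4$, using $t>0$) exactly as lemma~\ref{lem:ConvergenceRealIntegralsNu} was used in the proof of lemma~\ref{lem:vertexRewritingLemma.Real-}.

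I expect the main obstacle to be this uniformity up to $x=0$ for the $\phi$ part: in contrast with the interior-point estimates used to verify the PDE, the exponential $\re^{\ri\nu(\alpha^j\la)x}$ provides no decay as $x\to0$, so all decay must be furnished by the $t$-dependent factor, which is why the hypothesis $t>0$ is indispensable and why deformation off the rays of $\partial D$ (where $\Re(\ri\la^3)=0$) onto $\Gamma$ is forced. The only bookkeeping difference between~\eqref{eqn:vertexRewritingLemma.D3.Solutionj} and~\eqref{eqn:vertexRewritingLemma.D3.Solution0} is the case split $x\to0^+$ with $\nu(\alpha^j\la)$, $j\in\{1,2\}$, versus $x\to0^-$ with $\nu(\la)$; in each lemma~\ref{lem:generalNu} supplies both $\abs{\re^{\ri\nu(\alpha^j\la)x}}\le1$ on $\partial D$ and the identical $\re^{-\ri\la^3t}$ decay on $\Gamma$, so the two arguments run in parallel and, unlike in lemmata~\ref{lem:vertexRewritingLemma.Real-} and~\ref{lem:vertexRewritingLemma.Real+}, no final change of spectral variable is required.
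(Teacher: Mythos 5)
Your proposal is correct and takes essentially the same route as the paper, whose entire proof is the single observation that lemma~\ref{lem:ConvergenceD3Integrals} with $N=4$ supplies both the contour deformation of the $\phi$ part to $\Gamma$ and the uniform convergence needed to differentiate under the integral and pass to the $x\to0^\pm$ limit; your unpacking of that lemma's mechanics (dominated convergence for $\psi$ on $\partial D$, Cauchy plus Jordan across $S$ for $\phi$) matches its internal argument. The only quibble is that on the rays of $\Gamma$ one has $\Re(-\ri\la^3)=\epsilon^3-3z^2\epsilon$, so the decay of $\re^{-\ri\la^3t}$ is Gaussian in $\abs\la$ rather than the cubic rate you state, which changes nothing in the estimates.
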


\begin{proof}[Proof of lemma~\ref{lem:vertexRewritingLemma.D3}]
    This is an application of lemma~\ref{lem:ConvergenceD3Integrals} with $N=4$.
    Indeed, the contour deformation is justified by the first part of that lemma, and the uniform convergence result is sufficient for both the differentiation and the subsequent $x\to0^\pm$ limit.
\end{proof}

\begin{lem} \label{lem:vertexRewritingLemma.D3Stronger}
    Fix $\tau\geq0$.
    Let $\theta,\phi,\psi$ be as in lemma~\ref{lem:vertexRewritingLemma.D3}, except that now
    \[
        \theta(\la) = \re^{\ri\la^3\tau'}\phi(\la) + \re^{\ri\la^3\tau}\varphi(\la) + \psi(\la),
    \]
    with $\varphi$ extensible continuously to $s$ such that it is holomorphic inside $s$ with
    \begin{equation} \label{eqn:vertexRewritingLemma.D3Stronger.phiLimit}
        \lim_{C\to\infty}\max_{\substack{\la\in S:\\\abs\la=C}} \abs{\varphi(\la)}=0.
    \end{equation}
    Then, for $k\in\{0,1,2\}$, and for all $t\in(0,\tau)$, if $j\in\{1,2\}$,
    \begin{multline} \label{eqn:vertexRewritingLemma.D3Stronger.Solutionj}
        (-\ri)^k \lim_{x\to0^+} \partial_x^k \left[ \int_{\partial D} \re^{\ri\nu(\alpha^j\la)x - \ri\la^3t} \theta(\la) \D\la \right]
        \\
        =
        \left[
            \int_{\partial D} \re^{-\ri\la^3t} \psi(\la) \nu(\alpha^j\la)^k \D\la
            + \int_{\Gamma} \re^{-\ri\la^3(t-\tau')} \phi(\la) \nu(\alpha^j\la)^k \D\la
            + \int_{\Gamma} \re^{\ri\la^3(\tau-t)} \varphi(\la) \nu(\alpha^j\la)^k \D\la
        \right],
    \end{multline}
    and
    \begin{multline} \label{eqn:vertexRewritingLemma.D3Stronger.Solution0}
        (-\ri)^k \lim_{x\to0^-} \partial_x^k \left[ \int_{\partial D} \re^{\ri\nu(\la)x - \ri\la^3t} \theta(\la) \D\la \right]
        \\
        =
        \left[
            \int_{\partial D} \re^{-\ri\la^3t} \psi(\la) \nu(\la)^k \D\la
            + \int_{\Gamma} \re^{-\ri\la^3(t-\tau')} \phi(\la) \nu(\la)^k \D\la
            + \int_{\Gamma} \re^{\ri\la^3(\tau-t)} \phi(\la) \nu(\la)^k \D\la
        \right].
    \end{multline}
\end{lem}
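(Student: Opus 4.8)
The plan is to follow the proof of lemma~\ref{lem:vertexRewritingLemma.D3} almost verbatim, reducing the assertion to the generalized contour-deformation result, lemma~\ref{lem:ConvergenceD3IntegralsStronger}, applied with $N=4$. I would begin by splitting the integrand along the three summands of $\theta$. The $\psi$ piece is $\bigoh{\la^{-4}}$ on $\partial D$, so even after $k$-fold differentiation in $x$ produces the factor $\nu(\alpha^j\la)^k=\bigoh{\la^2}$ (by lemma~\ref{lem:generalNu}), the corresponding integral is absolutely and uniformly convergent in $x$; it may be left on $\partial D$, and the $x\to0^\pm$ limit is obtained by setting $x=0$, giving the first integral on each right-hand side. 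The two remaining pieces decay only like $\lindecayla$ on $\partial D$, so their integrals are not amenable to differentiation there and must first be deformed off the contour.

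The crux is that the two surviving pieces deform in \emph{opposite} directions, dictated by the signs of their exponents. Absorbing the ambient factor $\re^{-\ri\la^3t}$, the $\phi$ piece carries $\re^{-\ri\la^3(t-\tau')}$ and the $\varphi$ piece carries $\re^{\ri\la^3(\tau-t)}$; the argument uses (and the applications in~\S\ref{ssec:source.existence} guarantee) the strict ordering $\tau'<t<\tau$, so both $t-\tau'$ and $\tau-t$ are positive. On the semistrip $S$, which lies just outside $D$ where $\Re(\ri\la^3)>0$, the factor $\re^{-\ri\la^3(t-\tau')}$ decays super-exponentially; since $\phi$ is holomorphic on $S$ and vanishes at infinity there, Cauchy's theorem together with a third-order Jordan's lemma move the $\phi$ integral outward from $\partial D$ to $\Gamma$. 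Symmetrically, on the semistrip $s$, which lies just inside $D$ where $\Re(\ri\la^3)<0$, the factor $\re^{\ri\la^3(\tau-t)}$ decays super-exponentially, so the $\varphi$ integral is deformed inward from $\partial D$ to $\gamma$. Throughout both strips the spatial factor is harmless: for the $x\to0^+$ claim, $\nu(\alpha^j\la)\in\clos(\CC^+)$ for $j\in\{1,2\}$ makes $\abs{\re^{\ri\nu(\alpha^j\la)x}}\leq 1$, and for the $x\to0^-$ claim the same bound holds via $\nu(\la)\in\clos(\CC^-)$, so every circular-arc contribution vanishes by lemma~\ref{lem:generalNu}.

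Once the slowly-decaying pieces rest on $\Gamma$ and $\gamma$, the cubic exponential factors dominate the polynomial $\nu^k$, and lemma~\ref{lem:ConvergenceD3IntegralsStronger} furnishes absolute and uniform convergence of all three resulting integrals for $x$ near $0$. That uniformity is precisely what legitimizes differentiating $k$ times under the integral sign and then substituting $x=0$ to realize the $x\to0^\pm$ limit, producing~\eqref{eqn:vertexRewritingLemma.D3Stronger.Solutionj} and~\eqref{eqn:vertexRewritingLemma.D3Stronger.Solution0}. I expect the main obstacle to be the genuinely new ingredient over lemma~\ref{lem:vertexRewritingLemma.D3}, namely the \emph{inward} deformation of the $\varphi$ piece to $\gamma$ and the uniform convergence it must retain: the strict inequality $t<\tau$ is indispensable, since at $t=\tau$ the factor $\re^{\ri\la^3(\tau-t)}$ degenerates to $1$ and the $\gamma$-integral would diverge against the $\bigoh{\la^2}$ growth of $\nu^k$. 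This is exactly why the hypothesis confines $t$ below $\tau$, and why the vertex-condition proof in~\S\ref{ssec:source.existence} must exclude the partition points of the boundary data.
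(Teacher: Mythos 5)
Your proposal is correct and takes essentially the same route as the paper, whose entire proof is the one-line reduction you describe: repeat the argument of lemma~\ref{lem:vertexRewritingLemma.D3} (split off the $\bigoh{\la^{-4}}$ piece $\psi$, deform the slowly decaying pieces, differentiate under the now uniformly convergent integrals) but invoke lemma~\ref{lem:ConvergenceD3IntegralsStronger} in place of lemma~\ref{lem:ConvergenceD3Integrals}. Your placement of the $\varphi$ integral on $\gamma$ (inward deformation) is the right reading, consistent with lemma~\ref{lem:ConvergenceD3IntegralsStronger} and the applications in \S\ref{ssec:source.existence}; the lemma statement as printed writes $\Gamma$ for that contour (and $\phi$ where $\varphi$ is meant in~\eqref{eqn:vertexRewritingLemma.D3Stronger.Solution0}), which appear to be typographical slips rather than a divergence of approach.
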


\begin{proof}[Proof of lemma~\ref{lem:vertexRewritingLemma.D3Stronger}]
    This follows the proof of lemma~\ref{lem:vertexRewritingLemma.D3}, except it uses lemma~\ref{lem:ConvergenceD3IntegralsStronger} instead of lemma~\ref{lem:ConvergenceD3Integrals}.
\end{proof}

\subsection{To reexpress integrals so that they converge uniformly} \label{ssec:techncialLemmata.ContourDeformation}

\begin{lem} \label{lem:ConvergenceRealIntegrals}
    Fix $B>\sqrt{(\abs a+\epsilon^2)/3}$.
    Suppose that, for some $N\in\NN$, $\theta\in\Lebesgue^1\cap\Lebesgue^\infty(-\infty,\infty)$ may be represented on $\RR\setminus(-B,B)$ by
    \[
        \theta(\la) = \phi(\la) + \psi(\la),
    \]
    for $\psi(\la)=\bigoh{\la^{-N}}$ as $\la\to\pm\infty$, and $\phi$ extensible holomorphically to a neighbourhood of
    \[
        \{z-\ri y: \abs z\geq B,\;0\leq y\leq\epsilon\},
    \]
    with
    \begin{equation} \label{eqn:ConvergenceRealIntegrals.phiLimit}
        \lim_{C\to\pm\infty}\max_{y\in[0,\epsilon]}\abs{\phi(C-\ri y)}=0.
    \end{equation}
    Define $E(\la;x,t) = \exp[\ri\la x - \ri(\la^3-a\la)t]$.
    Then, for all $x,t\in\RR$,
    \begin{multline} \label{eqn:ConvergenceRealIntegrals.integral}
        \PV\int_{-\infty}^\infty E(\la;x,t) \theta(\la) \D\la
        =
        \lim_{C\to\infty}\left[\int_{-B}^B E(\la;x,t) \theta(\la) \D\la 
        + \int_{\raisebox{-0.3ex}{$\mathrlap{\scriptstyle[-C,-B]\cup[B,C]}$}\hphantom{1}} E(\la;x,t) \psi(\la) \D\la \right. \\ \left.
        + \left\{
            \int_{\substack{[-C-\epsilon\ri,-B-\epsilon\ri]\\\cup[B-\epsilon\ri,C-\epsilon\ri]}}
            + \int_{\substack{[-B-\epsilon\ri,-B]\\\cup[B,B-\epsilon\ri]}}
        \right\} E(\la;x,t) \phi(\la) \D\la \right].
    \end{multline}
    Further, for each $M\in\{0,1,\ldots,N-2\}$, all $\tau>0$, and all $\xi\in\RR$, uniformly in $(x,t)\in(-\infty,\xi]\times[\tau,\infty)$,
    \begin{multline} \label{eqn:ConvergenceRealIntegrals.integralM}
        \lim_{C\to\infty}\left[ \int_{-B}^B E(\la;x,t) \la^M \theta(\la) \D\la 
        + \int_{\raisebox{-0.3ex}{$\mathrlap{\scriptstyle[-C,-B]\cup[B,C]}$}\hphantom{1}} E(\la;x,t) \la^M \psi(\la) \D\la \right. \\ \left.
        + \left\{
            \int_{\substack{[-C-\epsilon\ri,-B-\epsilon\ri]\\\cup[B-\epsilon\ri,C-\epsilon\ri]}}
            + \int_{\substack{[-B-\epsilon\ri,-B]\\\cup[B,B-\epsilon\ri]}}
        \right\} E(\la;x,t) \la^M \phi(\la) \D\la \right]
    \end{multline}
    converges.
\end{lem}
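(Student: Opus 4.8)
The plan is to prove the two assertions in turn: first the contour-deformation identity~\eqref{eqn:ConvergenceRealIntegrals.integral}, which is an exact application of Cauchy's theorem on finite rectangles, and then the convergence of~\eqref{eqn:ConvergenceRealIntegrals.integralM}, which is where the hypotheses on $B$ and on $\psi$ do their work. Throughout, the governing computation is that, for $\la=s-y\ri$ with $y\in[0,\epsilon]$, one has $\Re[\ri\la x-\ri(\la^3-a\la)t]=y[x+at+ty^2-3ts^2]$; every estimate below is read off from this single identity.

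For~\eqref{eqn:ConvergenceRealIntegrals.integral}, I would first note that, since $\theta\in\Lebesgue^1$ and $\abs{E(\la;x,t)}=1$ for real $\la$, the left side converges absolutely and equals $\lim_{C\to\infty}\int_{-C}^C E\theta\,\D\la$. For each finite $C$ I split the integral at $\pm B$ and substitute $\theta=\phi+\psi$ on the outer segments. On the two rectangles with vertices $\pm B,\pm C,\pm C-\epsilon\ri,\pm B-\epsilon\ri$, which lie in the lower half plane where $\phi$ is holomorphic and $E$ is entire, Cauchy's theorem expresses $\int_{[B,C]}E\phi$ and its mirror as the sum of the shifted horizontal integral, the short connecting vertical at $\pm B$, and the far vertical at $\pm C$. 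Substituting these relations shows that $\int_{-C}^C E\theta$ equals the bracketed quantity on the right of~\eqref{eqn:ConvergenceRealIntegrals.integral} plus the two far verticals. It therefore remains only to show the far verticals vanish: on $\la=C-y\ri$ the exponent computation gives $\Re[\cdots]=y[x+at+ty^2-3tC^2]$, so for $t\geq0$ the term $-3tyC^2\leq0$ keeps $\abs{E}$ bounded uniformly in $C$, while~\eqref{eqn:ConvergenceRealIntegrals.phiLimit} forces $\max_{y}\abs{\phi(C-y\ri)}\to0$; as the segment has length $\epsilon$, the far verticals tend to $0$. Since the left side converges, the remaining limit on the right exists and equals it.

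For the convergence of~\eqref{eqn:ConvergenceRealIntegrals.integralM}, the only $C$-dependent pieces are the $\psi$-integral along $\RR$ and the $\phi$-integral along the shifted horizontal contour; the finite integral over $[-B,B]$ and the connecting verticals are fixed. I would dominate each growing piece absolutely and uniformly. For the $\psi$-piece, $\abs{E}=1$ on $\RR$ and $\la^M\psi(\la)=\bigoh{\la^{M-N}}=\bigoh{\la^{-2}}$ since $M\leq N-2$, so the integrand is bounded by an integrable function independent of $(x,t)$. For the $\phi$-piece, on $\la=s-\epsilon\ri$ the exponent specialises to $\Re[\ri\la x-\ri(\la^3-a\la)t]=\epsilon x+\epsilon t(a+\epsilon^2-3s^2)$; the hypothesis $B>\sqrt{(\abs a+\epsilon^2)/3}$ makes $a+\epsilon^2-3s^2<0$ for all $\abs s\geq B$, so for $t\geq\tau>0$ and $x\leq\xi$ we get $\abs{E(\la;x,t)}\leq\re^{\epsilon\xi}\re^{-\epsilon\tau(3s^2-a-\epsilon^2)}$, which decays like a Gaussian in $s$. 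Multiplying by $\abs s^M\abs{\phi(s-\epsilon\ri)}$, with $\phi$ bounded on the strip by~\eqref{eqn:ConvergenceRealIntegrals.phiLimit}, yields an integrand dominated uniformly in $(x,t)\in(-\infty,\xi]\times[\tau,\infty)$ by an integrable function of $s$. Hence both growing pieces converge absolutely and uniformly, and so does the whole expression.

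The substantive point, and the reason $B$ is chosen as it is, is the uniform-in-$t$ control of the shifted-contour integrand. On $\la=s-\epsilon\ri$ the transport term and the vertical shift together contribute the factor $\re^{\epsilon t(a+\epsilon^2)}$, which grows without bound as $t\to\infty$, so fixed-$\la$ decay of $\phi$ does not suffice. The cubic supplies the competing factor $\re^{-3\epsilon t s^2}$, and the two combine into $\re^{\epsilon t(a+\epsilon^2-3s^2)}$; restricting to $\abs s\geq B$ with $3B^2>\abs a+\epsilon^2$ makes this exponent nonpositive for every $t\geq0$, turning the $t$-growth into decay, uniformly in $s$. Verifying this nonpositivity and packaging it into a single $(x,t)$-uniform dominating function is the heart of the argument; the remainder is routine splitting, Cauchy's theorem, and Jordan-type estimates.
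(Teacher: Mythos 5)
Your proof is correct and follows essentially the same route as the paper's: the same computation of $\Re[\ri\la x-\ri(\la^3-a\la)t]$ on $\la=s-\ri y$, Cauchy's theorem on the finite rectangles with the far verticals killed by~\eqref{eqn:ConvergenceRealIntegrals.phiLimit}, and the choice of $B$ turning the shifted-contour integrand into one dominated, uniformly in $(x,t)\in(-\infty,\xi]\times[\tau,\infty)$, by a Gaussian in $s$. You are in fact slightly more explicit than the paper in noting that the vanishing of the far verticals requires $t\geq0$.
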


\begin{proof}[Proof of lemma~\ref{lem:ConvergenceRealIntegrals}]
    Although $E$ is pure oscillatory on $\RR$, it decays as $\la\to\infty$ along a contour parallel to but below the real line.
    Indeed, with $x<\xi\in\RR$ and $t>\tau\in\RR$, and for any real $z$ with $\abs{z} \geq B>b:=\sqrt{(\abs a+\epsilon^2)/3}$ and any $y\in[0,\epsilon]$, because $3z^2-a-y^2\geq3(z^2-b^2)$,
    \begin{multline} \label{eqn:ConvergenceRealIntegrals.uniformBoundE}
        \abs{E(z-\ri y;x,t)}
        = \re^{- y(3 z^2 - a - y^2)t + y x}
        \leq \re^{-y(3 z^2 - a - y^2)t + y\xi}
        \leq \re^{-3y(z^2 - b^2)t + y\xi} \\
        \leq \re^{-3y(z^2 - b^2)\tau + y\xi}
        \leq \re^{-3y(B^2 - b^2)\tau + y\xi},
    \end{multline}
    and the penultimate bound above implies that $\abs{E(z-\ri y;x,t)}$ is dominated uniformly in $(x,t)\in(-\infty,\xi]\times[\tau,\infty)$ by a function which converges to $0$ superexponentially as $z\to\pm\infty$.

    Equation~\eqref{eqn:ConvergenceRealIntegrals.integral}, but with the additional term
    \[
        \int_{\substack{[-C,-C-\epsilon\ri]\\\cup[C-\epsilon\ri,C]}} E(\la;x,t) \phi(\la) \D\la
    \]
    inside the limit on the right, is immediate from the definition of the principal value integral and Cauchy's theorem.
    From bound~\eqref{eqn:ConvergenceRealIntegrals.uniformBoundE} with $z=\pm C$,
    \[
        \int_{\substack{[-C,-C-\epsilon\ri]\\\cup[C-\epsilon\ri,C]}} \abs{E(\la;x,t) \phi(\la)} \D\hspace{-0.2em}\abs{\la}
        \leq
        \max_{y\in[0,\epsilon]}\abs{\phi(C-\ri y)} \int_0^\epsilon \re^{-3y(B^2 - b^2)\tau+\xi y} \D y.
    \]
    Hence, by hypothesis~\eqref{eqn:ConvergenceRealIntegrals.phiLimit}, the large $C$ limit of the above integral is zero, and this limit is uniform in $(x,t)\in(-\infty,\xi]\times[\tau,\infty)$.
    This justifies equation~\eqref{eqn:ConvergenceRealIntegrals.integral}.

    The first integral of expression~\eqref{eqn:ConvergenceRealIntegrals.integralM} converges absolutely, is bounded uniformly in $(x,t)\in(-\infty,\xi]\times[\tau,\infty)$ for any fixed$(\xi,\tau)\in\RR^2$, and is independent of $C$.
    Similarly, the second braced integral of~\eqref{eqn:ConvergenceRealIntegrals.integralM} converges absolutely and is bounded uniformly in $(x,t)\in(-\infty,\xi]\times[\tau,\infty)$ for any fixed $(\xi,\tau)\in\RR^2$; it is also independent of $C$.

    Thus uniform convergence of the whole of limit~\eqref{eqn:ConvergenceRealIntegrals.integralM} is equivalent to uniform convergence of
    \[
        \int_{\raisebox{-0.3ex}{$\mathrlap[\scriptstyle]{[-\infty,-B]\cup[B,\infty]}$}\hphantom{1}} E(\la;x,t) \la^M \psi(\la) \D\la
        + \int_{\raisebox{-1.2ex}{$\mathrlap[\scriptstyle]{\substack{[-\infty-\epsilon\ri,-B-\epsilon\ri]\\\cup[B-\epsilon\ri,+\infty-\epsilon\ri]}}$}\hphantom{1}} E(\la;x,t) \la^M \phi(\la) \D\la.
    \]
    The observation immediately following equation~\eqref{eqn:ConvergenceRealIntegrals.uniformBoundE} guarantees absolute convergence of the latter integral, uniformly in $(x,t)\in(-\infty,\xi]\times[\tau,\infty)$.
    Finally, $\abs{E(\la;x,t)\la^M\psi(\la)}=\bigoh{\la^{-2}}$, uniformly in $(x,t)\in(-\infty,\xi]\times[\tau,\infty)$, so the first integral converges absolutely uniformly.
\end{proof}

By essentially the same proof, we also have the following lemma, which may be applied to a similar integral, but after a change of variables.

\begin{lem} \label{lem:ConvergenceRealIntegralsNu}
    Suppose that, for some $N\in\NN$, $\theta\in\Lebesgue^1\cap\Lebesgue^\infty(\Gamma_\pm)$ may be represented on $\Gamma_\pm$ by
    \[
        \theta(\la) = \phi(\la) + \psi(\la),
    \]
    for $\psi(\la)=\bigoh{\la^{-N}}$ as $\la\to\pm\infty$, and $\phi$ extensible holomorphically to a neighbourhood of
    \(
        \mathfrak s,
    \)
    with
    \begin{equation} \label{eqn:ConvergenceRealIntegralsNu.phiLimit}
        \lim_{C\to\pm\infty}\max_{y\in[0,\epsilon]}\abs{\phi(C-\ri y)}=0.
    \end{equation}
    Define $E(\la;x,t) = \exp[\ri\nu(\la) x - \ri\la^3t]$.
    Then, for all $x,t\in\RR$,
    \begin{equation} \label{eqn:ConvergenceRealIntegralsNu.integral}
        \PV\int_{\Gamma_\pm} E(\la;x,t) \theta(\la) \D\la
        =
        \lim_{C\to\infty}\left[
            \int_{\raisebox{-0.3ex}{$\mathrlap[\scriptstyle]{\Gamma_\pm\rvert_{\abs{\Re(\la)}<C}}$}\hphantom{1}} E(\la;x,t) \psi(\la) \D\la
            + \int_{\raisebox{-0.3ex}{$\mathrlap[\scriptsize]{\Gamma'_\pm\rvert_{\abs{\Re(\la)}<C}}$}\hphantom{1}} E(\la;x,t) \phi(\la) \D\la
        \right].
    \end{equation}
    Further, for each $M\in\{0,1,\ldots,N-2\}$, all $\tau>0$, and all $\xi\in\RR$, uniformly in $(x,t)\in(-\infty,\xi]\times[\tau,\infty)$,
    \begin{equation} \label{eqn:ConvergenceRealIntegralsNu.integralM}
        \lim_{C\to\infty}\left[
            \int_{\raisebox{-0.3ex}{$\mathrlap[\scriptsize]{\Gamma_\pm\rvert_{\abs{\Re(\la)}<C}}$}\hphantom{1}} E(\la;x,t) \la^M \psi(\la) \D\la
            + \int_{\raisebox{-0.3ex}{$\mathrlap[\scriptsize]{\Gamma'_\pm\rvert_{\abs{\Re(\la)}<C}}$}\hphantom{1}} E(\la;x,t) \la^M \phi(\la) \D\la
        \right]
    \end{equation}
    converges.
\end{lem}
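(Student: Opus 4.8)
The plan is to mirror the proof of lemma~\ref{lem:ConvergenceRealIntegrals} almost verbatim, the only genuinely new ingredient being the replacement of the exponential bound~\eqref{eqn:ConvergenceRealIntegrals.uniformBoundE} by its counterpart for the kernel $E(\la;x,t)=\exp[\ri\nu(\la)x-\ri\la^3t]$. The two kernels are related precisely by the change of variables $\la\mapsto\nu(\la)$: since $\nu$ satisfies $\nu(\la)^3-a\nu(\la)=\la^3$, substituting $\la\mapsto\nu(\la)$ in $\exp[\ri\la x-\ri(\la^3-a\la)t]$ produces exactly $E$. Thus the transport term is absorbed into the cubic, and the contours $\Gamma_\pm$ (with their semicircular indentations forced by the domain of analyticity of $\nu$) and $\Gamma'_\pm$ play the roles previously played by $\RR$ and the line shifted by $-\epsilon\ri$.

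First I would establish the uniform decay estimate on the shifted contour and the strip $\mathfrak s$. On the straight part of $\Gamma'_\pm$ we have $\la=z-\ri\epsilon$ with $\abs z>R$, so $\la\in\CC^-$ and, by lemma~\ref{lem:generalNu}, $\nu(\la)\in\CC^-$ with $\nu(\la)=\la+\lindecayla$; writing $\Im\nu(\la)=-v$ we have $v>0$ and $v=\epsilon+\lindecayla$. Hence the spatial factor obeys $\abs{\exp(\ri\nu(\la)x)}=\re^{vx}\leq\re^{v\xi}$ for all $x\leq\xi$, a bound that is uniform and, crucially, does not grow as $\abs\la\to\infty$ along $\Gamma'_\pm$. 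Meanwhile a direct computation gives $\Re(-\ri\la^3)=-\epsilon(3z^2-\epsilon^2)$ on this contour, so for $t\geq\tau$ and $\epsilon<1/\sqrt3$ the temporal factor $\abs{\exp(-\ri\la^3t)}$ decays superexponentially in $z$. Combining, $\abs{E(\la;x,t)}$ is dominated, uniformly in $(x,t)\in(-\infty,\xi]\times[\tau,\infty)$, by a function that is superexponentially small as $\abs{\Re\la}\to\infty$; this is the sought replacement for~\eqref{eqn:ConvergenceRealIntegrals.uniformBoundE}.

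With this estimate in hand, the remainder follows lemma~\ref{lem:ConvergenceRealIntegrals}. Since $\phi$ is holomorphic across $\mathfrak s$ and $E(\argdot;x,t)$ is holomorphic there (as $\nu$ is holomorphic outside $B(0,R)$), Cauchy's theorem lets me deform the $\phi$-part from $\Gamma_\pm$ to $\Gamma'_\pm$; the vertical connecting segments at $\Re\la=\pm C$ vanish as $C\to\infty$ by hypothesis~\eqref{eqn:ConvergenceRealIntegralsNu.phiLimit} together with the above decay bound, exactly as in the earlier proof, yielding equation~\eqref{eqn:ConvergenceRealIntegralsNu.integral}. For the convergence of expression~\eqref{eqn:ConvergenceRealIntegralsNu.integralM} I would note that on $\Gamma'_\pm$ the superexponential decay of $E$ overwhelms the polynomial factor $\la^M$ and the bounded factor $\phi$, giving absolute convergence uniform in $(x,t)\in(-\infty,\xi]\times[\tau,\infty)$; on $\Gamma_\pm$, where $\la$ is real on the straight parts and the kernel is merely oscillatory, $\la^M\psi(\la)=\bigoh{\la^{M-N}}=\bigoh{\la^{-2}}$ since $M\leq N-2$, giving absolute convergence there; the compact indented arc near the origin contributes a term bounded uniformly in $(x,t)$ and independent of $C$.

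The main obstacle, such as it is, lies in controlling the spatial exponential $\exp(\ri\nu(\la)x)$ uniformly as $x\to-\infty$: the crude expansion $\nu(\la)=\la+\lindecayla$ alone would leave a correction of size $\bigoh{\abs x/\abs\la}$ that is not obviously harmless. This is circumvented by invoking the exact half-plane mapping property $\nu(\CC^-)\subset\CC^-$ from lemma~\ref{lem:generalNu}, which guarantees $\Im\nu(\la)<0$ on $\Gamma'_\pm$ and hence monotone decay of $\re^{vx}$ in $x$ for $x\leq\xi$, with $v$ bounded away from $0$ and $\infty$. The only remaining points are bookkeeping: the indentations of $\Gamma_\pm$ and $\Gamma'_\pm$ near the origin, which are compact and so affect none of the limits, and matching the orientation and labelling of $\Gamma'_+$ against $\Gamma'_-$ to the two cases of the statement.
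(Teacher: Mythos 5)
Your proposal is correct and follows exactly the route the paper intends: the paper offers no separate proof of lemma~\ref{lem:ConvergenceRealIntegralsNu}, merely the remark that it follows ``by essentially the same proof'' as lemma~\ref{lem:ConvergenceRealIntegrals} after the change of variables $\la\mapsto\nu(\la)$, and your write-up supplies precisely the details that change — the replacement of bound~\eqref{eqn:ConvergenceRealIntegrals.uniformBoundE} by the estimate $\Re(-\ri\la^3)=-\epsilon(3z^2-\epsilon^2)$ on $\Gamma'_\pm$ together with the half-plane property $\nu(\CC^-)\subset\CC^-$ from lemma~\ref{lem:generalNu} to control the spatial factor uniformly for $x\leq\xi$.
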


\begin{lem} \label{lem:ConvergenceD3Integrals}
    Suppose that, for some $N\in\NN$, $\theta\in\Lebesgue^1\cap\Lebesgue^\infty(\partial D)$ may be represented as the sum of functions
    \[
        \theta(\la) = \phi(\la) + \psi(\la),
    \]
    for $\psi\in\Lebesgue^1\cap\Lebesgue^\infty(\partial D)$ having $\psi(\la)=\bigoh{\la^{-N}}$ as $\la\to\pm\infty$ along $\partial D$, and $\phi$ extensible continuously to $S$ such that it is holomorphic inside $S$ with
    \begin{equation} \label{eqn:ConvergenceD3Integrals.phiLimit}
        \lim_{C\to\infty}\max_{\substack{\la\in S:\\\abs\la=C}} \abs{\phi(\la)}=0.
    \end{equation}
    Define $E_j(\la;x,t) = \exp[\ri\nu(\alpha^j\la) x - \ri\la^3t]$.
    If $(j,x)\in\{0\}\times(-\infty,0]\cup\{1,2\}\times[0,\infty)$, then, for all $t\geq0$,
    \begin{equation} \label{eqn:ConvergenceD3Integrals.integral}
        \PV\int_{\partial D} E_j(\la;x,t) \theta(\la) \D\la
        =
        \PV\,\left\{ \int_{\partial D} \psi(\la) +  \int_{\Gamma}\phi(\la) \right\} E_j(\la;x,t) \D\la.
    \end{equation}
    Further, for each $M\in\{0,1,\ldots,N-2\}$, all $\tau'>\tau>0$, the integrals
    \begin{equation} \label{eqn:ConvergenceD3Integrals.integralM}
        \int_{\partial D} E_j(\la;x,t) \la^M \psi(\la) \D\la
        \qquad\mbox{and}\qquad
        \int_{\Gamma} E_j(\la;x,t) \la^M \phi(\la) \D\la
    \end{equation}
    converge absolutely uniformly in $(x,t)\in(-\infty,0]\times[\tau,\tau']$ for $j=0$, and absolutely uniformly in $(x,t)\in[0,\infty)\times[\tau,\tau']$ for $j\in\{1,2\}$.
\end{lem}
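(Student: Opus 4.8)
The plan is to transplant the argument of lemma~\ref{lem:ConvergenceRealIntegrals} from the real line to $\partial D$: the rapidly decaying summand $\psi$ is left on $\partial D$, where the oscillatory factor $\re^{-\ri\la^3t}$ has unit modulus, while the slowly decaying summand $\phi$ is deformed off $\partial D$ onto $\Gamma$, where $\re^{-\ri\la^3t}$ decays. Everything rests on two elementary estimates for $E_j$. First, the factor $\re^{\ri\nu(\alpha^j\la)x}$ is bounded by $1$ on the relevant contours and $x$-ranges: for $j=0$ and $\la\in\partial D\cup\Gamma\subset\clos(\CC^-)\setminus B(0,R)$, lemma~\ref{lem:generalNu} gives $\nu(\la)\in\clos(\CC^-)$, so $\Im\nu(\la)\le0$ and $\abs{\re^{\ri\nu(\la)x}}=\re^{-x\Im\nu(\la)}\le1$ for all $x\le0$; for $j\in\{1,2\}$ the rotations $\alpha^j\la$ lie in $\clos(\CC^+)$, hence $\nu(\alpha^j\la)\in\clos(\CC^+)$ and the same bound holds for $x\ge0$. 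Second, parametrising the two rays of $\Gamma$ as $\la=\alpha^2(z-\ri\epsilon)$ with $z>R$ and $\la=\alpha(z-\ri\epsilon)$ with $z<-R$ (the outer edges of the two semistrips comprising $S$), one computes $\la^3=(z-\ri\epsilon)^3$, so $\Re(\ri\la^3)=3\epsilon z^2-\epsilon^3$ and $\abs{\re^{-\ri\la^3t}}=\re^{-(3\epsilon z^2-\epsilon^3)t}$ decays like a Gaussian in $z$, uniformly for $t\ge\tau>0$, exactly as bound~\eqref{eqn:ConvergenceRealIntegrals.uniformBoundE} does in the real-line lemma.

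With these in hand I would prove the deformation identity~\eqref{eqn:ConvergenceD3Integrals.integral} by applying Cauchy's theorem to $\phi(\la)E_j(\la;x,t)$ on the truncated region $S\cap B(0,C)$. Its boundary consists of arcs of $\partial D$ and $\Gamma$ (traversed oppositely), the short connecting segments near $\abs\la=R$ that the two contours share, and two crossing arcs at radius $C$. Since $\phi$ is continuous on $S$ and holomorphic in its interior, the contour integral vanishes; the crossing arcs have length bounded uniformly in $C$ (the width of $S$ is $\bigoh{\epsilon}$), carry $\abs{E_j}\le1$, and carry $\abs\phi\le\max_{\la\in S,\abs\la=C}\abs\phi\to0$ by hypothesis~\eqref{eqn:ConvergenceD3Integrals.phiLimit}, so they contribute nothing in the limit. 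Letting $C\to\infty$ in the principal-value sense turns $\int_{\partial D}\phi E_j$ into $\int_{\Gamma}\phi E_j$, which is~\eqref{eqn:ConvergenceD3Integrals.integral}. This step needs only $t\ge0$, since $\abs{E_j}\le1$ and the decay of $\phi$ suffice; the principal value is retained precisely because at $t=0$ the factor $\re^{-\ri\la^3t}$ provides no decay along $\partial D$.

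For the uniform-convergence claim~\eqref{eqn:ConvergenceD3Integrals.integralM} I would treat the two integrals separately. On $\partial D$, using $\abs{E_j(\la;x,t)}\le1$ on the stated $(j,x)$-ranges and all $t\ge0$, the integrand is dominated by $\abs\la^M\abs{\psi(\la)}=\bigoh{\abs\la^{M-N}}$; since $M\le N-2$ this is $\bigoh{\abs\la^{-2}}$, giving absolute convergence by a bound independent of $(x,t)$, hence uniform convergence on the whole stated region. On $\Gamma$, the Gaussian estimate above gives $\abs{\la^M\phi(\la)E_j(\la;x,t)}\le\abs\la^M\abs{\phi(\la)}\re^{-(3\epsilon z^2-\epsilon^3)\tau}$ for $t\in[\tau,\tau']$, and the Gaussian factor dominates any power of $z$, so this integral converges absolutely and uniformly in $x$ over the relevant half-line and in $t\in[\tau,\tau']$, regardless of the (merely $\lindecayla$, in the applications) decay rate of $\phi$.

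The main obstacle is the uniformity in $x$ over an unbounded range: because $x$ ranges over $(-\infty,0]$ or $[0,\infty)$, the bound $\abs{\re^{\ri\nu(\alpha^j\la)x}}\le1$ must hold at \emph{every} point of $\partial D$ and $\Gamma$ — including the compact circular arc and the connecting segments near $\abs\la=R$ — since even a single point with the wrong sign of $\Im\nu(\alpha^j\la)$ would make $\abs{E_j}$ grow without bound as $\abs x\to\infty$, and this linear-in-$z$ growth could not be controlled uniformly in $x$ by the Gaussian. Verifying that $\partial D\cup\Gamma$, together with its $\alpha^j$-rotations, remains in the correct closed half-plane, so that lemma~\ref{lem:generalNu} applies pointwise, is the one genuinely contour-dependent check; everything else is a routine transcription of the real-line argument of lemma~\ref{lem:ConvergenceRealIntegrals}.
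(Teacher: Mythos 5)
Your overall route is the paper's: apply Cauchy's theorem to $\phi\,E_j$ on the truncated region between $\partial D$ and $\Gamma$, kill the crossing arcs at $\abs\la=C$ using hypothesis~\eqref{eqn:ConvergenceD3Integrals.phiLimit}, and then estimate the two resulting integrals separately, using $\abs{\re^{-\ri\la^3t}}=1$ on the rays of $\partial D$ together with $\psi=\bigoh{\la^{-N}}$, and the Gaussian decay coming from $\Re(\ri\la^3)=3\epsilon z^2-\epsilon^3$ on the rays of $\Gamma$. However, the check you single out as the one genuinely contour-dependent step is exactly where your argument fails. For $j\in\{1,2\}$ it is \emph{not} true that $\alpha^j\la\in\clos(\CC^+)$ for every $\la\in\Gamma$ (nor for every $\la\in S$). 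Parametrise the semistrip of $S$ adjacent to the ray $\arg\la=-\pi/3$ as $\la=\alpha(z-\ri y)$, $z<-R$, $0\le y\le\epsilon$: then $\alpha^2\la=z-\ri y$, which for $y>0$ lies strictly in $\CC^-$; likewise on the other semistrip, $\la=\alpha^2(z-\ri y)$ with $z>R$ gives $\alpha\la=z-\ri y\in\CC^-$. Because the Laurent coefficients of $\nu$ are real, $\Im\nu(z-\ri y)=-\Im\nu(z+\ri y)\le0$, and on the outer edge $\Gamma$ it can be as negative as roughly $-\epsilon$. Hence $\abs{\re^{\ri\nu(\alpha^j\la)x}}$ is not bounded by $1$ there: it can be as large as $\re^{\delta x}$ for some $\delta>0$ of order $\epsilon$, which is unbounded as $x\to\infty$. (The $j=0$ case is genuinely fine, since $\Gamma\subset\clos(\CC^-)$ and $\nu$ preserves $\clos(\CC^-)$.)

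The consequences are uneven. The deformation identity~\eqref{eqn:ConvergenceD3Integrals.integral} survives, because for each \emph{fixed} $x$ the factor $\re^{\delta x}$ is a constant and is annihilated on the crossing arcs by $\max_{\la\in S,\,\abs\la=C}\abs{\phi(\la)}\to0$; this is precisely how the paper argues, deriving $\Re(\ri\nu(\alpha^j\la))\le\delta$ and bounding $\abs{E_j(\la;x,0)}\le\re^{\delta x}$ rather than by $1$. But your proof of the absolute uniform convergence of $\int_\Gamma E_j\,\la^M\phi$ over all of $x\in[0,\infty)$ rests entirely on the false bound: once corrected, your tail estimate acquires the factor $\re^{\delta x}$ and no longer yields uniformity over the unbounded $x$-range without further work (it does give uniformity on compact $x$-sets, which is what the differentiation-under-the-integral applications actually use). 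You need to redo the half-plane check honestly, record the resulting $\re^{\delta x}$ loss, and then repair or appropriately qualify the uniformity claim; as written, the step you yourself identify as the crux is wrong.
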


\begin{proof}[Proof of lemma~\ref{lem:ConvergenceD3Integrals}]
    Inside the principal value, that is inside a limit $C\to\infty$, we make a contour deformation for the $\phi$ part of the integral from $(\partial D)\rvert_{\abs{\la}<C}$ to the contour $\Gamma_C\cup\gamma_C$, where $\gamma_C$ is the union of line segments
    \[
        \{\alpha^2(C-\ri y): y\in[0,\epsilon]\}
        \cup
        \{\alpha(-C-\ri y): y\in[0,\epsilon]\},
    \]
    oriented as shown on figure~\ref{fig:gammaC}, and
    \[
        \Gamma_C = \Gamma\rvert_{\abs{\la-\epsilon[\ri+\sqrt3\sgn(\Re(\la))]/2}<C}.
    \]
    \begin{figure}
        \centering
        \includegraphics{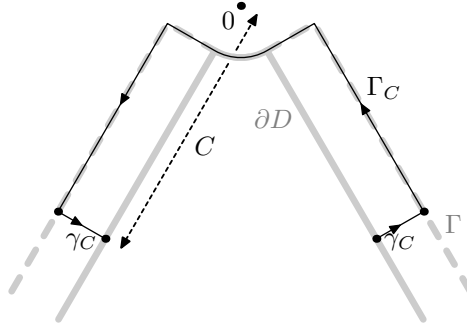}
        \caption{The contours $\Gamma_C$ and $\gamma_C$, for arbitrarily large $C>R$.}
        \label{fig:gammaC}
    \end{figure}%
    Cauchy's theorem justifies this contour deformation, and, provided we can show that
    \begin{equation} \label{eqn:ConvergenceD3Integrals.proof1}
        \lim_{C\to+\infty}\int_{\gamma_C} E_j(\la;x,t)\phi(\la)\D\la = 0,
    \end{equation}
    the first claim is justified.

    By definition,
    \[
        S = \{\alpha^2(z-\ri y):z\geq R, \; y\in[0,\epsilon]\} \cup \{\alpha(-z-\ri y):z\geq R, \; y\in[0,\epsilon]\} \cup \{R\re^{\ri\mu}:\mu\in[\tfrac{-2\pi}3,\tfrac{-\pi}3]\}.
    \]
    For $\la$ given in terms of $(z,y)$ by either of the formulae above, $\Re(-\ri\la^3)=y^3-3z^2y$, which is negative for $z$ sufficiently large, so $\abs{E_j(\la;0,t)}$ is bounded for $\la\in S$, provided $t\geq0$.
    By the asymptotic form for $\nu(\la)$, provided $\la$ is sufficently large, $\Re(\ri\nu(\la))\geq0$, while, for some $\delta>0$, $\Re(\ri\nu(\alpha^j\la))\leq\delta$.
    Hence
    \begin{multline*}
        \lim_{C\to+\infty} \abs{ \int_{\gamma_C} E_j(\la;x,t)\phi(\la)\D\la }
        \\
        \leq
        2\epsilon
        \lim_{C\to+\infty} \max_{\la\in\gamma_C} \abs{ E_j(\la;x,0) }
        \lim_{C\to+\infty} \max_{\la\in\gamma_C} \abs{ E_j(\la;0,t) }
        \lim_{C\to+\infty} \max_{\la\in\gamma_C} \abs{ \phi(\la) }
        \\
        \leq
        \begin{cases}
            2\epsilon \times \re^{\delta x} \times 1 \times 0 & \mbox{if } x\geq0,\; j\in\{1,2\}, \\
            2\epsilon \times 1 \times 1 \times 0 & \mbox{if } x\leq0,\; j=0,
        \end{cases}
    \end{multline*}
    which justifies limit~\eqref{eqn:ConvergenceD3Integrals.proof1}.

    As described above, provided $R$ is sufficently large and $(x,j)$ are chosen as in the statement of the theorem, $E_j(\la;x,0)$ are bounded everywhere on both $\partial D$ and $\Gamma$.
    On the circular arc component of both $\partial D$ and $\Gamma$, $\Re(-\ri\la^3)\leq R^3$, so, provided For $t\leq\tau'$, $\abs{E_j(\la;0,t)}\leq\re^{R^3\tau'}$ there.
    Hence the parts of the integrals along the common circular arc are absolutely bounded, uniformly in $(x,t)$.
    On either ray of $\partial D$, $\abs{E_j(\la;0,t)}=1$.
    Hence
    \[
        \abs{ \int_{\text{ray of }\partial D} E_j(\la;x,t)\psi(\la) \la^M \D\la }
    \]
    is bounded by
    \[
        \int_{\text{ray of }\partial D} \abs{\psi(\la)} \abs{\la}^M \D\abs{\la},
    \]
    which converges by hypothesis on $\psi$ for $M<N-1$.
    If $\la\in\{\alpha^2(z-\ri\epsilon),\alpha(-z-\ri\epsilon)\}\in\Gamma$, then $\Re(-\ri\la^3t)=\epsilon^3-3z^2\epsilon$, so, provided $z$ is sufficiently large and $t\geq\tau$,
    \[
        \abs{E_j(\la;0,t)}
        \leq
        \re^{-2\epsilon z^2t}
        \leq
        \re^{-2\epsilon z^2\tau}.
    \]
    Therefore,
    \[
        \abs{ \int_{\text{ray of }\Gamma} E_j(\la;x,t)\phi(\la) \la^M \D\la }
    \]
    is bounded by
    \[
        \int_R^\infty \re^{-2\epsilon z^2\tau} \abs{\phi(\la(z))} \abs{\la(z)}^M \D z,
    \]
    with $\la(z)$ being the point $\la$ on that ray of $\Gamma$ with $\abs{\la}^2=z^2+\epsilon^2$.
    Because $\phi$ is bounded, this also converges.
\end{proof}

\begin{lem} \label{lem:ConvergenceD3IntegralsStronger}
    Fix $\tau>\tau'\geq0$.
    Suppose that, for some $N\in\NN$, $\theta\in\Lebesgue^1\cap\Lebesgue^\infty(\partial D)$ may be represented as the sum of functions
    \[
        \theta(\la) = \re^{\ri\la^3\tau'}\phi(\la) + \re^{\ri\la^3\tau}\varphi(\la) + \psi(\la),
    \]
    for $\psi,\phi$ as in lemma~\ref{lem:ConvergenceD3Integrals}, and $\varphi$ extensible continuously to $s$ such that it is holomorphic inside $s$ with
    \begin{equation} \label{eqn:ConvergenceD3IntegralsStronger.varphiLimit}
        \lim_{C\to\infty}\max_{\substack{\la\in s:\\\abs\la=C}} \abs{\varphi(\la)}=0.
    \end{equation}
    Define $E_j(\la;x,t) = \exp[\ri\nu(\alpha^j\la) x - \ri\la^3t]$.
    If
    \begin{align*}
        (x,t) &\in (-\infty,0]\times(\tau',\tau]\setminus\{(0,\tau)\} & \mbox{for } j &= 0, \mbox{ or} \\
        (x,t) &\in [0,\infty)\times(\tau',\tau]\setminus\{(0,\tau)\} & \mbox{for } j &\in\{1,2\},
    \end{align*}
    then
    \begin{equation} \label{eqn:ConvergenceD3IntegralsStronger.integral}
        \PV\int_{\partial D} E_j(\la;x,t) \theta(\la) \D\la
        =
        \PV\,\left\{ \int_{\partial D} \psi(\la) +  \int_{\Gamma}\re^{\ri\la^3\tau'}\phi(\la) + \int_{\gamma}\re^{\ri\la^3\tau}\varphi(\la) \right\} E_j(\la;x,t) \D\la.
    \end{equation}
    Further, for each $M\in\{0,1,\ldots,N-2\}$, all $\xi>0$, and all $(\sigma',\sigma)$ obeying $\tau'<\sigma'<\sigma<\tau$, the integrals
    \begin{equation} \label{eqn:ConvergenceD3IntegralsStronger.integralM}
        \int_{\partial D} E_j(\la;x,t) \la^M \psi(\la) \D\la,
        \quad
        \int_{\Gamma} E_j(\la;x,t) \la^M \re^{\ri\la^3\tau'} \phi(\la) \D\la,
        \quad\mbox{and}\quad
        \int_{\gamma} E_j(\la;x,t) \la^M \re^{\ri\la^3\tau} \varphi(\la) \D\la
    \end{equation}
    converge absolutely uniformly in
    \[
        (x,t)\in(-\infty,-\xi]\times[\sigma',\tau]\cup(-\infty,0]\times[\sigma',\sigma]
        \qquad\mbox{for } j=0,
    \]
    and absolutely uniformly in
    \[
        (x,t)\in[\xi,\infty)\times[\sigma',\tau]\cup[0,\infty)\times[\sigma',\sigma]
        \qquad\mbox{for } j \in\{1,2\}.
    \]
\end{lem}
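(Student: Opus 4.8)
The plan is to follow the proof of lemma~\ref{lem:ConvergenceD3Integrals} closely, treating the new $\varphi$ term by an inward contour deformation onto $\gamma$ that mirrors the outward deformation of the $\phi$ term onto $\Gamma$. The governing observation is that the two temporal exponentials attached to $\phi$ and $\varphi$ are adapted to opposite sides of $\partial D$: combining $\re^{\ri\la^3\tau'}\phi(\la)$ with the factor $\re^{-\ri\la^3t}$ in $E_j$ produces $\re^{-\ri\la^3(t-\tau')}$ with $t-\tau'>0$, which decays in the region $S$ where $\Re(\ri\la^3)\geq0$; combining $\re^{\ri\la^3\tau}\varphi(\la)$ with $E_j$ produces $\re^{\ri\la^3(\tau-t)}$ with $\tau-t\geq0$, which decays in the region $s$ where $\Re(\ri\la^3)\leq0$. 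Thus $\phi$ wants to move out to $\Gamma$, $\varphi$ wants to move in to $\gamma$, and $\psi$, with its $\bigoh{\la^{-N}}$ decay and $\abs{\re^{-\ri\la^3t}}=1$ on the rays of $\partial D$, stays put.

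First I would establish the identity~\eqref{eqn:ConvergenceD3IntegralsStronger.integral}. Inside the principal value (a limit $C\to\infty$), apply Cauchy's theorem to the $\re^{\ri\la^3\tau'}\phi$ piece on the truncated part of $S$ to move it from $(\partial D)\rvert_{\abs\la<C}$ to $\Gamma_C$ together with the near-origin connectors and the radius-$C$ caps, exactly as in lemma~\ref{lem:ConvergenceD3Integrals}; the caps vanish by hypothesis~\eqref{eqn:ConvergenceD3Integrals.phiLimit} and the boundedness of the exponential on $S$ (using $t-\tau'>0$ and $\Re(\ri\la^3)\geq0$ there). In parallel, apply Cauchy's theorem to the $\re^{\ri\la^3\tau}\varphi$ piece on the truncated part of $s$ to move it inward to $\gamma$, the analogous caps now vanishing by~\eqref{eqn:ConvergenceD3IntegralsStronger.varphiLimit} together with the decay of $\re^{\ri\la^3(\tau-t)}$ where $\Re(\ri\la^3)\leq0$. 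Because the placement of $\nu(\alpha^j\la)$ in $\clos(\CC^\pm)$ guaranteed by lemma~\ref{lem:generalNu} keeps $\abs{E_j(\la;x,0)}\leq1$ for the admissible pairs $(j,x)$ in the statement, the spatial factor never spoils these estimates.

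The main work, and the point of genuine departure from lemma~\ref{lem:ConvergenceD3Integrals}, is the uniform absolute convergence of the three integrals in~\eqref{eqn:ConvergenceD3IntegralsStronger.integralM} over the stated, deliberately split, $(x,t)$ ranges. The $\partial D$ integral of $\la^M\psi$ converges for $M\leq N-2$ just as before, uniformly because $\abs{\re^{-\ri\la^3t}}=1$ on the rays and the shared arc contributes a finite piece bounded uniformly for $t\leq\tau$. The $\Gamma$ integral carries $\re^{-\ri\la^3(t-\tau')}$ with $t-\tau'\geq\sigma'-\tau'>0$, which on the rays of $\Gamma$ forces superexponential decay at a rate uniform over $t\geq\sigma'$, dominating $\la^M\phi$. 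The delicate term is the $\gamma$ integral of $\la^M\re^{\ri\la^3\tau}\varphi$, for which I would exploit \emph{two} distinct decay mechanisms matched to the two pieces of the admissible region: on $(-\infty,0]\times[\sigma',\sigma]$ (respectively $[0,\infty)\times[\sigma',\sigma]$ for $j\in\{1,2\}$) the factor $\re^{\ri\la^3(\tau-t)}$ decays since $\tau-t\geq\tau-\sigma>0$ uniformly, whereas on $(-\infty,-\xi]\times[\sigma',\tau]$ (respectively $[\xi,\infty)\times[\sigma',\tau]$), where $t$ may reach $\tau$ and that factor degenerates, the spatial factor $\abs{\re^{\ri\nu(\alpha^j\la)x}}$ supplies decay of order $\re^{-c\abs\la\,\xi}$ (because $\nu(\alpha^j\la)=\alpha^j\la+\lindecayla$ leaves the rays of $\gamma$ with $\mp\Im\nu$ growing linearly), again dominating $\la^M\varphi$. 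The union of these two regions is exactly the stated convergence domain, and the single excluded point $(0,\tau)$ is precisely where neither mechanism operates, which is why it is removed throughout. Hence the main obstacle is this bookkeeping on $\gamma$---verifying that the temporal and spatial decay mechanisms jointly cover the required region with uniform constants---rather than anything structurally new; the contour deformations and the $\partial D$ and $\Gamma$ estimates transcribe from lemma~\ref{lem:ConvergenceD3Integrals} with only the obvious changes.
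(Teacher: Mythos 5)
Your proposal is correct and follows essentially the same route as the paper: the paper likewise disposes of the $\psi$ and $\re^{\ri\la^3\tau'}\phi$ parts by reducing to lemma~\ref{lem:ConvergenceD3Integrals}, and handles $\varphi$ by an inward deformation to $\gamma$ justified by Cauchy's theorem and Jordan's lemma, using exactly your two complementary decay mechanisms (the spatial factor $\re^{\ri\nu(\alpha^j\la)x}$ when $x\neq0$, the temporal factor $\re^{\ri\la^3(\tau-t)}$ when $t<\tau$) to cover the two pieces of the stated $(x,t)$ region and to explain the exclusion of $(0,\tau)$. The only difference is presentational: the paper cites the earlier lemma for the $\phi$ and $\psi$ estimates rather than re-deriving them.
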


\begin{proof}[Proof of lemma~\ref{lem:ConvergenceD3IntegralsStronger}]
    In the case that $\varphi=0$, this is a slight weakening of lemma~\ref{lem:ConvergenceD3Integrals}, except that a factor of $\re^{\ri\la^3\tau'}$ has been extracted from $\phi(\la)$ and the $t$ domains adjusted accordingly
    Therefore, we need only prove the claims that relate to $\varphi$.

    For $\la\in s$,
    \[
        \re^{\ri\la^3\tau}E_j(\la;x,t)
        =
        \re^{\ri\alpha^j\la x} \re^{\ri\la^3(\tau-t)}
        =
        \begin{cases}
            \re^{\ri \mu(\la) x'}\times[\mbox{bounded}] & \mbox{useful when } x\neq 0, \\
            [\mbox{bounded}]\times\re^{\ri\la^3(\tau-t)} & \mbox{useful when } t < \tau,
        \end{cases}
    \]
    for some $x'>0$ and $\mu(\la)\in\clos\CC^+$ with $\abs{\mu(\la)}=\abs\la$.
    Therefore, given $(x,t)\neq(0,\tau)$, the product $\re^{\ri\la^3\tau}E_j(\la;x,t)$ provides a kernel appropriate to application of Jordan's lemma.
    The contour deformation described in equation~\eqref{eqn:ConvergenceD3IntegralsStronger.integral} simplifies to
    \[
        \PV\int_{\partial D} \re^{\ri\la^3\tau} E_j(\la;x,t) \varphi(\la) \D\la
        =
        \PV\int_{\gamma} \varphi(\la) \re^{\ri\la^3\tau} E_j(\la;x,t) \D\la,
    \]
    which then follows from Cauchy's theorem and Jordan's lemma, via bound~\eqref{eqn:ConvergenceD3IntegralsStronger.varphiLimit}.

    It remains only to analyse the uniform convergence of the third of integrals~\eqref{eqn:ConvergenceD3IntegralsStronger.integralM} which, because the integrand is bounded, reduces to showing uniform convergence on the semiinfinite components of $\gamma$.
    As $\la\to\infty$ along either of these components, because
    \(
        \Re(\ri\alpha^j\la),
        \Re(\ri\la^3)
        \to-\infty,
    \)
    and $(x,t)\neq(0,\tau)$, it follows that $\re^{\ri\la^3\tau} E_j(\la;x,t)\to0$ exponentially, at least as fast as $E_j(\la;\pm\xi,0)$ or $\re^{\ri\la^3(\tau-\tau'')}$.
    Therefore, the integral converges uniformly.
\end{proof}

\subsection{To calculate the $t\to0$ limit of integrals} \label{ssec:techncialLemmata.tLimit}

\begin{lem} \label{lem:tLimitMainLemma}
    For fixed $a\in\RR$, define $E(\la;x,t) = \exp[\ri\la x - \ri(\la^3-a\la)t]$.
    Then, for all $x\neq0$ and $B>0$,
    \begin{equation} \label{eqn:tLimitMainLemma.limit1}
        \lim_{t\to0}\int_B^\infty E(\la;x,t) \frac1\la \D\la
        =
        \int_B^\infty E(\la;x,0) \frac1\la \D\la.
    \end{equation}
    Similarly, if $\nu$ is the function described by lemma~\ref{lem:generalNu}, then
    \begin{equation} \label{eqn:tLimitMainLemma.limit2}
        \lim_{t\to0}\int_R^\infty \re^{\ri\nu(\la)x-\ri\la^3t} \frac1\la \D\la
        =
        \int_R^\infty \re^{\ri\nu(\la)x} \frac1\la \D\la.
    \end{equation}
\end{lem}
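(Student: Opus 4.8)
The plan is to establish continuity at $t=0$ of each of these conditionally convergent (not absolutely convergent) oscillatory integrals by the standard device of splitting off a large-$\la$ tail and showing the tail is negligible in the limit, following the short-time analysis of~\cite{COT2024a} but with the quadratic phase there replaced by the cubic phase $\la^3$ here. Concretely, for the first limit I would write $I(t)=\int_B^\infty \frac1\la \re^{\ri\la x-\ri(\la^3-a\la)t}\D\la = \int_B^C + T_C(t)$, where $T_C(t)=\int_C^\infty$. The finite piece $\int_B^C$ is continuous in $t$ by dominated convergence, since its integrand is bounded on the compact $\la$-range uniformly for $t$ near $0$. Thus the whole matter reduces to controlling $T_C(t)$ and $T_C(0)$: it suffices to show that for each fixed $x\neq0$ one can make $\limsup_{t\to0}\abs{T_C(t)}$ as small as desired by taking $C$ large.

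The governing phase is $\Phi(\la)=\la x-(\la^3-a\la)t$, with $\Phi'(\la)=x-(3\la^2-a)t$ and $\Phi''(\la)=-6\la t$. When the side of the limit makes $x/t>0$, there is a single stationary point $\la_*=\sqrt{(x+at)/(3t)}$, which runs off to $+\infty$ as $t\to0$; this moving saddle is exactly what obstructs a naive dominated-convergence or single-contour argument (indeed, integrating by parts alternately with the linear and cubic phases merely returns the identity $I(t)=I(t)$, so a genuine stationary-phase estimate is unavoidable). For small $t$ one has $\la_*>C$, and I would split the tail as $[C,\la_*/2]\cup[\la_*/2,2\la_*]\cup[2\la_*,\infty)$. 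On the first interval $\Phi'\gtrsim x>0$ is monotone and bounded below, so one integration by parts (a first-derivative van der Corput estimate) bounds the contribution by $\bigoh{1/(\abs x C)}+\bigoh{(xt)^{1/2}}$; on the outer interval $\abs{\Phi'}\gtrsim\la^2 t$, and integration by parts gives $\bigoh{t^{1/2}}$; on the central interval $\abs{\Phi''}=6\la t\gtrsim (xt)^{1/2}$, and the second-derivative van der Corput lemma applied to the amplitude $1/\la$ gives the saddle contribution $\bigoh{t^{1/4}}$. The opposite-sign case $x/t<0$ has no stationary point, $\Phi'$ is bounded away from $0$, and a single integration by parts gives $\bigoh{1/(\abs x C)}$ uniformly.

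Assembling: I would fix $C$ so large that $\abs{T_C(0)}=\bigoh{1/(\abs x C)}$ and the boundary-at-$C$ terms are below a prescribed tolerance, then let $t\to0$. For $t$ small the saddle lies well beyond $C$ and $\Phi'(C)\geq x/2$, so the only pieces not already controlled by $C$ are the $\bigoh{t^{1/4}}$ and $\bigoh{t^{1/2}}$ contributions, which vanish in the limit, while $\int_B^C$ converges to its $t=0$ value. Hence $\limsup_{t\to0}\abs{I(t)-I(0)}=\bigoh{1/(\abs x C)}$, and letting $C\to\infty$ proves~\eqref{eqn:tLimitMainLemma.limit1}; the signs of $t$ and of $x$ are interchanged by complex conjugation and by $\la\mapsto-\la$, so it suffices to treat one hard (saddle) case and one easy case. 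For the second limit~\eqref{eqn:tLimitMainLemma.limit2} I would run the identical argument with $\Phi(\la)=\nu(\la)x-\la^3 t$: by lemma~\ref{lem:generalNu}, $\nu(\la)=\la+\lindecayla$ and $\nu'(\la)=1+\bigoh{\la^{-2}}$ uniformly in $\arg(\la)$ on $[R,\infty)$, so $\Phi'(\la)=\nu'(\la)x-3\la^2 t$ has the same saddle structure $\la_*\sim\sqrt{x/(3t)}$ and the $\nu$-corrections are strictly lower order in each of the three regional estimates. The main obstacle is the central saddle-point estimate: obtaining the $\bigoh{t^{1/4}}$ bound on $[\la_*/2,2\la_*]$ with the moving saddle, uniformly enough in $t$ to survive the limit, is the crux, while the remaining regions and the reduction to tail smallness are routine.
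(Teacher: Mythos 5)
Your argument is correct, and its analytic core is the same as the paper's: both proofs identify the obstruction as a stationary point of the phase escaping to infinity at the rate $\la\sim\abs{t}^{-1/2}$, split the range of integration at that scale, control the saddle region by a van der Corput estimate, and treat the non-stationary regions by integration by parts. The bookkeeping, however, is organised differently. The paper first rescales ($\mu=\la\abs{x+at}$) to reduce to the canonical case $a=0$, $B=1$, $x=\pm1$ and then works throughout with the single difference integrand $\re^{\ri\la}(1-\re^{\ri\la^3t})/\la$ on the three $t$-dependent intervals $[1,\tfrac12\abs{t}^{-1/2}]$, $[\tfrac12\abs{t}^{-1/2},\abs{t}^{-1/2}]$, $[\abs{t}^{-1/2},\infty)$, showing each piece tends to zero; you instead keep the general parameters and run an iterated limit (fixed cutoff $C$, tail estimates uniform for small $t$, then $C\to\infty$). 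Your route avoids the scaling reduction at the cost of the auxiliary parameter $C$; the paper's avoids $C$ at the cost of the normalisation and a slightly more delicate integration by parts (the factor $1-\re^{-\ri\la^3t}/(1-3\la^2t)$ followed by dominated convergence) on the inner and outer pieces. On the saddle interval the paper applies the third-derivative van der Corput bound to the rescaled phase $\rho\pm\rho^3$, obtaining $\bigoh{t^{1/6}}$, whereas your second-derivative version with $\abs{\Phi''}\gtrsim(xt)^{1/2}$ gives $\bigoh{t^{1/4}}$; either decay suffices. Finally, for limit~\eqref{eqn:tLimitMainLemma.limit2} the paper simply changes variables $\la\mapsto\nu^{-1}(\la)$ and invokes $\nu^{-1}(\la)=\la+\lindecayla$ to reduce to limit~\eqref{eqn:tLimitMainLemma.limit1}, which is marginally lighter than rerunning the stationary-phase analysis on the phase $\nu(\la)x-\la^3t$ as you propose, though your observation that the $\nu$-corrections are lower order in every regional estimate (indeed $\nu'=1+\bigoh{\la^{-2}}$, since the constant Laurent coefficient of $\nu$ vanishes) is accurate and makes your variant work.
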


\begin{proof}[Proof of lemma~\ref{lem:tLimitMainLemma}]
    Limit~\eqref{eqn:tLimitMainLemma.limit2} follows from limit~\eqref{eqn:tLimitMainLemma.limit1} by change of variables and observation that $\nu^{-1}(\la)=\la+\lindecayla$ and $\nu'(\la)=1+\lindecayla$.
    For $t$ small enough, $\sgn(x+at)$ is constant, so we can change variables $\mu=\la\abs{x+at}$,
    \[
        \int_B^\infty E(\la;x,t) \frac1\la \D\la = \int_1^\infty \re^{\pm\ri\mu-\ri\mu^3\left(\frac t{\abs{x+at}^3}\right)} \frac1\mu \D\mu + \int_{B\abs{x+at}}^1 \re^{\pm\ri\mu-\ri\mu^3\left(\frac t{\abs{x+at}^3}\right)} \frac1\mu \D\mu,
    \]
    and the latter integral is a continuous function of $t$ close to $0$.
    Therefore, limit~\eqref{eqn:tLimitMainLemma.limit1} is equivalent to the same with $a=0$, $B=1$, and $x=\pm1$.
    Taking the complex conjugate only has the effect of changing the sign of $t$ and the coefficient of $\ri\mu$ in the exponent, so we need only show that
    \[
        \lim_{t\to0} \int_1^\infty \re^{\ri\la}\left(1-\re^{\ri\la^3t}\right)\frac1\la\D\la = 0.
    \]

    We write the integral of interest as $I_1(t)+I_2(t)+I_3(t)$, for
    \begin{align*}
        I_1(t) &= \int_1^{\frac1{2\sqrt{\abs t}}} \re^{\ri\la}\left(1-\re^{\ri\la^3t}\right)\frac1\la\D\la, \\
        I_2(t) &= \int_{\frac1{2\sqrt{\abs t}}}^{\frac1{\sqrt{\abs t}}} \re^{\ri\la}\left(1-\re^{\ri\la^3t}\right)\frac1\la\D\la, \\
        I_3(t) &= \int_{\frac1{\sqrt{\abs t}}}^\infty \re^{\ri\la}\left(1-\re^{\ri\la^3t}\right)\frac1\la\D\la.
    \end{align*}
    Changing variables $\la=\mu/\sqrt{\abs t}$ and using the notation $s = 1 / \sqrt{\abs t}$, we find
    \begin{equation} \label{eqn:tLimitMainLemma.proof1}
        I_2(t) = \int_{\frac12}^1 \re^{\ri\mu s} \frac1\mu\D\mu - \int_{\frac12}^1 \re^{\ri(\mu-\sgn(t)\mu^3) s} \frac1\mu\D\mu.
    \end{equation}
    The $s\to\pm\infty$ limits of the first integral on the right of equation~\eqref{eqn:tLimitMainLemma.proof1} are $0$ by the Riemann-Lebesgue lemma.
    Integrating by parts in the outer integral below,
    \begin{align*}
        \int_{\frac12}^1 \int_{\frac12}^\mu \re^{\ri(\rho\pm\rho^3) s} \D\rho \frac1{\mu^2} \D\mu
        &=
        \left[ -\frac1\mu \int_{\frac12}^\mu \re^{\ri(\rho\pm\rho^3) s} \D\rho \right]_{\mu=\frac12}^{\mu=1} + \int_{\frac12}^1 \re^{\ri(\mu\pm\mu^3) s} \frac1\mu\D\mu
        \\
        &= - \int_{\frac12}^1 \re^{\ri(\rho\pm\rho^3) s} + \int_{\frac12}^1 \re^{\ri(\mu\pm\mu^3) s} \frac1\mu\D\mu,
    \end{align*}
    hence the second integral on the right of equation~\eqref{eqn:tLimitMainLemma.proof1} can be reexpressed as
    \[
        \int_{\frac12}^1 \re^{\ri(\rho\pm\rho^3) s} \D\rho + \int_{\frac12}^1 \int_{\frac12}^\mu \re^{\ri(\rho\pm\rho^3) s} \D\rho \frac1{\mu^2} \D\mu.
    \]
    Now
    \(
        \abs{ \frac{\D^3}{\D\rho^3} (\rho\pm\rho^3) } = 6
    \),
    so the van der Corput lemma asserts the existence of a constant $c>0$ for which
    \[
        \int_{\frac12}^{\mu^\star} \re^{\ri(\rho\pm\rho^3) s} \D\rho \leq cs^{\frac{-1}3} = c t^{\frac16},
    \]
    with $\mu^\star\in\{1,\mu\}$.
    This establishes that $\lim_{t\to0}I_2(t)=0$.

    Integrating by parts,
    \begin{multline} \label{eqn:tLimitMainLemma.proof2}
        I_1(t)
        =
        \left[ \frac{\re^{\ri\la}}{\ri\la} \left( 1 - \frac{\re^{-\ri\la^3t}}{1-3\la^2t} \right) \right]_{\la=1}^{\la=\frac1{2\sqrt{\abs t}}}
        \\
        + \int_1^{\frac1{2\sqrt{\abs t}}} \frac{\re^{\ri\la}}{\ri\la^2} \left( 1 - \frac{\re^{-\ri\la^3t}}{1-3\la^2t} \right) \D\la
        - 6t\int_1^{\frac1{2\sqrt{\abs t}}} \frac{\re^{\ri\la-\ri\la^3t}}{(1-3\la^2t)^2}.
    \end{multline}
    The boundary terms evaluate to
    \[
        -2\ri\sqrt{\abs t}\re^{\ri/2\sqrt{\abs t}} \left( 1 - k\re^{-\ri/8\sqrt{\abs t}} \right) + \ri\re^\ri t \frac{3-\ri - t + \bigoh{t^2}}{1-3t},
    \]
    with $k=4$ or $k=4/7$ depending on the sign of $t$, so it has limit $0$ as $t\to0$.
    Bounds on $1-3\la^2t$ depend on the sign of $t$, but in either case it holds that $\frac14 \leq 1-3\la^2t \leq \frac74$.
    Using the lower bound to analyse the final term of equation~\eqref{eqn:tLimitMainLemma.proof2},
    \[
        \abs{ - 6t\int_1^{\frac1{2\sqrt{\abs t}}} \frac{\re^{\ri\la-\ri\la^3t}}{(1-3\la^2t)^2} }
        \leq
        6 \abs t \int_1^{\frac1{2\sqrt{\abs t}}} \frac{1}{(1/4)^2} \D\la
        = 96 \abs t \left( \frac1{2\sqrt{\abs t}} - 1 \right) \to 0,
    \]
    as $t\to0$.
    Pointwise in $\la\geq1$,
    \[
        \lim_{t\to0}\left( 1 - \frac{\re^{-\ri\la^3t}}{1-3\la^2t} \right) = 0,
    \]
    and the integrand of the penultimate term in equation~\eqref{eqn:tLimitMainLemma.proof2} is dominated by $5/\la^2$.
    Therefore, by Lebesgue's dominated convergence theorem, the penultimate term in equation~\eqref{eqn:tLimitMainLemma.proof2} also has limit $0$.
    We have shown that $\lim_{t\to0}I_1(t)=0$.

    With $\la\geq1/\sqrt{\abs t}$, $\abs{1-3\la^2t}\geq2\la^2\abs t \geq 2$.
    If we integrate by parts in the definition of $I_3(t)$, then we get the right side of equation~\eqref{eqn:tLimitMainLemma.proof2}, except with limits of integration and boundary limits $(\la=1/\sqrt{\abs t},\la\to\infty)$ instead of $(\la=1,\la=1/2\sqrt{\abs t})$.
    Once again, the bondary terms have limit $0$.
    The absolute value of the first integral is
    \[
        \abs{ \int_{\frac1{\sqrt{\abs t}}}^\infty \frac{\re^{\ri\la}}{\ri\la^2} \left( 1 - \frac{\re^{-\ri\la^3t}}{1-3\la^2t} \right) \D\la }
        \leq
        \int_{\frac1{\sqrt{\abs t}}}^\infty \frac1{\la^2} \left(1+\frac12\right) \D\la
        =
        \frac32\sqrt{\abs t} \to 0
    \]
    as $t\to0$.
    The absolute value of the second integral is
    \[
        \abs{ - 6t\int_{\frac1{\sqrt{\abs t}}}^\infty \frac{\re^{\ri\la-\ri\la^3t}}{(1-3\la^2t)^2} }
        \leq
        6 \abs t \int_{\frac1{\sqrt{\abs t}}}^\infty \frac1{\left(2\la^2\abs t\right)^2} \D\la = \frac12\sqrt{\abs t} \to 0.
    \]
    Hence also $\lim_{t\to0}I_3(t)=0$.
\end{proof}

\bibliographystyle{amsplain}
{\small\bibliography{dbrefs}}

\end{document}